\documentclass[letterpaper,12pt]{report}

\usepackage{pifont}
\usepackage{tabularx} % extra features for tabular environment
\usepackage{amsmath,environ}  % improve math presentation
\usepackage{graphicx} % takes care of graphic including machinery
\usepackage[margin=0.9in,letterpaper]{geometry} % decreases margins
\usepackage{cite} % takes care of citations
\usepackage[final]{hyperref} % adds hyper links inside the generated pdf file
\usepackage{cleveref}
\hypersetup{
	colorlinks=true,       % false: boxed links; true: colored links
	linkcolor=blue,        % color of internal links
	citecolor=blue,        % color of links to bibliography
	filecolor=magenta,     % color of file links
	urlcolor=blue         
}
\usepackage{enumitem}
\usepackage{graphicx} 
\usepackage{amsfonts}
\usepackage{amssymb}
\usepackage{comment}

\usepackage{amsthm}
\usepackage{subfigure}
\usepackage{grffile}
\usepackage{multicol}
\usepackage{algorithm,algorithmic}
\usepackage{empheq}
\usepackage{mathtools}
\usepackage{xcolor}
\usepackage{tikz}
\usetikzlibrary{shapes.geometric, arrows}
\tikzstyle{process} = [rectangle, minimum width=1cm, minimum height=1cm, text centered, draw=black, fill=orange!30]
\tikzstyle{arrow} = [thick,->,>=stealth]
\usetikzlibrary{decorations.pathreplacing}
\usepackage[framemethod=tikz]{mdframed}

% \usepackage{tikz}
% \usetikzlibrary{trees}
% \tikzstyle{every node}=[draw=black,thick,anchor=west]
% \tikzstyle{root} = [draw=white,fill=blue!20]
% \tikzstyle{folder}=[draw=red,fill=yellow!30]
% \tikzstyle{file}=[dashed]
\usepackage{blindtext}

\usepackage[scr = dutchcal]{mathalpha}
\setcounter{secnumdepth}{3}

%%%%%%%%%%%%%%%%%%%%%%%%%%%%%%%%%%%%%%%%%%%%%%%%%%%%%%%%%%%%%%%%%%%%%
%Math commands

\newcommand{\mcl}[1]{\mathcal{ #1}}
\newcommand{\mbf}[1]{\mathbf{ #1}}

\newcommand{\norm}[1]{\Vert #1\Vert}

\newcommand{\hinf}{\ensuremath{H_{\infty}}}
\newcommand{\ip}[2]{\left\langle{#1},{#2}\right\rangle}
\renewcommand{\th}{\ensuremath{\theta}}

\newcommand{\bmat}[1]{\begin{bmatrix} #1\end{bmatrix}}

\newcommand{\mat}[1]{\begin{matrix}#1\end{matrix}}

\newcommand{\sbmat}[1]{\left[\begin{smallmatrix}
		#1\end{smallmatrix} \right]}

\newcommand{\R}{\mathbb{R}}
\newcommand{\C}{\mathbb{C}}

\newcommand{\N}{\mathbb{N}}

\newcommand{\myint}{\int_{a}^{b}}
\newcommand{\myinta}[1]{\int_{a}^{#1}}
\newcommand{\myintb}[1]{\int_{#1}^{b}}

\newcommand{\precceq}{\preccurlyeq}

\let\bl\bigl
\let\bbl\Bigl
\let\bbbl\biggl
\let\bbbbl\Biggl
\let\br\bigr
\let\bbr\Bigr
\let\bbbr\biggr
\let\bbbbr\Biggr

%%%%%%%%%%%%%%%%%%%%%%%%%%%%%%%%%%%%%%%%%%%%%%%%%%%%%%%%%%%%%%%%%%%%%%%%%%%%%%%%
% Theorem Environment
%
\newtheorem{thm}{Theorem}
\newtheorem{defn}[thm]{Definition}
\newtheorem{lem}[thm]{Lemma}

\newtheorem{cor}[thm]{Corollary}

\newtheorem{ex}{\textbf{Example}}

%%%%%%%%%%%%%%%%%%%%%%%%%%%%%%%%%%%%%%%%%%%%%%%%%%%%%%%%%%%%%%%%%%%%%%%%%%%%%
\newenvironment{boxEnv}[1]
  {\mdfsetup{
    frametitle={\colorbox{red!30}{#1\space}},
    innertopmargin=10pt,
    frametitleaboveskip=-\ht\strutbox,
    roundcorner = 5pt,
    backgroundcolor = yellow!30,
    outerlinecolor = blue!50!black,
    }
  \begin{mdframed}%
  }
  {\end{mdframed}}
  \newenvironment{Statebox}[1]
  {\mdfsetup{
    frametitle={\colorbox{white}{#1\space}},
    innertopmargin=5pt,
    frametitleaboveskip=-\ht\strutbox,
    roundcorner = 0pt,
    backgroundcolor = black!2,
    outerlinecolor = black,
    }
  \begin{mdframed}%
  }
  {\end{mdframed}}

\newcounter{codebox}

\newenvironment{codebox}[1][]{%
    \refstepcounter{codebox}
  \mdfsetup{
    frametitle={\colorbox{blue!30}{Code Block \thecodebox\space}},
    innertopmargin=10pt,
    frametitleaboveskip=-\ht\strutbox,
    roundcorner = 5pt,
    backgroundcolor = green!20,
    outerlinecolor = blue!50!black,
    }
  \begin{mdframed}%
  }
  {\end{mdframed}}

%%%%%%%%%%%%%%%%%%%%%%%%%%%%%%%%%%
% Environment commands
%\newenvironment{codeMLB}{\begin{flalign*} \tt
%}{
%\end{flalign*}
%}
% There's an issue using flalign in a new environment. This is an alternative...
\NewEnviron{codeMLB}{%
\fontsize{11}{13}
  \begin{flalign*}\tt
    \BODY
  \end{flalign*}
}

\newenvironment{matlab}
  {\par\noindent\normalfont\par\nopagebreak%
  \begin{mdframed}[frametitle=,
     linewidth=1pt,
     linecolor=black,
     bottomline=false,topline=false,rightline=false,
     innerrightmargin=0pt,innertopmargin=0pt,innerbottommargin=0pt,
     innerleftmargin=1em,% Distance between vertical rule & proof content
     skipabove=.5\baselineskip
   ] \fontsize{11}{13} \tt}
  {\end{mdframed}}

% \newenvironment{matlab}{\vspace{2 mm}\par
% \begin{minipage}[c]{0.90 \textwidth}\fontsize{11}{13} \tt
% }{
% \end{minipage}\vspace{3 mm}\par\noindent\hspace{-0.4em}}

%%%%%%%%%%%%%%%%%%%%%%%%%%%%%%%%%%
% Operator commands
\newcommand{\fourpi}[4]{\ensuremath{\mcl{P}{\footnotesize\bmat{#1,& \hspace{-3mm}#2 \\ #3,& \hspace{-3mm} \left\{#4\right\} }}}}
\newcommand{\threepi}[1]{\mcl{P}_{\{#1\}}}

\setlength{\abovedisplayskip}{2pt}
\setlength{\belowdisplayskip}{2pt}
\allowdisplaybreaks
%++++++++++++++++++++++++++++++++++++++++

\begin{document}
	\vfill
	\title{ \includegraphics[width=0.35\linewidth]{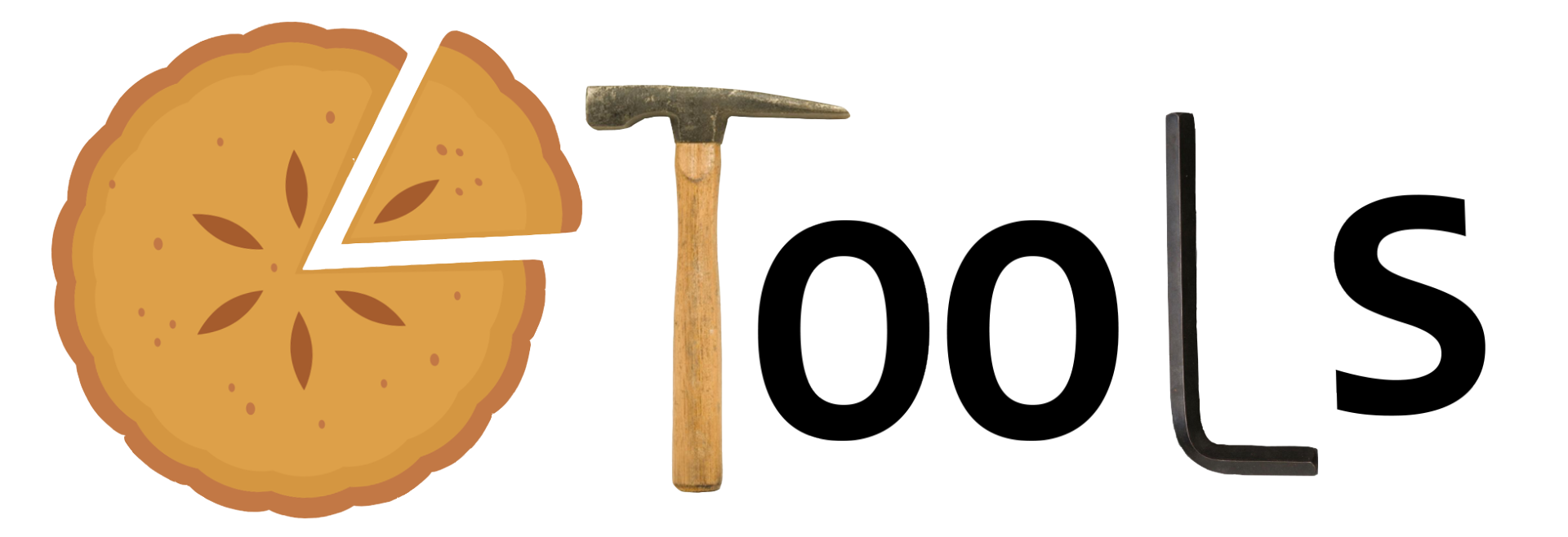}\\
		PIETOOLS 2022: User Manual}
	\author{S. Shivakumar \and A. Das \and D. Braghini \and D. Jagt \and Y. Peet \and M. Peet}
	\date{\today}
	\maketitle

	\chapter*{Copyrights and license information}
	PIETOOLS is free software: you can redistribute it and/or modify
	it under the terms of the GNU General Public License as published by
	the Free Software Foundation, either version 3 of the License, or
	(at your option) any later version.
	
	This program is distributed in the hope that it will be useful,
	but WITHOUT ANY WARRANTY; without even the implied warranty of
	MERCHANTABILITY or FITNESS FOR A PARTICULAR PURPOSE.  See the
	GNU General Public License for more details.
	
	You should have received a copy of the GNU General Public License along with this program; if not, write to the Free Software Foundation, Inc., 59 Temple Place, Suite 330, Boston, MA  02111-1307  USA. 
	\chapter*{Notation}
	\begin{tabular}{l l}
		$\R$ &Set of real numbers $(-\infty,\infty)$\\
		$\partial_s^i \mbf{x}$& $\frac{\partial^i\mbf{x}}{\partial s^i}$ where $s$ is in a compact subset of $\R$\\
		$\dot{\mbf{x}}$& $\frac{\partial\mbf{x}}{\partial t}$ where $t$ is in $[0,\infty)$\\
		$L_2^n[a,b]$& Set of Lebesgue-integrable functions from $[a,b]\to \R^n$\\
		$RL^{m,n}[a,b]$& $\R^m\times L_2^n[a,b]$\\
		$H_k^n[a,b]$& $\{f \in L_2^n[a,b]\mid \partial_s^if \in L_2^n[a,b] \forall i\le k\}$\\
		$0_{m\times n}$& Zero matrix of dimension $m\times n$\\
		$0_n$& Zero matrix of dimension $n\times n$\\
		$I_n$& Identity matrix of dimension $n\times n$\\
		$\Delta_a$ & Dirac operator on $f:C\to X$, $\Delta_a(f) = f(a)$, for $a\in C$\\
		$\mcl{B}(X,Y)$ & Space of bounded linear operators from $X$ to $Y$
	\end{tabular}
	
	\newpage

	%-----------------------------------------------------
	\tableofcontents

	\chapter{About PIETOOLS}\label{ch:about}
	PIETOOLS is a free MATLAB toolbox for manipulating Partial Integral (PI) operators and solving Linear PI Inequalities (LPIs), which are convex optimization problems involving PI variables and PI constraints. 
	PIETOOLS can be used to:
	\begin{itemize}
		\item define PI operators in 1D and 2D
		\item declare PI operator decision variables (positive semidefinite or indefinite)
		\item add operator inequality constraints
		\item solve LPI optimization problems
	\end{itemize}  
	The interface is inspired by YALMIP and the program structure is based on that used by SOSTOOLS. By default the LPIs are solved using SeDuMi~\cite{sedumi}, however, the toolbox also supports use of other SDP solvers such as Mosek, sdpt3 and sdpnal.
	
	To install and run PIETOOLS, you need:
	\begin{itemize}
		\item MATLAB version 2014a or later (we recommend MATLAB 2020a or higher. Please note some features of PIETOOLS, for example PDE input GUI, might be unavailable if an older version of MATLAB is used)
		\item The current version of the MATLAB Symbolic Math Toolbox (This is installed in most default versions of Matlab.)
		\item An SDP solver (SeDuMi is included in the installation script.)
	\end{itemize}

\section{PIETOOLS 2022 Release Notes} 

PIETOOLS 2022 introduces several improvements to PIETOOLS, including a new interface for declaring 1D PDEs, and updating many functions to allow for representation and analysis of 2D PDEs. We list the main updates below:

\begin{enumerate}
    \item \textbf{Introduction of the command line user interface:} 
    In PIETOOLS 2022, 1D linear PDEs can be constructed on-the-fly using simple command line instructions. In particular, PIETOOLS 2022 adds the following functionality:

    \begin{itemize}
    \item A new function \texttt{state} that allows state variables, inputs and outputs to be easily declared.

    \item Built-in routines for performing addition, multiplication, differentiation, substitution, and integration of declared state variables and inputs, allowing for straightforward symbolic declaration of differential equations and boundary conditions.

    \item A new overarching structure \texttt{sys} for representing linear 1D ODE-PDE systems, to which declared equations can be easily added using \texttt{addequation}.

    \item A built-in routines \texttt{convert} to directly convert declared ODE-PDE systems to equivalent PIEs.

    \end{itemize}

    Using the new command line parser, a wide variety of linear 1D ODE-PDE systems, including systems with delay, can be easily declared from the Command Window, and converted to equivalent PIEs for further analysis. See Chapter~\ref{ch:PDE_DDE_representation} for more information.

    \item \textbf{Introduction of 2D PDE and PIE functionality:} Expanding upon the functionality for 1D ODE-PDEs, PIETOOLS 2022 now also allows coupled systems of linear ODEs, 1D PDEs and 2D PDEs to be declared and analysed. In particular, PIETOOLS 2022 adds the following functionality:

    \begin{itemize}
    \item A new terms-format that allows for representation of coupled linear ODE - 1D PDE - 2D PDE systems. See Section~\ref{sec:alt_PDE_input:terms_input_PDE}.

    \item \texttt{opvar2d} and \texttt{dopvar2d} functions for representing PI operators and PI operator decision variables in 2D. See Section~\ref{sec:PIs:2D}

    \item LPI programming functions for setting up and solving LPI programs involving 2D PI operators. See Chapter~\ref{ch:LPIs}.

    \end{itemize}

\end{enumerate}

\section{Installing PIETOOLS}

PIETOOLS 2022 is compatible with Windows, Mac or Linux systems and has been verified to work with MATLAB version 2020a or higher, however, we suggest to use the latest version of MATLAB.
	
Before you start, \textbf{make sure} that you have 
\begin{enumerate}
	\item MATLAB with version 2014a or newer. (MATLAB 2020a or newer for GUI input)
	\item MATLAB has permission to create and edit folders/files in your working directory.
\end{enumerate}

\subsection{Installation}
PIETOOLS 2022 can be installed in two ways.
\begin{enumerate}
	\item \textbf{Using install script:} The script installs the following files --- tbxmanager (skipped if already installed), SeDuMi 1.3 (skipped if already installed), SOSTOOLS 4.00 (always installed), PIETOOLS 2022 (always installed). Adds all the files to MATLAB path.
	\begin{itemize}
		\item Go to \url{https://github.com/CyberneticSCL/PIETOOLS} or \url{control.asu.edu/pietools/}. 
		\item Download the file \textbf{pietools\_install.m} and run it in MATLAB.
		\item \textbf{Run the script from the folder it is downloaded in to avoid path issues.}
	\end{itemize}
	\item \textbf{Setting up PIETOOLS 2022 manually:}
	\begin{itemize}
		\item Download and install C/C++ compiler for the OS. 
		\item Install an SDP solver. SeDuMi can be obtained from \href{http://sedumi.ie.lehigh.edu/?page_id=58}{\texttt{this link}}.
		\item Download SeDuMi and run \textbf{install\_sedumi.m} file. 
		\begin{itemize}
			\item Alternatively, install MOSEK, obtain license file and add to MATLAB path.
		\end{itemize}
		\item Download \textbf{PIETOOLS\_2022.zip} from \href{http://control.asu.edu/pietools/pietools.html}{\texttt{this link}}, unzip, and add to MATLAB path.
		\end{itemize}
\end{enumerate}

\chapter{Scope of PIETOOLS}\label{ch:scope}

In this chapter, we briefly talk about the need for a new computational tool for the analysis and control of ODE-PDE systems as well as DDEs. We lightly touch upon, without going into details, the class of problems PIETOOLS can solve. 

	\section{Motivation}
	Semidefinite programming (SDP) is a class of optimization problems that involve the optimization of a linear objective over the cone of positive semidefinite (PSD) matrices. The development of efficient interior-point methods for semidefinite programming (SDPs) problems made LMIs a powerful tool in modern control theory. As Doyle stated in \cite{doyle}, LMIs played a central role in postmodern control theory akin to the role played by graphical methods like Bode, Nyquist plots, etc in classical control theory. However, most of the applications of LMI techniques were restricted to finite dimensional systems, until the sum-of-squares method came into the limelight. The sum-of-squares (SOS) optimization methods found application in control theory, for example searching for Lyapunov functions or finding bounds on singular values. SOS polynomials were also used in constructing relaxations to some optimization problems with boolean decision variables. This gave rise to many toolboxes such as SOSTOOLS \cite{sostools}, SOSOPT \cite{sosopt} etc. that can handle SOS polynomials in MATLAB. However, unlike the use of LMIs for linear ODEs, SOS methods could not be used for the analysis and control of PDEs without ad-hoc interventions. For example, to search for a Lyapunov function that proves the stability of a PDE one would usually hit a roadblock in the form of boundary conditions which are typically resolved by using integration by parts, Poincare inequality, H\"older's Inequality, etc. 
	
	In an ideal world, we would prefer to define a PDE, specify the boundary conditions and let a computational tool take care of the rest. To resolve this problem, either we teach a computer to perform these ``ad-hoc'' interventions %(Artificial Intelligence?)
 or come up with a method that does not require such interventions, to begin with. To achieve the latter, we developed the Partial Integral Equation (PIE) representation of PDEs, which is an algebraic representation of dynamical systems using Partial Integral (PI) operators. The development of PIE representation led to a framework that can extend LMI-based methods to infinite-dimensional systems. The PIE representation encompasses a broad class of distributed parameter systems and is algebraic -- eliminating the use of boundary conditions and continuity constraints~\cite{shivakumar_2019CDC}, \cite{das_2019CDC}. The development of the toolbox, PIETOOLS, that can solve optimization problems involving these PI operators was an obvious consequence of the PIE representation.

	\section{PIETOOLS for Analysis and Control of ODE-PDE Systems}
Using PIETOOLS 2022 for controlling ODE-PDE models has been made intuitive, easy, and with very few mathematical details about the PIE operators and the maths behind them. Let's take a closer look at how this works. 
	
 \subsection{Defining Models in PIETOOLS}
Any control problem necessarily starts with declaring the model, and PIETOOLS makes it extremely simple to do so. Suppose that we are interested in modeling a coupled ODE-PDE system, such as a system with ODE dynamics given by
\begin{align}
 \label{part1:ode}
\dot{x}(t) = -x(t)+ u(t),
\end{align}
with controlled input $u$, and PDE dynamics given by
\begin{align}
    \label{part1:pde0}
    \ddot{\mbf{x}}(t,s) = c^2  \partial_s^2 \mbf x(t,s) -b \partial_s \mbf x(t,s) + s w(t), s\in (0,1), t\geq 0,
\end{align}
i.e, the one-dimensional wave equation with velocity $c$, added viscous damping coefficient $b$ and external disturbance $w$. Since the PDE has a second-order derivative in time, we should make a change of variables to appropriately define a state space. On this example, we do so by calling $\phi = (\partial_s \mbf{x}, \dot{\mbf x} )$. Thus the dynamic equation becomes
\begin{align}
    \label{part1:pde}
    \dot{\phi}(t,s) = \bmat{0 & 1 \\ c & 0}  \partial_s \phi (t,s) +\bmat{0 & 0\\0 & -b} \phi (t,s) + \bmat{0\\s} w(t), s\in (0,1), t\geq 0.
\end{align}
We can also add to our system a regulated output equation, for instance
\begin{align}
    \label{part1:output}
    z(t) = \bmat{r(t) \\ u(t)}, 
\end{align}
where $r(t) = \int_0^1 \bmat{1 & 0}\phi(t,s)ds$ gives $\mbf x(t,s=1) - \mbf x(t,s=0)$. Adding $u$ to the regulated output is a way of obtaining this signal from the simulations.

To define these models, we first create the following variables in MATLAB using the \texttt{state()} class:
\begin{matlab}
\begin{verbatim}
>> x = state('ode');   phi = state('pde',2);
>> w = state('in');    u = state('in');
>> z = state('out',2)
\end{verbatim}
\end{matlab}
Then, we can define/add equations to this \texttt{sys()} object using standard operators, such as \texttt{`+',`-',`*',`diff',`subs',`int', etc.}, as shown below.
\begin{matlab}
>> odepde = sys();\\
>> eq\_dyn = [diff(x,t,1) == -x+u
                diff(phi,t,1)==[0 1; c 0]*diff(phi,s,1)+[0;s]*w+[0 0;0 -b]*phi];\\
>>eq\_out= z ==[int([1 0]*phi,s,[0,1])
                                                    u];\\
>>odepde = addequation(odepde,[eq\_dyn;eq\_out]);
\begin{verbatim}
   Initialized sys() object of type "pde"
   
   5 equations were added to sys() object
\end{verbatim}
\end{matlab}
Whenever, equations are successfully added to the \texttt{sys()} object, a text message confirming the same is displayed in the command output window. To verify if PIETOOLS got the right equations, the user just needs to type the system variable (''odepde'' in this example) on the command window for PIETOOLS to display the added equations. We encourage the user to always check the equations before proceeding.

\subsection{Setting the control signal}
    Since our system has a controlled input, which could have any name, we must pass this information to PIETOOLS. This is done by the following command:
    
\begin{matlab}
    >> odepde= setControl(odepde,[u]);
    \begin{verbatim}
        1 inputs were designated as controlled inputs
    \end{verbatim}
\end{matlab}
 Similarly, observed outputs also need to be specified. For details, see chapter \ref{ch:PDE_DDE_representation}.
 
\subsection{Declaring Boundary Conditions}
A general PDE model is incomplete without boundary conditions, but in PIETOOLS, boundary conditions can be declared in much the same way as the system dynamics. For example, to declare the following Dirichlet,
\[
 \dot{\mbf x} (t, s= 0)= 0,
\]
and Neuman,
\[
 \partial_s \mbf x (t, s= 1)= x(t),
\]
boundary conditions, we can simply call
\begin{matlab}
>> bc1 = [0 1]*subs(phi,s,0) == 0;\\
>> bc2 = [1 0]*subs(phi,s,1) == x;\\
>> odepde = addequation(odepde,[bc1;bc2]);
\begin{verbatim}
    2 equations were added to sys() object
\end{verbatim}
\end{matlab}

\subsection{Simulating ODE-PDE Model}
One of the first things a practitioner might do is to simulate the system. If you look at traditional PDE literature, one of the biggest challenges is simulation. Every different kind of PDE requires different techniques to discretize. More importantly, there is no guarantee that the simulation result will stay bounded. In PIETOOLS, there is only one command to simulate any linear ODE-PDE coupled system of your choice:
\begin{matlab}
    >> solution = PIESIM(odepde, opts, uinput, ndiff);
\end{matlab}
For instance, suppose, we want to simulate the ODE-PDE model corresponding to \eqref{part1:ode} and \eqref{part1:pde} with constant velocity $c=1$, damping coefficient $b=0.1$, under the previously defined boundary conditions, and a specific choice of disturbance $w(t)$. In particular, we aim to investigate how $w(t)$ affects $\dot{\mbf{x}}$ and the value $\mbf x(t,s=1) = \mbf x(t,s=0)$ given by the previously defined output. 

To run this simulation in PIESIM, we use the following opts to the function, which commands PIESIM to don't automatically plot the solution, to use 8 Chebyshev polynomials, to simulate the solutions up to $t=1 s$ with a time-step of $10^{-2} s$, and to use Backward Differentiation Formula (BDF):

\begin{matlab}
    >>opts.plot = 'no';\\   % Do not plot the final solution
    >>opts.N = 8;\\         % Expand using 8 Chebyshev polynomials
    >>opts.tf = 10;\\        % Simulate up to t = 1;
    >>opts.dt = 1e-2;\\     % Use time step of 10^-3
    >>opts.intScheme=1;\\   % Time-step using Backward Differentiation Formula (BDF)
    \end{matlab}
  Another important piece of information to PIESIM is regarding the initial conditions and external perturbations for the simulation. This is done as follows for the zero-state response of the system perturbed by an exponentially decaying sinusoidal signal:
\begin{matlab}
    >>uinput.ic.PDE = [0,0];\\  
    >>uinput.ic.ODE = 0;\\  
    >>uinput.u=0;\\
    >>uinput.w = sin(5*st)*exp(-st); 
\end{matlab}
    Note that the control input must also be zero since we want to simulate an open-loop response. The last argument is regarding the space differentiability of the states. In this example, the PDE state involves 2 first order differentiable state variables and this is passed through the input ndiff as:   
\begin{matlab}
       >>ndiff = [0,2,0];    % The PDE state involves 2 first-order differentiable state variables   
\end{matlab}

For details on PIESIM arguments, we refer the reader to chapter \ref{ch:PIESIM}. PIESIM gives us discretized time-dependent arrays corresponding to the time vector used in the simulations and the resulting state variables and output. The result is depicted on Figures \ref{fig:DEMO1_OL} and \ref{fig:DEMO1_OL_3D}.

\begin{figure}[htbp]
    \centering
    \includegraphics[width=0.6\textwidth]{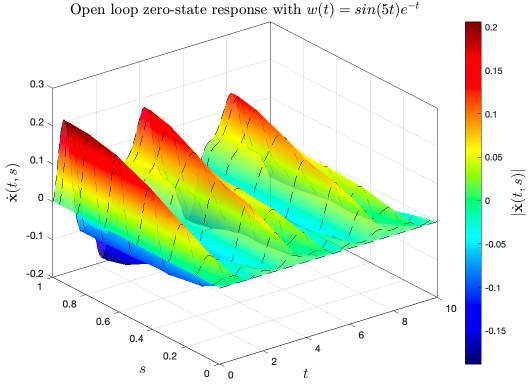}
    \caption{Transient response of the state variable $\dot{\mbf{x}}(t,s)$ by simulating the ODE-PDE model \eqref{part1:ode} and \eqref{part1:pde} with $u(t) = 0$ for external disturbance $w(t) = sin(5t) e^{-t}$.}
    \label{fig:DEMO1_OL_3D}
\end{figure}

\begin{figure}[htbp]
    \centering
    \includegraphics[width=0.6\textwidth]{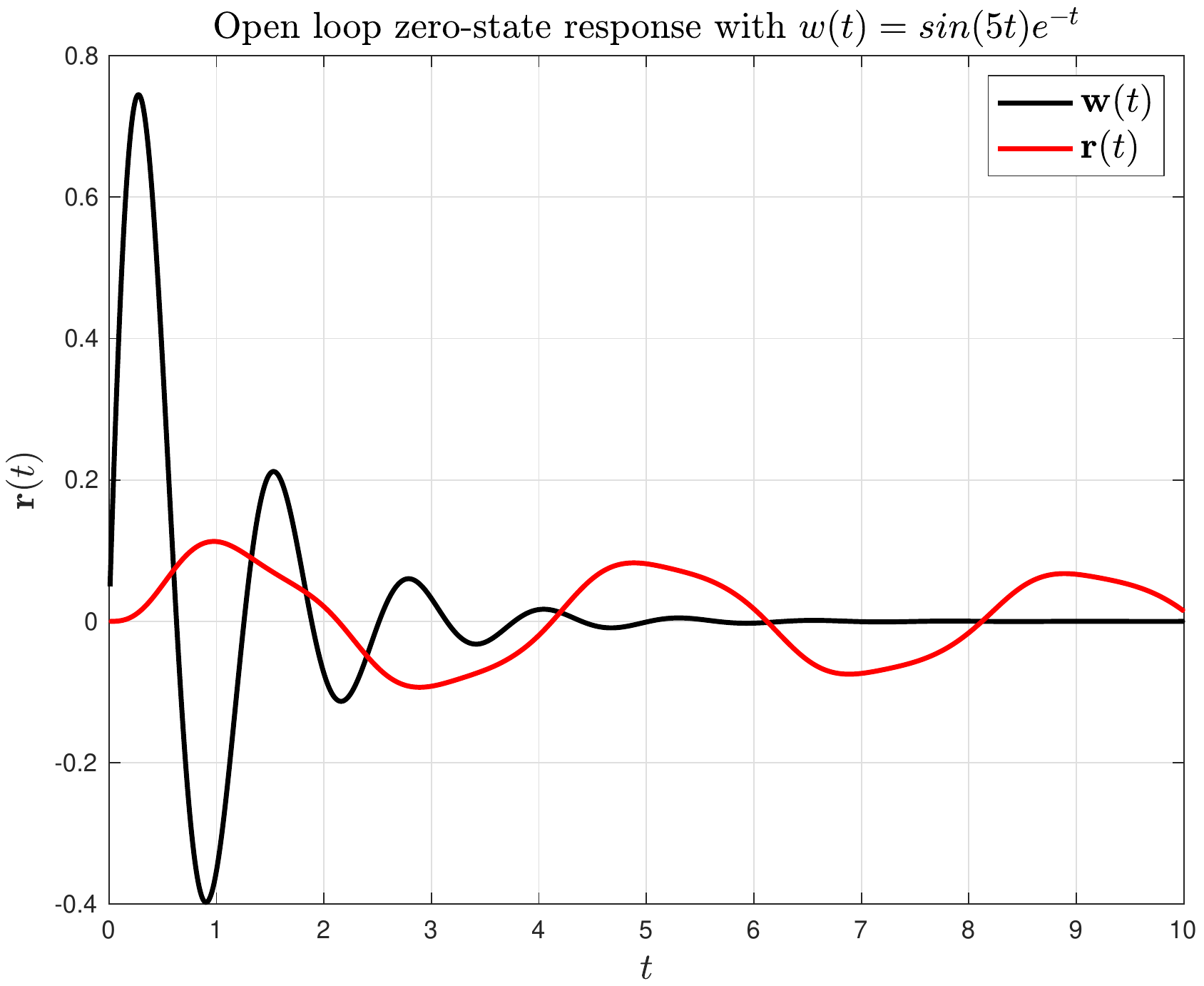}
    \caption{Transient response $r(t)$ of the ODE-PDE model \eqref{part1:ode} and \eqref{part1:pde} with $u(t) = 0$ for external disturbance $w(t) = sin(5t) e^{-t}$.}
    \label{fig:DEMO1_OL}
\end{figure}

\subsection{Analysis and Control of the ODE-PDE Model Using PIEs}
Apart from simulation, you may be interested in knowing whether the model is internally stable or not. Moreover, what would be a good control input such that the effect of external disturbances for a specific choice of output can be suppressed? In PIETOOLS, such an analysis and synthesis are typically performed by first converting the ODE-PDE model to a new representation called Partial Integral Equations (PIEs), which is parametrized by a special class of operators, and then solving convex optimization problems (see chapter.~\ref{ch:LPIs}) for more details. 

Thus, PIEs are an equivalent representation of ODE-PDE models which provide a convenient and efficient way to analyze ODE-PDE models by numerically treatable methods. The conversion from the original system to the PIE representation is simply done using the following command, resulting in the next output on the command window (considering that every previous step was correctly done): 
\begin{matlab}
    >>PIE = convert(odepde,'pie');  
    \begin{verbatim}
    --- Reordering the state components to allow for representation as PIE ---

    The order of the state components x has not changed.

    --- Converting ODE-PDE to PIE --- 
    Initialized sys() object of type ``pde''
    Conversion to pie was successful
    \end{verbatim}
\end{matlab}
Once the model is converted to a PIE, analysis, and control can be performed by calling one of the executive functions.% The simplest way to do it is by calling \texttt{PIETOOLS\_auto\_execute} and choosing one of the executives.
There are plenty of executive functions available, starting from stability, computing $H_{\infty}$ gain, $H_{\infty}$ optimal state estimator as well as state feedback controllers. 

For instance, the example of this chapter is asymptotically stable only when $b > 0$. This can be shown by calling the executive after one of the following predefined settings had been chosen: \texttt{extreme}, \texttt{stripped}, \texttt{light}, \texttt{heavy}, \texttt{veryheavy}, or \texttt{custom}. For details on the optimization settings, the reader is referred to section.~\ref{sec:executives-settings}. 

\begin{matlab}
    >>settings = lpisettings('heavy');\\
    >>[prog, P] = PIETOOLS\_stability(PIE,settings);
\end{matlab}

If the resultant optimization problem can be solved with these settings, the following message will be displayed after the optimization outputs:
\begin{matlab}
    \begin{verbatim}
        The System of equations was successfully solved.
    \end{verbatim}
\end{matlab}

, which provides an exponential stability certificate for the system. 

 Now, if we want to improve the system's rejection of disturbances, the optimal solution is to design a state-feedback controller that provides a control input $u(t)$ to be applied in \eqref{part1:ode}, which minimizes the $\hinf$ norm of the closed-loop system, provided that such a controller exists. To compute this performance measurement on the open-loop, we just need to call the executive:
 
 \begin{matlab}
    >> [prog, P, gamma] = PIETOOLS\_Hinf\_gain(PIE,settings);
\end{matlab}

 Provided, again, that the optimization problem can be solved, the command window output will display the $\hinf$ norm. In this example, we have:

 \begin{matlab}
    \begin{verbatim}
    The H-infty norm of the given system is upper bounded by:
    5.1631
    \end{verbatim}
\end{matlab}

 For PIETOOLS to synthesize a state feedback controller which minimizes this metric, we need to call a third executive:

 \begin{matlab}
    >> [prog, Kval, gam\_val] = PIETOOLS\_Hinf\_control(PIE, settings);
\end{matlab}

 , which will make PIETOOLS search for the operator $\mcl K$ stored in variable \texttt{Kval} corresponding to the controller, and display the closed loop $\hinf$ norm if succeeded. For this example, the result is a great increase in performance, in terms of this metric:
 
 \begin{matlab}
    \begin{verbatim}
    The closed-loop H-infty norm of the given system is upper bounded by:
    0.9779
    \end{verbatim}
\end{matlab}
 
 The controller is generally a 4-PI linear operator, as detailed described in Chapter.~\ref{ch:PIs}, which has an image parameterized by matrix-valued polynomials. The resultant controller can be displayed by entering its variable name on the command window. Keep in mind that PIETOOLS disregard the monomials with coefficients lower than an accuracy defaulted to $10^-4$.

 We can again use \texttt{PIESIM} to simulate the response of the resultant closed-loop system, as depicted in Figures \ref{fig:DEMO1_CL_3D} and \ref{fig:DEMO1_CL}. Moreover, we can compare the closed-loop with the open-loop performances, as shown by Figure \ref{fig:DEMO1_outputs}. We encourage the user to look at the file \texttt{DEMO1\_Simple\_Stability\_Simulation\_and\_Control.m}, included in the \textit{PIETOOLS\_demos} folder of PIETOOLS, which provides a complete guide to reproduce the results described and depicted on this section. 

\begin{figure}[htbp]
    \centering
    \includegraphics[width=0.6\textwidth]{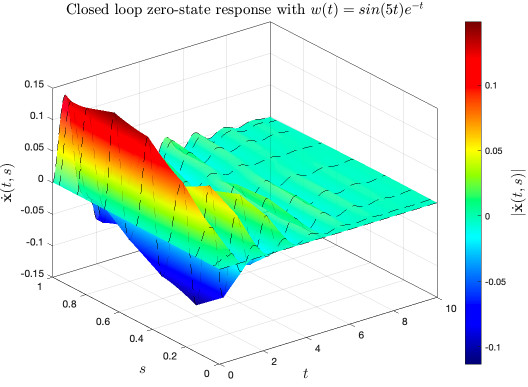}
    \caption{Transient response of the state variable $\dot{\mbf{x}}(t,s)$ on the closed-loop system for external disturbance $w(t) = sin(5t) e^{-t}$.}
    \label{fig:DEMO1_CL_3D}
\end{figure}

\begin{figure}[htbp]
\centering
\subfigure[]{\includegraphics[width=0.495\textwidth]{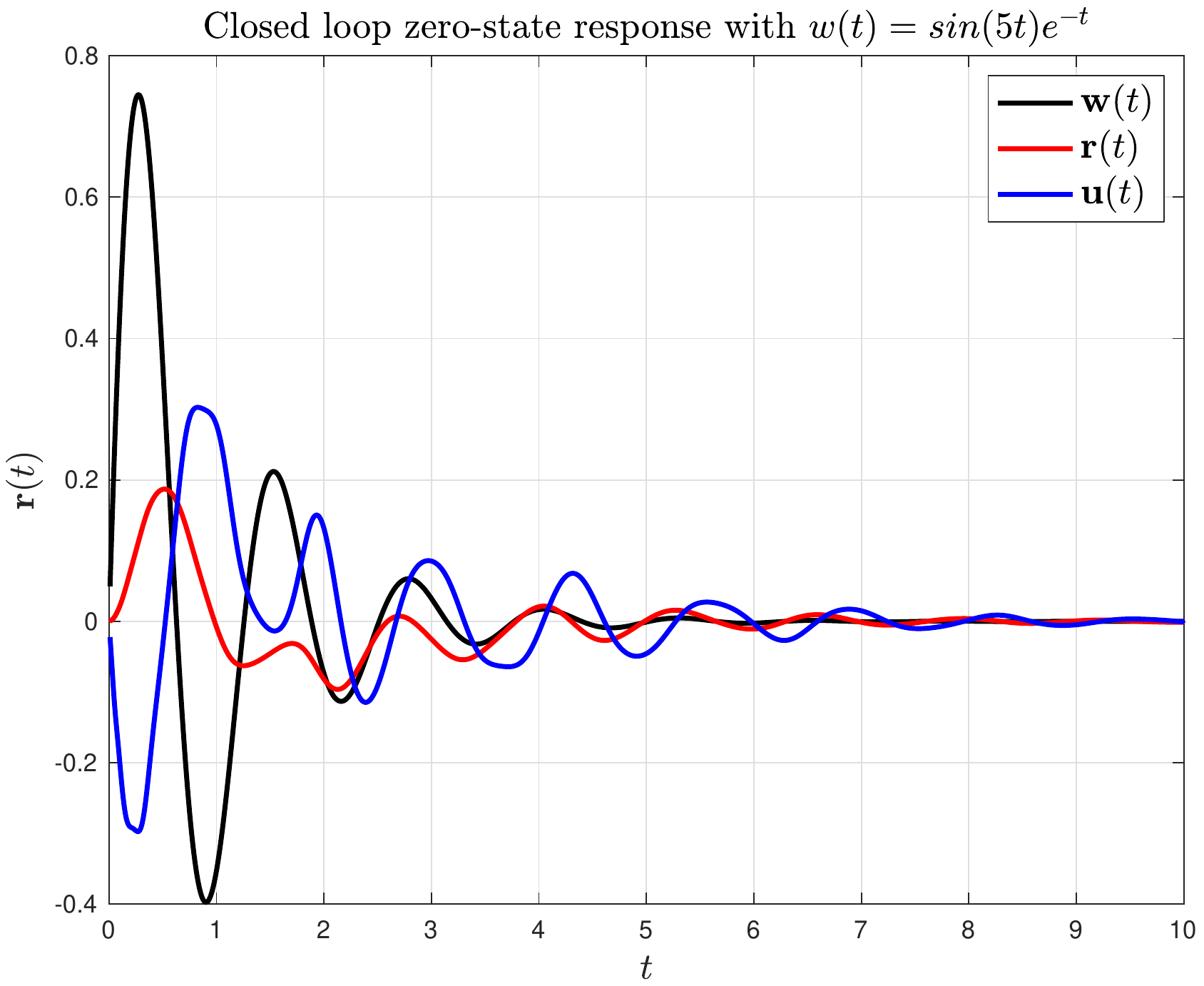}}\label{fig:DEMO1_CL}
\subfigure[]{\includegraphics[width=0.495\textwidth]{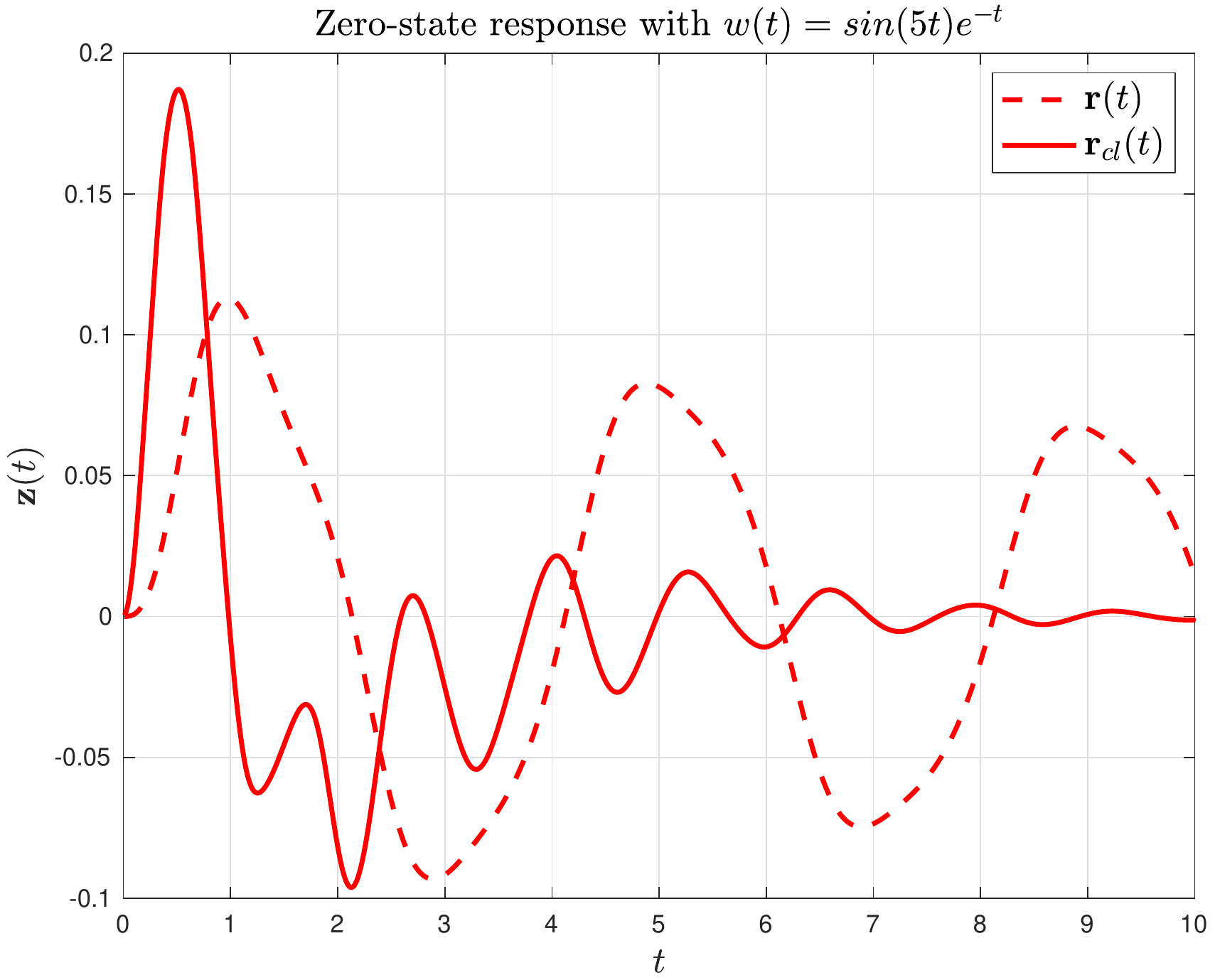}}\label{fig:DEMO1_outputs}
\caption{(a)Transient response of the output $r(t)$ and controlled input $u(t)$ of the closed-loop system for external disturbance $w(t) = sin(5t) e^{-t}$. (b)Comparison of the transient responses of the open-loop system-$r(t)$- with the closed-loop system-$r_{cl}(t)$.} 
\end{figure}

\section{Summary}

In this chapter, we gave an introduction to how PIETOOLS can be used to solve various control-relevant problems involving linear ODE-PDE models. The example depicted here was highly sensitive to disturbances infinite-dimensional system. Figures.~\ref{fig:DEMO1_OL_3D} and.~\ref{fig:DEMO1_OL} show that, even after the applied disturbance has ceased, the output signal $r(t)$ remains affected, taking more time than the final time of the presented simulation to reject the disturbance. This behavior is measured by the computed $\hinf$ norm of the open-loop system. 

On the other hand, with the synthesized feedback controller given by PIETOOLS, the closed-loop system quickly rejects the disturbance, as is clear from Figures.~\ref{fig:DEMO1_CL_3D} and.~\ref{fig:DEMO1_CL}. The increase in performance can be certified by the considerable reduction in the value of the $\hinf$ norm and by comparing the behavior of the outputs without and with the controller, in Figure.~\ref{fig:DEMO1_outputs}.

\chapter{PI Operators in PIETOOLS}\label{ch:PIs}

PIETOOLS primarily functions by manipulation of Partial Integral (PI) operators which is made simple by introduction of MATLAB classes that represent PI operators. 
%These operators can be used to represent partial differential and time-delay systems in a way that involves no boundary conditions, allowing analysis and control to be performed without having to explicitly account for these constraints. Moreover, operations such as addition and composition of PI operators are well-defined, allowing functions for computing the sum and product of PI operators to 
% Here, we roughly define a PI operator as any function $\mcl{P}$ mapping functions $\mbf{u}\in L_2[\Omega]$ to new functions $\mbf{v}\in L_2[\Omega]$, and which can be represented using partial integrals, such as e.g. the simple operator $\mcl{P}:L_2[0,1]\rightarrow L_2[0,1]$
% \begin{align*}
%  \bl(\mcl{P}\mbf{u}\br)(s)&=\int_{0}^{s}\mbf{u}(\theta)d\theta,  &   s&\in[0,1]
% \end{align*}
% PI operators such as this one are parameterized by a fixed amount of (matrix-valued) functions, allowing PIETOOLS to numerically represent PI operators in a relatively straightforward way. In addition, it can be shown that the sum and composition of appropriate PI operators is also a PI operator~\cite{peet_2020Aut}, allowing operators operations such as addition and multiplication of PI operators to be defined in PIETOOLS as well (see also Chapter~\ref{ch:opvar}). Finally, for any well-posed linear partial differential or time-delay system, an equivalent representation of the system in terms of PI operators can be derived, a conversion that has been incorporated in PIETOOLS.
In PIETOOLS 2022, there are two types of PI operators: PI operators with known parameters, \texttt{opvar/opvar2d} class objects, and PI operators with unknown parameters, \texttt{dopvar/dopvar2d} class objects. In this Chapter, we outline the classes used to represent PI operators with known parameters. The information in this chapter is divided as follows: Section~\ref{sec:PIs:1D} and Section~\ref{sec:PIs:2D} provide brief mathematical background, and corresponding MATLAB implementation, about PI operators in 1D and 2D, respectively. Section~\ref{sec:PIs:overview} provides an overview of the structure of \texttt{opvar/opvar2d} classes in PIETOOLS. For more theoretical background on PI operators, we refer to Appendix~\ref{appx:PI_theory}. For more information on operations that can be performed on \texttt{opvar/opvar2d} class objects, we refer to Chapter~\ref{ch:opvar}.

\section{Declaring PI Operators in 1D}\label{sec:PIs:1D}

In this Section, we illustrate how 1D PI operators can be represented in PIETOOLS using \texttt{opvar} class objects. Here, we say that an operator $\mcl{P}$ is a 1D PI operator if it acts on functions $\mbf{v}(s)$ depending on just one spatial variable $s$, and the operation it performs can be described using partial integrals. We further distinguish 3-PI operators, acting on functions $\mbf{v}\in L_2^{n}[a,b]$, and 4-PI operators, acting on functions $\sbmat{v_0\\\mbf{v}_1}\in\sbmat{\R^{n_0}\\L_2^{n_1}[a,b]}$. Both types of operators can be represented using \texttt{opvar} class objects, as we show in the remainder of this section.

\subsection{Declaring 3-PI Operators}

We first consider declaring a 3-PI operator in PIETOOLS. Here, for given parameters $R=\{R_0,R_1,R_2\}$, the associated 3-PI operator $\mcl{P}[R]:L_2^n[a,b]\rightarrow L_2^m[a,b]$ is given by
\begin{align}\label{eq:3PI_standard_form}
 \bl(\mcl{P}[R]\mbf{v}\br)(s)&=R_0(s)\mbf{v}(s)+\int_{a}^{s}R_1(s,\theta)\mbf{v}(\theta)d\theta + \int_{s}^{b}R_2(s,\theta)\mbf{v}(\theta)d\theta,   &   s&\in[a,b],
\end{align}
for any $\mbf{v}\in L_2^n[a,b]$. In PIETOOLS, we represent such 3-PI operators using \texttt{opvar} class objects. For example, suppose we wish to declare a very simply PI operator $\mcl{A}:L_2^2[-1,1]\rightarrow L_2^2[-1,1]$, defined by
\begin{align}\label{eq:ex_3PI_1}
    \bl(\mcl{A}\mbf{v}\br)(s)&=\int_{-1}^{s}\underbrace{\bmat{1&2\\3&4}}_{R_1}\mbf{v}(\theta)d\theta,   &   s&\in[-1,1].
\end{align}
To declare this operator, we first initialize an empty \texttt{opvar} object \texttt{A}, by simply calling \texttt{opvar} as:
\begin{matlab}
\begin{verbatim}
 >> opvar A
 A = 
    [] | []
    --------
    [] | []
 A.R = 
      [] | [] | []
 >> A.I = [-1,1];
\end{verbatim}    
\end{matlab}
Here, the first line initialize a $0\times 0$ \texttt{opvar} object with all empty parameters \texttt{[]}. The second line, \texttt{A.I=[-1,1]}, then sets the spatial interval associated to the operator equal to $[-1,1]$, indicating that it maps the function space $L_2[-1,1]$. 

Next, we set the parameters of the operator. For a 3-PI operator such as $\mcl{A}$, only the paramaters in the field \texttt{A.R} will be nonzero, where \texttt{A.R} itself has fields \texttt{R0}, \texttt{R1} and \texttt{R2}. For our simple operator, only the parameter $R_1$ in the 3-PI Expression~\eqref{eq:3PI_standard_form} is nonzero, so we only have to assign a value to the field \texttt{R1}:
\begin{matlab}
\begin{verbatim}
 >> A.R.R1 = [1,2; 3,4];
 A = 
    [] | []
    --------
    [] | []
 A.R = 
      [0,0] | [1,2] | [0,0]
      [0,0] | [3,4] | [0,0]
\end{verbatim}    
\end{matlab}
where the fields \texttt{A.R.R0} and \texttt{A.R.R2} automatically default to zero-arrays of the appropriate dimensions. With that, the \texttt{opvar} object \texttt{A} represents the PI operator $\mcl{A}$ as defined in~\eqref{eq:ex_3PI_1}.

Next, suppose we wish to implement a slightly more complicated operator $\mcl{B}:L_2[0,1]\rightarrow L_2^2[0,1]$, defined as
\begin{align*}
    \bl(\mcl{B}\mbf{x}\br)(s)&=\underbrace{\bmat{1\\s^2}}_{R_{0}}\mbf{v}(s)+\int_{0}^{s}\underbrace{\bmat{2s\\s(s-\theta)}}_{R_1}\mbf{x}(\theta)d\theta+\int_{s}^{1}\underbrace{\bmat{3\theta\\\frac{3}{4}(s^2-s)}}_{R_2}\mbf{x}(\theta)d\theta, &
    s&\in[0,1].
\end{align*}
For this operator, the parameters $R_i(s,\theta)$ are all polynomial functions. Such polynomial functions can be represented in PIETOOLS using the \texttt{polynomial} class (from the `multipoly' toolbox), for which operations such as addition, multiplication and concatenation have already been implemented. This means that polynomials such as the functions $R_i$ can be implemented by simply initializing polynomial variables $s$ and $\theta$, and then using these variables to define the desired functions:
\begin{matlab}
\begin{verbatim}
 >> pvar s theta
 >> R0 = [1; s^2]
 R0 = 
   [   1]
   [ s^2]
   
 >> R1 = [2*s; s*(s-theta)]
 R1 = 
   [           2*s]
   [ s^2 - s*theta]
   
 >> R2 = [3*theta; (3/4)*(s^2-s)]
 R2 = 
   [           3*theta]
   [ 0.75*s^2 - 0.75*s]
\end{verbatim}    
\end{matlab}
Here, the first line calls the function \texttt{pvar} to initialize the two polynomial variables \texttt{s} and \texttt{theta}, which we use to represent the spatial variable $s$ and dummy variable $\theta$ respectively. Then, we can add and multiply these variables to represent any desired polynomial in $(s,\theta)$, allowing us to implement the parameters $R_0(s)$, $R_1(s,\theta)$ and $R_2(s,\theta)$. Having defined these parameters, we can then represent the operator $\mcl{B}$ as an \texttt{opvar} object \texttt{b} as before:
\begin{matlab}
\begin{verbatim}
 >> opvar B;
 >> B.I = [0,1];
 >> B.var1 = s;      B.var2 = theta;
 >> B.R.R0 = R0;     B.R.R1 = R1;     B.R.R2 = R2
 B =
       [] | [] 
       ---------
       [] | B.R 

 B.R =
       [1] |         [2*s] |         [3*theta] 
     [s^2] | [s^2-s*theta] | [0.75*s^2-0.75*s] 
\end{verbatim}    
\end{matlab}
Note here that, in addition to specifying the spatial domain $[0,1]$ of the variables using the field \texttt{B.I}, we also have to specify the actual variables $s$ and $\theta$ that appear in the parameters, using the fields \texttt{B.var1} and \texttt{B.var2}. Here \texttt{var1} should correspond to the primary spatial variable, i.e. the variable $s$ on which the function $\mbf{u}(s):=\bl(\mcl{B}\mbf{v}\br)(s)$ will actually depend, and \texttt{B.var2} should correspond to the dummy variable, i.e. the variable $\theta$ which is used solely for integration.

%\newpage
\subsection{Declaring 4-PI Operators}

In addition to 3-PI operators, 4-PI operators can also be represented using the \texttt{opvar} structure. Here, for a given matrix $P$, given functions $Q_1,Q_2$, and 3-PI parameters $R=\{R_0,R_1,R_2\}$, we define the associated 4-PI operator $\mcl{P}\sbmat{P&Q_1\\Q_2&R}:\sbmat{\R^{n_0}\\L_2^{n_1}[a,b]}\rightarrow \sbmat{\R^{m_0}\\L_2^{m_1}[a,b]}$
\begin{align*}
    \bbl(\mcl{P}\sbmat{P&Q_1\\Q_2&R}\mbf{v}\bbr)(s)&=
    \left[\begin{array}{ll}
        Pv_0        \hspace*{-0.1cm}~& +\ \int_{a}^{b}Q_1(s)\mbf{v}_1(s)ds  \\
        Q_2(s)v_0   \hspace*{-0.1cm}& +\ \bl(\mcl{P}[R]\mbf{v}_1\br)(s)
    \end{array}\right],  &
    s&\in[a,b],
\end{align*}
for $\mbf{v}=\sbmat{v_0\\\mbf{v}_1}\in \sbmat{\R^{n_0}\\L_2^{n_1}[a,b]}$. To represent operators of this form, we use the same \texttt{opvar} structure as before, only now also specifying values of the fields \texttt{P}, \texttt{Q1} and \texttt{Q2}. For example, suppose we wish to declare a 4-PI operator $\mcl{C}:\sbmat{\R^2\\L_2[0,3]}\rightarrow \sbmat{\R\\L_2^2[0,3]}$ defined as
\begin{align*}
    \bl(\mcl{C}\mbf{x}\br)(s)&=
    \bbbbl[\begin{array}{ll}
        \overbrace{\sbmat{-1&2}}^{P} x_0        \hspace*{-0.1cm}~& +\ \int_{0}^{3}\overbrace{(3-s^2)}^{Q_1}\mbf{x}_1(s)ds  \\
        \underbrace{\sbmat{0&-s\\s&0}}_{Q_2}v_0   \hspace*{-0.1cm}& +\ \underbrace{\sbmat{1\\s^3}}_{R_0}\mbf{v}_1(s) + \int_{0}^{s}\underbrace{\sbmat{s-\theta\\\theta}}_{R_1}\mbf{v}_1(\theta)d\theta + \int_{s}^{3}\underbrace{\sbmat{s\\ \theta-s}}_{R_2}\mbf{v}_1(\theta)d\theta,
    \end{array}\bbbbr], &
    s&\in[0,3].
\end{align*}
for $\mbf{v}=\sbmat{v_0\\\mbf{v}_1}\in \sbmat{\R^{2}\\L_2^{1}[0,3]}$. To declare this operator, we first construct the polynomial functions defining the parameters $P$ through $R_2$, using \texttt{pvar} objects \texttt{s} and \texttt{tt} to represent $s$ and $\theta$:
\begin{matlab}
\begin{verbatim}
 >> pvar s tt
 >> P = [-1,2];
 >> Q1 = (3-s^2);        
 >> Q2 = [0,-s; s,0];
 >> R0 = [1; s^3];        R1 = [s-tt; tt];     R2 = [s; tt-s];
\end{verbatim}    
\end{matlab}
Having defined the desired parameters, we can then define the operator $\mcl{C}$ as
\begin{matlab}
\begin{verbatim}
 >> opvar C;
 >> C.I = [0,3];
 >> C.var1 = s;      C.var2 = tt;
 >> C.P = P:
 >> C.Q1 = Q1;
 >> C.Q2 = Q2;
 >> C.R.R0 = R0;     C.R.R1 = R1;     C.R.R2 = R2
 C =
     [-1,2] | [-s^2+3] 
     ------------------
     [0,-s] | C.R 
      [s,0] |   

 C.R =
      [1] | [s-tt] |     [s] 
    [s^3] |   [tt] | [-s+tt] 
\end{verbatim}    
\end{matlab}
using the field \texttt{R} to specify the 3-PI sub-component, and using the fields \texttt{P}, \texttt{Q1} and \texttt{Q2} to set the remaining parameters.

\section{Declaring PI Operators in 2D}\label{sec:PIs:2D}

In addition to PI operators in 1D, PI operators in 2D can also be represented in PIETOOLS, using the \texttt{opvar2d} data structure. Here, similarly to how we distinguish 3-PI operators and 4-PI operators for 1D function spaces, we will distinguish 2 classes of 2D operators. In particular, we distinguish the standard 9-PI operators, which act on just functions $\mbf{v}\in L_2\bl[[a,b]\times[c,d]\br]$, and the more general 2D PI operator, acting on coupled functions $\sbmat{v_0\\\mbf{v}_x\\\mbf{v}_y\\\mbf{v}_2}\in\sbmat{\R^{n_0}\\L_2^{n_x}[a,b]\\L_2^{n_y}[c,d]\\L_2^{n_2}[[a,b]\times[c,d]}$.

\subsection{Declaring 9-PI Operators}

For given parameters $N=\sbmat{N_{00}&N_{01}&N_{02}\\N_{10}&N_{11}&N_{12}\\N_{20}&N_{21}&N_{22}}$, the associated 9-PI operator $\mcl{P}[N]:L_2^n\bl[[a,b]\times[c,d]\br]\rightarrow L_2^m\bl[[a,b]\times[c,d]\br]$ is given by
{\small
\begin{align*}
    \left(\mcl{P}[N]\mbf{v}\right)(x,y)= N_{00}(x,y)\mbf{v}(x,y) &+\hspace*{0.0cm} \int_{c}^{y}\! N_{01}(x,y,\nu)\mbf{v}(x,\nu)d\nu + \int_{y}^{d}\! N_{02}(x,y,\nu)\mbf{v}(x,\nu)d\nu \nonumber\\
    +\int_{a}^{x}\! N_{20}(x,y,\theta)\mbf{v}(\theta,y)d\theta &+ \int_{a}^{x}\!\int_{c}^{y}\! N_{11}(x,y,\theta,\nu)\mbf{v}(\theta,\nu)d\nu d\theta + \int_{a}^{x}\!\int_{y}^{d}\! N_{12}(x,y,\theta,\nu)\mbf{v}(\theta,\nu)d\nu d\theta  \nonumber\\
    +\int_{x}^{b} N_{20}(x,y,\theta)\mbf{v}(\theta,y)d\theta &+ \int_{x}^{b}\!\int_{c}^{y}\! N_{21}(x,y,\theta,\nu)\mbf{v}(\theta,\nu)d\nu d\theta + \int_{x}^{b}\!\int_{y}^{d}\! N_{22}(x,y,\theta,\nu)\mbf{v}(\theta,\nu)d\nu d\theta,
 \end{align*}
}
for any $\mbf{v}\in L_2\bl[[a,b]\times[c,d]\br]$. In PIETOOLS 2022, we represent such operators using \texttt{opvar2d} class objects, which are declared in a similar manner to \texttt{opvar} objects. For example, to delcare a simple operator $\mcl{D}:L_2^2\bl[[0,1]\times[1,2]\br]\rightarrow L_2^2\bl[[0,1]\times[1,2]\br]$ defined as
\begin{align*}
    \bl[\mcl{D}\mbf{v}\br](s_1,s_2)&=\int_{0}^{s_1}\int_{s_2}^{2}\underbrace{\bmat{s_1^2&s_1s_2\\s_1s_2 &s_2^2}}_{N_{12}}\mbf{v}(\theta_1,\theta_2)d\theta_2 d\theta_1,   &   (s_1,s_2)&\in[0,1]\times[1,2],
\end{align*}
we first declare the parameter $N_{12}$ defining this operator by representing $s_1$ and $s_2$ by \texttt{pvar} objects \texttt{s1} and \texttt{s2}
\begin{matlab}
\begin{verbatim}
 >> pvar s1 s2
 >> N12 = [s1^2, s1*s2; s1*s2, s2^2];
\end{verbatim}    
\end{matlab}
Then, we initialize an empty \texttt{opvar2d} object \texttt{D} to represent $\mcl{D}$, and assign the variables $(s_1,s_2)$ and their domain $[0,1]\times[1,2]$ to this operator as
\begin{matlab}
\begin{verbatim}
 >> opvar2d D;
 >> D.var1 = [s1;s2];
 >> D.I = [0,1; 1,2];
\end{verbatim}    
\end{matlab}
Note here that, in \texttt{opvar2d} objects, \texttt{var1} is a column vector listing each of the spatial variables $(s_1,s_2)$ on which the result $\mbf{u}(s_1,s_2)=\bl(\mcl{D}\mbf{v}\br)(s_1,s_2)$ depends. Accordingly, the field \texttt{I} in an \texttt{opvar2d} object also has two rows, with each row specifying the interval on which the variable in the associated row of \texttt{var1} exists. Having initialized the operator, we then assign the parameter \texttt{N12} to the appropriate field. Here, the parameters defining a 9-PI operator are stored in the $3\times 3$ cell array \texttt{D.R22}, with \texttt{R22} referring to the fact that these parameters map 2D functions to 2D functions. Within this array, element \texttt{\{i,j\}} for $i,j\in\{1,2,3\}$ corresponds to parameter $N_{i-1,j-1}$ in the operator, and so we can specify parameter $N_{12}$ using element \texttt{\{2,3\}}:
\begin{matlab}
\begin{verbatim}
 >> D.R22{2,3} = N12
 D =
     [] |    [] |    [] |    [] 
     --------------------------
     [] | D.Rxx |    [] | D.Rx2 
     --------------------------
     [] |    [] | D.Ryy | D.Ry2 
     --------------------------
     [] | D.R2x | D.R2y | D.R22 

 D.Rxx =
     [] | [] | [] 

 D.Rx2 =
     [] | [] | [] 

 D.Ryy =
     [] | [] | [] 

 D.Ry2 =
     [] | [] | [] 

 D.R2x =
     [] | [] | [] 

 D.R2y =
     [] | [] | [] 

 D.R22 =
     [0,0] | [0,0] |        [0,0] 
     [0,0] | [0,0] |        [0,0] 
     ---------------------------- 
     [0,0] | [0,0] | [s1^2,s1*s2] 
     [0,0] | [0,0] | [s1*s2,s2^2] 
     ---------------------------- 
     [0,0] | [0,0] |        [0,0] 
     [0,0] | [0,0] |        [0,0] 
\end{verbatim}    
\end{matlab}
We note that, in the resulting structure, there are a lot of empty parameters, such as \texttt{D.Rxx}. As we will discuss in the next subsection, these parameters correspond to maps to and from other functions spaces, just like the parameters \texttt{P} and \texttt{Qi} in the \texttt{opvar} structure. Since the operator $\mcl{D}$ maps only functions $L_2^2\bl[[0,1]\times[1,2]\br]\rightarrow L_2^2\bl[[0,1]\times[1,2]\br]$, all parameters mapping different function spaces are empty for the object \texttt{D}.

Suppose now we want to declare a 9-PI operator $\mcl{E}:L_2\bl[[0,1]\times[-1,1]\br]\rightarrow L_2\bl[[0,1]\times[-1,1]\br]$ defined by
\begin{align*}
    \bl(\mcl{E}\mbf{v}\br)(s_1,s_2)&=\overbrace{x^2y^2}^{N_{00}}\mbf{v}(s_1,s_2) + \int_{-1}^{s_2}\overbrace{s_1(s_2-\theta_2)}^{N_{01}}\mbf{v}(s_1,\theta_2)d\theta_2   \\
    &+ \int_{s_1}^{1}\underbrace{(s_1-\theta_1)s_2}_{N_{20}}\mbf{v}(\theta_1,s_2)d\theta_1 + \int_{s_1}^{1}\int_{-1}^{s_2}\underbrace{(s_1-\theta_1)(s_2-\theta_2)}_{N_{21}}\mbf{v}(\theta_1,\theta_2)d\theta_2 d\theta_1
\end{align*}
As before, we first set the values of the parameters $N_{ij}$, using \texttt{s1}, \texttt{s2}, \texttt{th1} and \texttt{th2} to represent $s_1$, $s_2$, $\theta_1$ and $\theta_2$ respectively:
\begin{matlab}
\begin{verbatim}
 >> pvar s1 s2 th1 th2
 >> N00 = s1^2 * s2^2;
 >> N01 = s1*(s2-th2);
 >> N20 = (s1-th1)*s2;
 >> N21 = (s1-th1)*(s2-th2);
\end{verbatim}    
\end{matlab}
Next, we initialize an \texttt{opvar2d} object \texttt{E} with the appropriate variables and domain as
\begin{matlab}
\begin{verbatim}
 >> opvar2d E;
 >> E.var1 = [s1;s2];     E.var2 = [th1; th2];
 >> E.I = [0,1; -1,1];
\end{verbatim}    
\end{matlab}
where in this case we set both the primary variables, using \texttt{var1}, and the dummy variables, using \texttt{var2}. Note here that the domains of the first and second dummy variables are the same as those of the first and second primary variables, and are defined in the first and second row of \texttt{I} respectively. Finally, we assign the parameters $N_{ij}$ to the appropriate elements of \texttt{R22}
\begin{matlab}
\begin{verbatim}
 >> E.R22{1,1} = N00;     E.R22{1,2} = N01;
 >> E.R22{3,1} = N20;     E.R22{3,2} = N21
 E =
     [] |    [] |    [] |    [] 
     --------------------------
     [] | E.Rxx |    [] | E.Rx2 
     --------------------------
     [] |    [] | E.Ryy | E.Ry2 
     --------------------------
     [] | E.R2x | E.R2y | E.R22 

 E.R22 =

        [s1^2*s2^2] |                [s1*s2-s1*th2] | [0] 
     ---------------------------------------------------- 
                [0] |                           [0] | [0] 
     ---------------------------------------------------- 
     [s1*s2-s2*th1] | [s1*s2-s1*th2-s2*th1+th1*th2] | [0] 
\end{verbatim}    
\end{matlab}
so that \texttt{E} represents the desired operator.

\subsection{Declaring General 2D PI Operators}

The most general PI operators that can be represented in PIETOOLS 2022 are those mapping $\sbmat{\R^{n_0}\\L_2^{n_x}[a,b]\\L_2^{n_y}[c,d]\\L_2^{n_2}\bl[[a,b]\times[c,d]\br]}\rightarrow \sbmat{\R^{m_0}\\L_2^{m_x}[a,b]\\L_2^{m_y}[c,d]\\L_2^{m_2}\bl[[a,b]\times[c,d]\br]}$, defined by parameters $R=\sbmat{R_{00}&R_{0x}&R_{0y}&R_{02}\\R_{x0}&R_{xx}&R_{xy}&R_{x2}\\R_{y0}&R_{yx}&R_{yy}&R_{y2}\\R_{20}&R_{2x}&R_{2y}&R_{22}}$ as
{\small
\begin{align*}
    \bl(\mcl{P}[R]\mbf{x}\br)(s)=
    \left[\!\!\begin{array}{llll}
        R_{00}v_0 & \hspace*{-0.2cm}+\ \int_{a}^{b}R_{0x}(x)\mbf{v}_{x}(x)dx & \hspace*{-0.2cm}+\ \int_{c}^{d}R_{0y}(y)\mbf{v}_{y}(y)dy & \hspace*{-0.2cm}+\ \int_{a}^{b}\int_{c}^{d}R_{02}(x,y)\mbf{v}_{2}(x,y)dydx \\
        R_{x0}(x)v_0 & \hspace*{-0.2cm}+\ \bl(\mcl{P}[R_{xx}]\mbf{v}_{x}\br)(x) & \hspace*{-0.2cm}+\ \int_{c}^{d}R_{xy}(x,y)\mbf{v}_{y}(y)dy & \hspace*{-0.2cm}+\ \int_{c}^{d}\bl(\mcl{P}[R_{x2}]\mbf{v}_2\br)(x,y) dy \\
        R_{y0}(y)v_0 & \hspace*{-0.2cm}+\ \int_{a}^{b}R_{yx}(x,y)\mbf{v}_{x}(x)dx & \hspace*{-0.2cm}+\ \bl(\mcl{P}[R_{yy}]\mbf{v}_{y}\br)(y) & \hspace*{-0.2cm}+\ \int_{a}^{b}\bl(\mcl{P}[R_{y2}]\mbf{v}_2\br)(x,y) dx \\
        R_{20}(x,y)v_0 & \hspace*{-0.2cm}+\ \bl(\mcl{P}[R_{2x}]\mbf{v}_{x}\br)(x,y) & \hspace*{-0.2cm}+\ \bl(\mcl{P}[R_{2y}]\mbf{v}_{y}\br)(x,y) & \hspace*{-0.2cm}+\ \bl(\mcl{P}[R_{22}]\mbf{v}_2\br)(x,y) \\
    \end{array}\!\right]
\end{align*}
}
for $\mbf{v}=\sbmat{v_0\\\mbf{v}_x\\\mbf{v}_y\\\mbf{v}_2}\in \sbmat{\R^{n_0}\\L_2^{n_x}[a,b]\\L_2^{n_y}[c,d]\\L_2^{n_2}\bl[[a,b]\times[c,d]\br]}$, where $\mcl{P}[R_{xx}]$, $\mcl{P}[R_{yy}]$, $\mcl{P}[R_{x2}]$, $\mcl{P}[R_{y2}]$, $\mcl{P}[R_{2x}]$ and $\mcl{P}[R_{2y}]$ are 3-PI operators, and where $\mcl{P}[R_{22}]$ is a 9-PI operator. These types of PI operators are also represented using the \texttt{opvar2d} class, specifying each of the parameters $R_{ij}$ using the associated fields \texttt{Rij}. For example, suppose we want to implement a PI operator $\mcl{F}:\sbmat{\R\\L_2[0,2]\\L_2\bl[[0,2]\times[2,3]\br]}\rightarrow \sbmat{L_2^{2}[0,2]\\L_2\bl[[0,2]\times[2,3]\br]}$, defined as
\begin{align*}
    \bl(\mcl{F}\mbf{v}\br)(x,y)&=
    \bbbl[\begin{array}{lll}
         \overbrace{\sbmat{1\\x}}^{R_{x0}}v_0 & \hspace*{-0.2cm}+ \overbrace{\sbmat{x\\x^2}}^{R_{xx}^{0}}\mbf{v}_1(x) + \int_{x}^{2}\overbrace{\sbmat{1\\(\theta-x)}}^{R_{xx}^{2}}\mbf{v}_1(\theta)d\theta  & \hspace*{-0.2cm}+\int_{2}^{3}\int_{0}^{x}\overbrace{\sbmat{y\\y^2(x-\theta)}}^{R_{x2}^{1}}\mbf{v}_2(\theta,y)dy \\
         & \hspace*{0.35cm} \underbrace{y^2}_{R_{2x}^{0}}\mbf{v}_1(x) + \int_{0}^{x}\underbrace{y}_{R_{2x}^{1}}\mbf{v}_1(\theta)d\theta & \hspace*{-0.2cm}+ \int_{0}^{x}\int_{2}^{y}\underbrace{\theta\nu}_{R_{22}^{11}} \mbf{v}_2(\theta,\nu)d\nu d\theta
    \end{array}\bbbr],
\end{align*}
for $\mbf{v}=\sbmat{v_0\\\mbf{v}_1\\\mbf{v}_2}\in \sbmat{\R\\L_2[0,2]\\L_2\bl[[0,2]\times[2,3]\br]}$.  To declare this operator, we define the parameters as before as
\begin{matlab}
\begin{verbatim}
 >> pvar x y theta nu
 >> Rx0 = [1; x];
 >> Rxx_0 = [x; x^2];     Rxx_2 = [1; theta-x];
 >> Rx2_1 = [y; y^2 * (x-theta)];
 >> R2x_0 = y^2;         R2x_1 = y;
 >> R22_11 = theta*nu;
\end{verbatim}    
\end{matlab}
and then declare the \texttt{opvar2d} object as
\begin{matlab}
\begin{verbatim}
 >> opvar2d F;
 >> F.var1 = [x; y];      F.var2 = [theta; nu];
 >> F.I = [0,2; 2,3];
 >> F.Rx0 = Rx0;
 >> F.Rxx{1} = Rxx_0;     F.Rxx{3} = Rxx_2;
 >> F.Rx2{2} = Rx2_1;
 >> F.R2x{1} = R2x_0;     F.R2x{2} = R2x_1;
 >> F.R22{2,2} = R22_11;
\end{verbatim}    
\end{matlab}
yielding a structure 
\begin{matlab}
\begin{verbatim}
 >> F
 F =
 
      [] |    [] |    [] |    [] 
     ---------------------------
     [1] | F.Rxx |    [] | F.Rx2 
     [x] |       |       |       
     ---------------------------
      [] |    [] | F.Ryy | F.Ry2 
     ---------------------------
     [0] | B.R2x | F.R2y | F.R22 

 F.Rxx =
 
       [x] | [0] |       [1] 
     [x^2] | [0] | [theta-x] 

 F.Rx2 =
 
     [0] |                [y] | [0] 
     [0] | [-theta*y^2+x*y^2] | [0] 

 F.Ryy =
 
     [] | [] | [] 

 F.Ry2 =
 
     [] | [] | [] 

 F.R2x =
 
     [y^2] | [y] | [0] 

 F.R2y =
 
     [] | [] | [] 

 F.R22 =
 
     [0] |        [0] | [0] 
     ---------------------- 
     [0] | [nu*theta] | [0] 
     ---------------------- 
     [0] |        [0] | [0] 
\end{verbatim}
\end{matlab}
Representing the operator $\mcl{F}$.

In the following subsection, we provide an overview of how the \texttt{opvar} and \texttt{opvar2d} data structures are defined.

\newpage

\section{Overview of \texttt{opvar} and \texttt{opvar2d} Structure}\label{sec:PIs:overview}

\subsection{\texttt{opvar} class}

Let $\mcl{B}:\bmat{\R^{n_0}\\L_2^{n_1}[a,b]}\rightarrow \bmat{\R^{m_0}\\L_2^{m_1}[a,b]}$ be a 4-PI operator of the form
\begin{align}\label{eq:4PI_standard_form}
    \bl(\mcl{B}\mbf{x}\br)(s)=
    \left[\begin{array}{ll}
        Px_0        \hspace*{-0.1cm}~& +\ \int_{a}^{b}Q_1(s)\mbf{x}_1(s)ds  \\
        Q_2(s)x_0   \hspace*{-0.1cm}& +\ R_{0}(s)\mbf{x}_1(s) + \int_{a}^{s}R_{1}(s,\theta)\mbf{x}_1(\theta)d\theta + \int_{s}^{b}R_{2}(s,\theta)\mbf{x}_1(\theta)d\theta
    \end{array}\right]
\end{align}
for $\mbf{x}=\bmat{x_0\\\mbf{x}_1}\in \bmat{\R^{n_0}\\L_2^{n_1}[a,b]}$. Then, we can represent this operator as an \texttt{opvar} object \texttt{B} with fields as defined in Table~\ref{tab:opvar_fields}.

\begin{table}[!th]
\renewcommand{\arraystretch}{1.0}
\fontsize{11}{13}
 \begin{tabular}{p{1.0cm}p{2.00cm}p{12.75cm}}
 \hline
    \texttt{B.dim}    & \texttt{= [m0,n0; \hspace*{0.4cm} m1,n1]} 
    &  $2\times 2$ array of type \texttt{double} specifying the dimensions of the function spaces $\sbmat{\R^{m_0}\\L_2^{m_1}[a,b]}$ and $\sbmat{\R^{n_0}\\L_2^{n_1}[a,b]}$ the operator maps to and from;\\
    \texttt{B.var1} & \texttt{= s}    &  $1\times 1$ \texttt{pvar} (\texttt{polynomial} class) object specifying the spatial variable $s$; \\
    \texttt{B.var2} & \texttt{= theta}    &  $1\times 1$ \texttt{pvar} (\texttt{polynomial} class) object specifying the dummy variable $\theta$;   \\
    \texttt{B.I} & \texttt{= [a,b]}       &  $1\times 2$ array of type \texttt{double}, specifying the interval $[a,b]$ on which the spatial variables $s$ and $\theta$ exist; \\
    \texttt{B.P} & \texttt{= P} & $m_0\times n_0$ array of type \texttt{double} or \texttt{polynomial} defining the matrix $P$; \\
    \texttt{B.Q1} & \texttt{= Q1} & $m_0\times n_1$ array of type \texttt{double} or \texttt{polynomial} defining the function $Q_1(s)$; \\
    \texttt{B.Q2} & \texttt{= Q2} & $m_1\times n_0$ array of type \texttt{double} or \texttt{polynomial} defining the function $Q_2(s)$; \\
    \texttt{B.R.R0} & \texttt{= R0} & $m_1\times n_1$ array of type \texttt{double} or \texttt{polynomial} defining the function $R_0(s)$; \\
    \texttt{B.R.R1} & \texttt{= R1} & $m_1\times n_1$ array of type \texttt{double} or \texttt{polynomial} defining the function $R_1(s,\theta)$; \\
    \texttt{B.R.R2} & \texttt{= R2} & $m_1\times n_1$ array of type \texttt{double} or \texttt{polynomial} defining the function $R_2(s,\theta)$; \\
    \hline
 \end{tabular}
 \caption{Fields in an \texttt{opvar} object \texttt{B}, defining a general 4-PI operator as in Equation~\eqref{eq:4PI_standard_form}}
\label{tab:opvar_fields}
\end{table}

\subsection{\texttt{opvar2d} class}

Let $\mcl{D}:\sbmat{\R^{n_0}\\L_2^{n_x}[a,b]\\L_2^{n_y}[c,d]\\L_2^{n_2}\bl[[a,b]\times[c,d]\br]}\rightarrow \sbmat{\R^{m_0}\\L_2^{m_x}[a,b]\\L_2^{m_y}[c,d]\\L_2^{m_2}\bl[[a,b]\times[c,d]\br]}$ be a PI operator of the form
{\small
\begin{align}\label{eq:0112PI_standard_form}
    \bl(\mcl{D}\mbf{x}\br)(s)=
    \left[\!\!\begin{array}{llll}
        R_{00}v_0 & \hspace*{-0.2cm}+\ \int_{a}^{b}R_{0x}(x)\mbf{v}_{x}(x)dx & \hspace*{-0.2cm}+\ \int_{c}^{d}R_{0y}(y)\mbf{v}_{y}(y)dy & \hspace*{-0.2cm}+\ \int_{a}^{b}\int_{c}^{d}R_{02}(x,y)\mbf{v}_{2}(x,y)dydx \\
        R_{x0}(x)v_0 & \hspace*{-0.2cm}+\ \bl(\mcl{P}[R_{xx}]\mbf{v}_{x}\br)(x) & \hspace*{-0.2cm}+\ \int_{c}^{d}R_{xy}(x,y)\mbf{v}_{y}(y)dy & \hspace*{-0.2cm}+\ \int_{c}^{d}\bl(\mcl{P}[R_{x2}]\mbf{v}_2\br)(x,y) dy \\
        R_{y0}(y)v_0 & \hspace*{-0.2cm}+\ \int_{a}^{b}R_{yx}(x,y)\mbf{v}_{x}(x)dx & \hspace*{-0.2cm}+\ \bl(\mcl{P}[R_{yy}]\mbf{v}_{y}\br)(y) & \hspace*{-0.2cm}+\ \int_{a}^{b}\bl(\mcl{P}[R_{y2}]\mbf{v}_2\br)(x,y) dx \\
        R_{20}(x,y)v_0 & \hspace*{-0.2cm}+\ \bl(\mcl{P}[R_{2x}]\mbf{v}_{x}\br)(x,y) & \hspace*{-0.2cm}+\ \bl(\mcl{P}[R_{2y}]\mbf{v}_{y}\br)(x,y) & \hspace*{-0.2cm}+\ \bl(\mcl{P}[R_{22}]\mbf{v}_2\br)(x,y) \\
    \end{array}\!\right]
\end{align}
}
for $\mbf{v}=\sbmat{v_0\\\mbf{v}_x\\\mbf{v}_y\\\mbf{v}_2}\in \sbmat{\R^{n_0}\\L_2^{n_x}[a,b]\\L_2^{n_y}[c,d]\\L_2^{n_2}\bl[[a,b]\times[c,d]\br]}$, where $\mcl{P}[R_{xx}]$, $\mcl{P}[R_{yy}]$, $\mcl{P}[R_{x2}]$, $\mcl{P}[R_{y2}]$, $\mcl{P}[R_{2x}]$ and $\mcl{P}[R_{2y}]$ are 3-PI operators, and where $\mcl{P}[R_{22}]$ is a 9-PI operator.
We can represent the operator $\mcl{D}$ as an \texttt{opvar2d} object \texttt{D} with fields as defined in Table~\ref{tab:opvar2d_fields}.

\begin{table}[!ht]
\renewcommand{\arraystretch}{1.0}
\fontsize{11}{13}
 \begin{tabular}{p{1.0cm}p{2.00cm}p{12.75cm}}
 \hline
    \texttt{D.dim}    & \texttt{= [m0,n0; \hspace*{0.4cm} mx,nx; \hspace*{0.4cm} my,ny; \hspace*{0.4cm} m2,n2;]} 
    &  $4\times 2$ array of type \texttt{double} specifying the dimensions of the function spaces $\sbmat{\R^{m_0}\\L_2^{m_x}[a,b]\\L_2^{m_y}[c,d]\\L_2^{m_2}\bl[[a,b]\times[c,d]\br]}$ and $\sbmat{\R^{n_0}\\L_2^{n_x}[a,b]\\L_2^{n_y}[c,d]\\L_2^{n_2}\bl[[a,b]\times[c,d]\br]}$ the operator maps to and from;\\
    \texttt{D.var1} & \texttt{= [x; y]}    &  $2\times 1$ \texttt{pvar} (\texttt{polynomial} class) object specifying the spatial variables $(x,y)$; \\
    \texttt{D.var2} & \texttt{= [theta; \hspace*{0.4cm} nu]}    &  $2\times 1$ \texttt{pvar} (\texttt{polynomial} class) object specifying the dummy variables $(\theta,\nu)$;   \\
    \texttt{D.I} & \texttt{= [a,b; \hspace*{0.4cm} c,d]}       &  $2\times 2$ array of type \texttt{double}, specifying the domain $[a,b]\times[c,d]$ on which the spatial variables $(x,\theta)$ and $(y,\nu)$ exist; \\
    \texttt{D.R00} & \texttt{= R00} & $m_0\times n_0$ array of type \texttt{double} or \texttt{polynomial} defining the matrix $R_{00}$; \\
    \texttt{D.R0x} & \texttt{= R0x} & $m_0\times n_x$ array of type \texttt{double} or \texttt{polynomial} defining the function $R_{0x}(x)$; \\
    \texttt{D.R0y} & \texttt{= R0y} & $m_0\times n_y$ array of type \texttt{double} or \texttt{polynomial} defining the function $R_{0y}(y)$; \\
    \texttt{D.R02} & \texttt{= R02} & $m_0\times n_2$ array of type \texttt{double} or \texttt{polynomial} defining the function $R_{02}(x,y)$; \\
    \texttt{D.Rx0} & \texttt{= Rx0} & $m_x\times n_0$ array of type \texttt{double} or \texttt{polynomial} defining the function $R_{x0}(x)$; \\
    \texttt{D.Rxx} & \texttt{= Rxx} & $3\times 1$ cell array specifying the 3-PI parameters $R_{xx}$; \\
    \texttt{D.Rxy} & \texttt{= Rxy} & $m_x\times n_y$ array of type \texttt{double} or \texttt{polynomial} defining the function $R_{xy}(x,y)$; \\
    \texttt{D.Rx2} & \texttt{= Rx2} & $3\times 1$ cell array specifying the 3-PI parameters $R_{x2}$; \\
    \texttt{D.Ry0} & \texttt{= Ry0} & $m_y\times n_0$ array of type \texttt{double} or \texttt{polynomial} defining the function $R_{y0}(y)$; \\
    \texttt{D.Ryx} & \texttt{= Ryx} & $m_y\times n_x$ array of type \texttt{double} or \texttt{polynomial} defining the function $R_{yx}(x,y)$; \\
    \texttt{D.Ryy} & \texttt{= Ryy} & $1\times 3$ cell array specifying the 3-PI parameters $R_{yy}$; \\
    \texttt{D.Ry2} & \texttt{= Ry2} & $1\times 3$ cell array specifying the 3-PI parameters $R_{y2}$; \\
    \texttt{D.R20} & \texttt{= R20} & $m_2\times n_0$ array of type \texttt{double} or \texttt{polynomial} defining the function $R_{20}(x,y)$; \\
    \texttt{D.R2x} & \texttt{= R2x} & $3\times 1$ cell array specifying the 3-PI parameters $R_{2x}$; \\
    \texttt{D.R2y} & \texttt{= R2y} & $1\times 3$ cell array specifying the 3-PI parameters $R_{2y}$; \\
    \texttt{D.R22} & \texttt{= R22} & $3\times 3$ cell array specifying the 9-PI parameters $R_{22}$; \\\hline
 \end{tabular}
 \caption{Fields in an \texttt{opvar2d} object \texttt{D}, defining a general PI operator in 2D as in Equation~\eqref{eq:0112PI_standard_form}}
\label{tab:opvar2d_fields}
\end{table}

\part{PIETOOLS Workflow for ODE-PDE and DDE Models}

\chapter{Setup and Representation of PDEs and DDEs}\label{ch:PDE_DDE_representation}

Using PIETOOLS, a wide variety of linear differential equations and time-delay systems can be simulated and analysed by representing them as partial integral equations (PIEs). To facilitate this, PIETOOLS includes several input format to declare partial differential equations (PDEs) and delay-differential equations (DDEs), which can then be easily converted to equivalent PIEs using the PIETOOLS function \texttt{convert}, as we show in Chapter~\ref{ch:PIE}. In this chapter, we present two of these input formats, discussing in detail how linear 1D PDE and DDE systems can be easily implemented using the Command Line Parser for PDEs and Batch-Based input format for DDEs. We refer to Chapter~\ref{ch:alt_PDE_input} for information on two alternative input formats for PDEs, including a format to parse 2D PDEs, and we refer to Chapter~\ref{ch:alt_DDE_input} for two alternative input formats for time-delay systems, namely the Neutral Delay System (NDS) and Delay Differential Equation (DDF) formats.

%In PIETOOLS, coupled ODE-PDE systems can be defined using three different approaches: Command Line Parser (via command line or MATLAB scripts), graphical user interface (a MATLAB app), and `terms input' format structure. The former two input methods are useful when the system is defined by using entire equations, whereas the last method (terms input format method) requires the user to identify the parameters from each term of set of equations and define them in a PIETOOLS compatible class called \texttt{pde\_struct}.

%As such, Command Line Parser and GUI input methods are the simplest and most intuitive methods to define a coupled ODE-PDE, however, in this chapter we describe all three methods for the sake completion. Although most ODE-PDE coupled systems can be defined by using any of the three input methods, certain type of PDEs (for example, PDEs with 2 spatial dimensions and PDEs with second order time differential terms) can only be defined using the terms input format. 

\section{Command Line Parser for 1D ODE-PDEs}
In PIETOOLS 2022, by far the simplest and most intuitive of these input formats is the Command Line Parser format. Command Line Parser format utilizes MATLAB variables of class \texttt{state} to define symbols that can be freely manipulated to express PDEs as MATLAB expressions that are stored in the \texttt{sys} class object. \texttt{sys} class objects are used to store parameters of a coupled ODE-PDE and to perform analysis, control, and simulation of the stored system. Refer to \ref{subsec:sys} for more details.

\begin{boxEnv}{\textbf{Requirements}}
\begin{itemize}
    \item Command Line Parser format can be used to defined ODE-PDEs with ONLY one spatial dimension.
    \item Command Line Parser format supports ODEs/PDEs with time-delayed terms.
    \item Command Line Parser format requires MATLAB 2021a or higher.
    \item Variables \texttt{s}, \texttt{t}, and \texttt{theta} are protected and cannot be used for other purposes.
\end{itemize}

\end{boxEnv}

\subsection{Defining a coupled ODE-PDE system}
 For the purpose of demonstration, consider the following coupled ODE-PDE system in control theory framework 
\begin{align*}
\dot{x}(t) &= -5 x(t)+\int_0^1 \partial_s \mbf x(t,s) ds + u(t)\\
\dot{\mbf x}(t,s) &= 9 \mbf x(t,s)+ \partial_s^2 \mbf x(t,s) +sw(t)\\
\mbf x(t,0) &= 0, \quad \partial_s\mbf x(t,1) + x(t) = 2w(t)\\
z(t) &= \bmat{\int_0^1 \mbf x(t,s) ds\\ u(t)}\\
y(t) &= \mbf x(t,0).
\end{align*}

\begin{codebox}
    The above set of equations can be defined and converted to a PIE using the code shown below.
    \begin{matlab}
        >> pvar ~t ~s ~theta; \\
        >> x = state('ode'); ~X = state('pde');\\ 
        >> w = state('in'); z = state('out',2);\\
        >> u = state('in'); y = state('out');\\
        >> odepde= sys(); \\
        >> odepde = addequation(odepde, z==[int(X,s,[0,1]); u]);\\
        >> eqns = [diff(x,t)==-5*x+int(diff(X,s,1),s,[0,1])+u;\\
        diff(X,t)==9*X+diff(X,s,2)+s*w; \\
        subs(X,s,0)==0;\\ 
        subs(diff(X,s),s,1)==-x+2*w;\\ 
        y==subs(X,s,0)];\\
        >> odepde = addequation(odepde,eqns);\\
        >> odepde = setControl(odepde,[u]);\\
        >> odepde = setObserve(odepde,[y]); \\
        >> sys\_pie = convert(odepde,'pie');
    \end{matlab}
\end{codebox}

Next we will breakdown each step used in the code above and explain the action performed by each line of the above code block. Specifying any PDE system using the `Command Line Parser' format follows the steps listed below:
\begin{enumerate}
    \item Define independent variables ($s$, $t$) 
    \item Define dependent variables ($\mbf x$, $x$, $z$, $y$, $w$ and $u$)
    \item Define a \texttt{sys()} object to store the equations
    \item Add equations
    \item Specify control inputs and observed outputs
\end{enumerate}

\subsubsection{Define independent variables}

To define equations symbolically, first, the independent variables (spatial variable and time variable) and dependent variables (states, inputs, and outputs) have to be declared. For example, if the PDE is defined on space $s$ and time $t$, we would start by defining these variables as \texttt{polynomial} objects as shown below.

\begin{matlab}
>> pvar ~t ~s ~theta;  ~\% independent variables are polynomial objects
\end{matlab}

\noindent\textbf{Note} that we have defined additional variable \texttt{theta} which will be used as a dummy spatial variable if needed (for example, in operations involving integration).

\subsubsection{Define dependent variables}
After defining independent variables, we need to define dependent variables such as ODE/PDE states, inputs, and outputs (See \ref{ch:scope} for details). Dependent variables are defined as \texttt{state} class objects. For example:

\begin{matlab}
>> x = state('ode'); ~X = state('pde');\\ 
>> w = state('in'); z = state('out',2);\\
>> u = state('in'); y = state('out');
\end{matlab}

The above code, when executed in MATLAB, creates four symbolic variables, namely \texttt{x, X, w, u, z, y}, and assigns them the type ODE, PDE, input (\texttt{w, u}), and output (\texttt{z, y}) respectively. For more details on the usage of \texttt{state} class see \ref{ss-sec:state}. Now that the dependent and independent variables have been established, we define the ODE-PDE equations using these symbolic variables.

In this example, the output \texttt{z} must be of length 2 because
\[z(t) = \bmat{\int_0^1 \mbf x(t,s) ds\\ u(t)}.\] 
The length is specified using the second argument as shown in the code
\begin{matlab}
>> z = state('out',2); 
\end{matlab}
\noindent where the second argument to \texttt{state()} function always specifies length of the vector.

\subsubsection{Define a PDE object, \texttt{sys()}} Now that all the required symbols are defined, we can initialize a \texttt{sys} class object and add equations to the object.
\begin{matlab}
>> odepde= sys(); 
\begin{verbatim}
    Initialized sys() object of type ``pde''
\end{verbatim}
\end{matlab}
By default, \texttt{sys} objects are initialized as type `pde' on the domain $[0,1]$. The domain can be modified, if needed, by using the following command.
\begin{matlab}
>> odepde.dom = [0,3];
\end{matlab}
The above command changes the domain of all relevant PDE states to $[0,3]$. For the current example, we assume the domain is $[0,1]$ and proceed.

\begin{boxEnv}{Warning}
The domain parameter MUST be set before adding the equations to the system.    
\end{boxEnv}

\subsubsection{Define equations and add to PDE object}
We can individually add equations one at a time using \texttt{addequation()} method are group the equations into a column vector and add all the equations together. For example, we can add the following equation to the \texttt{sys()} 
\[z(t) = \bmat{\int_0^1 \mbf x(t,s) ds\\ u(t)}\] 
using the command
\begin{matlab}
>> odepde = addequation(odepde, z==[int(X,s,[0,1]); u]); \% one~equation~at~a~time
\begin{verbatim}
    2 equations were added to sys() object
\end{verbatim}
\end{matlab}
Alternatively, we can define all the equations together in a column vector and add them to the system in one command. We can add the remaining $5$ equations and boundary conditions listed below
\begin{align*}
\dot{x}(t) &= -5 x(t)+\int_0^1 \partial_s \mbf x(t,s) ds + u(t)\\
\dot{\mbf x}(t,s) &= 9 \mbf x(t,s)+ \partial_s^2 \mbf x(t,s) +sw(t),\qquad \mbf x(t,0) = 0, \quad \partial_s\mbf x(t,1) + x(t) = 2w(t)\\
y(t) &= \mbf x(t,0).
\end{align*}
using the code
\begin{matlab}
>> eqns = [diff(x,t)==-5*x+int(X,s,[0,1])+u; diff(X,t)==9*X+diff(X,s,2)+s*w;\\
\hspace{8mm}  subs(X,s,0)==0; subs(diff(X,s),s,1)==-x+2*w; y== subs(x,s,0)];\\
>> odepde = addequation(odepde,eqns);
\begin{verbatim}
    5 equations were added to sys() object
\end{verbatim}
\end{matlab}

Any expression passed to \texttt{addequation} function in the form \texttt{addequation(odepde,expr)} results in addition of the equation \texttt{`expr=0'} to the \texttt{odepde} object. 
\begin{boxEnv}{\textbf{Note}}
The $=$ symbol is not used while defining equations. Instead $==$ is used because MATLAB uses $=$ as a protected symbol for assignment operation. Thus, any symbolic expression that needs to be added takes the form \texttt{expr==0} or \texttt{exprA==exprB}.
\end{boxEnv}

\subsubsection{Specify control inputs and observed outputs}
By default, all inputs are defined as disturbance inputs and all outputs are defined as regulated outputs. To explicit assign, certain inputs as control inputs, one must use \texttt{setControl()} function. In the above example, we can specify \texttt{u} to be control input by using the following command.
\begin{matlab}
odepde = setControl(odepde,[u]); \% designate ~control ~inputs
\begin{verbatim}
    1 inputs were designated as controlled inputs
\end{verbatim}
\end{matlab}

Likewise, observed outputs can be added to the system as shown below.
\begin{matlab}
odepde = setObserve(odepde,[y]); 
\begin{verbatim}
    1 outputs were designated as observed outputs
\end{verbatim}
\end{matlab}
Functions such as \texttt{removeControl} and \texttt{removeObserve} are included to reset a particular input as disturbance (output as regulated output). While this is the last step in defining a system using the command line parser format, we can directly convert the \texttt{sys} objects to PIE objects using a function as shown the next subsection.

\subsubsection{Getting PIE from \texttt{sys} class objects}
Once the ODE-PDE class object has been defined, we can obtain the PIE class object, namely \texttt{pie\_struct}, using the convert function.
\begin{matlab}
sys\_pie = convert(odepde,'pie'); % convert the system to pie
\begin{verbatim}
Conversion to 'pie' was successful
\end{verbatim}
\end{matlab}

After executing the code in the above `Code block', the PIE system parameters are stored under \texttt{params} property which can be accessed to obtain the following output
\begin{matlab}
\begin{verbatim}
    >> sys_pie.params

ans = 
  pie_struct with properties:

     dim: 1;
    vars: [1×2 polynomial];
     dom: [1×2 double];

       T: [2×2 opvar];     Tw: [2×1 opvar];     Tu: [2×1 opvar]; 
       A: [2×2 opvar];     B1: [2×1 opvar];     B2: [2×1 opvar]; 
      C1: [2×2 opvar];    D11: [2×1 opvar];    D12: [2×1 opvar]; 
      C2: [1×2 opvar];    D21: [1×1 opvar];    D22: [1×1 opvar]; 
\end{verbatim}
\end{matlab}

Once the PIE structure is obtained, we can proceed to perform analysis, control, and simulation, as discussed in detail in Chapters~\ref{ch:PIESIM} and~\ref{ch:LPIs}.

\subsection{More examples of command line parser format}
More details on the implementation of Command Line Parser format can be found in Section~\ref{subsec:additional-command-line}. In this subsection, we provide a few more examples to demonstrate the typical use of the command line parser. More specifically, we focus on examples involving inputs, outputs, delays, vector-valued PDEs, etc., to demonstrate the capabilities of command line parser.
\subsubsection{Example: Transport equation}
Consider the Transport equation which is modeled as a PDE with 1$^{st}$ derivatives in time and space given by 
\begin{align*}
    \dot{\mbf x}(t,s) &= 5\partial_s \mbf x(t,s)+u(t),\quad s\in[0,2]\\
    y(t) &= x(t,2),\\
    \mbf x(t,0) &= 0.
\end{align*}
Here, we use a control input in the domain and an observer at the right boundary with an intention to design an observer based controller. This system can be defined using the command line parser format as shown below.

\begin{codebox}
The above set of equations can be defined and converted to a PIE using the code shown below.
\begin{matlab}
>> pvar ~t ~s ~theta; \\
>> X = state('pde');\\ 
>> u = state('in'); y = state('out');\\
>> odepde= sys(); \\
>> odepde.dom = [0,2];\\
>> eqns = [diff(X,t)==5*diff(X,s)+u;\\
\hspace{5cm}  subs(X,s,0)==0; y==subs(x,s,2)];\\
>> odepde = addequation(odepde,eqns);\\
>> odepde = setControl(odepde,[u]);\\
>> odepde = setObserve(odepde,[y]); \\
>> sys\_pie = convert(odepde,'pie');
\end{matlab}
\end{codebox}
The steps are identical to the one specified in the previous section. The only exception is the change of domain to $[0,2]$. First, we define all the independent variables as pvar objects and dependent variables as state objects. Then we define the equations and add the equations to a system storage object. Finally, we specify which symbols are control inputs and observed outputs and convert the system to a PIE system. By accessing \texttt{params} property, we obtain the following output
\begin{matlab}
\begin{verbatim}
    >> sys_pie.params

ans = 
  pie_struct with properties:

     dim: 1;
    vars: [1×2 polynomial];
     dom: [1×2 double];

       T: [1×1 opvar];     Tw: [1×0 opvar];     Tu: [1×1 opvar]; 
       A: [1×1 opvar];     B1: [1×0 opvar];     B2: [1×1 opvar]; 
      C1: [0×1 opvar];    D11: [0×0 opvar];    D12: [0×1 opvar]; 
      C2: [1×1 opvar];    D21: [0×0 opvar];    D22: [0×1 opvar]; 
\end{verbatim}
\end{matlab}

Once the PIE system is obtained, we can use the LPIs in Chapter \ref{ch:LPIs} to design an observer and controller for this PDE.

\subsubsection{Example: PDE with delay terms}
Consider a reaction-diffusion equation which are modeled as a PDE with $2^{nd}$-order derivatives in both space and first order derivative in time. Let there be an dynamic ODE system coupled with the PDE through a channel that is delayed by an amount $\tau=2$. The coupled ODE-PDE model for this system is given by the equations
\begin{align*}
    \dot{x}(t) &= -5x(t),\\
    \dot{\mbf x}(t,s) &= 10\mbf x(t,s)+\partial_s^2 \mbf x(t,s)+x(t-2),\\
    \mbf x(t,0) &= 0 = \mbf x(t,1).
\end{align*}
\begin{codebox}
The above set of equations can be defined and converted to a PIE using the code shown below.
\begin{matlab}
>> x = state('ode');\\
>> X = state('pde');\\
>> pvar ~s ~t ~theta;\\
>> ss = sys();\\
>> eqns = [diff(x,t)==-5*x; diff(X,t)==diff(X,s,2)+subs(x,t,t-2);\\ \hspace{5cm} subs(X,s,0)==0;subs(X,s,1)==0;];\\
>> ss = addequation(ss,eqns);\\
>> ss = convert(ss,'pie');
\end{matlab}
\end{codebox}

In the above system, the delay term is converted to a new state, $\mbf v$, that is governed by a transport equation. The new set of equations is then given by
\begin{align*}
    \dot{x}(t) &= -5x(t),\\
    \dot{\mbf x}(t,s) &= 10\mbf x(t,s)+\partial_s^2 \mbf x(t,s)+\mbf v(t,-2),\\
    \dot{\mbf v}(t,\theta) &= \partial_{\theta}\mbf v(t,\theta),\\
    \mbf x(t,0) &= 0 = \mbf x(t,1)\quad \mbf v(t,0) = x(t).
\end{align*}
\textbf{Note:} The above conversion is performed internally, and the users only need to use the input format shown in the code block above.

Thus, the resulting PIE will have two distributed states and one finite dimensional state which can be verified by looking at the dimensions of the parameters stored in \texttt{ss.params} property:
\begin{matlab}
\begin{verbatim}
    >> ss.params

ans = 
  pie_struct with properties:

     dim: 2;
    vars: [2×2 polynomial];
     dom: [2×2 double];

       T: [3×3 opvar2d];     Tw: [3×0 opvar2d];     Tu: [3×0 opvar2d]; 
       A: [3×3 opvar2d];     B1: [3×0 opvar2d];     B2: [3×0 opvar2d]; 
      C1: [0×3 opvar2d];    D11: [0×0 opvar2d];    D12: [0×0 opvar2d]; 
      C2: [0×3 opvar2d];    D21: [0×0 opvar2d];    D22: [0×0 opvar2d]; 
\end{verbatim}
\end{matlab}

\subsubsection{Example: Beam equation}\label{ex:parser_timoshenko}
Here we consider the Timoshenko Beam equations which are modeled as a PDE with $2^{nd}$-order derivatives in both space and time. While this system cannot be directly input using the command line parser format, we can redefine the state variables to convert it to a PDE with first order temporal derivative as shown below.
\begin{align*}
&\ddot{w} = \partial_s (w_s-\phi), \quad \ddot{\phi} = \phi_{ss}+(w_s - \phi)\\
&\phi(0)=w(0)=0, \quad \phi_s(1)=0,\quad w_s(1)-\phi(1)=0.
\end{align*}
By choosing $\mbf x= [\dot{w}, w_s-\phi, \dot{\phi},\phi_s]$, we get
\begin{align*}
    &\dot{\mbf x}(t,s) = \bmat{0&0&0&0\\0&0&-1&0\\0&1&0&0\\0&0&0&0}\mbf x(t,s) + \bmat{0&1&0&0\\1&0&0&0\\0&0&0&1\\0&0&1&0}\partial_s \mbf x(t,s),\\
    &\bmat{1&0&0&0&0&0&0&0\\0&0&1&0&0&0&0&0\\0&0&0&0&0&0&0&1\\0&0&0&0&0&1&0&0}\bmat{\mbf x(t,0)\\\mbf x(t,1)} = 0,
\end{align*}
which is a vector-valued transport equation with a reaction term. Next, we define this system using the command line parser as shown below.

\begin{codebox}
The above set of equations can be defined and converted to a PIE using the code shown below.
\begin{matlab}
>> x = state('pde',4);\\
>> pvar ~s ~t ~theta;\\
>> ss = sys();\\
>> A0 = [0,0,0,0;0,0,-1,0;0,1,0,0;0,0,0,0];\\ 
>> A1 = [0,1,0,0;1,0,0,0;0,0,0,1;0,0,1,0];\\ 
>> B = [1,0,0,0,0,0,0,0;0,0,1,0,0,0,0,0;0,0,0,0,0,0,0,1;0,0,0,0,0,1,0,0];\\
>> eqns = [diff(x,t)==A0*x+A1*diff(x,s); B*[subs(x,s,0); subs(x,s,1)]==0];\\
>> ss = addequation(ss,eqns);\\
>> ss = convert(ss,'pie');
\end{matlab}
As seen above, the presence of vector-valued states does not change the typical workflow to define the PDE. As long as the dimensions of the parameters and vectors used in the equations match, the process and steps remain the same.
\end{codebox}

The above code should generate a PIE system with \texttt{params} as shown below:
\begin{matlab}
\begin{verbatim}
    >> ss.params

ans = 
  pie_struct with properties:

     dim: 1;
    vars: [1×2 polynomial];
     dom: [1×2 double];

       T: [4×4 opvar];     Tw: [4×0 opvar];     Tu: [4×0 opvar]; 
       A: [4×4 opvar];     B1: [4×0 opvar];     B2: [4×0 opvar]; 
      C1: [0×4 opvar];    D11: [0×0 opvar];    D12: [0×0 opvar]; 
      C2: [0×4 opvar];    D21: [0×0 opvar];    D22: [0×0 opvar]; 
\end{verbatim}
\end{matlab}

\section{Alternative Input Formats for PDEs}\label{sec:PDE_DDE_representation:alt_PDEs}

In addition to the command line parser input format, PIETOOLS 2022 offers two additional methods for declaring PDEs. In particular, PIETOOLS comes with a graphical user interface (GUI) that allows users to simultaneously visualize the PDE that they are specifying, as well as ``terms-based'' input format, which is the only input format to declare PDEs in multiple spatial variables in PIETOOLS 2022. We briefly introduce both of these input formats here, focusing on the GUI in Subsection~\ref{subsec:PDE_DDE_representation:alt_PDEs:GUI}, and the terms-based input format in Subsection~\ref{subsec:PDE_DDE_representation:alt_PDEs:2D}. Note that we provide only a brief introduction of each format here, refering to Chapter~\ref{ch:alt_PDE_input} for more details.

\subsection{A GUI for Declaring PDEs}\label{subsec:PDE_DDE_representation:alt_PDEs:GUI}

Aside from the command line parser, the GUI is the easiest way to declare linear 1D ODE-PDE systems in PIETOOLS, providing a simple, intuitive and interactive visual interface to directly input the model. The GUI can be opened by running \texttt{PIETOOLS\_PIETOOLS\_GUI} from the command line, opening a window like the one displayed in the picture below:
\begin{figure}[H]
	\centering
	\includegraphics[width=0.95\textwidth]{./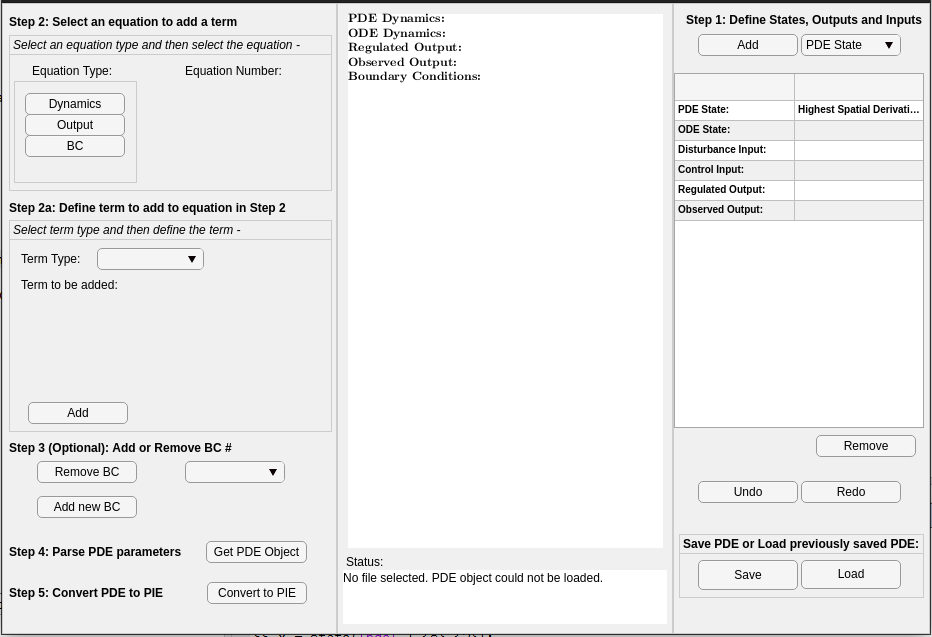}
	\caption{Example of empty GUI window.}
	\label{gui_empty}
\end{figure}
Then, the desired PDE can be declared following steps 1 through 4, first specifying the state variables, inputs and outputs, then declaring the different equations term by term, and finally adding any boundary conditions. It also allows PDE models to be saved and loaded, so that e.g. the system
\begin{align*}
    \dot{\mbf{x}}(t,s)&=\partial_{s}^2 \mbf{x}(t,s) + sw(t), &  s&\in[0,1]\\
    z(t)&=\int_{0}^{1}\mbf{x}(t,s)ds,   \\
    \mbf{x}(t,0)&=0,\qquad \mbf{x}(t,1)=0,
\end{align*}
can be retrieved by simply loading the file\\
\texttt{PIETOOLS\_PDE\_Ex\_Heat\_Eq\_with\_Distributed\_Disturbance\_GUI}\\
from the library of PDE examples, returning a window that looks like
\begin{figure}[H]
	\centering
	\includegraphics[width=0.95\textwidth]{./Figures/GUI_Ex_Empty.png}
	\caption{GUI window after loading the file\\ \texttt{PIETOOLS\_PDE\_Ex\_Heat\_Eq\_with\_Distributed\_Disturbance\_GUI} from the library of PDE examples.}
	\label{gui_loaded}
\end{figure}
Then, the system can be parsed by clicking \texttt{Get PDE Objects}, returning a structure \texttt{PDE\_GUI} in the MATLAB workspace that can be used for further analysis.

For more details on how to use the GUI, we refer to Section~\ref{sec:GUI}.

\subsection{An Input Format for 2D PDEs}\label{subsec:PDE_DDE_representation:alt_PDEs:2D}

In PIETOOLS 2022, the terms-based input format is the only way to declare 2D PDEs. In this format, a PDE is represented as a \texttt{pde\_struct} object. Each term in each equation in the PDE is then implemented separately.

For example, to declare a 2D heat equation,
\begin{align*}
    \dot{\mbf{x}}(t,s_1,s_2)&=\partial_{s_1}^2\mbf{x}(t,s_1,s_2) + \partial_{s_2}^{2}\mbf{x}(t,s_1,s_2) + w(t), &   (s_1,s_2)\in[-1,1]\times[0,1],  \\
    z(t)&=\int_{-1}^{1}\int_{0}^{1}\mbf{x}(t,s_1,s_2)ds_2 ds_1, \\
    \mbf{x}(t,-1,s_2)&=0,\qquad \mbf{x}(t,1,s_2)=0,  \\
    \mbf{x}(t,s_1,0)&=0,\qquad \mbf{x}(t,s_1,1)=0,
\end{align*}
we first declare spatial variables $(s_1,s_2)$ as \texttt{s1} and \texttt{s2}, and initialize an empty \texttt{pde\_struct} object \texttt{PDE} as
\begin{matlab}
\begin{verbatim}
 >> pvar s1 s2
 >> PDE = pde_struct();
\end{verbatim}
\end{matlab}
Then, we declare the state $x(t,s_1,s_2)$, input $w(t)$ and output $z(t)$ as
\begin{matlab}
\begin{verbatim}
 >> PDE.x{1}.vars = [s1;s2];
 >> PDE.x{1}.dom = [-1,1; 0,1];
 >> PDE.w{1}.vars = [];
 >> PDE.z{1}.vars = [];
\end{verbatim}
\end{matlab}
using the field \texttt{vars} to specify the spatial variables on which each component depends, and the field \texttt{dom} to specify the spatial domain on which these variables exist. Then, the PDE can be implemented one term at a time as
\begin{matlab}
\begin{verbatim}
 >> PDE.x{1}.term{1}.x = 1;       PDE.x{1}.term{2}.x = 1;       PDE.x{1}.term{3}.w = 1;
 >> PDE.x{1}.term{1}.D = [2,0];   PDE.x{1}.term{2}.D = [0,2];    
\end{verbatim}
\end{matlab}
using the fields \texttt{x} and \texttt{w} to indicate whether each term involves a state component or input, and the field \texttt{D} to specify the order of derivative of the state component in the term. The output equation can be similarly specified as
\begin{matlab}
\begin{verbatim}
 >> PDE.z{1}.term{1}.x = 1;
 >> PDE.z{1}.term{1}.I{1} = [-1,1];    PDE.z{1}.term{1}.I{2} = [0,1];
\end{verbatim}
\end{matlab}
using the field \texttt{I} to specify the desired domain of integration of the state component along each spatial direction. Finally, the boundary conditions can be declared as
\begin{matlab}
\begin{verbatim}
 >> PDE.BC{1}.term{1}.x = 1;            PDE.BC{2}.term{1}.x = 1;
 >> PDE.BC{1}.term{1}.loc = [-1,s2];    PDE.BC{2}.term{1}.loc = [1,s2];
 >> PDE.BC{3}.term{1}.x = 1;            PDE.BC{4}.term{1}.x = 1;
 >> PDE.BC{3}.term{1}.loc = [s1,0];     PDE.BC{4}.term{1}.loc = [s1,1];
\end{verbatim}
\end{matlab}
Then, the PDE can be initialized by calling \texttt{PDE = initialize(PDE)}, returning a structure that can be used for analysis and simulation. 

For more details on how to use the terms-based input format to declare (2D) PDEs, we refer to Section~\ref{sec:alt_PDE_input:terms_input_PDE}.

\section{Batch Input Format for DDEs}\label{sec:PDE_DDE_representation:DDEs}
 
The DDE data structure allows the user to declare any of the matrices in the following general form of Delay-Differential equation.
\begin{align}
	&\bmat{\dot{x}(t)\\z(t) \\ y(t)}=\bmat{A_0 & B_{1} & B_{2}\\ C_{1} & D_{11} &D_{12}\\ C_{2} & D_{21} &D_{22}}\bmat{x(t)\\w(t)\\u(t)}+\sum_{i=1}^K \bmat{A_i & B_{1i} & B_{2i}\\C_{1i} & D_{11i} & D_{12i}\\C_{2i} & D_{21i} & D_{22i}} \bmat{x(t-\tau_i)\\w(t-\tau_i)\\u(t-\tau_i)}\notag \\
& \hspace{2cm}+\sum_{i=1}^K \int_{-\tau_i}^0\bmat{A_{di}(s) & B_{1di}(s) &B_{2di}(s)\\C_{1di}(s) & D_{11di}(s) & D_{12di}(s)\\C_{2di}(s) & D_{21di}(s) & D_{22di}(s)} \bmat{x(t+s)\\w(t+s)\\u(t+s)}ds \label{eqn:DDE_ch4}
\end{align}
In this representation, it is understood that
\begin{itemize}
\item The present state is $x(t)$.\vspace{-2mm}
\item The disturbance or exogenous input is $w(t)$. These signals are not typically known or alterable. They can account for things like unmodelled dynamics, changes in reference, forcing functions, noise, or perturbations.\vspace{-2mm}
\item The controlled input is $u(t)$. This is typically the signal which is influenced by an actuator and hence can be accessed for feedback control. \vspace{-2mm}
\item The regulated output is $z(t)$. This signal typically includes the parts of the system to be minimized, including actuator effort and states. These signals need not be measured using senors.\vspace{-2mm}
\item The observed or sensed output is $y(t)$. These are the signals which can be measured using sensors and fed back to an estimator or controller.\vspace{-2mm}
\end{itemize}
To add any term to the DDE structure, simply declare its value. For example, to represent 
\[
\dot x(t)=-x(t-1),\qquad z(t)=x(t-2)
\]
we use
	\begin{matlab}
		>> DDE.tau = [1 2];\\
		>> DDE.Ai\{1\} = -1;\\
		>> DDE.C1i\{2\} = 1;
	\end{matlab}
All terms not declared are assumed to be zero. The exception is that we require the user to specify the values of the delay in \texttt{DDE.tau}. When you are done adding terms to the DDE structure, use the function \texttt{DDE=PIETOOLS\_initialize\_DDE(DDE)}, which will check for undeclared terms and set them all to zero. It also checks to make sure there are no incompatible dimensions in the matrices you declared and will return a warning if it detects such malfeasance. The complete list of terms and DDE structural elements is listed in Table~\ref{tab:DDE_parameters_ch4}.

\begin{table}[ht!]\vspace{-2mm}
\begin{center}{
\begin{tabular}{c|c||c|c||c|c}
%\multicolumn{2}{c||}{Dimension Size} & \multicolumn{2}{c}{CPU seconds} &\multicolumn{2}{c}{CPU seconds} \\
  \multicolumn{6}{c}{\textbf{ODE Terms:}}\\
Eqn.~\eqref{eqn:DDE_ch4}  & \texttt{DDE.}  &   Eqn.~\eqref{eqn:DDE_ch4}  & \texttt{DDE.} &   Eqn.~\eqref{eqn:DDE_ch4}  & \texttt{DDE.}\\
\hline
$A_0$    & \texttt{A0} & $B_{1}$ & \texttt{B1} &$B_{2}$&\texttt{B2}\\ 
$C_{1}$ & \texttt{C1} & $D_{11}$ &\texttt{D11}&$D_{12}$&\texttt{D12}\\ 
$C_{2}$ & \texttt{C2} & $D_{21}$ &\texttt{D21}&$D_{22}$&\texttt{D22} \\ \hline \\
 \multicolumn{6}{c}{ \textbf{Discrete Delay Terms:}}\\
Eqn.~\eqref{eqn:DDE_ch4}  & \texttt{DDE.}  &   Eqn.~\eqref{eqn:DDE_ch4}  & \texttt{DDE.} &   Eqn.~\eqref{eqn:DDE_ch4}  & \texttt{DDE.}\\
\hline
$A_i$    & \texttt{Ai\{i\}} & $B_{1i}$ & \texttt{B1i\{i\}} &$B_{2i}$&\texttt{B2i\{i\}}\\ 
$C_{1i}$ & \texttt{C1i\{i\}} & $D_{11i}$ &\texttt{D11i\{i\}}&$D_{12i}$&\texttt{D12i\{i\}}\\ 
$C_{2i}$ & \texttt{C2i\{i\}} & $D_{21i}$ &\texttt{D21i\{i\}}&$D_{22i}$&\texttt{D22i\{i\}}\\
\hline \\  \multicolumn{6}{c}{\textbf{Distributed Delay Terms: May be functions of \texttt{pvar s}}}\\
Eqn.~\eqref{eqn:DDE_ch4}  & \texttt{DDE.}  &   Eqn.~\eqref{eqn:DDE_ch4}  & \texttt{DDE.} &   Eqn.~\eqref{eqn:DDE_ch4}  & \texttt{DDE.}\\
\hline
$A_{di} $   & \texttt{Adi\{i\}} & $B_{1di}$ & \texttt{B1di\{i\}} &$B_{2di}$&\texttt{B2di\{i\}}\\ 
$C_{1di} $& \texttt{C1di\{i\}} &$ D_{11di} $&\texttt{D11di\{i\}}&$D_{12di}$&\texttt{D12di\{i\}}\\ 
$C_{2di}$ & \texttt{C2di\{i\}} & $D_{21di} $&\texttt{D21di\{i\}}&$D_{22di}$&\texttt{D22di\{i\}}\\
\end{tabular}
}
\end{center}\vspace{-2mm}
\caption{ Equivalent names of Matlab elements of the \texttt{DDE} structure terms for terms in Eqn.~\eqref{eqn:DDE_ch4}. For example, to set term \texttt{XX} to \texttt{YY}, we use \texttt{DDE.XX=YY}.  In addition, the delay $\tau_i$ is specified using the vector element \texttt{DDE.tau(i)} so that if $\tau_1=1, \tau_2=2, \tau_3=3$, then \texttt{DDE.tau=[1 2 3]}. }\label{tab:DDE_parameters_ch4}\end{table}

\subsection{Initializing a DDE Data structure} The user need only add non-zero terms to the DDE structure. All terms which are not added to the data structure are assumed to be zero. Before conversion to another representation or data structure, the data structure will be initialized using the command
	\begin{flalign*}
		&\texttt{DDE = initialize\_PIETOOLS\_DDE(DDE)}&
	\end{flalign*}
This will check for dimension errors in the formulation and set all non-zero parts of the \texttt{DDE} data structure to zero. Not that, to make the code robust, all PIETOOLS conversion utilities perform this step internally.

\section{Alternative Input Formats for TDSs}\label{sec:PDE_DDE_representation:alt_TDSs}

Although the delay differential equation (DDE) format is perhaps the most intuitive format for representing time-delay systems (TDS), it is not the only representation of TDS systems, and not every TDS can be represented in this format. For this reason, PIETOOLS includes two additional input format for TDSs, namely the Neutral Type System (NDS) representation, and Differential-Difference Equation (DDF) representation. Here, the structure of a NDS is identical to that of a DDE except for 6 additional terms: 
\begin{align*}
	\bmat{\dot{x}(t)\\z(t) \\ y(t)}&=\bmat{A_0 & B_{1} & B_{2}\\ C_{1} & D_{11} &D_{12}\\ C_{2} & D_{21} &D_{22}}\bmat{x(t)\\w(t)\\u(t)}+\sum_{i=1}^K \bmat{A_i  & B_{1i} & B_{2i}& E_i\\C_{1i}& D_{11i} & D_{12i} & E_{1i}\\C_{2i} & D_{21i} & D_{22i}&E_{2i}} \bmat{x(t-\tau_i)\\w(t-\tau_i)\\u(t-\tau_i)\\ \dot x(t-\tau_i)}\notag\\[-3mm]
& +\hspace{-1mm}\sum_{i=1}^K \hspace{0mm}\int_{-\tau_i}^0\hspace{-1mm}\bmat{A_{di}(s) & \hspace{-1mm}B_{1di}(s) &\hspace{-1mm}B_{2di}(s)& \hspace{-1mm}E_{di}(s)\\C_{1di}(s) & \hspace{-1mm}D_{11di}(s) & \hspace{-1mm}D_{12di}(s)& \hspace{-1mm}E_{1di}(s)\\C_{2di}(s) &\hspace{-1mm}D_{21di}(s) & \hspace{-1mm}D_{22di}(s)& \hspace{-1mm}E_{2di}(s)} \hspace{-2mm}\bmat{x(t+s)\\w(t+s)\\u(t+s)\\ \dot x(t+s)}\hspace{-1mm}ds. %\label{eqn:NDS}
\end{align*}
These new terms are parameterized by $E_i,E_{1i}$, and $E_{2i}$ for the discrete delays and by $E_{di},E_{1di}$, and $E_{2di}$ for the distributed delays, and should be included in a NDS object as, e.g. \texttt{NDS.E\{1\}=1}. On the other hand, the DDF representation is more compact but less transparent than the DDE and NDS representation, taking the form
\begin{align*}
	\bmat{\dot{x}(t)\\ z(t)\\y(t)\\r_i(t)}&=\bmat{A_0 & B_1& B_2\\C_1 &D_{11}&D_{12}\\C_2&D_{21}&D_{22}\\C_{ri}&B_{r1i}&B_{r2i}}\bmat{x(t)\\w(t)\\u(t)}+\bmat{B_v\\D_{1v}\\D_{2v}\\D_{rvi}} v(t) \notag\\
v(t)&=\sum_{i=1}^K C_{vi} r_i(t-\tau_i)+\sum_{i=1}^K \int_{-\tau_i}^0C_{vdi}(s) r_i(t+s)ds.%\label{eqn:DDF}
\end{align*}
In this representation, the output signal from the ODE part is decomposed into sub-components $r_i$, each of which is delayed by amount $\tau_i$. Identifying these sub-components is often challenging, so in most cases it will be preferable to use the NDS or DDE representation instead. However, the DDF representation is more general than either the DDE or NDS representation, so PIETOOLS also includes an input format for declaring DDF systems. For more information on how to declare systems in the DDF or NDS representation, and how to convert between different representations, we refer to Chapter~\ref{ch:alt_DDE_input}.

%%%%%%%%%%%%%%%%%%%%%%%%%%%%%%%%%%%%%%%%%%%%

\chapter{Converting PDEs and DDEs to PIEs}\label{ch:PIE}

In the previous chapter, we showed how general linear 1D ODE-PDE and DDE systems can be declared in PIETOOLS. In order to analyze such systems, PIETOOLS represents each of them in a standardized format, as a partial integral equation (PIE). This format is parameterized by partial integral, or PI operators, rather than by differential operators, allowing PIEs to be analysed by solving optimization problems on these PI operators (see Chapter~\ref{ch:LPIs}).

In this chapter, we show how an equivalent PIE representation of PDE and DDE systems can be computed in PIETOOLS. In particular, in Section~\ref{sec:PIE:PDE2PIE:what_is_a_pie}, we first provide a simple illustration of what a PIE is. In Sections~\ref{sec:PIE:PDE2PIE:pde_2_pie} and~\ref{sec:PIE:PDE2PIE:dde_2_pie}, we then show how a PDE and a DDE can be converted to a PIE, and in Section~\ref{sec:PIE:PDE2PIE:io_pde_2_pie}, we show how a PDE with inputs and outputs can be converted to a PIE. To reduce notation, we demonstrate the PDE conversion only for 1D systems, though we note that the same steps also work for 2D PDEs.

\section{What is a PIE?}\label{sec:PIE:PDE2PIE:what_is_a_pie}

To illustrate the concept of partial integral equations, suppose we have a simple 1D PDE
\begin{align}\label{eq:1D_PDE_example}
 \dot{\mbf{x}}(t,s)&=2\partial_{s}\mbf{x}(t,s) + 10\mbf{x}(t,s),    &   s&\in[0,1],  \\
 \mbf{x}(t,0)&=0.   \nonumber
\end{align}
In this system, the PDE state $\mbf{x}(t)$ at any time $t\geq 0$ is a function of $s$, that has to satisfy the boundary condition (BC) $\mbf{x}(t,0)=0$. Moreover, the state must be at least first order differentiable with respect to $s$, for us to be able to evaluate the derivative $\partial_{s}\mbf{x}(t,s)$. As such, a more fundamental state would actually be this first order derivative $\partial_{s}\mbf{x}(t)$ of the state, which does not need to be differentiable, nor does it need to satisfy any boundary conditions. We therefore define $\mbf{x}_{\text{f}}(t,s):=\partial_{s}\mbf{x}(t,s)$ as the \textit{fundamental state} associated to this PDE. Using the fundamental theorem of calculus, we can then express the PDE state in terms of the fundamental state as
\begin{align*}
 \mbf{x}(t,s)&=\mbf{x}(t,0)+\int_{0}^{s}\partial_{s}\mbf{x}(t,\theta)d\theta=\mbf{x}(t,0)+\int_{0}^{s}\mbf{x}_{\text{f}}(t,\theta)d\theta = \int_{0}^{s}\mbf{x}_{\text{f}}(t,\theta)d\theta,
\end{align*}
where we invoke the boundary condition $\mbf{x}(t,0)=0$. Substituting this result into the PDE, we arrive at an equivalent representation of the system as
\begin{align}\label{eq:1D_PIE_example}
    \int_{0}^{s}\dot{\mbf{x}}_{\text{f}}(t,\theta)d\theta &= 2\mbf{x}_{\text{f}}(t,s) + \int_{0}^{s}10\mbf{x}_{\text{f}}(t,\theta)d\theta,   &   s&\in[0,1],
\end{align}
in which the fundamental state $\mbf{x}_{\text{f}}$ does not need to satisfy any boundary conditions, nor does it need to be differentiable with respect to $s$. We refer to this representation as the Partial Integral Equation, or PIE representation of the system, involving only partial integrals, rather than partial derivatives with respect to $s$. It can be shown that for any well-posed linear PDE -- meaning that the solution to the PDE is uniquely defined by the dynamics and the BCs -- there exists an equivalent PIE representation. In PIETOOLS, this equivalent representation can be obtained by simply calling \texttt{convert} for the desired PDE structure \texttt{PDE}, returning a structure \texttt{PIE} that corresponds to the equivalent PIE representation.

\section{Converting a PDE to a PIE}\label{sec:PIE:PDE2PIE:pde_2_pie}

Suppose that we have a PDE structure \texttt{PDE}, defining a 1D heat equation with integral boundary conditions:
\begin{align}\label{eq:PDE_ex_PDE2PIE_1}
     \dot{\mbf{x}}(t,s)&=\partial_{s}^{2}\mbf{x}(t,s),  &   s&\in[0,1]  \nonumber\\
     \text{with BCs}\hspace*{1.0cm} \mbf{x}(t,0)&+\int_{0}^{1}\mbf{x}(t,s)ds=0,    \hspace*{1.0cm}
     \mbf{x}(t,1)+\int_{0}^{1}\mbf{x}(t,s)ds=0
\end{align}
In this system, the state $\mbf{x}(t,s)$ at each time $t\geq 0$ must be at least second order differentiable with respect to $s$, so we define the associated fundamental state as $\mbf{x}_{\text{f}}(t,s)=\partial_{s}^2\mbf{x}(t,s)$.
We implement this system in PIETOOLS using the command line parser as follows:
\begin{matlab}
\begin{verbatim}
 >> pvar s t
 >> x = state(`pde');
 >> PDE_dyn = diff(x,t) == diff(x,s,2);
 >> PDE_BCs = [subs(x,s,0) + int(x,s,[0,1]) == 0;
               subs(x,s,1) + int(x,s,[0,1]) == 0];
 >> PDE = sys();
 >> PDE = addequation(PDE,[PDE_dyn; PDE_BCs]);
\end{verbatim}
\end{matlab}
Then, we can derive the associated PIE representation by simply calling
\begin{matlab}
\begin{verbatim}
 >> PIE = convert(PDE,`pie')
 PIE = 
   pie_struct with properties:

     dim: 1;
    vars: [1×2 polynomial];
     dom: [0 1];

       T: [1×1 opvar];     Tw: [1×0 opvar];     Tu: [1×0 opvar]; 
       A: [1×1 opvar];     B1: [1×0 opvar];     B2: [1×0 opvar]; 
      C1: [0×1 opvar];    D11: [0×0 opvar];    D12: [0×0 opvar]; 
      C2: [0×1 opvar];    D21: [0×0 opvar];    D22: [0×0 opvar]; 
\end{verbatim}
\end{matlab}
In this structure, the field \texttt{dim} corresponds to the spatial dimensionality of the system, with \texttt{dim=1} indicating that this is a 1D PIE. The fields \texttt{vars} and \texttt{dom} define the spatial variables in the PIE and their domain, with
\begin{matlab}
\begin{verbatim}
 >> PIE.vars
 ans = 
   [ s, theta]

 >> PIE.dom
 ans = 
      0     1
\end{verbatim}
\end{matlab}
indicating that \texttt{s} is the primary variable, \texttt{theta} the dummy variable, and both exist on the domain $s,\theta \in[0,1]$.
We note that the remaining fields in the \texttt{PIE} structure are all \texttt{opvar} objects, representing PI operators in 1D. Moreover, most of these operators are empty, being of dimension $1\times 0$, $0\times 1$ or $0\times 0$. This is because the PDE~\eqref{eq:PDE_ex_PDE2PIE_1} does not involve any inputs or outputs, and therefore its associated PIE has the simple structure
\begin{align*}
    \bl(\mcl{T}\dot{\mbf{x}}_{\text{f}}\br)(t,s)&=\bl(\mcl{A}\mbf{x}_{\text{f}}\br)(t,s),
\end{align*}
where the operator $\mcl{T}$ maps the fundamental state $\mbf{x}_{\text{f}}$ back to the PDE state $\mbf{x}$ as
\begin{align*}
    \bl(\mcl{T}\mbf{x}_{\text{f}}\br)(t,s)&=\mbf{x}(t,s).
\end{align*}
For the PDE~\eqref{eq:PDE_ex_PDE2PIE_1}, we know that $\mbf{x}_{\text{f}}(t,s)=\partial_{s}\mbf{x}(t,s)$. The associated operators $\mcl{T}$ and $\mcl{A}$ are represented by the \texttt{opvar} objects \texttt{T} and \texttt{A} in the \texttt{PIE} structure, for which we find that
\begin{matlab}
\begin{verbatim}
 >> T = PIE.T
 T =
     [] | [] 
     ---------
     [] | T.R 
     
 T.R = 
         [0] | [s^2 - 0.25*s - theta] | [0.75*(s^2 - s)]        
 >> A = PIE.A
 A =
     [] | [] 
     ---------
     [] | A.R 
 
 A.R = 
          [1] | [10] | [0]
\end{verbatim}
\end{matlab}
We conclude that the PDE~\eqref{eq:PDE_ex_PDE2PIE_1} is equivalently represented by the PIE
\begin{align*}
    \int_{0}^{s}\underbrace{\bbl(s^2-\frac{s}{4}-\theta\bbr)}_{\texttt{PIE.T.R.R1}}\dot{\mbf{x}}_{\textnormal{f}}(t,\theta)d\theta + \int_{s}^{1}\underbrace{\frac{3}{4}\bbl(s^2-s\bbr)}_{\texttt{PIE.T.R.R2}}\dot{\mbf{x}}_{\textnormal{f}}(t,\theta)d\theta = \underbrace{1}_{\texttt{PIE.A.R.R0}}\mbf{x}_{\textnormal{f}}(t,s).
\end{align*}

\section{Converting a DDE to a PIE}\label{sec:PIE:PDE2PIE:dde_2_pie}

Just like PDEs, DDEs (and other delay-differential equations) can also be equivalently represented as PIEs. For example, consider the following DDE
\begin{align*}
    \dot{x}(t)=\bmat{-1.5&0\\0.5&-1}x(t) + \int_{-1}^{0} \bmat{3 & 2.25\\0 &0.5}x(t+s)ds + \int_{-2}^{0}\bmat{-1&0\\0&-1}x(t+s)ds,
\end{align*}
where $x(t)\in\R^2$ for $t\geq 0$. We declare this system as a structure \texttt{DDE} in PIETOOLS as
\begin{matlab}
\begin{verbatim}
 >> DDE.A0 = [-1.5, 0; 0.5, -1];
 >> DDE.Adi{1} = [3, 2.25; 0, 0.5];     DDE.tau(1) = 1;
 >> DDE.Adi{2} = [-1, 0; 0, -1];        DDE.tau(2) = 2;
\end{verbatim}
\end{matlab}
We can then convert the DDE to a PIE by calling
\begin{matlab}
\begin{verbatim}
 >> PIE = convert_PIETOOLS_DDE(DDE,`pie')
 PIE = 
   pie_struct with properties:

     dim: 1;
    vars: [1×2 polynomial];
     dom: [1x2 double];
       T: [6×6 opvar];     Tw: [6×0 opvar];     Tu: [6×0 opvar]; 
       A: [6×6 opvar];     B1: [6×0 opvar];     B2: [6×0 opvar]; 
      C1: [0×6 opvar];    D11: [0×0 opvar];    D12: [0×0 opvar]; 
      C2: [0×6 opvar];    D21: [0×0 opvar];    D22: [0×0 opvar]; 
\end{verbatim}
\end{matlab}
In this structure, we note that \texttt{dim=1}, indicating that the PIE is 1D, even though the state $x(t)\in\R^2$ in the DDE is finite-dimensional. This is because, in order to incorporate the delayed signals, the state is augmented to $\mbf{x}(t)=\sbmat{x(t)\\\mbf{x}_1(t)\\\mbf{x}_2(t)}\in\sbmat{\R^2\\ L_2^2[-1,0]\\ L_2^2[-1,0]}$, where 
\begin{align*}
    \mbf{x}_1(t,s)&=x(t+\tau_{1}s)=x(t+s),  &   &\text{and,}    &
    \mbf{x}_{2}&=x(t+\tau_2 s)=x(t-2s)
\end{align*}
for $s\in[-1,0]$. Here, the artificial states $\mbf{x}_1(t)$ and $\mbf{x}_2(t)$ will have to satisfy
\begin{align*}
    \dot{\mbf{x}}_1(t,s)&=\dot{x}(t+s)=\partial_{r}x(r)=\partial_{s}x(t+s)=\partial_{s}\mbf{x}_1(t,s), \\
    \dot{\mbf{x}}_2(t,s)&=\dot{x}(t+2s)=\partial_{r}x(r)=\frac{1}{2}\partial_{s}x(t+2s)=\frac{1}{2}\partial_{s}\mbf{x}_2(t,s) &   s&\in[-1,0]
\end{align*}
and we can equivalently represent the DDE as a PDE
\begin{align*}
    \dot{x}(t)&=\bmat{-1.5&0\\0.5&-1}x(t) + \int_{-1}^{0} \bmat{3 & 2.25\\0 &0.5}\mbf{x}_1(t,s)ds + \int_{-2}^{0}\bmat{-1&0\\0&-1}\mbf{x}_2(t,s)dt, \\
    \dot{\mbf{x}}_1(t,s)&=\partial_{s}\mbf{x}_1(t,s)    \\
    \dot{\mbf{x}}_2(t,s)&=\frac{1}{2}\partial_{s}\mbf{x}_2(t,s) \\
    \text{with BCs}\qquad &\mbf{x}_1(t,-1)=x(t), \qquad  \mbf{x}_1(t,-2)=x(t).
\end{align*}
In this system, $\mbf{x}_1$ and $\mbf{x}_2$ must be first-order differentiable with respect to $s$, suggesting that the fundamental state associated to this PDE is given by $\mbf{x}_{\text{f}}(t)=\sbmat{x_{\text{f},0}(t)\\\mbf{x}_{\text{f},1}(t)\\\mbf{x}_{\text{f},2}(t)}=\sbmat{x(t)\\\partial_{s}\mbf{x}_1(t)\\\partial_{s}\mbf{x}_2(t)}$ for $t\geq 0$. The \texttt{PIE} structure derived from the \texttt{DDE} will describe the dynamics in terms of this fundamental state $\mbf{x}_{\text{f}}$, where we note that, indeed, the objects \texttt{T} and \texttt{A} are of dimension $6\times 6$. In particular, we find that
\begin{matlab}
\begin{verbatim}
 >> T = PIE.T
 T =
      [1,0] | [0,0,0,0] 
      [0,1] | [0,0,0,0] 
      ------------------
      [1,0] | T.R 
      [0,1] |   
      [1,0] |   
      [0,1] |   

 T.R =
     [0,0,0,0] | [0,0,0,0] | [-1,0,0,0] 
     [0,0,0,0] | [0,0,0,0] | [0,-1,0,0] 
     [0,0,0,0] | [0,0,0,0] | [0,0,-1,0] 
     [0,0,0,0] | [0,0,0,0] | [0,0,0,-1] 
\end{verbatim}
\end{matlab}
where \texttt{T.P} is simply a $2\times 2$ identity operator, as the first two state variables of the augmented state $\mbf{x}$ and the fundamental state $\mbf{x}_{\text{f}}$ are both identical, and equal to the finite-dimensional state $x(t)$. More generally, we find that the augmented state can be retrieved from the associated fundamental state as
\begin{align*}
    \mbf{x}(t,s)=\bl(\mcl{T}\mbf{x}_{\text{f}}\br)(t,s)
    =\left[\begin{array}{ll}
         I_{2\times 2} & \smallint_{-1}^{0}d\theta \sbmat{0_{2\times 2} & 0_{2\times 2}}  \\
         \sbmat{I_{2\times 2}\\I_{2\times 2}} & \smallint_{s}^{0}d\theta \sbmat{-I_{2\times 2}&0_{2\times 2}\\ 0_{2\times 2}&-I_{2\times 2}} 
    \end{array}\right]
    \left[\begin{array}{l}
        x_{\text{f},0}(t)\\ \mbf{x}_{\text{f},1}(t,\theta)\\ \mbf{x}_{\text{f},2}(t,\theta)
    \end{array}\right]
    =\left[\begin{array}{l}
        x_{\text{f},0}(t)\\ x_{\text{f},0}(t)-\int_{s}^{0}\mbf{x}_{\text{f},1}(t,\theta)d\theta\\ x_{\text{f},0}(t)-\int_{s}^{0}\mbf{x}_{\text{f},2}(t,\theta)d\theta
    \end{array}\right]
\end{align*}
Then, studying the value of the object \texttt{A}
\begin{matlab}
\begin{verbatim}
 >> A = PIE.A
 A =
      [-0.5,2.25] | [-3*s-3,-2.25*s-2.25,2*s+2,0] 
       [0.5,-2.5] | [0,-0.5*s-0.5,0,2*s+2] 
      --------------------------------------------
            [0,0] | A.R 
            [0,0] |   
            [0,0] |   
            [0,0] |   

 A.R =
          [1,0,0,0] | [0,0,0,0] | [0,0,0,0] 
          [0,1,0,0] | [0,0,0,0] | [0,0,0,0] 
     [0,0,0.5000,0] | [0,0,0,0] | [0,0,0,0] 
     [0,0,0,0.5000] | [0,0,0,0] | [0,0,0,0] 
\end{verbatim}
\end{matlab}
we find that the DDE can be equivalently represented by the PIE
\begin{align*}
    \bl(\mcl{T}\dot{\mbf{x}}_{\text{f}}\br)(t,s)&=
    \left[\begin{array}{l}
        \dot{x}_{\text{f},0}(t)\\ \dot{x}_{\text{f},0}(t)-\int_{s}^{0}\dot{\mbf{x}}_{\text{f},1}(t,\theta)d\theta\\ \dot{x}_{\text{f},0}(t)-\int_{s}^{0}\dot{\mbf{x}}_{\text{f},2}(t,\theta)d\theta
    \end{array}\right]  \\
    &=\left[\begin{array}{l}
        \sbmat{-0.5&2.25\\0.5&-2.5}x_{\text{f},0}(t) + \int_{-1}^{0}(s+1)\bbl(\sbmat{-3&-2.25\\0&-0.5}\mbf{x}_{\text{f},1}(t,s) + 2\mbf{x}_{\text{f},2}(t,s) \bbr)ds\\ 
        \mbf{x}_{\text{f},1}(t,s)\\
        \frac{1}{2}\mbf{x}_{\text{f},2}(t,s)
    \end{array}\right]
    =\bl(\mcl{A}\mbf{x}_{\text{f}}\br)(t,s)
\end{align*}

\section{Converting a System with Inputs and Outputs to a PIE}\label{sec:PIE:PDE2PIE:io_pde_2_pie}

In addition to autonomous differential systems, systems with inputs and outputs can also be represented as PIEs. In this case, the PIE takes a more general form
\begin{align}\label{eq:ioPIE_structure}
    \mcl{T}_u\dot{u}(t)+\mcl{T}_w\dot{w}+\mcl{T}\dot{\mbf{x}}_{\text{f}}(t)&=\mcl{A}\mbf{x}_{\text{f}}(t)+\mcl{B}_1 w(t)+\mcl{B}_2 u(t),   \nonumber\\
    z(t)&=\mcl{C}_1\mbf{x}_{\text{f}}(t) + \mcl{D}_{11}w(t) + \mcl{D}_{12}u(t), \nonumber\\
    y(t)&=\mcl{C}_2\mbf{x}_{\text{f}}(t) + \mcl{D}_{21}w(t) + \mcl{D}_{22}u(t),
\end{align}
where $w$ denotes the exogenous inputs, $u$ the actuator inputs, $z$ the regulated outputs, and $y$ the observed outputs. Here, the operator $\mcl{T}_{u}$, $\mcl{T}_{w}$ and $\mcl{T}$ define the map from the fundamental state $\mbf{x}_{\text{f}}$ back to the PDE state as
\begin{align*}
    \mbf{x}(t)=\mcl{T}_{u}u(t)+\mcl{T}_{w}w(t)+\mcl{T}\mbf{x}_{\text{f}}(t),
\end{align*}
where the operators $\mcl{T}_{u}$ and $\mcl{T}_{w}$ will be nonzero only if the inputs $u$ and $w$ contribute to the boundary conditions enforced upon the PDE state $\mbf{x}$. As such, the temporal derivatives $\dot{u}$ and $\dot{w}$ will also contribute to the PIE only if these inputs appear in the boundary conditions, which may be the case when performing e.g. boundary or delayed control.

In PIETOOLS, systems with inputs and outputs can be converted to PIEs in the same manner as autonomous systems. For example, consider a 1D heat equation with distributed disturbance $w$, and boundary control $u$, where we can observe the state at the upper boundary, and we wish to regulate the integral of the state over the entire domain:
\begin{align}\label{eq:ex_PDE2PIE_io}
    \dot{\mbf{x}}(t,s)&=\frac{1}{2}\partial_{s}^2\mbf{x}(t,s)+s(2-s)w(t),  &   s&\in[0,1]  \nonumber\\
    z(t)&=\int_{0}^{1}\mbf{x}(t,s)ds,    \nonumber\\
    y(t)&=\mbf{x}(t,1),  \nonumber\\
    \text{with BCs} \hspace*{1.0cm} 
    \mbf{x}(t,0)&=u(t), \hspace*{1.5cm} \partial_{s}\mbf{x}(t,1)=0,
\end{align}
This system too can be represented as a partial integral equation, describing the dynamics of the fundamental state $\mbf{x}_{f}=\partial_{s}^2\mbf{x}$. To arrive at this PIE representation, we once more implement the PDE using the command line parser as
\begin{matlab}
\begin{verbatim}
 >> pvar s t
 >> x = state('pde');    
 >> w = state('in');      u = state('in');
 >> z = state('out');     y = state('out');
 >> PDE = sys();
 >> PDE_dyn = diff(x,t) == 0.5*diff(x,s,2) + s*(2-s)*w;
 >> PDE_z   = z == int(x,s,[0,1]);
 >> PDE_y   = y == subs(x,s,1);
 >> PDE_BCs = [subs(x,s,0) == u; subs(diff(x,s),s,1) == 0];
 >> PDE = addequation(PDE,[PDE_dyn; PDE_z; PDE_y; PDE_BCs]);
 >> PDE = setControl(PDE,u);          PDE = setObserve(PDE,y);
\end{verbatim}
\end{matlab}
where we use the commands \texttt{setControl} and \texttt{setObserve} to indicate that $u$ and $y$ are a controlled input and observed output respectively. Then, we can convert this system to an equivalent PIE as before, finding a structure
\begin{matlab}
\begin{verbatim}
 >> PIE = convert(PDE,`pie')
 PIE = 
   pie_struct with properties:

     dim: 1;
    vars: [1×2 polynomial];
     dom: [0 1];
     
       T: [1×1 opvar];     Tw: [1×1 opvar];     Tu: [1×1 opvar]; 
       A: [1×1 opvar];     B1: [1×1 opvar];     B2: [1×1 opvar]; 
       C1: [1×1 opvar];    D11: [1×1 opvar];    D12: [1×1 opvar]; 
       C2: [1×1 opvar];    D21: [1×1 opvar];    D22: [1×1 opvar]; 
\end{verbatim}
\end{matlab}
In this structure, the fields \texttt{T} through \texttt{D22} describe the PI operators $\mcl{T}$ through $\mcl{D}_{22}$ in the PIE~\eqref{eq:ioPIE_structure}. Here, since the exogenous input $w$ does not contribute to the boundary conditions, it also will not contribute to the map $\mbf{x}=\mcl{T}_{u}u+\mcl{T}_{w}w+\mcl{T}\mbf{x}_{\text{f}}$ from the fundamental state $\mbf{x}_{\text{f}}$ to the PDE state $\mbf{x}$. As such, we also find that the associated \texttt{opvar} object \texttt{Tw} has all parameters equal to zero, whereas \texttt{Tu} and \texttt{T} are distinctly nonzero
\begin{matlab}
\begin{verbatim}
 >> Tw = PIE.Tw
 Tw = 
       [] | [] 
      -----------
      [0] | Tw.R 
      
 >> Tu = PIE.Tu
 Tu = 
       [] | [] 
      -----------
      [1] | Tu.R 

 >> T = PIE.T
 T = 
      [] | [] 
      ---------
      [] | T.R 
      
 T.R = 
       [0] | [-theta] | [-s] 
\end{verbatim}
\end{matlab}
Note here that only the parameter \texttt{Tu.Q2} is non-empty for \texttt{Tu}, and only \texttt{T.R} is nonempty for \texttt{T}, as $\mcl{T}_u$ maps a finite-dimensional state $u\in\R$ to an infinite dimensional state $\mbf{x}\in L_2[0,1]$, whilst $\mcl{T}$ maps an infinite-dimensional state $\mbf{x}_{\text{f}}\in L_2[0,1]$ to an infinite-dimensional state $\mbf{x}\in L_2[0,1]$. Studying the values of \texttt{Tu} and \texttt{T}, we find that we can retrieve the PDE state as
\begin{align*}
    \mbf{x}=\mcl{T}_{u}u+\mcl{T}\mbf{x}_{\text{f}}=u -\int_{0}^{s}\theta\mbf{x}_{\text{f}}(\theta)d\theta - \int_{s}^{1}s\mbf{x}_{\text{f}}(\theta)d\theta = u-\int_{0}^{s}\theta\partial_{\theta}^2\mbf{x}(\theta)d\theta - \int_{s}^{1}s\partial_{\theta}^2\mbf{x}(\theta)d\theta
\end{align*}
Next, we look at the operators $\mcl{A}$, $\mcl{B}_1$ and $\mcl{B}_{2}$. Here, $\mcl{B}_2$ will be zero, as the input $u$ does not appear in the equation for $\dot{\mbf{x}}$, nor does the value of $\mbf{x}_{\text{f}}=\partial_{s}^2\mbf{x}$ depend on $u$. For the remaining operators, we find that they are equal to
\begin{matlab}
\begin{verbatim}      
 >> A = PIE.A
 A = 
      [] | [] 
      ---------
      [] | T.R 

 A.R = 
       [0.5] | [0] | [0]  

 >> B1 = PIE.B1
 B1 =
              [] | [] 
      ------------------
      [-s^2+2*s] | B1.R   
\end{verbatim}
\end{matlab}
suggesting that the fundamental state $\mbf{x}_{\text{f}}$ must satisfy
\begin{align*}
    \dot{u}(t) -\int_{0}^{s}\theta\dot{\mbf{x}}_{\text{f}}(t,\theta)d\theta - \int_{s}^{1}s\dot{\mbf{x}}_{\text{f}}(t,\theta)d\theta &= \frac{1}{2}\mbf{x}_{\text{f}}(t) + [-s^2 + 2s]w(t),    &
    s&\in[0,1]
\end{align*}
This leaves only the output equations. Here, since there is no feed-through from $w$ into $z$ or $y$, the operators $\mcl{D}_{11}$ and $\mcl{D}_{21}$ will both be zero. However, despite the actuator input $u$ not appearing in the PDE equations for $z$ and $y$, the contribution of $u$ to the BCs means that the value of the PDE state $\mbf{x}=\mcl{T}\mbf{x}_{\text{f}}+\mcl{T}_u u$ also depends on the value of $u$, and therefore $\mcl{D}_{12}$ and $\mcl{D}_{22}$ are nonzero. In particular, we find that
\begin{matlab}
\begin{verbatim}      
 >> C1 = PIE.C1
 C1 = 
      [] | [0.5*s^2-s] 
      -----------------
      [] | C1.R 

 >> D12 = PIE.D12
 D12 = 
      [1] | [] 
      -----------
      []  | D12.R 

 >> D22 = PIE.D22
 D22 =
      [1] | [] 
      -----------
      []  | D22.R  
     
 >> C2 = PIE.C2
 C2 =
      [] | [-s] 
      ----------
      [] | C2.R                
\end{verbatim}
\end{matlab}
Here, only the parameters \texttt{Q1} of \texttt{C1} and \texttt{C2} are non-empty, as the operators $\mcl{C}_1$ and $\mcl{C}_2$ map infinite-dimensional states $\mbf{x}_{\text{f}}\in L_2[0,1]$ to finite-dimensional outputs $z,y\in\R$. Similarly, only the parameters \texttt{P} of \texttt{D12} and \texttt{D22} are non-empty, as $\mcl{D}_{12}$ and $\mcl{D}_{22}$ map the finite-dimensional input $u\in\R$ to finite-dimensional outputs $z,y\in\R$. Combining with the earlier results, we find that the PDE~\eqref{eq:ex_PDE2PIE_io} may be equivalently represented by the PIE
\begin{align*}
    \dot{u}(t)-\int_{0}^{s}\theta\dot{\mbf{x}}_{\text{f}}(t,\theta)d\theta - \int_{s}^{1}s\dot{\mbf{x}}_{\text{f}}(t,\theta)d\theta &= \frac{1}{2}\mbf{x}_{\text{f}}(t,s) + s(2-s)w(t),    \qquad   s\in[0,1]  \\
    z(t)&=\int_{0}^{1}\bbl(\frac{1}{2}s^2-s\bbr)\mbf{x}_{\text{f}}(t,s)ds + u(t),  \\
    y(t)&=-\int_{0}^{1}s\mbf{x}_{\text{f}}(t,s)ds + u(t),   \qquad \text{where $\mbf{x}_{\text{f}}(t,s)=\partial_{s}^{2}\mbf{x}(t,s)$}.
\end{align*}

\chapter{Simulating PDE, DDE and PIE Solutions with PIESIM}\label{ch:PIESIM}

In the previous chapter, we showed how an equivalent PIE representation of any well-posed, linear PDE or DDE can be computed in PIETOOLS. This partial integral representation offers several benefits to the partial differential and delay-differential formats, including analysis of e.g. stability properties using convex optimization as we show in Chapter~\ref{ch:LPIs}. In this chapter, we demonstrate another benefit of the PIE representation, namely the relative ease of numerical simulation of solutions in this format. In particular, in Section~\ref{sec:PIESIM:theory}, we provide some theoretical background on how solutions to PIEs can be numerically simulated. In Section~\ref{sec:PIESIM:functions}, we then show how the PIETOOLS function \texttt{PIESIM} can be used to simulate solutions to PDE, DDE and PIE systems. Finally, in Section~\ref{sec:PIESIM:demonstrations}, we show how solutions computed with \texttt{PIESIM} can be plotted, and several examples of how \texttt{PIESIM} can be used.

\section{Simulating Solutions in the PIE Representation}\label{sec:PIESIM:theory}
Consider a PIE system of the form
\begin{align}\label{eq:sim_pie}
    \mcl T \dot{\mbf v}(t) +\mcl T_w \dot{w}(t)+\mcl T_u \dot{u}(t)&= \mcl A \mbf v(t) + \mcl B_1 w(t)+\mcl B_2 u(t),\quad \mbf v(0,s) = \mbf v_0(s), \quad s\in[-1,1], \quad t\ge 0,\notag\\
    z(t) &= \mcl C \mbf v(t) + \mcl D_{11} w(t)+\mcl D_{12}u(t),
\end{align}
where $\mbf{v}(t)\in L_2^{n}[-1,1]$ for $t\geq 0$. To simulate solutions to this system, the state $\mbf{v}(t)$ at each time $t\geq 0$ is projected on to a finite dimensional vector space spanned by Chebyshev polynomials up to order $N$. The resulting projected solution, $\mbf v_N \approxeq \mbf v$, is of the form
\begin{align}\label{eq:sim_n_approx}
    \mbf v_N(t,s) &= \sum_{i=0}^N \alpha_i(t) P_i(s), \qquad s\in [-1,1], \quad t\ge 0,
\end{align}
where $P_i$ is a Chebyshev polynomial of degree $i$.

By substituting $\mbf v_N$ in the form of (\ref{eq:sim_n_approx}) into the PIE and taking an inner product with each basis Chebyshev polynomial function, we obtain an ODE approximation of the PIE as
\begin{align}\label{eq:sim_ode}
    T\dot{\alpha}(t) +T_w\dot{w}(t)+T_u\dot{u}(t)&= A \alpha(t) + B_1w(t)+ B_2u(t), \qquad \alpha = \bmat{\alpha_0&\alpha_1&\cdots&\alpha_N}^T,
\end{align}
where $T$, $T_w$, $T_u$, $A$ and $B_i$ are matrices obtained by spectral discretization of the PI operators $\mcl{T}$ through $\mcl{B}_i$. Given an initial condition $\alpha(0)=\alpha_{\text{I}}\in\R^{N+1}$, the resulting ODE~\eqref{eq:sim_ode} can then be solved analytically if $T$ is invertible, or numerically using any time-stepping scheme. Here, an initial value for $\alpha$ can be obtained from the initial conditions for $\mbf{v}$ in the PIE~\eqref{eq:sim_pie} using the identity
\begin{align*}
    \alpha_k(0) = \frac{2}{N+1}\sum_{i=0}^{N+1}\mbf v(0,s_i)P_K(s_i), \qquad s_i = \cos{\frac{i\pi}{N}}, \qquad k= 0, 1, \cdots, N.
\end{align*}
Given the ODE solution $\alpha(t)$ to~\ref{eq:sim_ode}, an approximate solution $\mbf{v}_N$ to the PIE can be reconstructed using the equation \eqref{eq:sim_n_approx}. If the PIE corresponds to a PDE or DDE system, the PIE solution $\mbf{v}_N$ can then be used to find an approximate solution $\mbf{x}_N$ of the original PDE or DDE using the identity
\begin{align}\label{eq:sim_original_sol}
    \mbf{x}(t,s) &= \mcl T\mbf v(t,s) + \mcl T_w w(t)+  \mcl T_u u(t)\approx T\mbf v_N(t,s) + T_w w(t)+T_u u(t).
\end{align}

\section{PIE Simulation Using PIETOOLS}\label{sec:PIESIM:functions}
PIETOOLS 2022 supports simulation of PIEs obtained by transforming a DDE/DDF or a PDE with spatial derivatives up to order $2$. Users can run PIE simulations either by using the script \texttt{solver\_PIESIM.m} located in the PIESIM folder or by directly calling the function \texttt{PIESIM()}. A guide to use both the methods will be presented in the following subsections.

\subsection{Following the \texttt{solver\_PIESIM} Template Script}
The script file is organized as a template for the user to demonstrate a typical workflow involved in simulation of the PIEs. The simulation procedure can be broken down into the following steps.
\begin{enumerate}
    \item Rescale the spatial dimension in case of PDEs (the delay interval in case of DDEs) to the interval $[-1,1]$, or alternatively use \texttt{rescalePIE()} function after conversion
    \item (Mandatory) Define a PDE/DDE model
    \begin{itemize}
        \item Alternatively, the above step can be skipped if the PIE is already known, however, that feature is restricted to executive function file
    \end{itemize}
    \item Convert PDE/DDE to a PIE (Use converter functions)
    \item (Optional) Define all the simulation settings listed below under \texttt{opts} structure 
    \begin{itemize}
        \item Order of approximation \texttt{N}
        \item Time of simulation \texttt{tf}
        \item Time integration scheme \texttt{intScheme}
        \item Order of time integration scheme \texttt{Norder} (only if \texttt{intScheme=1})
        \item System type \texttt{type}
        \item Time step \texttt{dt}
        \item Flag to turn plotting on or off \texttt{plot}
    \end{itemize}
    \item (Optional) Define all the system inputs listed below under \texttt{uinput} structure 
    \begin{itemize}
        \item Initial condition \texttt{ic.ODE} and \texttt{ic.ODE} (or \texttt{ic.DDE} for DDE model, \texttt{ic.PDE} for PIE model) as MATLAB symbolic expression in \texttt{sx} (space) and \texttt{st}
        \item Disturbance (MATLAB symbolic expression in time \texttt{st}) \texttt{w}
        \item Control input (MATLAB symbolic expression in time \texttt{st}) \texttt{u} 
        \item Flag for comparing with exact solution \texttt{ifexact}
        \item Exact solution as a MATLAB symbolic expression in time \texttt{st} and space \texttt{sx} under \texttt{exact}
    \end{itemize}
	\item Call \texttt{PIESIM(model,opts,uinput)} with the above inputs
    \item Reconstruct PDE/DDE solution (performed inside \texttt{PIESIM()})
\end{enumerate}

\begin{boxEnv}{Warning}
\texttt{uinput.ic.PDE} is used for both PDE and PIE inputs to \texttt{PIESIM()} function, however, when a \textbf{PDE} is passed \texttt{uinput.ic.PDE} stores initial conditions for the PDE, whereas when a \textbf{PIE} is passed \texttt{uinput.ic.PDE} stores initial conditions for the PIE!  
\end{boxEnv}

% \textbf{PDE/DDE systems can be defined} either by using the standard formats described in the previous sections or by using the \texttt{examples\_pde\_library\_PIESIM} to choose from a large number of predefined PDEs. Similarly, DDEs can be chosen from the DDE examples library.

% \textbf{Define simulation parameters} under an \texttt{opts} structure with the fields \texttt{N}, \texttt{tf}, and \texttt{intScheme} which correspond to degree of approximation (the highest order of Chebyshev polynomials to be used), final time of simulation starting from 0, and type of time-integration used to solve the ODE. The intscheme parameter can take values 1,2, and 3. Lower order gives more robust results whereas higher order provides more accuracy. Specifically, value of 1 uses Backward difference scheme for time integration, a value of 2 used gauss integration of the analytical solution ($T$ matrix must be invertible). The value 3 performs MATLAB symbolic integration.

% Finally, the executive PIESIM function is called using the above defined parameters and a solution is returned along with some standard plots.

\subsection{Using the \texttt{PIESIM} Function}
By using the function, the user can directly employ simulation results in a other scripts. While the systems definitions for PDE/DDE/PIE and \texttt{uinput}, and \texttt{opts} have the same format. This function can be called using the syntax
\begin{matlab}
    >> solution = PIESIM(system, opts, uinput, n\_pde);
\end{matlab}
where \texttt{system} is a PDE, DDE, or PIE structure whereas \texttt{opts} is the simulation options structure and \texttt{uinput} is a structure as described in the previous subsection both of which are optional for PDE/DDE systems. Notice, the additional input \texttt{n\_pde} that is required only if the \texttt{system} defined is of the type `PIE'. In case a PIE is directly passed, the user has to provide order of differentiablity of the original PDE/DDE states that generated the PIE form as the fourth argument. Furthermore, the information in the \texttt{uinput}, such as IC and input, must now correspond to the PIE (and NOT the original system). The syntax to simulate a PIE directly using executive function is given by
\begin{matlab}
    >> solution = PIESIM(PIE, opts, uinput, n\_pde);
\end{matlab}
where \texttt{n\_pde} is a vector describing the number of continuous, differentiable and twice differentiable states in the original PDE/DDE, in the same order. For example, \texttt{n\_pde = [n0,n1,n2]} implies the original PDE/DDE has \texttt{n0} continuous states, \texttt{n1} differentiable states, and \texttt{n2} twice differentiable states.

This function returns an output structure \texttt{solution} with the fields
\begin{itemize}
\item \texttt{tf} - scalar - actual final time of the solution
\item \texttt{final.pde} - array of size $(N+1) \times n_s$, $n_s=n_0+n_1+n_2$ - PDE (distributed state) solution at a final time 
\item  \texttt{final.ode} - array of size $n_x$ - ODE solution at a final time 
\item  \texttt{final.observed} - array of size $n_y$ - final value of observed outputs
\item  \texttt{final.regulated} - array of size $n_z$ - final value of regulated outputs 
\item \texttt{timedep.dtime} - array of size $1 \times N_{steps}$ - array of temporal stamps (discrete time values) of the time-dependent solution (only if \texttt{intScheme=1})
\item \texttt{timedep.pde} - array of size $(N+1) \times n_s \times N_{steps}$ - time-dependent solution of $n_s$ PDE (only if \texttt{intScheme=1}) (distributed) states of the primary PDE system
\item \texttt{timedep.ode} - array of size $n_x \times N_{steps}$ - time-dependent solution of $n_x$ ODE states, where $N_{steps}= tf/dt$ (only if \texttt{intScheme=1})
\item \texttt{timedep.observed} - array of size $n_y \times N_{steps}$ - time-dependent value of observed outputs (only if \texttt{intScheme=1})
\item \texttt{timedep.regulated} - array of size $n_z \times N_{steps}$ - time-dependent value of regulated outputs (only if \texttt{intScheme=1})
\end{itemize}

\begin{boxEnv}{Warning}
    In contrast to \texttt{uinput.ic.pde}, irrespective of the input, the \texttt{solution.timedep.pde} always stores $\mcl T\mbf v+\mcl T_w w+\mcl T_u u$ as the value as described in Equation \ref{eq:sim_original_sol}.
\end{boxEnv}

\section{Plotting the solution}\label{sec:PIESIM:demonstrations}
Simulation of PIEs, either by using solver file or directly using the executive file, generates figures that plot time-varying ODE states (from 0 to final simulation time) for each ODE state. Further, a plot showing the spatial distribution (only at final simulation time) is generated for all distributed states in the PIE. Note, that plots correspond to the solution of the \textbf{original PDE/DDE} and not the PIE solution. In general, given a PIE of the form \eqref{eq:sim_pie}, the solution which is plotted is 
\eqref{eq:sim_original_sol}.
However, the value of time-varying distributed state at each simulation time step is stored under the solution output which is given by the executive file. The user can use this output to generate further plots to calculate outputs $z$ as defined in \eqref{eq:sim_pie}.

\subsection{PIESIM Demonstration A: PDE example}
In this section, we will demonstrate the standard process involved in simulation of PIEs using an example from the \texttt{examples\_pde\_library\_PIESIM} file.

\begin{codebox}
First, we load an example from the library file and then edit simulation parameters directly by accessing the corresponding properties. The full code for the simulation demonstrated in this section is given below. 
\begin{matlab}
    >> syms sx st;\\
    >> init\_option=1;\\
    >> [PDE,uinput]=examples\_pde\_library\_PIESIM(4);\\
    >> uinput.exact(1) = -2*sx*st-sx\^{}2;\\
    >> uinput.ifexact=true;\\
    >> uinput.w(1) = -4*st-4;\\
    >> uinput.ic.PDE=-sx\^{}2;\\
    >> opts.plot='yes';\\
    >> opts.N=8;\\
    >> opts.tf=0.1;\\
    >> opts.intScheme=1;\\
    >> opts.Norder = 2;\\
    >> opts.dt=1e-3;\\
    >> solution = PIESIM(PDE,opts,uinput);
\end{matlab}
Next, we will explain each line used in the above code.
\end{codebox}

First, to choose an example from the examples library, we set the library flag to one. Then, an example can be selected by specifying the example number (between 1 and 34) to load the example.
\begin{matlab}
    >> init\_option=1;\\
    >> [PDE,uinput]=examples\_pde\_library\_PIESIM(4);
\end{matlab}

\noindent In this demonstration, we choose the example
\begin{align*}
&\dot{\mbf x}(t,s) = s \partial_s^2 \mbf x(t,s), \qquad s\in[0,2], t\ge 0\\
&\mbf x(t,0) = 0, \qquad \mbf x(t,2) = w_1(t), \qquad \mbf x(0,s) = -s^2.
\end{align*}
where $w_1(t) = -4t-4$. For this PDE, the exact solution is known and is given by the expression $\mbf x(t,s) = -2st-s^2$ which can be specified under \texttt{uinput} structure for verification as shown below. 
\begin{matlab}
    >> uinput.exact(1) = -2*sx*st-sx\^{}2;\\
    >> uinput.ifexact=true;
\end{matlab}

\noindent Likewise, other input parameters such as, initial conditions and inputs at the boundary are specified as
\begin{matlab}
    >> uinput.w(1) = -4*st-4;\\
    >> uinput.ic.PDE=-sx\^{}2;
\end{matlab}

\noindent where \texttt{sx, st} are MATLAB symbolic objects. However, the example automatically defines the \texttt{uinput} structure and the above expressions are provided for demonstration only and not necessary when using a PDE from example library. Once the PDE and system inputs are defined, we have to specify simulation parameters under \texttt{opts} structure. First, we turn on the plotting by specifying the plotting flag as show below.
\begin{matlab}
    >> opts.plot='yes';\\
    >> opts.N=8;\\
    >> opts.tf=0.1;\\
    >> opts.intScheme=1;\\
    >> opts.Norder = 2;\\
    >> opts.dt=1e-3;
\end{matlab}

\noindent We specify the order of discretization (order of chebyshev polynomials to be used in approximation of PDE solution $N$) and time of simulation. Then, we select a time-integration scheme (backward difference scheme is used in this demonstration, however, one can chose symbolic integration by setting \texttt{intScheme=2}). In solver file, time step and order of truncation are automatically chosen for backward difference scheme as shown below, however, the user can modify these parameters as needed.

\noindent Now that we have defined all necessary parameters we can run the simulation using the command for the PDE example 
\begin{matlab}
    solution = PIESIM(PDE,opts,uinput);
\end{matlab}

\begin{figure}[ht]
	\centering
	\includegraphics[scale=0.75]{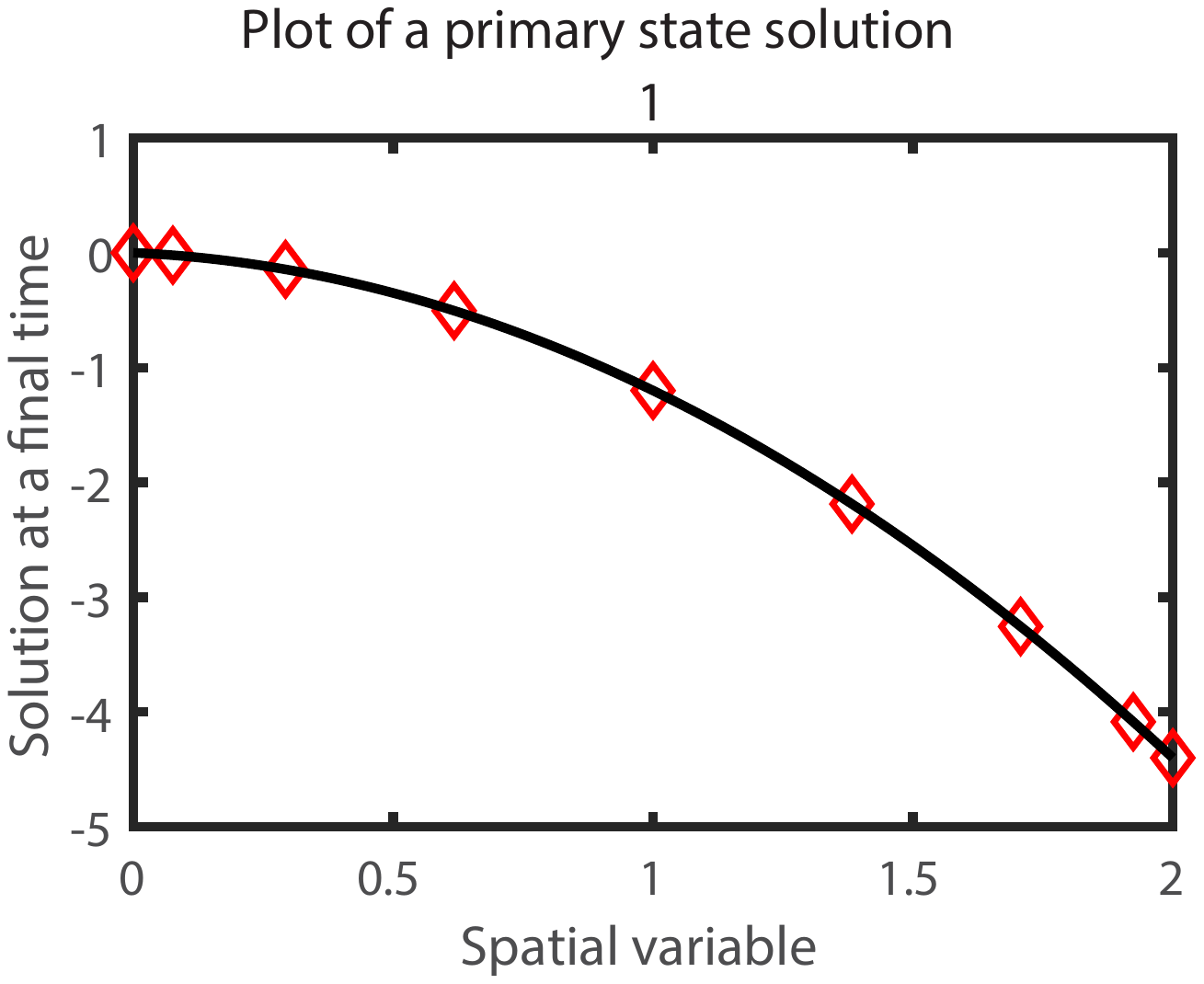}
	\caption{Default PIESIM plot: Final solution $\mbf x(t,\cdot)$ at $t= 0.1 s$ obtained by analytical expression (solid line) and by PIE simulation (dots) are plotted against space $[0,2]$}\label{fig:piesimdemoA}
\end{figure}
 which produces the plot Fig.~\ref{fig:piesimdemoA}, where we see that simulation result in dots whereas the analytical solution is plotted using the solid line. If the analytical solution is not passed, then only the dots are plotted.

\subsection{PIESIM Demonstration B: DDE example}
Simulation of DDEs can be performed using the same steps as the simulation of PDEs, however, there is a main difference which is the first argument to \texttt{PIESIM()} function a DDE model. Consider a DDE system,
\begin{align*}
    \dot{x}(t) &= \bmat{-1&2\\0&1}x(t) + \bmat{0.6&-0.4\\0&0}x(t-\tau_a) + \bmat{0&0\\0&-0.5}x(t-\tau_b)+\bmat{1\\1}w(t)+\bmat{0\\1}u(t)\\
    y(t) &= \bmat{1&0\\0&1\\0&0} x(t)+\bmat{0\\0\\0.1} u(t)
\end{align*}
where $\tau_a = 1$ and $\tau_b = 2$. This system can be stored in a \texttt{DDE} structure as shown below.
\begin{matlab}
    >> DDE.A0=[-1 2;0 1]; DDE.Ai{1}=[.6 -.4; 0 0]; \\
    >> DDE.Ai{2}=[0 0; 0 -.5]; DDE.B1=[1;1];\\
    >> DDE.B2=[0;1]; DDE.C1=[1 0;0 1;0 0];\\ 
    >> DDE.D12=[0;0;.1]; DDE.tau=[1,2];\\
    >> solution = PIESIM(DDE,opts,uinput);
\end{matlab}
We can use the same \texttt{opts} and \texttt{uinput} from previous section (except initial conditions, which we will leave undefined and default to zero). Note \texttt{uinput.u} must be set to zero since we are simulating without a controller.
\begin{matlab}
>> uinput.exact(1) = -2*sx*st-sx\^{}2;\\
    >> uinput.ifexact=true;\\
    >> uinput.w(1) = -4*st-4;\\
    >> uinput.u(1) =0;\\
    >> opts.plot='yes';\\
    >> opts.N=8;\\
    >> opts.tf=0.1;\\
    >> opts.intScheme=1;\\
    >> opts.Norder = 2;\\
    >> opts.dt=1e-3;\\
\end{matlab}
Then, use the command to simulate the system which gives us a default plot. However, using the code below, we can change the plot features by accessing the data in \texttt{solution} structure.
\begin{matlab}
>> plot(solution.timedep.dtime,solution.timedep.ode,'--o',...\\
   'MarkerIndices',1:50:length(solution.timedep.dtime));\\
>> ax = gca;\\
>> set(ax,'XTick',solution.timedep.dtime(1:150:end));\\
>> lgd1 = legend('$x_1$','$x_2$','Interpreter','latex'); lgd1.FontSize = 10.5; \\
>> lgd1.Location = 'northeast';\\
>> title('Time evolution of the Delay system states, x, ...\\
     without state feedback control');\\
>> ylabel('$x_1(t), ~~~x_2(t)$','Interpreter','latex','FontSize',15);\\
>> xlabel('t','FontSize',15,'Interpreter','latex');
\end{matlab}

\begin{figure}[ht]
	\centering
	\includegraphics[width=\textwidth]{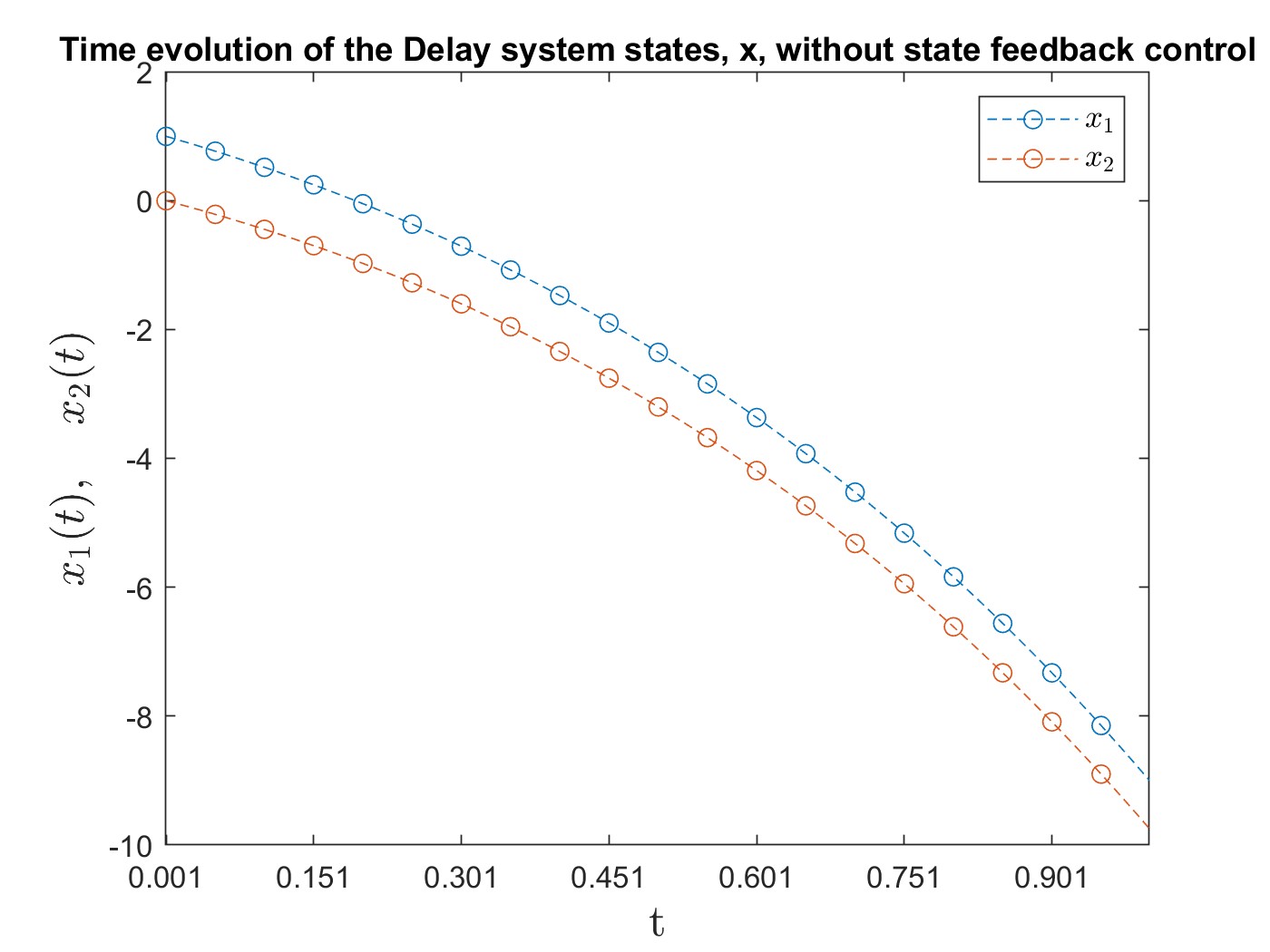}
	\caption{DDE solutions $x_1(t)$ and $x_2(t)$ obtained by by PIE simulation are plotted against time $t$}\label{fig:piesimdemo_ddeB}
\end{figure}

All the code presented here can be found in the demo file packaged with PIETOOLS named ``PIE\_simulation\_DEMO''.

\subsection{PIESIM Demonstration C: PIE example}
In the above DDE example, the solution is clearly unstable. For this system, we can design a stabilizing controller for the PIE form and then simulate the solution again to see the behaviour of the control system under the action of the controller. However, conversion of a PIE back to DDE/PDE format is often tricky. Fortunately, a PIE need not be converted back to DDE or PDE format to simulate its solutions, because \texttt{PIESIM()} can be used to simulate a system in PIE form directly. For example, for the given DDE, 
\begin{align*}
    \dot{x}(t) &= \bmat{-1&2;0&1}x(t) + \bmat{0.6&-0.4\\0&0}x(t-1) +\bmat{0&0\\0&-0.5}x(t-2)+\bmat{1\\1}w(t)+\bmat{0\\1}u(t)\\
    y(t) &= \bmat{1&0\\0&1\\0&0} x(t)+\bmat{0\\0\\0.1} u(t)
\end{align*}
we can find the controller by using the following code.
\begin{matlab}
    >> DDE.A0=[-1 2;0 1]; DDE.Ai{1}=[.6 -.4; 0 0]; \\
    >> DDE.Ai{2}=[0 0; 0 -.5]; DDE.B1=[1;1];\\
    >> DDE.B2=[0;1]; DDE.C1=[1 0;0 1;0 0];\\ 
    >> DDE.D12=[0;0;.1]; DDE.tau=[1,2];\\
    >> DDE=initialize\_PIETOOLS\_DDE(DDE);\\
    >> DDF=minimize\_PIETOOLS\_DDE2DDF(DDE);\\
    >> PIE=convert\_PIETOOLS\_DDF(DDF);\\
    >> [prog, K, gamma, P, Z] = PIETOOLS\_Hinf\_control(PIE);\\
    >> PIE = closedLoopPIE(PIE,K);\\
    >> ndiff = [0, PIE.T.dim(2,1)];
\end{matlab}
Then, we can define the \texttt{uinput} and \texttt{opt} to define simulation parameters for the PIESIM. Using the same inputs and options as before, we have
\begin{matlab}
>> uinput.exact(1) = -2*sx*st-sx\^{}2;\\
    >> uinput.ifexact=true;\\
    >> uinput.w(1) = -4*st-4;\\
    >> opts.plot='yes';\\
    >> opts.N=8;\\
    >> opts.tf=0.1;\\
    >> opts.intScheme=1;\\
    >> opts.Norder = 2;\\
    >> opts.dt=1e-3;
        >> solution=PIESIM(PIE,opts,uinput,ndiff);
\end{matlab}
See the file `DDE\_simulation\_demo.m' in the `GetStarted\_DOCS\_DEMOS'. Given the (optimal) controller gains $K$, we construct a closed loop PIE system using \texttt{closedLoopPIE(PIE,K)}. This closed system can then be simulated using the \texttt{PIESIM()} function. Finally, we plot the system behavior under the state feedback designed for the PIE using the following code.

\begin{matlab}
>> plot(solution.timedep.dtime,solution.timedep.ode,'--o',...\\
'MarkerIndices',1:50:length(solution.timedep.dtime));\\
>> ax = gca;\\
>> set(ax,'XTick',solution.timedep.dtime(1:150:end));\\
>> lgd1 = legend('$x_1$','$x_2$','Interpreter','latex'); lgd1.FontSize = 10.5; \\
>> lgd1.Location = 'northeast';\\
>> title('Time evolution of the Delay system states, x,...\\
with state feedback control');\\
>> ylabel('$x_1(t), ~~~x_2(t)$','Interpreter','latex','FontSize',15);\\
>> xlabel('t','FontSize',15,'Interpreter','latex');
\end{matlab}

The main difference from PDE/DDE simulations to PIE simulations is the new input argument \texttt{ndiff} which describes how many states are differentiable. In case of DDEs, all states with delays are at least once differentiable along the delay variable and hence are all placed under \texttt{n1}. The other arguments such as \texttt{opts} and \texttt{uinput} are same as the inputs described in earlier sections. Behaviour of the DDE solution (for same initial conditions) simulated using a PIE is shown in Figure \ref{fig:piesimdemo_pieB}.
\begin{figure}[H]
	\centering
	\includegraphics[width=\textwidth]{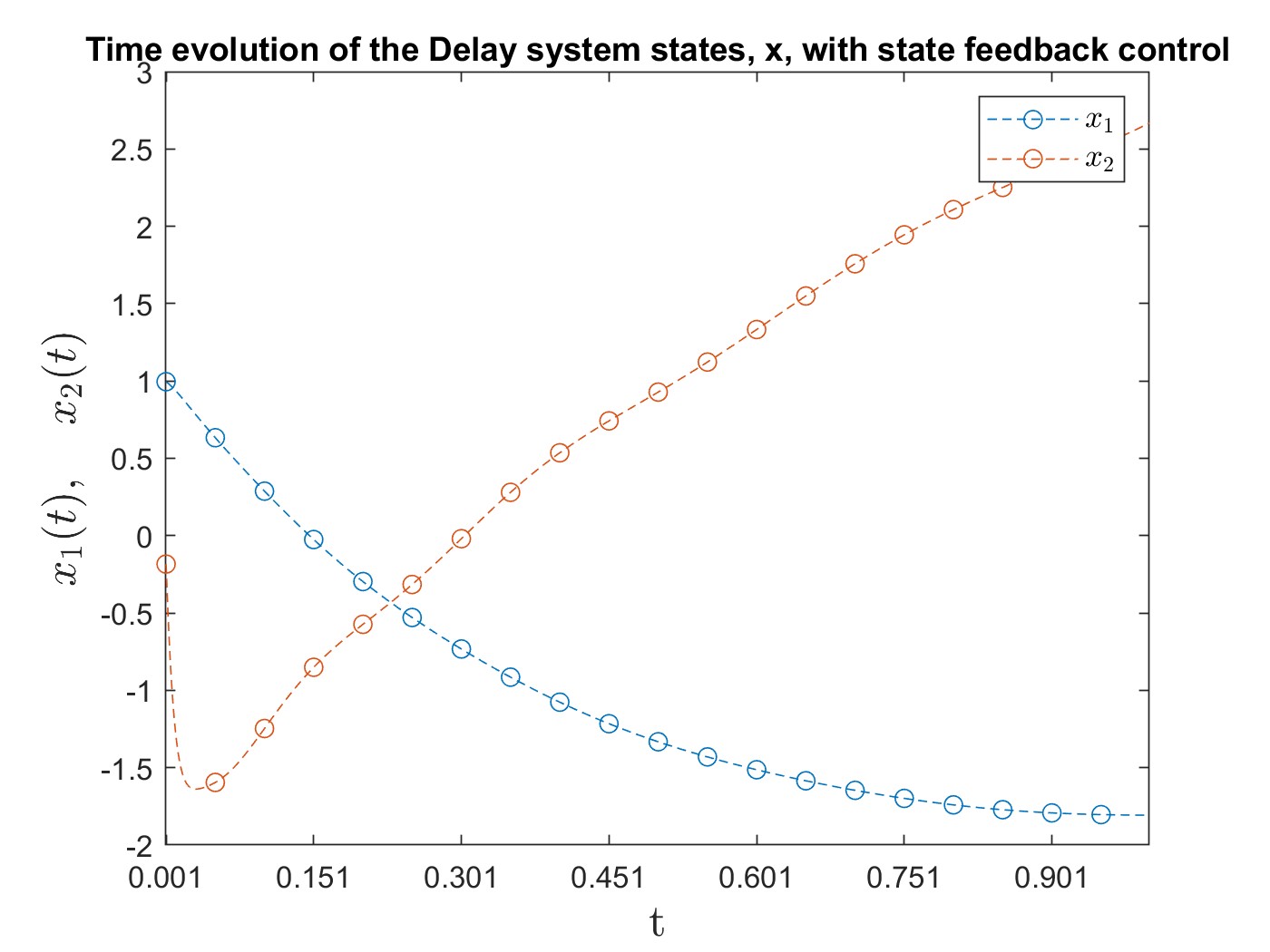}
	\caption{DDE solution $x_1(t)$ and $x_2(t)$ obtained by PIE simulation for the closed loop PIE of the previous DDE example is plotted against time $t$}\label{fig:piesimdemo_pieB}
\end{figure}

All the code presented here can be found in the demo file packaged with PIETOOLS named ``PIE\_simulation\_DEMO''.

% \begin{boxEnv}{\textbf{Note:}}
% When passing a PIE as input to \texttt{PIESIM}, the initial conditions \texttt{uinput.ic.PDE} should correspond to the initial conditions of the PIE state $\mbf{v}$, not the PDE or DDE state $\mbf{x}(t)=\mcl T\mbf v(t) + \mcl T_w w(t)+\mcl T_u u(t)$.\\
% However, given some inputs $w$ and $u$, the simulated state \texttt{solution} returned by \texttt{PIESIM} \textbf{always} corresponds to the DDE or PDE solution $\mbf{x}(t)=\mcl T\mbf v(t) + \mcl T_w w(t)+\mcl T_u u(t)$, \textbf{never} the solution $\mbf{v}(t)$ to the PIE, irrespective of the system type (PDE/DDE/PIE) passed to \texttt{PIESIM}. 
% \end{boxEnv}

%
%%%%%%%%%%%%%%%%%%%%%%%%%%%%%%%%%%%%%%%%%%%%%%%%%%%%%%%%%%

\chapter{Declaring and Solving Convex Optimization Programs on PI Operators}\label{ch:LPIs}

In Chapter~\ref{ch:PIE} we showed that, using PIETOOLS, we can derive an equivalent PIE representation of any well-posed system of linear partial differential and delay-differential equations. This PIE representation is completely free of boundary conditions and continuity constraints that appear in any well-posed PDE, allowing analysis of PIEs to be performed without having to explicitly account for such additional constraints. In addition, PIEs are parameterized by PI operators, which can be added and multiplied, and for which concepts of e.g. positivity are well-defined. This allows us to impose positivity and negativity constraints on PI operators, referred to as linear PI inequalities (LPIs), to define convex optimization programs for testing properties (such as stability) of PIEs. 

In this chapter, we show how these convex optimization problems can be implemented in PIETOOLS. In particular, in Sections~\ref{sec:LPIs:prog_struct} and~\ref{sec:pivars}, we show how an LPI optimization program can be initialized, and how (PI operator) decision variables can be added to this program structure. Next, in Section~\ref{sec:LPIs:constraints} we show how PI operator equality and inequality constraints can be specified, followed by how an objective function can be set for the program in Section~\ref{sec:LPIs:obj_fun}. In Sections~\ref{sec:LPIs:solve} and~\ref{sec:LPIs:getsol}, we then show how the optimization program can be solved, and how the obtained solution can be extracted respectively. Finally, in Section~\ref{sec:executives-settings}, we show how pre-defined executive files can be used to solve standard LPI optimization programs for PIEs, and how properties in these optimization programs can be modified using the \texttt{settings} files.

We illustrate the use of each of the functions in this chapter with the following example. A demo file ``Hinf\_optimal\_estimator\_DEMO'' declaring and solving the LPI in this example has also been included in PIETOOLS; see Section~\ref{sec:demos:estimator}

\begin{Statebox}{\textbf{Example}}

Consider the problem of designing an $H_{\infty}$-optimal estimator of the form 
\begin{align}\label{eq:LPIs:PIE_example}
	\mcl T \dot{\hat{\mbf{v}}}(t) &=\mcl A\mbf{\hat{v}}(t)+\mathcal{L}\bl(y(t)-\hat{y}(t)\br), & & &
    \mcl T \dot{\mbf{v}}(t)&=\mcl A\mbf{v}(t)+\mcl{B}_1w(t), \notag\\
	\hat{z}(t) &= \mcl{C}_1\mbf{\hat{v}}(t),   &   &\text{for a PIE},  &
    z(t) &= \mcl{C}_1\mbf v(t) + \mcl{D}_{11}w(t),  \notag\\
	\hat{y}(t) &= \mcl{C}_2\mbf{\hat{v}}(t),   & & &
    y(t) &= \mcl{C}_2\mbf{v}(t) + \mcl{D}_{21}w(t),
\end{align}
% for a PIE
% \begin{align}\label{eq:LPIs:PIE_example}
% 	\mcl T \dot{\mbf v}(t)&=\mcl A\mbf v(t)+\mcl{B}_1w(t),\notag\\
% 	z(t) &= \mcl{C}_1\mbf v(t) + \mcl{D}_{11}w(t),\notag\\
% 	y(t) &= \mcl{C}_2\mbf{v}(t) + \mcl{D}_{21}w(t).
% \end{align}
aiming to find an operator $\mcl{L}$ that minimize the gain $\frac{\|\hat{z}-z\|_{L_2}}{\|w\|_{L_2}}$. 
To construct such an operator, we solve the LPI,
\begin{align}\label{eq:LPIs:LPI_example}
	&\min\limits_{\gamma,\mcl{P},\mcl{Z}} ~~\gamma&\notag\\
	&\mcl{P}\succ0, &%\notag\\
	&\hspace{-1ex}Q:=\bmat{-\gamma I& -\mcl D_{11}^{\top}&-(\mcl P\mcl B_1+\mcl Z\mcl D_{21})^*\mcl T\\(\cdot)^*&-\gamma I&\mcl C_1\\(\cdot)^*&(\cdot)^*&(\mcl P\mcl A+\mcl Z\mcl C_2)^*\mcl T+(\cdot)^*}\preccurlyeq 0&
\end{align}
so that, for any solution $(\gamma,\mcl{P},\mcl{Z})$ to this problem, letting $\mcl{L}:=\mcl{P}^{-1} \mcl{Z}$, the estimation error will satisfy $\norm{z-\hat{z}} \leq \gamma \norm{w}$. For more details on this LPI, and additional examples of LPIs and their applications, we refer to Chapter~\ref{ch:LPI_examples}. 

We will solve the LPI~\eqref{eq:LPIs:LPI_example} for the PIE associated to the PDE
\begin{align}\label{eq:LPIs:PDE_example}
    && \dot{\mbf{x}}(t,s)&=\partial_{s}^2\mbf{x}(t,s) + 4\mbf{x}(t,s) + w(t), & &&    s&\in[0,1] &&\nonumber\\
    \text{with BCs}& & 0&=\mbf{x}(t,0)=\partial_{s}\mbf{x}(t,1),  \nonumber\\
    \text{and outputs}& & z(t)&=\int_{0}^{1}\mbf{x}(t,s)ds + w(t),   \nonumber\\
    &&y(t)&=\mbf{x}(t,1). 
\end{align}
We declare this PDE using the command line parser as
\begin{matlab}
\begin{verbatim}
 >> pvar s t
 >> PDE = sys();
 >> x = state('pde');    w = state('in');
 >> y = state('out');    z = state('out');
 >> eqs = [diff(x,t) == diff(x,s,2) + 4*x + w;
           z == int(x,s,[0,1]) + w;
           y == subs(x,s,1);
           subs(x,s,0) == 0;
           subs(diff(x,s),s,1) == 0];
 >> PDE = addequation(PDE,eqs);
 >> PDE = setObserve(PDE,y);
\end{verbatim}  
\end{matlab}
We convert the PDE to an equivalent PIE using \texttt{convert}, and extract the defining PI operators so that these can be used to declare the LPI~\eqref{eq:LPIs:LPI_example}
\begin{matlab}
\begin{verbatim}
 >> PIE = convert(PDE,'pie');         PIE = PIE.params;
 >> T = PIE.T;    
 >> A = PIE.A;      C1 = PIE.C1;      C2 = PIE.C2;
 >> B1 = PIE.B1;    D11 = PIE.D11;    D21 = PIE.D21;
\end{verbatim}
\end{matlab}
% \begin{matlab}
% \begin{verbatim}
%  >> T = PIE.T;     A = PIE.A;       B1 = PIE.B1;
%  >> C1 = PIE.C1;   D11 = PIE.D11;   
%  >> C2 = PIE.C2;   D21 = PIE.D21;
% \end{verbatim}
% \end{matlab}

\end{Statebox}

\section{Initializing an Optimization Problem Structure}\label{sec:LPIs:prog_struct}

In PIETOOLS, optimization programs are stored as program structures \texttt{prog}. These structures keep track of the free variables in the optimization program, the decision variables in the optimization program, the constraints imposed upon these decision variables, and the objective function in terms of these decision variables. To initialize an optimization program structure, call the function \texttt{sosprogram}, passing as primary argument a vector of free variables that appear in the problem, and possibly passing decision variables to appear in the program as second argument:
\begin{matlab}
\begin{verbatim}
 >> pvar s1 s2;                            % Declare free (polynomial) variables
 >> dpvar d1 d2;                           % Initialize decision variables
 >> prog = sosprogram([s1;s2], [d1;d2]);   % Initialize the program structure
\end{verbatim}
\end{matlab}
When initializing a program structure for an LPI, it is crucial that the free variables \texttt{poly\_vars} include the spatial variables and dummy variables that appear in the relevant PI operators.

\begin{Statebox}{\textbf{Example}}

For the LPI~\eqref{eq:LPIs:LPI_example}, the free variables that appear in the optimization program will be $s$ and $\theta$, the spatial variables that appear in the PI operators. We initialize the optimization program structure as
\begin{matlab}
\begin{verbatim}
 >> vars = PIE.vars(:)
 vars = 
   [     s]
   [ theta]
   
 >> prog = sosprogram(vars)
 prog = 

   struct with fields:

             var: [1×1 struct]
            expr: [1×1 struct]
        extravar: [1×1 struct]
       objective: []
         solinfo: [1×1 struct]
        vartable: {2×1 cell}
          varmat: [1×1 struct]
     decvartable: {}
\end{verbatim}
\end{matlab}
This initialize an empty optimization program in the free variables \texttt{vars=[s;theta]}. The field \texttt{vartable} will then contain the names of the free variables that appear in the optimization program,
\begin{matlab}
\begin{verbatim}
 >> prog.vartable
 ans =
   2×1 cell array

    {'s'    }
    {'theta'}
\end{verbatim}
\end{matlab}
matching the variables that appear in the \texttt{PIE} structure.
\end{Statebox}

% Here we use \texttt{vars = PIE.vars(:)} to ensure that the free variables used in the optimization program match the spatial and dummy variables used in the PI operators in the PIE representation.

% For example, if our PIE~\eqref{eq:LPIs:PIE_example} is 1D, with spatial variable $s\in[a,b]$ and dummy variables $\theta\in[a,b]$, we can initialize a program structure for the LPI~\eqref{eq:LPIs:LPI_example} as
% \begin{matlab}
%  >> pvar s theta;\\
%  >> prog = sosprogram([s,theta]);
% \end{matlab}
% where we use \texttt{s} and \texttt{theta} to represent $s$ and $\theta$ in the optimization problem. Similarly, if our PIE is 2D, with spatial variables $(x,y)\in[a,b]\times[c,d]$ and dummy variables $(\theta,\nu)\in[a,b]\times [c,d]$, we can initialize a program structure as
% \begin{matlab}
%  >> pvar x y theta nu;\\
%  >> prog = sosprogram([x,y,theta,nu]);
% \end{matlab}
% The output \texttt{prog} is a SOSTOOLS program structure, to which we can add decision variables, constraints, and an objective function, defining the desired optimization problem. It is important to make sure that the variables in the optimization program structure match those in the PI operators appearing in the LPI.

\begin{boxEnv}{\textbf{Note:}}
To represent LPI optimization programs, PIETOOLS utilizes the \texttt{sosprogram} optimization program structure from SOSTOOLS 4.00~\cite{sostools}. For additional options allowed by SOSTOOLS not mentioned here, we refer to the SOSTOOLS 4.00 manual.
% Therefore, in addition to the PI operator decision variables and equality/inequality constraints that PIETOOLS allows to be added to the optimization program structure, polynomial SOS variables and constraints as defined in SOSTOOLS can also be added to the optimization program structure. Since such polynomial variables/constraints do not explicitly occur in typical LPI programs, we will not discuss them in this manual, referring to the SOSTOOLS 4.00 manual for more information.
\end{boxEnv}

\section{Declaring Decision Variables}\label{sec:pivars}
Having discussed how to initialize an optimization program structure \texttt{prog}, in this section, we show how decision variables can be added to the optimization program structure. For the purposes of implementing LPIs, we distinguish three types of decision variables: standard scalar decision variables (Subsection~\ref{sec:sosdecvar}), positive semidefinite PI operator decision variables (Subsection~\ref{sec:poslpivar}), and indefinite PI operator decision variables (Subsection~\ref{sec:lpivar}).

\subsection{\texttt{sosdecvar}}\label{sec:sosdecvar}
The simplest decision variables in LPI programs are represented by scalar \texttt{dpvar} objects, and can be declared using \texttt{dpvar}. The function \texttt{sosdecvar} must then be used to add the decision variable to the optimization program structure:
\begin{matlab}
\begin{verbatim}
 >> dpvar d1;                   % Initialize a new decision variable
 >> prog = sosdecvar(prog,d1);  % Add the decision variable to the program
\end{verbatim}
\end{matlab}

\begin{Statebox}{\textbf{Example}}

In the LPI~\eqref{eq:LPIs:LPI_example}, $\gamma$ is a scalar decision variables, that appears both in the constraints and the objective function. To declare this variable, we simply call
\begin{matlab}
\begin{verbatim}
 >> dpvar gam;
 >> prog = sosdecvar(prog, gam)
 prog = 

   struct with fields:
 
             var: [1×1 struct]
            expr: [1×1 struct]
        extravar: [1×1 struct]
       objective: 0
         solinfo: [1×1 struct]
        vartable: {2×1 cell}
          varmat: [1×1 struct]
     decvartable: {'gam'}
\end{verbatim}
\end{matlab}
first initializing the variable \texttt{gam} as a \texttt{dpvar} object representing $\gamma$, and subsequently declaring it as decision variables in the optimization program, adding it to the field \texttt{decvartable}.

\end{Statebox}

\subsection{\texttt{poslpivar}}\label{sec:poslpivar}

In PIETOOLS, positive semidefinite PI operator decision variables $\mcl{P}\succcurlyeq 0$ are parameterized by positive matrices $P\succcurlyeq 0$ as $\mcl{P}=\mcl{Z}_d^* T\mcl{Z}_d\succeq 0$, where $\mcl{Z}_{d}$ is a PI operator parameterized by monomials of degree of at most $d$ (see Theorem~\ref{th:positivity} in Appendix~\ref{appx:PI_theory}). Such PI operators can be declared using the function \texttt{poslpivar}:
\begin{matlab}
 >> [prog,P] = poslpivar(prog,n,I,d,opts);
\end{matlab}
This function takes three mandatory inputs.
\begin{itemize}
	\item \texttt{prog}: An sosprogram structure to which to add the PI operator decision variable.
	\item \texttt{n}: A $2\times 1$ vector \texttt{[n0;n1]} specifying the dimensions of $\mcl{P}:\sbmat{\R^{n_0}\\L_2^{n_1}[a,b]}\rightarrow\sbmat{\R^{n_0}\\L_2^{n_1}[a,b]}$ for a 1D operator, or a $4\times 1$ vector \texttt{[n0;n1;n2;n3]} specifying the dimensions for a 2D operator $\mcl{P}:\sbmat{\R^{n_0}\\L_2^{n_1}[a,b]\\L_2^{n_2}[c,d]\\L_2^{n_3}[[a,b]\times[c,d]]}\rightarrow \sbmat{\R^{n_0}\\L_2^{n_1}[a,b]\\L_2^{n_2}[c,d]\\L_2^{n_3}[[a,b]\times[c,d]]}$.
	\item \texttt{I}: a $1\times 2$ array \texttt{[a,b]} specifying the interval $[a,b]$ of the variables in \texttt{P} in 1D, or a $2\times 2$ array \texttt{[a,b;c,d]} specifying the spatial domain $[a,b]\times[c,d]$ of the variables in \texttt{P} in 2D.
	\item \texttt{d} (optional): 
    \begin{itemize}
     \item 1D: A cell structure of the form $\texttt{\{a,[b0,b1,b2]\}}$, specifying the degree \texttt{a} of $s$ in $Z_1(s)$, the degree \texttt{b0} of $s$ in $Z_2(s,\theta)$, the degree \texttt{b1} of $\theta$ in $Z_2(s,\theta)$, and the degree \texttt{b2} of $s$ and $\theta$ combined in $Z_2(s,\theta)$ (see Thm.~\ref{th:positivity}).

     \item 2D: A structure with fields  \texttt{d.dx,d.dy,d.d2}, specifying degrees for operators along $x\in[a,b]$, along $y\in[c,d]$, and along both $(x,y)\in[a,b]\times[c,d]$; call \texttt{help poslpivar\_2d} for more information.
    \end{itemize}

	\item \texttt{opts} (optional): This is a structure with fields 
	\begin{itemize}
		\item \texttt{exclude}: $4\times 1$ vector with 0 and 1 values. Excludes the block $T_{ij}$ (see Thm.~\ref{th:positivity}) if $i$-th value is 1. Binary $16\times1$ array in 2D; call \texttt{help poslpivar\_2d} for more information.
		\item \texttt{psatz}: Sets $g(s)=1$ if set to 0, and $g(s) = (b-s)(s-a)$ if set to 1 in 1D. In 2D, sets $g(x,y)=(b-x)(x-a)(d-y)(y-c)$ if \texttt{psatz=1}.
		\item \texttt{sep}: Binary scalar value, constrains $\texttt{P.R.R1=P.R.R2}$ if set to 1. In 2D, this field is a $6\times 1$ array; call \texttt{help poslpivar\_2d} for more information.
  \end{itemize}
\end{itemize}
The output is a program structure \texttt{prog} with new decision variables and a \texttt{dopvar} class object \texttt{P} representing a positive semidefinite PI operator decision variable. The \texttt{poslpivar} function has other experimental features to impose sparsity constraints on the $T$ matrix of Theorem \ref{th:positivity}, which should be used with caution. Use \texttt{help poslpivar} for more information. %Note that positive definite \texttt{dopvar} objects have only 2 dimensions since they need to be self-adjoint.

Note that, to enforce $\mcl{P}\succeq 0$ only for $s\in[a,b]$ (or $(x,y)\in[a,b]\times[c,d]$), the option \texttt{psatz} can be used as
\begin{matlab}
\begin{verbatim}
 >> [prog,P] = poslpivar(prog,n,I,d);
 >> opts.psatz = 1;
 >> [prog,P2] = poslpivar(prog,n,I,d,opts);
 >> P = P+P2;
\end{verbatim}
\end{matlab}
This will allow $P$ to be negative definite outside of the specified domain \texttt{I}, allowing for more freedom in the optimization problem. However, since it involves declaring a second PI operator decision variable \texttt{P2}, it may also substantially increase the computational complexity associated with setting up and solving the optimization problem.

Note also that the output of operator \texttt{P} of \texttt{poslpivar} is only positive semidefinite, i.e. $\mcl{P}\succeq 0$. To ensure strict positive definiteness, a (small) positive constant $\epsilon$ can be added to the operator as e.g.
\begin{matlab}
\begin{verbatim}
 >> ep = 1e-5;
 >> P = P + ep;
\end{verbatim}
\end{matlab}
This amounts to adding a matrix $\epsilon I$ to the parameters \texttt{P.P} and \texttt{P.R.R0} in 1D, or \texttt{P.R00}, \texttt{P.Rxx\{1\}}, \texttt{P.Ryy\{1\}} and \texttt{P.R22\{1,1\}} in 2D, ensuring that $\mcl{P}\succ 0$.

\begin{Statebox}{\textbf{Example}}
In the LPI~\eqref{eq:LPIs:LPI_example}, we have one positive definite decision variable $\mcl{P}\succeq 0$. This operator $\mcl{P}$ should have the same dimensions $\sbmat{n_0\\n_1}$ as the operator $\mcl{T}:\sbmat{\R^{n_0}\\L_2^{n_1}[0,1]}\rightarrow \sbmat{\R^{n_0}\\L_2^{n_1}[0,1]}$, which in our case are $n_0=0$ and $n_1=1$:
 \begin{matlab}
\begin{verbatim}
 >> Pdim = T.dim(:,1)
 Pdim =

      0
      1      
\end{verbatim}
\end{matlab}
Similarly, we require the spatial domain of the operator $\mcl{P}$ to match the spatial domain of the PIE as
\begin{matlab}
\begin{verbatim}
 >> Pdom = PIE.dom
 Pdom =

      0     1
\end{verbatim}
\end{matlab}
%We could also use e.g. \texttt{Pdom=T.I}, or specify \texttt{Pdom=[0,1]} explicitly, so long as we make sure that the domain of the spatial variables in the operator $\mcl{P}$ matches that of the spatial variables in the PIE. 
We then specify the degrees of the monomials in the operator $\mcl{Z}_d$ defining $\mcl{P}=\mcl{Z}^*_{d} P\mcl{Z}_{d}$ as
\begin{matlab}
\begin{verbatim}   
 >> Pdeg = {6,[2,3,5],[2,3,5]}
 Pdeg =
  
   1×3 cell array

     {[6]}    {[2 3 5]}    {[2 3 5]}
\end{verbatim}
\end{matlab}
%We define the degrees as $\texttt{Pdeg}=\{d_0,d_1,d_2\}$, where \texttt{Pdeg{2}(1)} corresponds to the maximal degree of the variable $s$ in $Z_{d_1}(s,\theta)$, \texttt{Pdeg{2}(2)} the maximal degree of the variable $\theta$ in $Z_{d_1}(s,\theta)$, and \texttt{Pdeg{2}(3)} the maximal combined degree of $s$ and $\theta$ in $Z_{d_1}(s,\theta)$. Increasing these degrees, the freedom in our choice of $\mcl{P}$ will increase, meaning we are more likely to find a solution to the LPI if it exists. However, the computational complexity of the optimization problem also increase with the maximal degrees. 
These degrees were chosen based on some trial and error, finding that they produce an accurate solution, whilst still being relatively small.

To reduce the size of the optimization program, we also require \texttt{P.R.R1=P.R.R2} using the option \texttt{sep},
\begin{matlab}
\begin{verbatim}
 >> opts.sep = 1;
\end{verbatim}
\end{matlab}
decreasing the freedom in our choice of $\mcl{P}$, but also decreasing computational complexity.
Then, we can declare a positive PI operator $\mcl{P}$ of the proposed specifications as
\begin{matlab}
\begin{verbatim}
 >> [prog,P] = poslpivar(prog,Pdim,Pdom,Pdeg,opts)
 prog = 

   struct with fields:

             var: [1×1 struct]
            expr: [1×1 struct]
        extravar: [1×1 struct]
       objective: [362×1 double]
         solinfo: [1×1 struct]
        vartable: {2×1 cell}
          varmat: [1×1 struct]
     decvartable: {362×1 cell}

 P =
      [] | [] 
      ---------
      [] | P.R 

 P.R =
     Too big to display. Use ans.R.R0 | Too big to display. Use ans.R.R1 |
\end{verbatim}
\end{matlab}
The output object \texttt{P} is a \texttt{dopvar} objects, representing a PI operator decision variable rather than a fixed PI operator. As such, the parameters \texttt{P}, \texttt{Q1}, \texttt{Q2} and \texttt{R} defining this PI operator are \texttt{dpvar} class objects, parameterized by decision variables \texttt{coeff\_i}. These decision variables \texttt{coeff\_i} are collected in the field \texttt{decvartable} of the program structure (along with the previously declared decision variable \texttt{gam}), and represent the matrix $T$ in the expansion $\mcl{P}=\mcl{Z}_{d}^* T\mcl{Z}_{d}$, constrained to satisfy $T\succcurlyeq 0$.

In the LPI~\eqref{eq:LPIs:LPI_example}, the operator $\mcl{P}$ is required to be strictly positive definite, satisfying $\mcl{P}\succeq\epsilon I$ for some $\epsilon>0$. To enforce this, we let $\epsilon=10^{-6}$, and ensure strict positivity of $\mcl{P}$ by calling
\begin{matlab}
\begin{verbatim}
 >> eppos = 1e-6;
 >> P.R.R0 = P.R.R0 + eppos*eye(size(P));
\end{verbatim}
\end{matlab}
\end{Statebox}

\subsection{\texttt{lpivar}}\label{sec:lpivar}
A general (indefinite) PI operator decision variable $\mcl{Z}$ can be declared in PIETOOLS using the \texttt{lpivar} function as shown below.
\begin{matlab}
 >> [prog,Z] = lpivar(prog,n,I,d);
\end{matlab}
This function takes three mandatory inputs.
\begin{itemize}
	\item \texttt{prog}: An sosprogram structure to which to add the PI operator decision variable.
	\item \texttt{n}: a $2\times 2$ array \texttt{[m0,n0;m1,n1]} specifying the dimensions of $\mcl{Z}:\sbmat{\R^{n_0}\\L_2^{n_1}[a,b]}\rightarrow\sbmat{\R^{m_0}\\L_2^{m_1}[a,b]}$ for a 1D operator, or $4\times 2$ array \texttt{[m0,n0;m1,n1;m2,n2;m3,n3]} specifying the dimensions for a 2D operator $\mcl{Z}:\sbmat{\R^{n_0}\\L_2^{n_1}[a,b]\\L_2^{n_2}[c,d]\\L_2^{n_3}[[a,b]\times[c,d]]}\rightarrow \sbmat{\R^{m_0}\\L_2^{m_1}[a,b]\\L_2^{m_2}[c,d]\\L_2^{m_3}[[a,b]\times[c,d]]}$.
	\item \texttt{I}: a $1\times 2$ array \texttt{[a,b]} specifying the interval $[a,b]$ of the variables in \texttt{Z} in 1D, or a $2\times 2$ array \texttt{[a,b;c,d]} specifying the spatial domain $[a,b]\times[c,d]$ of the variables in \texttt{Z} in 2D.
	\item \texttt{d} (optional):
    \begin{itemize}
	    \item 1D: An array  structure of the form $\texttt{[b0,b1,b2]}$, specifying the degree \texttt{b0} of $s$ in \texttt{Q1, Q2, R0}, the degree \texttt{b1} of $\theta$ in \texttt{Z.R.R1, Z.R.R2}, and the degree \texttt{b2} of $s$ in \texttt{Z.R.R1, Z.R.R2}.
        \item 2D: A structure with fields  \texttt{d.dx,d.dy,d.d2}, specifying degrees for operators along $x\in[a,b]$, along $y\in[c,d]$, and along both $(x,y)\in[a,b]\times[c,d]$; call \texttt{help poslpivar\_2d} for more information.
	\end{itemize}
\end{itemize}
The output is a \texttt{dopvar} object \texttt{Z} representing an indefinite PI decision variable, and an updated program structure to which the decision variable has been added. 

\begin{boxEnv}{\textbf{Note:}}
Indefinite PI operator decision variables $\mcl{Z}$ need not be symmetric. As such, the second argument \texttt{n} to the function \texttt{lpivar} must specify both the output (row) dimensions of the operator $\mcl{Z}$ (\texttt{n(:,1)}), and the input (column) dimensions of the operator $\mcl{Z}$ (\texttt{n(:,2)}).
\end{boxEnv}

\begin{Statebox}{\textbf{Example}}

For the LPI~\eqref{eq:LPIs:LPI_example}, we need to declare a PI operator decision variable $\mcl{Z}$ which need not be positive or negative definite. Here, if $\mcl{C}_{2}:\sbmat{\R^{n_0}\\L_2^{n_1}[a,b]}\rightarrow \sbmat{\R^{m_0}\\L_2^{m_1}[a,b]}$, then $\mcl{Z}:\sbmat{\R^{m_0}\\L_2^{m_1}[a,b]}\rightarrow \sbmat{\R^{n_0}\\L_2^{n_1}[a,b]}$, so we specify the dimensions and domain of $\mcl{Z}$ as
\begin{matlab}
\begin{verbatim}
 >> Zdim = C2.dim(:,[2,1])
 Zdim =

      0     1
      1     0
      
 >> Zdom = PIE.dom;
\end{verbatim}
\end{matlab}
so that in our case $\mcl{Z}:\R\rightarrow L_2[0,1]$. As such, only the parameter \texttt{Z.Q2} will be non-empty, and we will allow this parameter to be a quartic function of $s$, specifying degrees
\begin{matlab}
\begin{verbatim}
 >> Zdeg = [4,0,0];
\end{verbatim}
\end{matlab}
Using the function \texttt{lpivar}, we then declare an indefinite PI operator decision variable as
\begin{matlab}
\begin{verbatim}
 >> [prog,Z] = lpivar(prog,Zdim,Zdom,Zdeg)
 prog = 

   struct with fields:

             var: [1×1 struct]
            expr: [1×1 struct]
        extravar: [1×1 struct]
       objective: [367×1 double]
         solinfo: [1×1 struct]
        vartable: {2×1 cell}
          varmat: [1×1 struct]
     decvartable: {367×1 cell}
 
 Z =
                                                                     [] | [] 
      -----------------------------------------------------------------------
      [coeff_362+coeff_363*s+coeff_364*s^2+coeff_365*s^3+coeff_366*s^4] | Z.R 

 Z.R =
     [] | [] | [] 
\end{verbatim}
\end{matlab}
where we note that \texttt{Z.Q2} is indeed a quartic function of $s$. The resulting \texttt{dopvar} object \texttt{Z} only involves five decision variables \texttt{coeff}, increasing the total number of decision variables in \texttt{prog.decvartable} to 367.
    
\end{Statebox}

\section{Imposing Constraints}\label{sec:LPIs:constraints}
Constraints form a crucial aspect of most optimization problems. In LPIs, constraints often appear as inequality or equality conditions on 4-PI objects (e.g. $\mcl{Q}\preceq 0$ or $\mcl{Q}=0$). These constraints can be set up using the functions \texttt{lpi\_ineq} and \texttt{lpi\_eq} respectively, as we show in the next subsections.
	
\subsection{\texttt{lpi\_ineq}}\label{sec:ineq_pi}
Given a program structure \texttt{prog} and \texttt{dopvar} object \texttt{Q} (representing a PI operator variable $\mcl{Q}$), an inequality constraint $\mcl{Q}\succeq 0$ can be added to the program by calling
\begin{matlab}
 >> prog = lpi\_ineq(prog,Q,opts);
\end{matlab}
This function takes three input arguments
\begin{itemize}
	\item \texttt{prog}: An sosprogram structure to which to add the constraint $\mcl{Q}\succeq 0$.
	\item \texttt{Q}: A \texttt{dopvar} or \texttt{dopvar2d} object representing the PI operator $\mcl{Q}$. 
	\item \texttt{opts} (optional): This is a structure with fields 
	\begin{itemize}
		\item \texttt{psatz}: Binary scalar indicating whether to enforce the constraint only locally. If \texttt{psatz=0} (default), a constraint $\mcl{Q}=\mcl{R}_1$ will be enforced, where $\mcl{R}\succeq 0$ will be declared as a \texttt{dopvar} or \texttt{dopvar2d} object
        \begin{matlab}
         >> R1=poslpivar(prog,n,I,d,opts1)
        \end{matlab}
        with \texttt{opts1.psatz=0}. If \texttt{psatz=1}, a constraint $\mcl{Q}=\mcl{R}_1+\mcl{R}_2$ will be enforced, with $\mcl{R}_1$ as before, and $\mcl{R}_2\succeq 0$ declared as a \texttt{dopvar} or \texttt{dopvar2d} object 
        \begin{matlab}
         >> R2=poslpivar(prog,n,I,d,opts2)
        \end{matlab}
        with \texttt{opts2.psatz=0}. Using \texttt{psatz=1} allows the constraint $\mcl{Q}\succeq 0$ to be violated outside of the spatial domain \texttt{I}, but may also substantially increase the computational effort in solving the problem.
  \end{itemize}
\end{itemize}
Calling \texttt{lpi\_ineq}, a modified optimization structure \texttt{prog} is returned with the constraints \texttt{Q>=0} included. Note that the constraint imposed by \texttt{lpi\_ineq} is always \textbf{non-strict}. For strict positivity, an offset $\epsilon>0$ may be introduced, enforcing $\mcl{Q}-\epsilon\succeq 0$ to ensure $\mcl{Q}\succeq\epsilon I\succ 0$. This may be implemented as \texttt{lpi\_ineq(prog,Q-ep)} where \texttt{ep} is a small positive number.

\begin{Statebox}{\textbf{Example}}
For the LPI~\eqref{eq:LPIs:LPI_example}, we impose the constraint $\mcl{Q}\preccurlyeq 0$ by calling
\begin{matlab}
\begin{verbatim}
 >> nw = size(B1,2);       nz = size(C1,1);
 >> Q = [-gam*eye(nw),     -D11',        -(P*B1+Z*D21)'*T;
         -D11,             -gam*eye(nz), C1;
        -T'*(P*B1+Z*D21),   C1',         (P*A+Z*C2)'*T+T'*(P*A+Z*C2)];
 >> prog = lpi_ineq(prog,-Q);
 prog = 
 
   struct with fields:

             var: [1×1 struct]
            expr: [1×1 struct]
        extravar: [1×1 struct]
       objective: [20248×1 double]
         solinfo: [1×1 struct]
        vartable: {2×1 cell}
          varmat: [1×1 struct]
     decvartable: {20248×1 cell}
\end{verbatim}
\end{matlab}
We note that \texttt{lpi\_ineq} enforce the constraint $-\mcl{Q}\succcurlyeq 0$ by introducing a new PI operator decision variable $\mcl{R}\succcurlyeq 0$, and imposing the equality constraint $-\mcl{Q}=\mcl{R}$. In doing so, \texttt{lpi\_ineq} tries to ensure that the degrees of the polynomial parameters defining $\mcl{R}$ match those of the parameters defining $-\mcl{Q}$, in this case parameterizing $\mcl{R}$ by $20248-367 = 19881$ decision variables (check the size of \texttt{decvartable}). An operator $\mcl{R}$ involving more or fewer decision variables can also be declared manually, at  which point the constraint $-\mcl{Q}=\mcl{R}$ can be enforced using \texttt{lpi\_eq}, as we show next.

\end{Statebox}

\subsection{\texttt{lpi\_eq}}\label{sec:eq_pi}
Given a program structure \texttt{prog} and \texttt{dopvar} object \texttt{Q} (representing a PI operator decision variable $\mcl{Q}$), an equality constraint $\mcl{Q}=0$ can be added to the program by calling
\begin{matlab}
 >> prog = lpi\_eq(prog,Q);
\end{matlab}
This returns a modified optimization structure \texttt{prog} with the constraints \texttt{Q=0} included. Note that equality constraints such as $\mcl{Q}_1=\mcl{Q}_2$ may be equivalently written as e.g $\mcl{Q}_1-\mcl{Q}_2=0$, which may be implemented as \texttt{prog = lpi\_eq(prog,Q-R)}. 

\begin{Statebox}{\textbf{Example}}
 For the LPI~\eqref{eq:LPIs:LPI_example}, we can enforce the constraint $\mcl{Q}\precceq 0$ by declaring a new positive semidefinite PI operator decision variable $\mcl{R}\succeq 0$, and enforcing $\mcl{Q}=-\mcl{R}\preccurlyeq 0$ as
\begin{matlab}
\begin{verbatim}
 >> nw = size(B1,2);       nz = size(C1,1);
 >> Q = [-gam*eye(nw),     -D11',        -(P*B1+Z*D21)'*T;
         -D11,             -gam*eye(nz), C1;
        -T'*(P*B1+Z*D21),   C1',         (P*A+Z*C2)'*T+T'*(P*A+Z*C2)];
 >> [prog_alt,R] = poslpivar(prog,Q.dim(:,1),Q.I);
 >> prog_alt = lpi_eq(prog_alt,Q+R)
 prog_alt = 

   struct with fields:

             var: [1×1 struct]
            expr: [1×1 struct]
        extravar: [1×1 struct]
       objective: [467×1 double]
         solinfo: [1×1 struct]
        vartable: {2×1 cell}
          varmat: [1×1 struct]
     decvartable: {467×1 cell}
\end{verbatim}
\end{matlab}
The new program structure will include the constraints $\mcl{R}\succcurlyeq 0$ and $\mcl{Q}+\mcl{R}=0$. The resulting number of decision variables (467) is substantially smaller than in the program obtained using \texttt{lpi\_ineq}, as the default degrees used by \texttt{poslpivar} are relatively small. Specifying higher degrees \texttt{d} in declaring \texttt{R = poslpivar(prog,Q.dim(:,1),Q.I,d)}, we can increase the freedom in the optimization problem, potentially increasing accuracy but also increasing computational complexity.
\end{Statebox}

\section{Defining an Objective Functions}\label{sec:LPIs:obj_fun}
Aside from constraints, many optimization problems also involve an objective function, aiming to minimize or maximize some function of the decision variables. To \textbf{minimize} the value of a \textbf{linear} objective function $f(d_1,\hdots,d_1)$, where $d_1,\hdots,d_n$ are decision variables, call \texttt{sossetobj} with as first argument the program structure, and as second argument the function $f(d)$:
\begin{matlab}
\begin{verbatim}
 >> prog = sossetobj(prog, f);
\end{verbatim}
\end{matlab}
where \texttt{f} must be a \texttt{dpvar} object representing the objective function. For example, a function $f(\gamma_1,\gamma_2)=\gamma_1+5\gamma_2$ could be specified as objective function using
\begin{matlab}
\begin{verbatim}
 >> dpvar gam1 gam2;
 >> f = gam1 + 5*gam2;
 >> prog = sosdecvar(prog,[gam1;gam2]);
 >> prog = sossetobj(prog, f);
\end{verbatim}
\end{matlab}
\begin{boxEnv}{\textbf{Note:}}
    \begin{itemize}
        \item The objective function must always be linear in the decision variables.
        \item Only one (scalar) objective function can be specified.
        \item In solving the optimization program, the value of the objective function will always be minimized. Thus, to maximize the value of the objective function $f(d)$, specify $-f(d)$ as objective function. 
    \end{itemize}
\end{boxEnv}

\begin{Statebox}{\textbf{Example}}
 In the LPI~\eqref{eq:LPIs:LPI_example}, the value of the decision variable $\gamma$ is minimized. As such, the objective function in this problem is just $f(\gamma)=\gamma$, which we declare as
\begin{matlab}
\begin{verbatim}
 >> prog = sossetobj(prog, gam);
 prog = 

   struct with fields:

             var: [1×1 struct]
            expr: [1×1 struct]
        extravar: [1×1 struct]
       objective: [23776×1 double]
         solinfo: [1×1 struct]
        vartable: {2×1 cell}
          varmat: [1×1 struct]
     decvartable: {23776×1 cell}
\end{verbatim} 
\end{matlab}
The field \texttt{objective} in the resulting structure \texttt{prog} will be a vector with all elements equal to zero, except the first element equal to 1, specifying that the objective function is equal to 1 times the first decision variable in \texttt{decvartable}, which is $\gamma$.
\end{Statebox}

\section{Solving the Optimization Problem}\label{sec:LPIs:solve}

Once an optimization program has been specified as a program structure \texttt{prog}, it can be solved by calling \texttt{sossolve}
\begin{matlab}
\begin{verbatim}
 >> prog_sol = sossolve(prog,opts);
\end{verbatim}
\end{matlab}
Here \texttt{opts} is an optional argument to specify settings in solving the optimization program, with fields
\begin{itemize}
    \item \texttt{opts.solve}: \texttt{char} type object specifying which semidefinite programming (SDP) solver to use. Options include `\texttt{sedumi}' (default), `\texttt{mosek}', `\texttt{sdpt3}', and `\texttt{sdpnalplus}'. Note that these solvers must be separately installed in order to use them.

    \item \texttt{opts.simplify}: Binary value indicating whether the solver should attempts to simplify the SDP before solving. The simplification process may take additional time, but may reduce the time of actually solving the SDP.
\end{itemize}
After calling \texttt{sossolve}, it is important to check whether the problem was actually solved. Using the solver SeDuMi, this can be established looking at e.g. the value of \texttt{feasratio}, which will be close to $+1$ if the problem was successfully solved, and the values of \texttt{pinf} and \texttt{dinf}, which should both be zero if the problem is primal and dual solvable. If \texttt{sossolve} is unsuccessful in solving the problem, the problem may be infeasible, or different settings must be used in declaring the decision variables and constraints (e.g. include higher degree monomials in the positive operators).

\begin{Statebox}{\textbf{Example}}
Having declared the full LPI~\eqref{eq:LPIs:LPI_example}, we can finally solve the problem as
\begin{matlab}
\begin{verbatim}
 >> opts.solver = 'sedumi';
 >> opts.simplify = true;
 >> prog_sol = sossolve(prog,opts);

 Residual norm: 1.1879e-05
 
          iter: 19
     feasratio: 0.8308
          pinf: 0
          dinf: 0
        numerr: 1
        timing: [0.1475 5.4485 0.0113]
       wallsec: 5.6073
        cpusec: 6.3700
\end{verbatim}
\end{matlab}
We note that the problem was not found to be either primal or dual infeasible, and the value of \texttt{feasratio} is fairly close to 1. However, the solver did run into numerical errors, as signified by \texttt{numerr: 1}. Nevertheless, we will refrain from performing an additional test (e.g. rerunning with greater degrees for the monomials defining $\mcl{P}$) for the purposes of this demonstration.

\end{Statebox}

\section{Extracting the Solution}\label{sec:LPIs:getsol}

Calling \texttt{prog\_sol=sossolve(prog)}, a program structure \texttt{prog\_sol} is returned that is very similar to the input structure \texttt{prog}, defining the solved optimization problem. From this solved structure, solved values of the decision variables can be extracted using \texttt{sosgetsol} and \texttt{getsol\_lpivar}, as we show next.

\subsection{\texttt{sosgetol}}\label{sec:sosgetsol}
To obtain the (optimal) value of a decision variable after an LPI optimization program has been solved, the function \texttt{sosgetsol} can be used, passing the solved optimization program structure as first argument, and the considered decision variable as second argument:
\begin{matlab}
\begin{verbatim}
 >> f_val = sosgetsol(prog_sol,f);
\end{verbatim}
\end{matlab}
Here, \texttt{f} must be a \texttt{dpvar} object, representing any polynomail function that is affine in the decision variables that appear in the optimization program. 

\begin{Statebox}

To extract the optimal value of $\gamma$ found when solving the LPI~\eqref{eq:LPIs:LPI_example}, we call
\begin{matlab}
\begin{verbatim}
 >> gam_val = sosgetsol(prog_sol,gam)
 gam_val = 
   1.0028
\end{verbatim}
\end{matlab}
We find that, through proper choice of the operator $\mcl{L}$, the estimator can achieve an $H_{\infty}$-norm $\frac{\|\tilde{z}\|_{L_2}}{\|w\|_{L_2}}\leq 1.0028$.
    
\end{Statebox}

\subsection{\texttt{getsol\_lpivar}}\label{sec:getsol_lpivar}
To retrieve the values of \texttt{dopvar} decision variables after an optimization program has been solved, the function \texttt{getsol\_lpivar} can be used, passing the solved optimization program structure \texttt{prog\_sol} as first argument, and the considered \texttt{dopvar} decision variable as second argument:
\begin{matlab}
\begin{verbatim}
 >> Pop_val = sosgetsol(prog_sol,Pop);
\end{verbatim}
\end{matlab}
Note that sosprogram \texttt{prog\_sol} must be in a solved state (\texttt{sossolve} must have been called) to retrieve the solution for the input \texttt{dopvar} or \texttt{dopvar2d} object \texttt{Pop}. The output \texttt{Pop\_val} will then be \texttt{opvar} or \texttt{opvar2d} class object, representing a fixed PI operator, with the solved (optimal) values of the decision variables substituted into the associated parameters.

\begin{Statebox}{\textbf{Example}}
To extract the values of the operators $\mcl{P}$ and $\mcl{Z}$ for which the LPI~\eqref{eq:LPIs:LPI_example} has been found feasible, we call
\begin{matlab}
\begin{verbatim}
 >> Pval = getsol_lpivar(prog_sol,P);  
 >> Zval = getsol_lpivar(prog_sol,Z);
\end{verbatim}
\end{matlab}    
The resulting object \texttt{Pval} and \texttt{Zval} are \texttt{opvar} objects, representing the values of the operator $\mcl{P}$ and $\mcl{Z}$ for which the LPI~\eqref{eq:LPIs:LPI_example} holds. Using these values, we can compute an operator $\mcl{L}$ such that the Estimator~\eqref{eq:LPIs:PIE_example} satisfies $\frac{\|\tilde{z}\|_{L_2}}{\|w\|_{L_2}}\leq \gamma=1.0028$, as we show next.
\end{Statebox}

When performing estimator or controller synthesis (see also Chapter~\ref{ch:LPI_examples}), the optimal estimator or controller associated to a solved problem has to be constructed from the solved PI operator decision variables. For example, for the estimator in~\eqref{eq:LPIs:PIE_example}, the value of $\mcl{L}$ is determined by the values of $\mcl{P}$ and $\mcl{Z}$ in the LPI~\eqref{eq:LPIs:LPI_example} as $\mcl{L}=\mcl{P}^{-1}\mcl{Z}$. To facilitate this post-processing of the solution, PIETOOLS includes several utility functions. We note that these functions have not been included for 2D operators in PIETOOLS 2022.

\subsubsection{\underline{\texttt{getObserver}}}
For a solution $(\mcl{P},\mcl{Z})$ to the optimal estimator LPI~\eqref{eq:LPIs:LPI_example}, the operator $\mcl{L}$ in the Estimator~\eqref{eq:LPIs:PIE_example} can be computed as
\begin{matlab}
\begin{verbatim}
 >> Lval = getObserver(Pval,Zval);
\end{verbatim}
\end{matlab}
where \texttt{Pval} and \texttt{Zval} are \texttt{opvar} objects representing the (optimal) values of $\mcl{P}$ and $\mcl{Z}$ in the LPI, and \texttt{Lval} is an \texttt{opvar} object representing the associated (optimal) value of $\mcl{L}$ in the estimator.

\begin{Statebox}{\textbf{Example}}
Given the \texttt{opvar} objects \texttt{Pval} and \texttt{Zval}, we can finally construct an optimal observer operator $\mcl{L}$ for the System~\eqref{eq:LPIs:PIE_example} by calling
\begin{matlab}
\begin{verbatim}
 >> Lval = getObserver(Pval,Zval)
 Lval =
                                [] | [] 
    ----------------------------------------
    Too big to display. Use ans.Q2 | Lval.R 

 Lval.R =
     [] | [] | [] 
\end{verbatim}
\end{matlab}
where the expression for \texttt{Lval.Q2} is rather complicated. Nevertheless, using this value for the operator $\mcl{L}$, an $H_{\infty}$ norm $\frac{\|\tilde{z}\|_{L_2}}{\|w\|_{L_2}}\leq \gamma=1.0028$ can be achieved. Increasing the freedom in our optimization problem, e.g. by increasing the degrees of the monomaials defining $\mcl{P}$ and $\mcl{Z}$, we may be able to achieve a tighter bound on the $H_{\infty}$ norm for the obtained operator $\mcl{L}$, or find another operator $\mcl{L}$ achieving a smaller value of the norm.
\end{Statebox}

\subsubsection{\underline{\texttt{getController}}}
For a solution $(\mcl{P},\mcl{Z})$ to the optimal control LPI~\eqref{eq:cont_lpi} (see Section~\ref{sec:LPI_examples:control}, the operator $\mcl{K}=\mcl{Z}\mcl{P}^{-1}$ defining the feedback law $u=\mcl{K}\mbf{v}$ for optimal control of the PIE~\eqref{eq:LPI_examples:control_PIE} can be computed as
\begin{matlab}
\begin{verbatim}
 >> Kval = getController(Pval,Zval);
\end{verbatim}
\end{matlab}
where \texttt{Pval} and \texttt{Zval} are \texttt{opvar} objects representing the (optimal) values of $\mcl{P}$ and $\mcl{Z}$ in the LPI, and \texttt{Kval} is an \texttt{opvar} object representing the associated (optimal) value of $\mcl{K}$ in the feedback law $u=\mcl{K}\mbf{v}$. Note that this feedback law is described in terms of the PIE state $\mbf{v}$, not the state of the associated PDE or TDS. Deriving an optimal controller for the associated PDE or TDS system will require careful consideration of how the PIE state relates to the PDE or TDS state.

\subsubsection{\underline{\texttt{closedLoopPIE}}}
For a PIE~\eqref{eq:LPI_examples:control_PIE} and an operator $\mcl{K}$ defining a feedback law $u=\mcl{K}\mbf{v}$, a PIE corresponding to the closed-loop system for the given feedback law can be computed as
\begin{matlab}
\begin{verbatim}
 >> PIE_CL = getController(PIE_OL,Kval);
\end{verbatim}
\end{matlab}
where \texttt{Kval} is an \texttt{opvar} object representing the (optimal) value of $\mcl{K}$ in the feedback law $u=\mcl{K}\mbf{v}$, \texttt{PIE\_OL} is a \texttt{pie\_struct} object representing the PIE system without feedback, and \texttt{PIE\_CL} is a \texttt{pie\_struct} object representing the closed-loop PIE system with the feedback law $u=\mcl{K}\mbf{v}$ enforced. Note that the resulting system takes no more actuator inputs $u$, so that operators \texttt{PIE\_CL.Tu}, \texttt{PIE\_CL.B2}, \texttt{PIE\_CL.D12}, and \texttt{PIE\_CL.D22} are all empty.

\begin{Statebox}{\textbf{Example}}
 A full code declaring and solving the optimal estimator LPI~\eqref{eq:LPIs:LPI_example} for the PDE~\eqref{eq:LPIs:PDE_example} has been included as a demo file ``Hinf\_optimal\_observer\_DEMO'' in PIETOOLS. See Section~\ref{sec:demos:estimator} for more information. Section~\ref{sec:demos:estimator} also shows how the estimated PDE state associated to the obtained operator $\mcl{L}$ can be simulated with PIESIM.
\end{Statebox}

\section{Running Pre-Defined LPIs: Executives and Settings}\label{sec:executives-settings}

Combining the steps from the previous sections, we find that the $H_{\infty}$-optimal estimator LPI~\eqref{eq:LPIs:LPI_example} can be declared and solved for any given PIE structure \texttt{PIE} using roughly the same code. Therefore, to facilitate solving the $H_{\infty}$-optimal estimator problem, the code has been implemented in an \texttt{executive} file \texttt{PIETOOLS\_Hinf\_estimator}, that is structured roughly as
\begin{matlab}
\begin{verbatim}
 >> % Extract the operators and initialize the LPI program
 >> T = PIE.T;     A = PIE.A;       B1 = PIE.B1;
 >> C1 = PIE.C1;   D11 = PIE.D11;   C2 = PIE.C2;   D12 = PIE.D12;
 >> prog = sosprogram([PIE.vars(:,1);PIE.vars(:,2)]);

 >> % Declare the objective function min{gamma}
 >> dpvar gam;
 >> prog = sosdecvar(prog, gam);
 >> prog = sossetobj(prog, gam);

 >> % Declare the positive operator P>=0
 >> [prog,P] = poslpivar(prog,T.dim(:,1),PIE.dom,dd1,options1);
 >> if override1==0
 >>     % Allow P<=0 outside domain PIE.dom
 >>     [prog,P2] = poslpivar(prog,T.dim(:,1),PIE.dom,dd12,options12);
 >>     P = P + P2;
 >> end
 >> % Enforce strict positivity P>0
 >> P.P = P.P + eppos*eye(size(P.P));
 >> P.R.R0 = eppos2*eye(size(P.R.R0));
 
 >> % Declare the indefinite operator Z
 >> [prog,Z] = lpivar(prog,C2.dim(:,[2,1]),PIE.dom,ddZ);

 >> % Enforce the negativity constraint Q<=0
 >> Q = [-gam*eye(nw),     -D11’,        -(P*B1+Z*D21)’*T;
         -D11,             -gam*eye(nz),  C1;
         -T’*(P*B1+Z*D21),  C1’,          (P*A+Z*C2)’*T+T’*(P*A+Z*C2) + epneg*T'*T];
 >> if sosineq
 >>     % Enforce using lpi_ineq
 >>     prog = lpi_ineq(prog,-Q,opts);
 >> else
 >>     % Enforce using lpi_eq
 >>     [prog,R] = poslpivar(prog,Q.dim(:,1),Q.I,dd2,options2);
 >>     if override2==0
 >>         % Allow R<=0 outside of domain Q.I
 >>         [prog,R2] = poslpivar(prog,Q.dim(:,1),Q.I,dd3,options3);
 >>         R = R+R2;
 >>     end
 >>     % Enforce Q=-R<=0
 >>     prog = lpi_eq(prog,Q+R);
 >> end
 
 >> % Solve the optimization program and extract the solution
 >> prog_sol = sossolve(prog, sos_opts);
 >> gam_val = sosgetsol(prog_sol, gam);
 >> Pval = getsol_lpivar(prog_sol, P);     
 >> Zval = getsol_lpivar(prog_sol, Z);
 >> Lval = getObserver(Pval, Zval);
\end{verbatim}
\end{matlab}
We note that, in this code, that there are several parameters that can be set, including what degrees to use for the PI operator decision variables (\texttt{dd1}, \texttt{ddZ}, \texttt{dd2}, etc.), whether or not to enforce positivity/negativity strictly and/or locally (\texttt{eppos}, \texttt{epneg}, \texttt{override1}, etc.), and what options to use in calling each of the different functions (\texttt{options1}, \texttt{opts}, \texttt{sos\_opts}, etc.). To specify each of the options, the executive files such as \texttt{PIETOOLS\_Hinf\_estimator} can be called with an optional argument \texttt{settings}, as we describe in the following subsection.

\subsection{Settings in PIETOOLS Executives}

When calling an executive function such as \texttt{PIETOOLS\_Hinf\_estimator} in PIETOOLS, a second (optional) argument can be used to specify settings to use in declaring the LPI program. This argument should be a MATLAB structure with fields as defined in Table~\ref{tab:executive_settings_fields}, specifying a value for each of the different options to be used in declaring the LPI program.

\begin{table}[ht]
\begin{tabular}{l|l}
 \texttt{settings} Field  &  \textbf{Application} \\\hline
 \texttt{eppos} & Nonnegative (small) scalar to enforce strict positivity of \texttt{Pop.P}\\
 \texttt{eppos2} & Nonnegative (small) scalar to enforce strict positivity of \texttt{Pop.R0}\\
 \texttt{epneg} & Nonnegative (small) scalar to enforce strict negativity of \texttt{Dop}\\
 \texttt{sosineq} & Binary value, set 1 to use \texttt{sosineq}\\
 \texttt{override1} & Binary value, set 1 to let $\texttt{P2op}=0$\\
 \texttt{override2} & Binary value, set 1 to let $\texttt{De2op}=0$\\
 \texttt{dd1} & 1x3 cell structure defining monomial degrees for \texttt{Pop}\\
 \texttt{dd12} & 1x3 cell structure defining monomial degrees for \texttt{P2op}\\
 \texttt{dd2} & 1x3 cell structure defining monomial degrees for \texttt{Deop}\\
 \texttt{dd3} & 1x3 cell structure defining monomial degrees for \texttt{De2op}\\
 \texttt{ddZ} & 1x3 array defining monomial degrees for \texttt{Zop}\\
 \texttt{options1} & Structure of \texttt{poslpivar} options for \texttt{Pop}\\
 \texttt{options12} & Structure of \texttt{poslpivar} options for \texttt{P2op}\\
 \texttt{options2} & Structure of \texttt{poslpivar} options for \texttt{Deop}\\
 \texttt{options3} & Structure of \texttt{poslpivar} options for \texttt{De2op}\\
 \texttt{opts} & Structure of \texttt{lpi\_ineq} options for enforcing $\texttt{Dop}\leq0$\\
 \texttt{sos\_opts} & Structure of \texttt{sossolve} options for solving the LPI
\end{tabular}
\caption{Fields of settings structure passed on to PIETOOLS executive files}
\label{tab:executive_settings_fields}
\end{table}

To help in declaring settings for the executive files, PIETOOLS includes several pre-defined \texttt{settings} structures, allowing LPI programs of varying complexity to be constructed. In particular, we distinguish \texttt{extreme}, \texttt{stripped}, \texttt{light}, \texttt{heavy} and \texttt{veryheavy} settings, corresponding to LPI programs of increasing complexity. A \texttt{settings} structure associated to each can be extracted by calling the function \texttt{lpisettings}, using
\begin{matlab}
\begin{verbatim}
 >> settings = lpisettings(complexity, epneg, simplify, solver);
\end{verbatim}
\end{matlab}
This function takes the following arguments:
\begin{itemize}
    \item \texttt{complexity}: A \texttt{char} object specifying the complexity for the settings. Can be one of `\texttt{extreme}', `\texttt{stripped}', `\texttt{light}', `\texttt{heavy}', `\texttt{veryheavy}' or `\texttt{custom}'.

    \item epneg: (optional) Positive scalar $\epsilon$ indicating how strict the negativity condition $Q\precceq \epsilon\|\mcl{T}\|^2$ would need to be in e.g. the LPI for stability. Defaults to 0, enforcing $Q\precceq 0$.

    \item simplify: (optional) A \texttt{char} object set to `\texttt{psimplify}' if the user wishes to simplify the SDP produced in the executive before solving it, or set to `\texttt{}' if not. Defaults to `\texttt{}'.

    \item solver: (optional) A \texttt{char} object specifying which solver to use to solve the SDP in the executive. Options include `\texttt{sedumi}' (default), `\texttt{mosek}', `\texttt{sdpt3}', and `\texttt{sdpnalplus}'. Note that these solvers must be separately installed in order to use them.
\end{itemize}
Note that, using higher-complexity settings, the number of decision variables in the optimization problem will be greater. This offers more freedom in solving the optimization program, thereby allowing for (but not guaranteeing) more accurate results, but also (substantially) increasing the computational effort. We therefore recommend initially trying to solve with e.g. \texttt{stripped} or \texttt{light} settings, and only using heavier settings if the executive fails to solve the problem. Note also that PIETOOLS includes a \texttt{custom} settings file, which can be used to declare custom settings for the executives. 

Once settings have been specified, the desired LPI can be declared and solved for a PIE represented by a structure \texttt{PIE} by simply calling the corresponding \texttt{executive} file, solving e.g. the $H_\infty$-optimal estimator LPI~\eqref{eq:LPIs:LPI_example} by calling
\begin{matlab}
\begin{verbatim}
 >> settings = lpisettings('light');
 >> [prog, Lop, gam, P, Z] = PIETOOLS_Hinf_estimator(PIE, settings);
\end{verbatim}
\end{matlab}
If successful, this returns the program structure \texttt{prog} associated to the solved problem, as well as a \texttt{opvar} object \texttt{Lop} and scalar \texttt{gam} respectively corresponding to the operator $\mcl{L}$ in the estimator~\eqref{eq:LPIs:PIE_example} and associated estimation error gain $\gamma$. The function also returns \texttt{dopvar} objects \texttt{P} and \texttt{Z}, corresponding to the unsolved PI operator decision variables in the LPI.

\subsection{Executive Functions Available in PIETOOLS}

In addition to the $H_{\infty}$-optimal estimation LPI, PIETOOLS includes several other executive files to run standard LPI programming tests for a provided PIE. For example, stability of the PIE defined by \texttt{PIE} when $w=0$ can be tested by calling
\begin{matlab}
\begin{verbatim}
 >> settings = lpisettings('light');
 >> [prog] = PIETOOLS_stability(PIE, settings);
\end{verbatim}
\end{matlab}
returning the optimization program structure \texttt{prog} associated to the solved program, and displaying a message of whether the system was found to be stable or not in the command window.

Table~\ref{tab:executive_functions} lists the different executive functions that have already been implemented in PIETOOLS. For each executive, a brief description of its purpose is provided, along with a mathematical description of the LPI that is solved. Each executive can be called for a \texttt{PIE} structure with fields
\begin{matlab}
\begin{verbatim}
       T:  [nx × nx opvar];     Tw: [nx × nw opvar];     Tu: [nx × nu opvar]; 
       A:  [nx × nx opvar];     B1: [nx × nw opvar];     B2: [nx × nu opvar]; 
       C1: [nz × nx opvar];    D11: [nz × nw opvar];    D12: [nz × nu opvar]; 
       C2: [ny × nx opvar];    D21: [ny × nw opvar];    D22: [ny × nu opvar]; 
\end{verbatim}
\end{matlab}
representing a PIE of the form
\begin{align*}%\label{eq:ioPIE_structure}
    \mcl{T}_u\dot{u}(t)+\mcl{T}_w\dot{w}+\mcl{T}\dot{\mbf{x}}_{\text{f}}(t)&=\mcl{A}\mbf{x}_{\text{f}}(t)+\mcl{B}_1 w(t)+\mcl{B}_2 u(t),   \nonumber\\
    z(t)&=\mcl{C}_1\mbf{x}_{\text{f}}(t) + \mcl{D}_{11}w(t) + \mcl{D}_{12}u(t), \nonumber\\
    y(t)&=\mcl{C}_2\mbf{x}_{\text{f}}(t) + \mcl{D}_{21}w(t) + \mcl{D}_{22}u(t).
\end{align*}
For more information on the origin and application of each LPI, see the references provided in the table, as well as Chapter~\ref{ch:LPI_examples}.

\begin{table}[ht]
\hspace*{-1.0cm}%\centering
\setlength{\abovedisplayskip}{-6pt}
\setlength{\belowdisplayskip}{-2pt}
\begin{tabular}{m{6.5cm} | m{10cm}}
    \textbf{Problem} & \textbf{LPI} \\
    \hline\hline
    \multicolumn{2}{l}{\texttt{[prog, P] = PIETOOLS\_stability(PIE, settings)}} \\\hline
    Test stability of the PIE for $w=0$ and $u=0$, by verifying feasibility of the primal LPI~\cite{shivakumar_2019CDC}.  &
    {\begin{flalign*} 
    &   \mcl{P}\succ 0 & \\
	& \mcl{T}^*\mcl{P}\mcl{A}+\mcl{A}^*\mcl{P}\mcl{T}\preccurlyeq 0&
    \end{flalign*} } \\
    \hline\hline
    \multicolumn{2}{l}{\texttt{[prog, P] = PIETOOLS\_stability\_dual(PIE, settings)}} \\\hline
    Test stability of the PIE for $w=0$ and $u=0$ by verifying feasibility of the dual LPI~\cite{shivakumar_2020CDC}.         & 
    {\begin{flalign*}
	&\mcl{P}\succ0&\\
	&\mcl{T}\mcl{P}\mcl{A}^*+\mcl{A}\mcl{P}\mcl{T}^*\preccurlyeq 0&
	\end{flalign*}}\\
    \hline\hline
    \multicolumn{2}{l}{\texttt{[prog, P, gam] = PIETOOLS\_Hinf\_gain(PIE, settings)}} \\\hline
    Determine an upper bound $\gamma$ on the $\mcl{H}_{\infty}$-norm $\sup_{w,z\in L_2}\frac{\|z\|_{L_2}}{\|w\|_{L_2}}$ of the PIE for $u=0$, by solving the primal LPI~\cite{shivakumar_2019CDC}. &
    {\begin{flalign*}
    &\min\limits_{\gamma,\mcl{P}} ~~\gamma&\\
	&\mcl{P}\succ0&\\
	&\bmat{-\gamma I & \mcl{D}_{11}^*&\mcl{B}_1^*\mcl{PT}\\(\cdot)^*&-\gamma I&\mcl{C}_1\\(\cdot)^*&(\cdot)^*&\mcl{T}^*\mcl{P}\mcl{A}+\mcl{A}^*\mcl{P}\mcl{T}}\preccurlyeq 0&\end{flalign*}}\\
    \hline\hline    
    \multicolumn{2}{l}{\texttt{[prog, P, gam] = PIETOOLS\_Hinf\_gain\_dual(PIE, settings)}} \\\hline
    Determine an upper bound $\gamma$ on the $\mcl{H}_{\infty}$-norm $\sup_{w,z\in L_2}\frac{\|z\|_{L_2}}{\|w\|_{L_2}}$ of the PIE for $u=0$, by solving the dual LPI~\cite{shivakumar_2020CDC}. &
    {\begin{flalign*}
	&\min\limits_{\gamma,\mcl{P}} ~~\gamma&\\
	&\mcl{P}\succ0&\\
	&\bmat{-\gamma I & \mcl{D}_{11}&\mcl{T}\mcl{P}\mcl{C}_1\\(\cdot)^*&-\gamma I&\mcl{B}_1^*\\(\cdot)^*&(\cdot)^*&\mcl{T}\mcl{P}\mcl{A}^*+\mcl{A}\mcl{P}\mcl{T}^*}\preccurlyeq 0&\end{flalign*}}\\
    \hline\hline  
    \multicolumn{2}{l}{\texttt{[prog, L, gam, P, Z] = PIETOOLS\_Hinf\_estimator(PIE, settings)}} \\\hline
    Establish an $\mcl{H}_{\infty}$-optimal observer $\mcl{T}\dot{\hat{\mbf{x}}}_{\text{f}}=A\hat{\mbf{x}}_{\text{f}}+\mcl{L}(\mcl{C}_1\hat{\mbf{x}}_{\text{f}}-y)$ for the PIE with $u=0$ by solving the LPI and returning $\mcl{L}=\mcl{P}^{-1}\mcl{Z}$~\cite{das_2019CDC}.  &
    {\begin{flalign*}
    &\min\limits_{\gamma,\mcl{P},\mcl{Z}} ~~\gamma&\\
	&\mcl{P}\succ0&\\
	&\bmat{-\gamma I & \mcl{D}_{11}^*&(\mcl{P}\mcl{B}_1+\mcl{Z}\mcl{D}_{21})^*\mcl{T}\\(\cdot)^*&-\gamma I&\mcl{C}_1\\(\cdot)^*&(\cdot)^*&\mcl{T}^*(\mcl{P}\mcl{A}+\mcl{Z}\mcl{C}_{2})+(\mcl{P}\mcl{A}+\mcl{Z}\mcl{C}_{2})^*\mcl{P}\mcl{T}}\preccurlyeq 0&\end{flalign*}}\\
    \hline\hline  
    \multicolumn{2}{l}{\texttt{[prog, K, gam, P, Z] = PIETOOLS\_Hinf\_control(PIE, settings)}} \\\hline
    Establish an $\mcl{H}_{\infty}$-optimal controller $u=\mcl{K}\mbf{x}_{\text{f}}$ for the PIE by solving the LPI and returning $\mcl{K}=\mcl{Z}\mcl{P}^{-1}$~\cite{shivakumar_2020CDC}.  &
    {\begin{flalign*}
	&\min\limits_{\gamma,\mcl{P},\mcl{Z}} ~~\gamma&\\
	&\mcl{P}\succ0&\\
	&\bmat{-\gamma I & \mcl{D}_{11}&\mcl{T}(\mcl{P}\mcl{C}_1+\mcl{Z}\mcl{D}_{12})\\(\cdot)^*&-\gamma I&\mcl{B}_1^*\\(\cdot)^*&(\cdot)^*&\mcl{T}(\mcl{A}\mcl{P}+\mcl{B}_2\mcl{Z})^*+(\mcl{A}\mcl{P}+\mcl{B}_2\mcl{Z})\mcl{T}^*}\preccurlyeq 0&\end{flalign*}}\\
    \hline\hline  
\end{tabular}
\caption{List of pre-defined executives for analysis and control of PIEs. See also Chapter~\ref{ch:LPI_examples}.}
\label{tab:executive_functions}
\end{table}

% %%%%%%%%%%%%%%%%%%%%%%%%%%%%%%%%%%%%%%%%%%%%%%%%%%%%%%%%%%%%%%%%%%%%%%%%%%%%%%%%%%%%%%%%%%%%%%%%%%%
% %%%%%%%%%%%%%%%%%%%%%%%%%%%%%%%%%%%%%%%%%%%%%%%%%%%%%%%%%%%%%%%%%%%%%%%%%%%%%%%%%%%%%%%%%%%%%%%%%%%

\part{Additional PIETOOLS Functionality}
%   Part 3:
%   Guide to PIETOOLS functions:

\chapter{Alternative Input Formats for ODE-PDE Systems}\label{ch:alt_PDE_input}

In PIETOOLS, coupled ODE-PDE systems can be defined using three different approaches: Command Line Parser (via command line or MATLAB scripts), graphical user interface (a MATLAB app), and `terms input' format structure. The former two input methods are useful when the system is defined by using entire equations, whereas the last method (terms input format method) requires the user to identify the parameters from each term of set of equations and define them in a PIETOOLS compatible class called \texttt{pde\_struct}.

As such, Command Line Parser and GUI input methods are the simplest and most intuitive methods to define a coupled ODE-PDE. However, in this chapter we describe only the GUI and terms-based input methods, referring to Chapter~\ref{ch:PDE_DDE_representation} for information on the Command Line Parser input format. In particular, in Section~\ref{sec:GUI}, we show how any well-posed, linear, coupled 1D ODE-PDE system can be declared in PIETOOLS using the GUI. In Section~\ref{sec:alt_PDE_input:terms_input_PDE}, we then show how any well-posed, linear, coupled ODE - 1D PDE - 2D PDE system can be declared in PIETOOLS using the terms-based input format. We note that, although most ODE-PDE coupled systems can be defined by using any of the three input methods, certain type of PDEs (for example, PDEs with 2 spatial dimensions and PDEs with second order time differential terms) can only be defined using the terms input format in PIETOOLS 2022.

\section{A GUI for Defining PDEs}\label{sec:GUI}

In addition to the Command Line Parser, PIETOOLS 2022 also allows PDEs to be delcared using a graphical user interface (GUI), that provides a simple, intuitive and interactive visual interface to directly input the model. It also allows declared PDE models to be saved and loaded, so that the same system can be used in different sessions without having to declare the model from scratch each time.

To open the GUI, simply call \texttt{PIETOOLS\_PIETOOLS\_GUI} from the command line. You will see something similar to the picture below:
\begin{figure}[H]
	\centering
	\includegraphics[width=0.95\textwidth]{./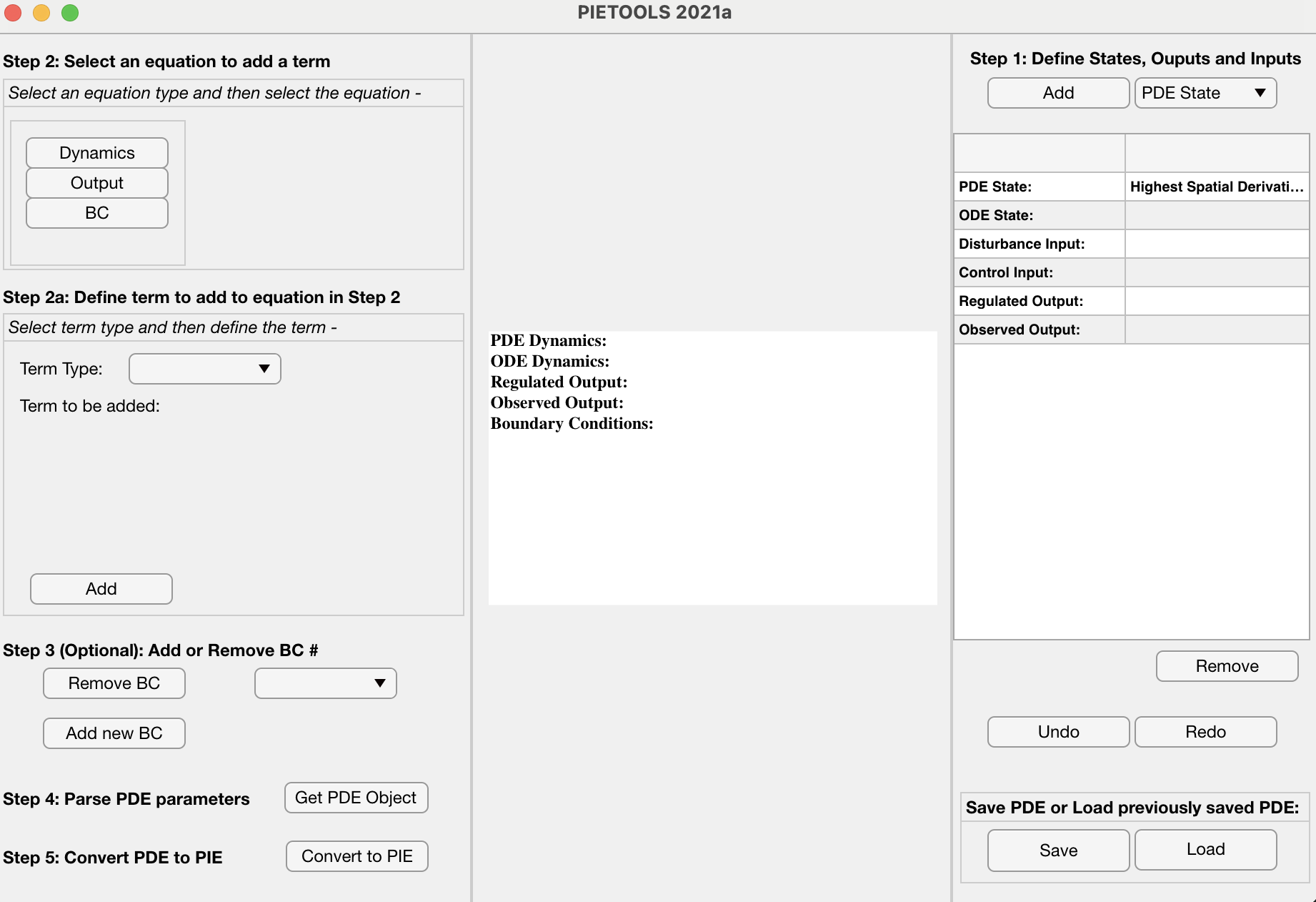} %{Fig/heat.JPG}
	\caption{GUI overview.}
	\label{gui1}
\end{figure}

Now we will go over the GUI step-by-step to demonstrate how to define your own linear, 1D ODE-PDE model in PIETOOLS. 

\subsection{Step 1: Define States, Outputs and Inputs}
First, we start with the right side of the screen as follows:
	\begin{figure}[H]
	\centering
	\includegraphics[width=0.95\textwidth]{./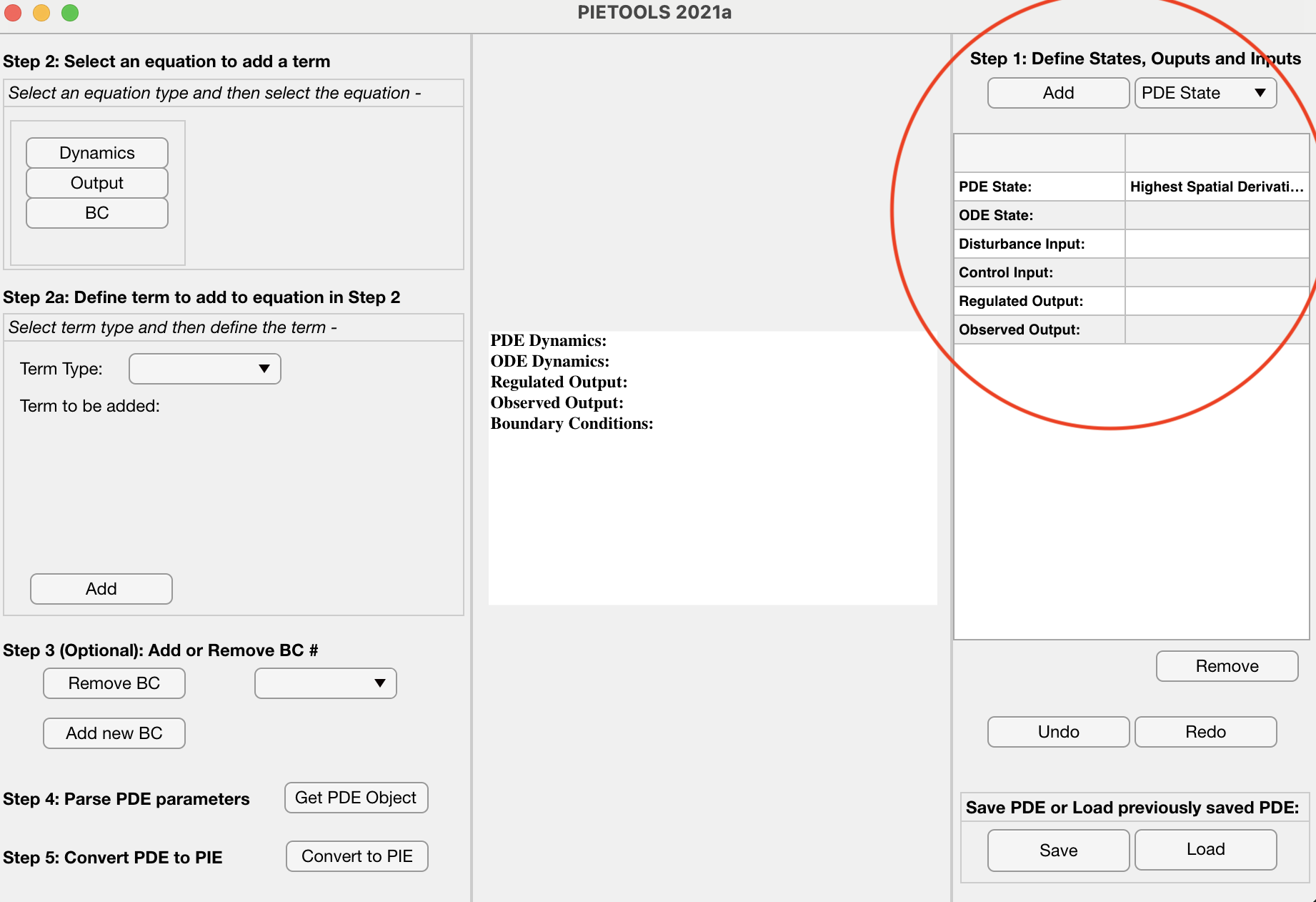} %{Fig/heat.JPG}
	\caption{Step 1: Define States, Outputs and Inputs}
	\label{gui2}
\end{figure}

\begin{enumerate}
    \item The drop-down menu \texttt{PDE State} provides a list of all the possible variables to be defined on your model. Clicking on the \texttt{PDE State} menu reveals the list
    
    	\begin{figure}[H]
	\centering
	\includegraphics[width=0.30\textwidth]{./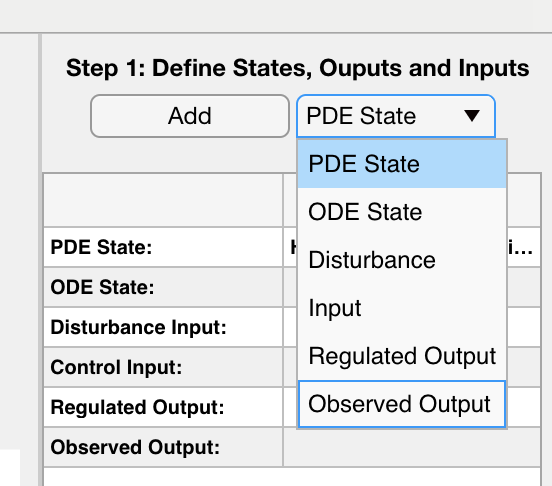} %{Fig/heat.JPG}
	\caption{Adding variables your model}
	\label{gui3}
\end{figure}

\item After selecting your intended variable, you can add them by clicking on the \texttt{Add} button.

\item Specifically, when you select \texttt{PDE State} from the drop-down menu and attempt to  \texttt{PDE State}, you also have to specify what is highest order of derivative the particular state admits.

	\begin{figure}[H]
	\centering
	\includegraphics[width=0.30\textwidth]{./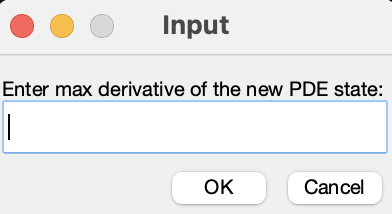} %{Fig/heat.JPG}
	\caption{Enter the highest order of derivative the particular state admits}
	\label{gui4}
\end{figure}
\item Once the variables are added, it automatically gets displayed in the display panel in the middle. As, no dynamics is specified to the model so far, all the variables are set to the default setting temporarily. 
\begin{figure}[H]
	\centering
	\includegraphics[width=0.80\textwidth]{./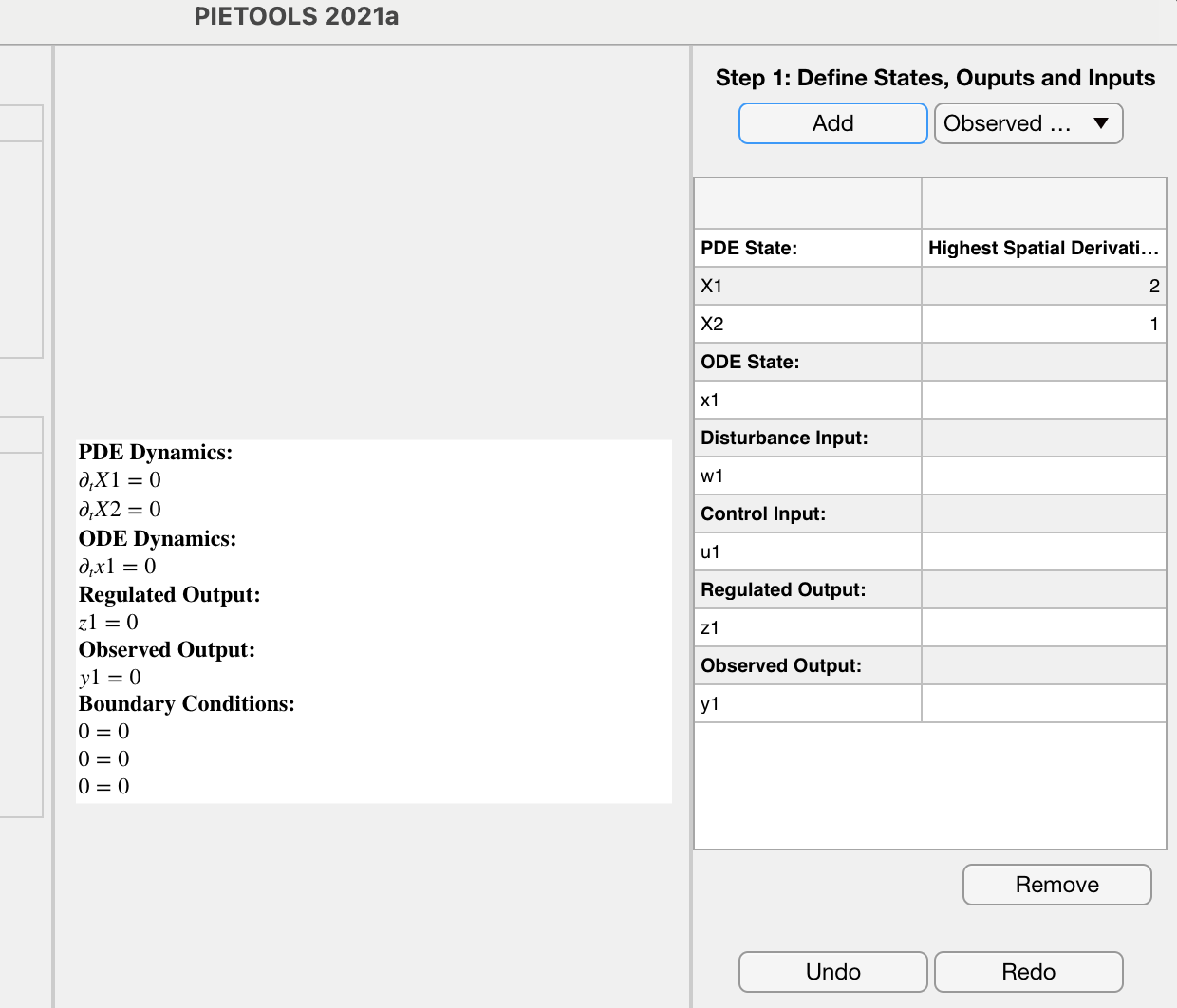} %{Fig/heat.JPG}
	\caption{After adding the variables to the model}
	\label{gui5}
\end{figure}

\item At the bottom there are options of \texttt{Remove}, \texttt{Undo}, \texttt{Redo} to delete or recover variables. 
\end{enumerate}

\subsection{Step 2: Select an Equation to Add a Term}
Now we specify the dynamics and the terms corresponding to each variables defined in Step 1.  This is located on the left hand side of the GUI.

    	\begin{figure}[H]
	\centering
	\includegraphics[width=0.80\textwidth]{./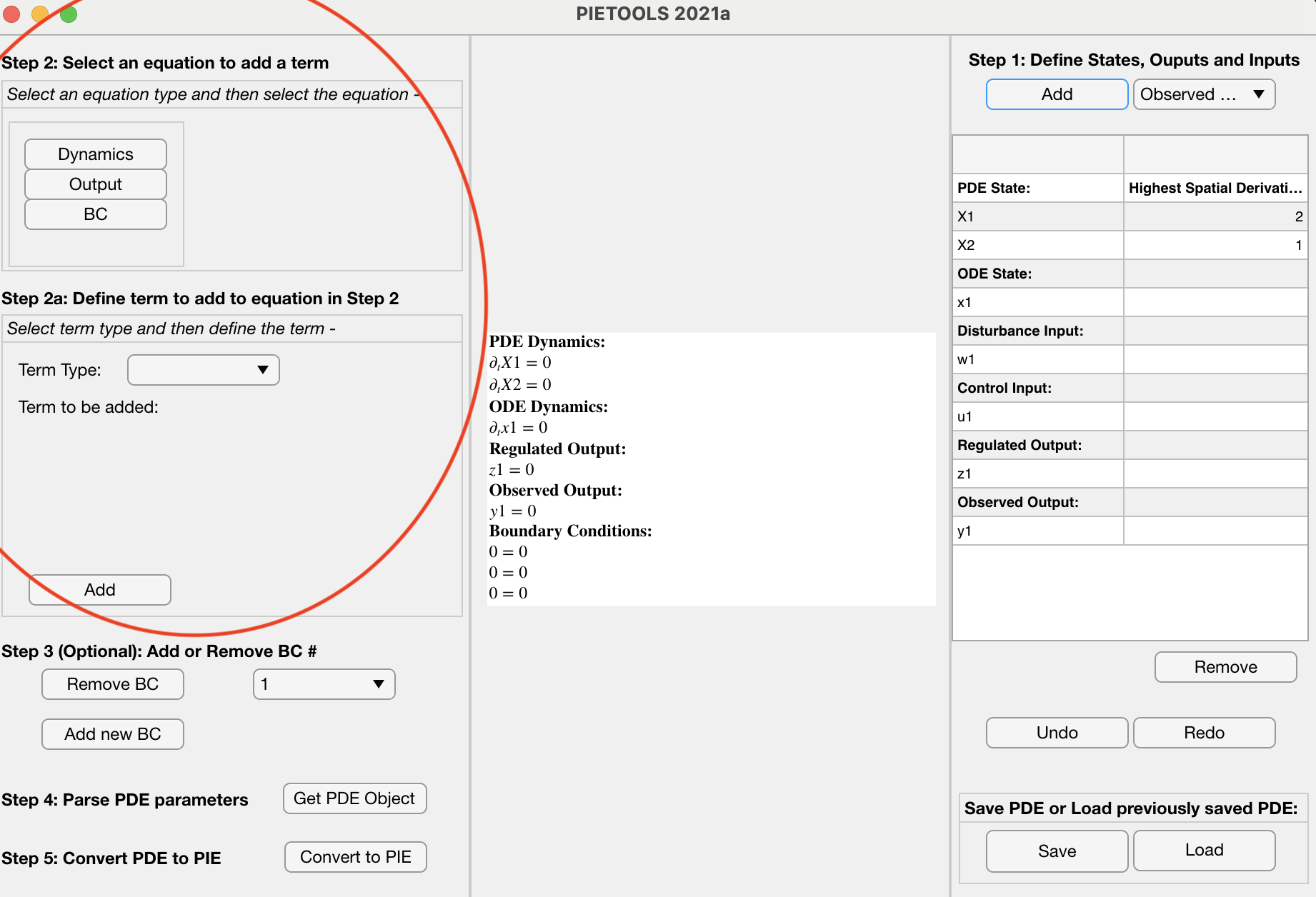} %{Fig/heat.JPG}
	\caption{Step 2: Select an Equation to Add a Term}
	\label{gui2_1}
\end{figure}
This has two parts. On the top, we have a panel for \texttt{Select an Equation type and the select the equation-} to choose which part of the model to be defined. Then, for each of the part, there an another  panel below that has to be used to \texttt{Select term type and then define the term-}.

\begin{enumerate}
    \item In the panel titled \texttt{Select an Equation type and the select the equation-}, select either \texttt{Dynamics}, \texttt{Output} or \texttt{BC} (i.e. Boundary Conditions). 
    
\item If you select \texttt{Dynamics}, all the PDE and ODE states that you specified in Step 1 appears. 
    	\begin{figure}[H]
	\centering
	\includegraphics[width=0.30\textwidth]{./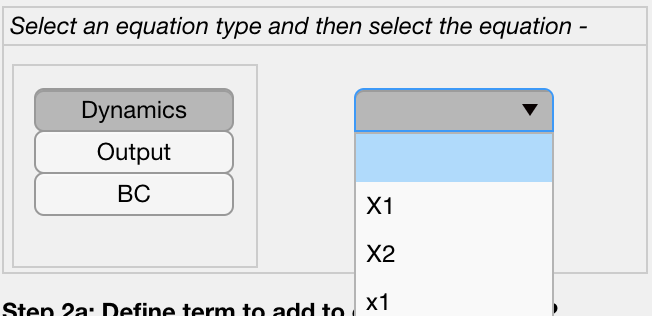} %{Fig/heat.JPG}
	%\caption{Terms to be defined in \texttt{Dynamics}}
	\label{gui2_2}
\end{figure}

\item Once you select the desired state for which the dynamics term is needed to be added to, go down to the Step 2.a \texttt{Select term type and then define the term-}.
	\begin{figure}[H]
	\centering
	\includegraphics[width=0.30\textwidth]{./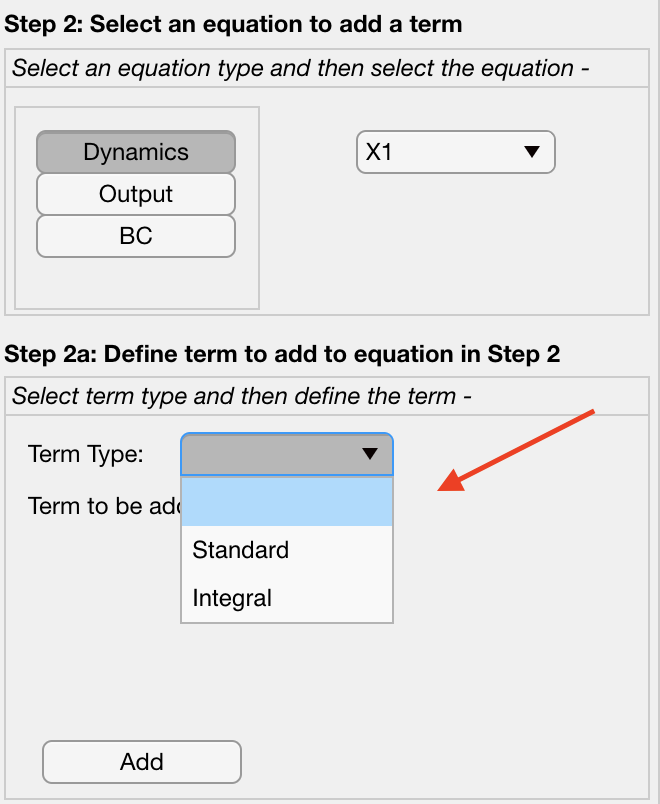} %{Fig/heat.JPG}
	%\caption{Terms to be defined in \texttt{Dynamics}}
	\label{gui2_3}
\end{figure}

\item For individual PDE state, you may have two kinds of terms, a \texttt{Standard}, a multiplier coefficient associated with a state or a \texttt{Integral}, an integral term associated with a state.

\item The \texttt{Standard} option allows to define all the multiplier terms associated with each variables. On the other hand, the \texttt{Integral} option is only available for the PDE states. 
	\begin{figure}[H]
	\centering
	\includegraphics[width=0.30\textwidth]{./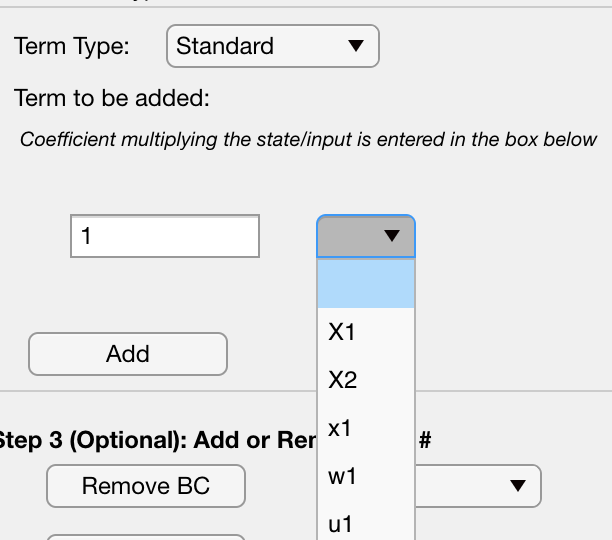}
	\includegraphics[width=0.35\textwidth]{./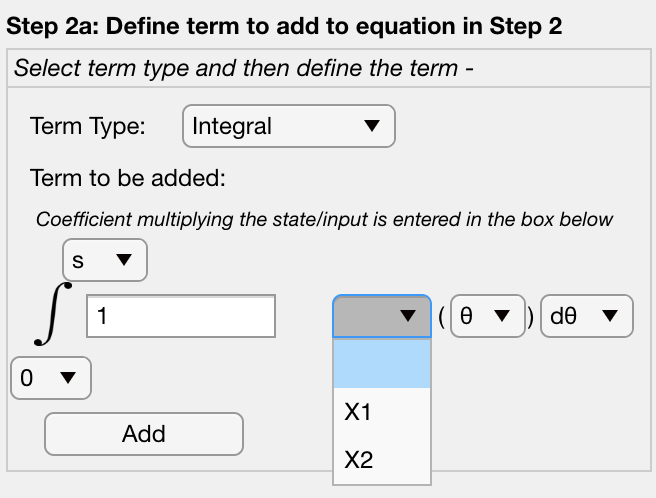}
	%{Fig/heat.JPG}
	%\caption{Terms to be defined in \texttt{Dynamics}}
\end{figure}

\item Now in \texttt{Standard} option, one has to select the variable and add the coefficient in the adjacent panel. Moreover, the PDE states may also contain its derivatives. If you select PDE state, you can input the order of derivative (from $0$ up to the highest order derivative for that state), the independent variable with respect to which the function is defined (it is $s$ for in-domain, $0, 1$ for boundary), and the corresponding coefficient terms. Then, by clicking on the \texttt{Add} button, we can add that term to the model and it gets shown in the display panel readily. 

	\begin{figure}[H]
	\centering
	\includegraphics[width=0.70\textwidth]{./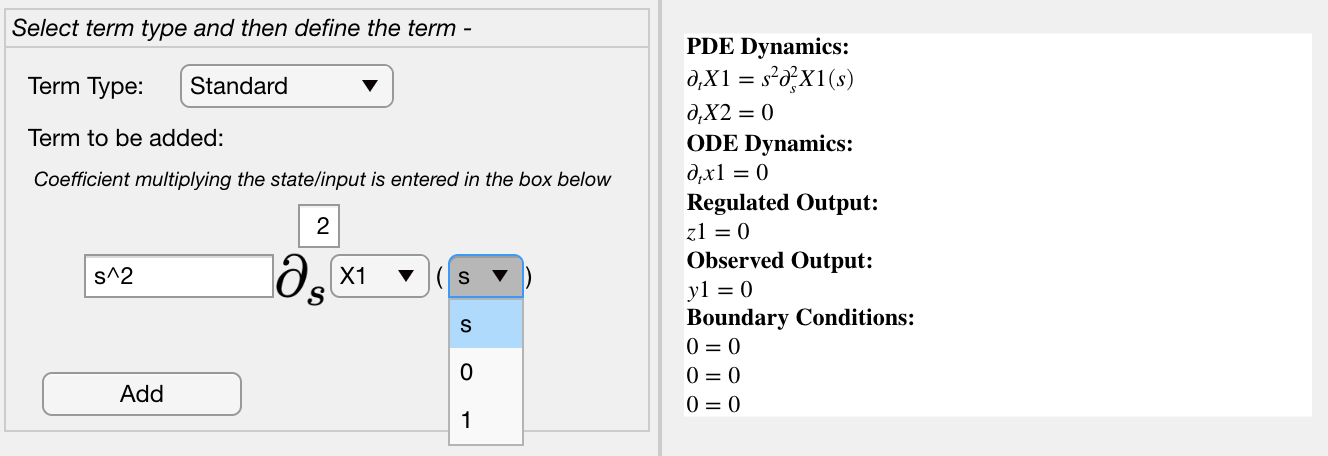}
	%{Fig/heat.JPG}
	%\caption{Terms to be defined in \texttt{Dynamics}}
\end{figure}

\begin{boxEnv}{\textbf{Note:}}
Only the PDE states can be a function of `$s$'. For other terms option of adding $'s'$ as an independent variable is not available.
\end{boxEnv} 

The order of derivative can not exceed the highest order derivative for that state. If the input value exceed that, while adding it throws the following error

	\begin{figure}[H]
	\centering
	\includegraphics[width=0.30\textwidth]{./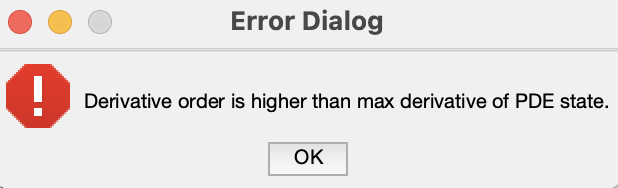}
	%{Fig/heat.JPG}
	%\caption{Terms to be defined in \texttt{Dynamics}}
\end{figure}

\item One can also add an integral term by selecting \texttt{Integral}. Here, identical to the \texttt{Standard} option, we can define the order or derivative, the coefficient and the limits of integral which can be $0$ or $s$ for lower limit and $s$ or $1$ for upper limit. The functions are always with respect to $\theta$. 

	\begin{figure}[H]
	\centering
	\includegraphics[width=0.80\textwidth]{./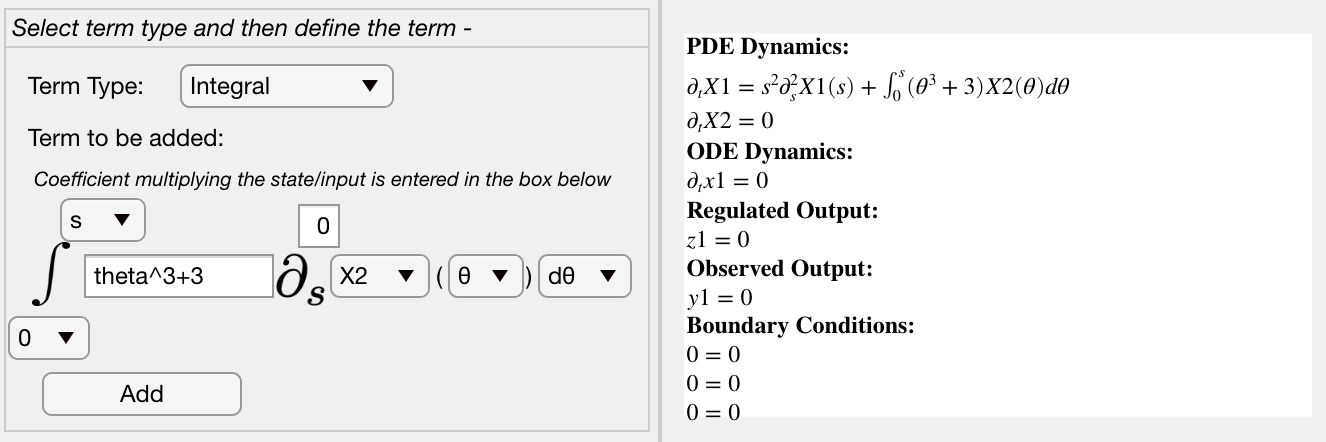}
	%{Fig/heat.JPG}
	%\caption{Terms to be defined in \texttt{Dynamics}}
\end{figure}

	\begin{figure}[H]
	\centering
	\includegraphics[width=0.80\textwidth]{./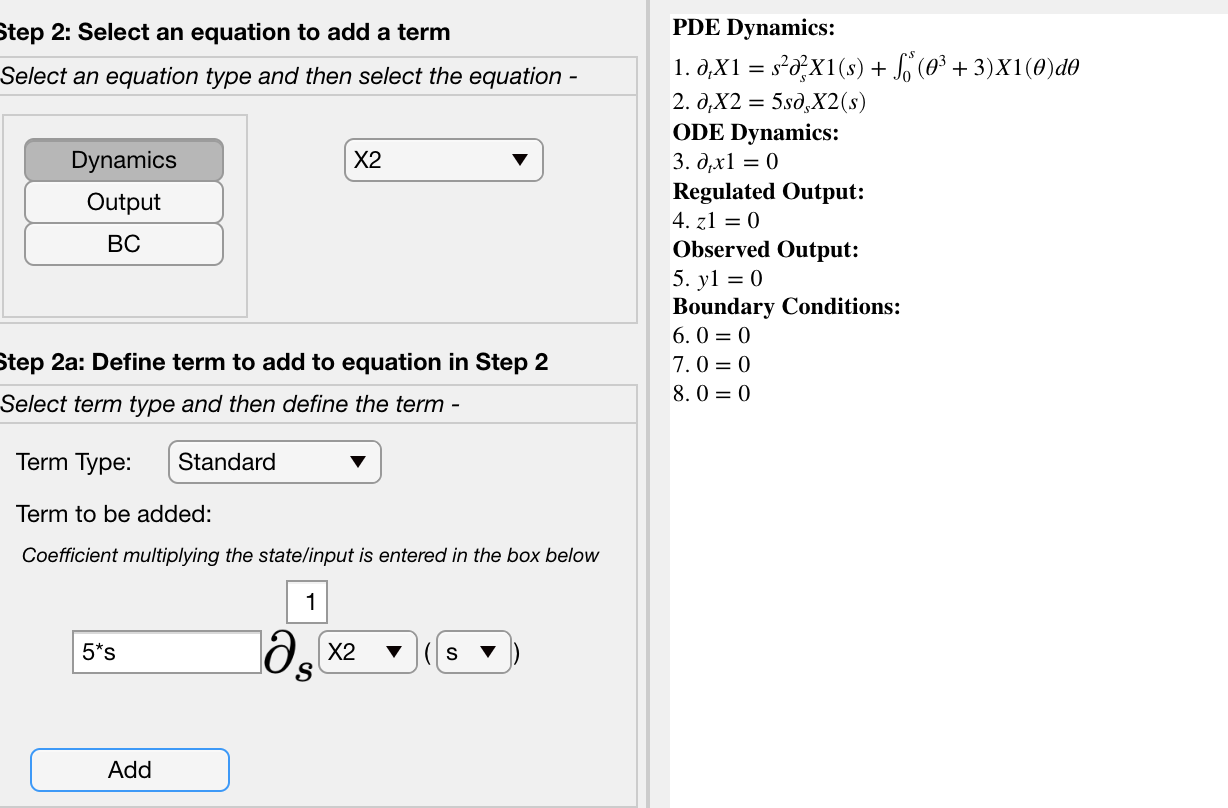}
	%{Fig/heat.JPG}
	%\caption{Terms to be defined in \texttt{Dynamics}}
\end{figure}

\item To define the outputs and boundary conditions, one must follow the same steps as above. 
	\begin{figure}[H]
	\centering
	\includegraphics[width=0.35\textwidth]{./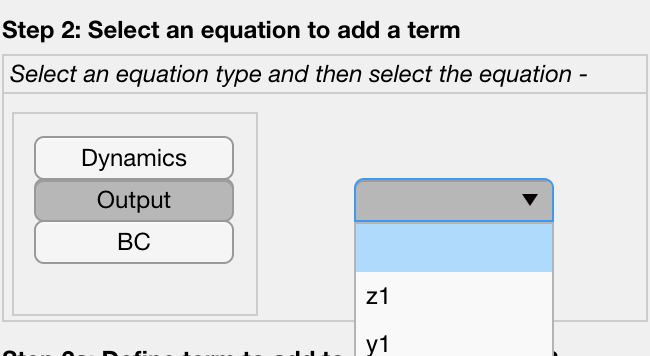}
	\includegraphics[width=0.35\textwidth]{./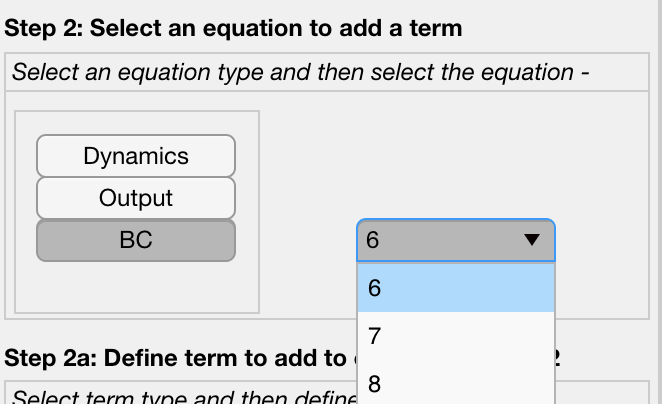}
	%{Fig/heat.JPG}
	%\caption{Terms to be defined in \texttt{Dynamics}}
\end{figure}
\end{enumerate}

\begin{boxEnv}{Additional Remarks:}
A) The integral term of PDE states can only be a function of $'\theta'$. Moreover, the order of derivative can not exceed the specified order of differentiability of that state. If the input value exceeds this order of differentiability, and error is thrown when trying to add the term.

\noindent B) Terms can be specified and added only for one variable at a time. Once a desired variable and one of the options (\texttt{Dynamics}, \texttt{Output}, \texttt{BC}) has been selected at the top, the term can be added following the instructions at the bottom (Step 2a). In order to select another variable for which to add a term, the above steps must be repeated.
\end{boxEnv}

After adding all the corresponding terms for dynamics, outputs and boundary conditions, the complete description of an example model looks something like below:
	\begin{figure}[H]
	\centering
	\includegraphics[width=0.40\textwidth]{./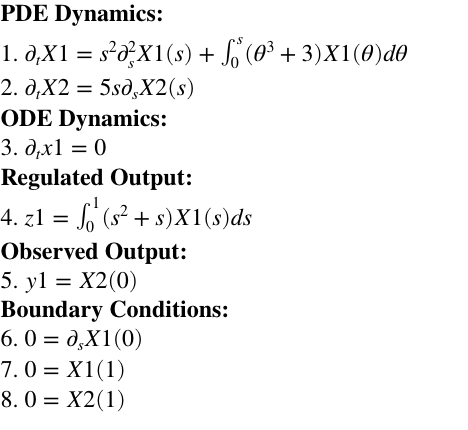}
	%{Fig/heat.JPG}
	\caption{An example of a complete model as displayed in the GUI}
\end{figure}

\subsection{Step 3: (Optional) Add or Remove BC}
As an option, the user can either add a new boundary condition or remove one. Note that the number of boundary condition has to be coherent with the number of state variables. 
	\begin{figure}[H]
	\centering
	\includegraphics[width=0.50\textwidth]{./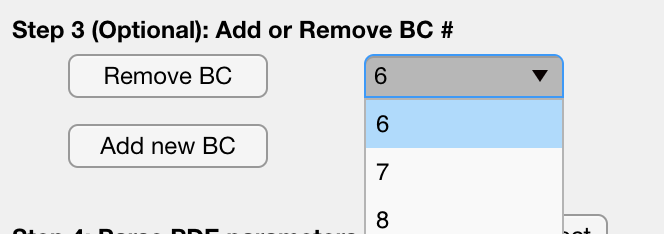}
	%{Fig/heat.JPG}
%	\caption{An example of a complete model as displayed in the GUI}
\end{figure}

\subsection{Step 4-5: Parse PDE Parmeters and Convert Them to PIE}
\begin{enumerate}
    \item Now clicking \texttt{Get PDE Object}, you can extract all the parameters related to your model and store them in an object called \texttt{PDE\_GUI} which directly gets loaded into the MATLAB workspace. 
    
   \item Now clicking \texttt{convert to PIE}, you can convert your model to PIE and store them in an object called \texttt{PIE\_GUI} which directly gets loaded into the MATLAB workspace. 
\end{enumerate}

	\begin{figure}[H]
	\centering
	\includegraphics[width=0.50\textwidth]{./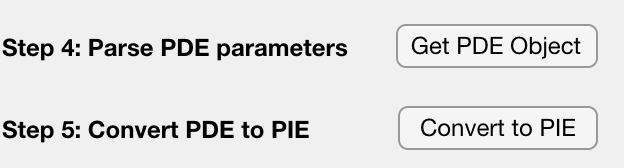}
	%{Fig/heat.JPG}
%	\caption{An example of a complete model as displayed in the GUI}
\end{figure}

\section{The Terms-Based Input Format}\label{sec:alt_PDE_input:terms_input_PDE}

In addition to the Command Line Parser and GUI input methods, PIETOOLS also allows ODE-PDE systems to be specified in a terms-based format. Although this format is not quite as intuitive as the alternative input methods, it is the most general input format, used internally by PIETOOLS for e.g. the PDE to PIE conversion. In PIETOOLS 2022, it is also the only format that allows for the implementation of PDEs involving 2 spatial variables.

In the terms-based input format, each term in each equation defining the system is specified as a separate cell element in \texttt{pde\_struct} class object. These \texttt{pde\_struct} objects collect all information on the state variables, inputs and outputs, as well as the equations defining their dynamics, through the following fields:

\begin{Statebox}{\texttt{pde\_struct}}
\begin{flalign*}
 &\texttt{PDE.x}	&	&\text{a cell with each element $i$ specifying a state component $\mbf{x}_i$ in the system;}	\\%	&	\dot{\mbf{x}}_i(t)&=\mscr{A}_i\mbf{x}(t) + \mscr{B}_{w,i}\mbf{w}(t) + \mscr{B}_{u,i}\mbf{u}(t)	&	&	\\
 &\texttt{PDE.w}	&	&\text{a cell with each element $i$ specifying an exogenous input $\mbf{w}_{i}$;}	\\
 &\texttt{PDE.u}	&	&\text{a cell with each element $i$ specifying an actuator input $\mbf{u}_{i}$;}	\\
 &\texttt{PDE.z}	&	&\text{a cell with each element $i$ specifying a regulated output $\mbf{z}_{i}$;}	\\%	&	\dot{\mbf{z}}_i(t)&=\mscr{C}_{z,i}\mbf{x}(t) + \mscr{D}_{zw,i}\mbf{w}(t) + \mscr{D}_{zu,i}\mbf{u}(t)	&	&	\\ 
 &\texttt{PDE.y}	&	&\text{a cell with each element $i$ specifying an observed output $\mbf{y}_{i}$;}	\\%	&	\dot{\mbf{y}}_i(t)&=\mscr{C}_{y,i}\mbf{x}(t) + \mscr{D}_{yw,i}\mbf{w}(t) + \mscr{D}_{yu,i}\mbf{u}(t)	&	&	\\
 &\texttt{PDE.BC}	&	&\text{a cell with each element $i$ specifying a boundary condition for the PDE.}	&	&	%	& 0&=\mscr{E}_{i}\mbf{x}(t) + \mscr{E}_{w,i}w(t) + \mscr{E}_{u,i}u(t)	&	&	\\ 
\end{flalign*}
\end{Statebox}

Each element of each of the cells in the above list is yet another struct, with fields
\begin{flalign*}
&\texttt{size}	&	&\text{a scalar integer specifying the size of the state component, input or output;} \\
&\texttt{vars}	&	&\text{a $p\times 2$ pvar (polynomial) array (for $p\leq 2$), specifying the spatial variables} \\
&   &   &\text{of the state component, input, or output;}	&	&\\
&\texttt{dom}	&	&\text{a $p\times 2$ array specifying the interval on which each spatial variable exists;}	&	&\\
&\texttt{term}	&	&\text{a cell defining the (differential) equation associated to the state component,}\\
&   &   &\text{output, or boundary condition,}	&	&
\end{flalign*}
specifying e.g. a state component $\mbf{x}_{i}(t)\in L_2^{n}[[a,b]\times[c,d]]$ on variables $(s_1,s_2)\in[a,b]\times[c,d]$ and an exogenous input $w_k(t)\in\R^m$ as
\begin{matlab}
 \texttt{PDE.x\{i\}.size = n;}	        \hspace*{2.5cm} \texttt{PDE.w\{k\}.size = m;} \\
 \texttt{PDE.x\{i\}.vars = [s1; s2];}	\hspace*{2.0cm} \texttt{PDE.w\{k\}.vars = [];}	\\
 \texttt{PDE.x\{i\}.dom = [a,b; c,d];}	
\end{matlab}
The remaining field \texttt{term} can then be used to specify the differential equation for the state component $\mbf{x}_i$, output $\mbf{y}_i$ or $\mbf{z}_i$, or the $i$th boundary condition $0=\mbf{F}_i$. Here, the $j$th term in each of these equations is specified through the fields
\begin{flalign*}
& \texttt{term\{j\}.x;}	 & &\text{integer specifying which state component,}	&	\\
& \hspace*{0.5cm} \textbf{or} \quad \texttt{term\{j\}.w;}	&	&\hspace*{4.0cm} \text{\textbf{or} exogenous input,}	\\
& \hspace*{1.0cm} \textbf{or} \quad \texttt{term\{j\}.u;}	&	&\hspace*{4.5cm} \text{\textbf{or} actuator input appears in the term;}	\\	
& \texttt{term\{j\}.D}	& &\text{$1\times p$ integer array specifying the order of the derivative of the \textbf{state}} \\
&   &   &\text{\textbf{component} \texttt{term\{j\}.x} in each variable;}	\\
& \texttt{term\{j\}.loc} & &\text{$1\times p$ polynomial or ``double'' array specifying the spatial position}\\
&   &   &\text{at which to evaluate the \textbf{state component} \texttt{term\{j\}.x};}	\\
& \texttt{term\{j\}.I} & &\text{$p\times 1$ cell array specifying the lower and upper limits of the integral}\\
&   &   &\text{to take of the state or input;}	\\
& \texttt{term\{j\}.C}	& &\text{polynomial or constant factor with which to multiply the state or input,} 
\end{flalign*}
where $p$ denotes the number of spatial variables on which the component $x$ or input $w$ or $u$ depends. For example, a term involving a state component $\mbf{x}_k(s_1,s_2)$ depending on two spatial variables of the form
{\small
\begin{align}\label{PDE_term}
&\underbrace{\int_{L_1}^{U_1} \int_{L_2}^{U_2}}_{\texttt{I}}\left(
\underbrace{C(s_1,s_2,\theta_1,\theta_2)}_{\texttt{C}}\
\overbrace{\partial_{\theta_1}^{d_1}\partial_{\theta_2}^{d_2}}^{\texttt{D}}\thinspace
\underbrace{\mbf{x}_{k}}_{\texttt{x}}(t-\tau,\overbrace{\theta_1,\theta_2}^{\texttt{loc}})\right) d\theta_2 d\theta_1.
\end{align}
}
can be added to the equation for $\dot{\mbf{x}}_i$ as
\begin{matlab}
\begin{verbatim}
 PDE.x{i}.term{j}.x = k;
 PDE.x{i}.term{j}.D = [d1,d2];
 PDE.x{i}.term{j}.loc = [theta1, theta2];
 PDE.x{i}.term{j}.I = {[L1,U1]; [L2,U2]};
 PDE.x{i}.term{j}.C = C;
 PDE.x{i}.term{j}.delay = tau;
\end{verbatim}
\end{matlab}
Similarly, a term allowing a term involving an input as
{\small
\begin{align}\label{PDE_term_input}
&\underbrace{\int_{L_1}^{U_1} \int_{L_2}^{U_2}}_{\texttt{I}}\left(\underbrace{C(s_1,s_2,\theta_1,\theta_2)}_{\texttt{C}}\ \underbrace{\mbf{w}_{k}}_{\texttt{w}}(t-\tau,\theta_1,\theta_2)\right)d\theta_2 d\theta_1,	\qquad \text{or}   &\nonumber\\	
&\hspace*{3.0cm}\underbrace{\int_{L_1}^{U_1} \int_{L_2}^{U_2}}_{\texttt{I}}\left(\underbrace{C(s_1,s_2,\theta_1,\theta_2)}_{\texttt{C}}\ \underbrace{\mbf{u}_{k}}_{\texttt{u}}(t-\tau,\theta_1,\theta_2)\right)d\theta_2 d\theta_1,   &   &
\end{align}
}
can be added to the PDE of the $i$th state component as
\begin{matlab}
 \texttt{PDE.x\{i\}.term\{j\}.w = k;} \hspace*{2.5cm} \text{or}  \hspace*{2.0cm} \texttt{PDE.x\{i\}.term\{j\}.u = k;}	\\
\texttt{PDE.x\{i\}.term\{j\}.I = \{[L1,U1]; [L2,U2]\};}  \\
\texttt{PDE.x\{i\}.term\{j\}.C = C;}	\\
\texttt{PDE.x\{i\}.term\{j\}.delay = tau;}	
\end{matlab}
Here, in any term, at most one of the fields \texttt{x}, \texttt{w}, and \texttt{u} can be specified, indicating whether the term involves a state component or input, and specifying which state component or input this would be. If the term involves an input, no field \texttt{D} or \texttt{loc} can be specified, as PIETOOLS cannot take a derivative of an input, nor evaluate it at any particular position. Using a similar structure, terms in the equations for the outputs and the boundary conditions can also be specified. \\

In the remainder of this section, we will illustrate through several examples how this terms-based input format can be used to declare general linear ODE-PDE systems in PIETOOLS. In particular:
\bigskip
\begin{enumerate}
	%\item In Section~\ref{subsec:terms:single_state}, we show how a standard 2D PDE can be implemented.
	%\medskip
	
	% \item In Section~\ref{sec:tdiff}, we show how a 2D PDE with a second order temporal derivative can be implemented.
	% \medskip
	
	\item In Section~\ref{subsec:terms:multi_state}, we show how a system of coupled 2D PDEs can be implemented.
	\medskip
	
	\item In Section~\ref{subsec:terms:ODE_PDE}, we show how a coupled ODE - 1D PDE - 2D PDE system can be implemented.
	\medskip
	
	\item In Section~\ref{subsec:terms:input_output}, we show how systems with inputs and outputs can be implemented.
	\medskip
	
	\item Finally, in Section~\ref{subsec:terms:additional_opts}, we outline some additional options to implement more complicated systems, involving partial integrals, kernel functions, and stricter continuity constraints.
\end{enumerate}

%\begin{table}
%	
% \begin{tabular}{l|l|l|l}
% 	
% 	\multirow{6}{*}{\texttt{PDE.}} & \texttt{w\{i\}.} & \texttt{vars} &  \\
%   & \texttt{u\{i\}.}	& \texttt{dom}	& \\
%   & \texttt{x\{i\}.}	& \multirow{4}{*}{\texttt{term\{j\}.}}	& \texttt{x} or \texttt{w} or \texttt{u} \\
%   & \texttt{z\{i\}.}	&	& \texttt{D} \\
%   & \texttt{y\{i\}.}	&	& \texttt{loc}\\
%   & \texttt{BC\{i\}.}	&	& \texttt{I}\\
%   &  	&	& \texttt{C}\\
% \end{tabular}
%\end{table}

\subsection{Declaring a System of Coupled 2D PDEs}\label{subsec:terms:multi_state}

Suppose we want to implement a PDE given by
\begin{align}\label{eq:terms_example_multi_state}
\dot{\mbf{x}}_{1}(t,s_1,s_2)&=\bmat{1\\2}\mbf{x}_2(t,s_1,s_2) + \bmat{10&0\\0&-1}\partial_{s_1}\partial_{s_2}\mbf{x}_1(t,s_1,s_2),	&	(s_1,s_2)&\in[0,3]\times[-1,1],	\nonumber\\
\dot{\mbf{x}}_{2}(t,s_1,s_2)&=\bmat{1&1}\partial_{s_1}^2 \mbf{x}_1(t,s_1,s_2) + \bmat{2&-1}\partial_{s_2}^2 \mbf{x}_1(t,s_1,s_2),		&	t&\geq 0,  \nonumber\\
0&=\mbf{x}_1(t,0,s_2)		&    
&\hspace*{-4.0cm} 0=\partial_{s_1}\mbf{x}_1(t,0,s_2),	\nonumber\\
0&=\mbf{x}_1(t,s_1,-1),	    &
&\hspace*{-4.0cm} 0=\partial_{s_2}\mbf{x}_1(t,s_1,-1),	
\end{align}
where $\mbf{x}_1\in L_2^{2}\bl[[0,3]\times[-1,1]\br]$ and $\mbf{x}_2\in L_2\bl[[0,3]\times[-1,1]\br]$. To implement this PDE, we first initialize an empty \texttt{pde\_struct} object as
\begin{matlab}
\begin{verbatim}
 >> PDE = pde_struct();
\end{verbatim}
\end{matlab}
Next, we declare the state components that appear in the system. In~\eqref{eq:terms_example_multi_state}, we have two PDEs, involving two state components $\mbf{x}=\bmat{\mbf{x}_1\\\mbf{x}_2}$. Each component and associated PDE is implemented using a separate field \texttt{PDE.x\{i\}}, for which we specify the spatial variables and their domain as
\begin{matlab}
\begin{verbatim}
 >> pvar s1 s2
 >> PDE.x{1}.vars = [s1;s2];        PDE.x{2}.vars = [s1;s2];
 >> PDE.x{1}.dom = [0,3;-1,1];      PDE.x{2}.dom = [0,3;-1,1];
\end{verbatim}
\end{matlab}
Here, we use the first line to initialize polynomial variables $s1$ and $s2$. With the second line, we then indicate that both $\mbf{x}_1$ and $\mbf{x}_2$ depend on these spatial variables, and we set the domain of these variables using the third line. Note that the variables should be specified as a column vector, so that each row in \texttt{dom} defines the domain of the variable(s) in the associated row of \texttt{vars}. 

Having declared the state components, we now define the PDEs associated to each component. Here, for $i\in\{1,2\}$, the PDE for state component $\mbf{x}_i$ is defined one term at a time using the cell structure \texttt{PDE.x\{i\}.term}. For example, to specify the first term in the first PDE,
\begin{align*}
 \underbrace{\bmat{1\\2}}_{\texttt{C}}\underbrace{\mbf{x}_2}_{\texttt{x}}(t,s_1,s_2),
\end{align*}
we specify the following elements
\begin{matlab}
\begin{verbatim}
 >> PDE.x{1}.term{1}.x = 2;
 >> PDE.x{1}.term{1}.C = [1;2];
\end{verbatim}
\end{matlab}
Here, we set \texttt{x\{1\}.term\{1\}.x=2} to indicate that the first term in the PDE of the first state component $\mbf{x}_1$ is expressed in terms of the second state component $\mbf{x}_2$. Setting \texttt{C=[1;2]}, we pre-multiply this term with the matrix $\sbmat{1\\2}$ before adding it to the equation. 

In a similar manner, we declare the second term in the first PDE,
\begin{align*}
\underbrace{\bmat{10&0\\0&-1}}_{\texttt{C}}\overbrace{\partial_{s_1}^1\partial_{s_2}^1}^{\texttt{D}}\underbrace{\mbf{x}_1}_{\texttt{x}}(t,s_1,s_2),
\end{align*}
by calling
\begin{matlab}
\begin{verbatim}
 >> PDE.x{1}.term{2}.x = 1;
 >> PDE.x{1}.term{2}.D = [1,1];
 >> PDE.x{1}.term{2}.C = [10,0; 0,-1];
\end{verbatim}
\end{matlab}
Here, in addition to specifying the considered state component and coefficient matrix using \texttt{term\{2\}.x} and \texttt{term\{2\}.C}, we also specify a derivative to be taken of the state using the field \texttt{term\{2\}.D}. This field \texttt{term\{j\}.D} specifies the order of the derivative that should be taken of the state component \texttt{term\{j\}.x} with respect to each of the spatial variables on which this state component depends. Setting \texttt{D=[1,1]}, a first order derivative will be taken with respect to each of the two variables $(s_1,s_2)$ on which state component $\mbf{x}_1$ depends. Note that if no field \texttt{D} is specified for a term, it will default to $0$, assuming that no differentiation is desired for this term.

Now, to declare the terms in the PDE for the second state component $\mbf{x}_2$,
\begin{align*}
    \underbrace{\bmat{1&1}}_{\texttt{C}}\overbrace{\partial_{s_1}^2\partial_{s_2}^{0}}^{\texttt{D}} \underbrace{\mbf{x}_1}_{\texttt{x}}(t,s_1,s_2) + \underbrace{\bmat{2&-1}}_{\texttt{C}}\overbrace{\partial_{s_1}^{0}\partial_{s_2}^2}^{\texttt{D}} \underbrace{\mbf{x}_1}_{\texttt{x}}(t,s_1,s_2)
\end{align*}
we can specify each term separately as we did the first equation, by calling
\begin{matlab}
\begin{verbatim}
 >> PDE.x{2}.term{1}.x = 1;         PDE.x{2}.term{2}.x = 1;
 >> PDE.x{2}.term{1}.D = [2,0];     PDE.x{2}.term{2}.D = [0,2];
 >> PDE.x{2}.term{1}.C = [1,1];     PDE.x{2}.term{2}.C = [2,-1];
\end{verbatim}
\end{matlab}
Here, since both terms depend on the same state component $\mbf{x}_1$, we set the field \texttt{x} in both terms equal to $1$. However, in such a situation, we can also combine the terms to represent them as
\begin{align*}
\bmat{1&1}\partial_{s_1}^2 \mbf{x}_1(t,s_1,s_2) + \bmat{2&-1}\partial_{s_2}^2 \mbf{x}_1(t,s_1,s_2)
=\underbrace{\bmat{1&1&2&-1}}_{\texttt{C}}\bbbbl[\overbrace{\mat{\partial_{s_1}^2\partial_{s_2}^{0}\\ \partial_{s_1}^{0}\partial_{s_2}^{2}}}^{\texttt{D}}\underbrace{\mat{\mbf{x}_1\\\mbf{x}_1}}_{\texttt{x}}\!\!\mat{(t,s_1,s_2)\\(t,s_1,s_2)}\bbbbr],
\end{align*}
which we can implement using a single term
\begin{matlab}
\begin{verbatim}
 >> PDE.x{2}.term{1}.x = 1;
 >> PDE.x{2}.term{1}.D = [2,0; 0,2];
 >> PDE.x{2}.term{1}.C = [1,1, 2,-1];
\end{verbatim}
\end{matlab}

Having declared the PDE, it remains only to specify the boundary conditions,
\begin{align*}
    0&=\underbrace{\mbf{x}_1}_{\texttt{x}}(t,\underbrace{0,s_2}_{\texttt{loc}})		&    
    0&=\underbrace{\mbf{x}_1}_{\texttt{x}}(t,\underbrace{s_1,-1}_{\texttt{loc}}) ,	\nonumber\\
    0&=\overbrace{\partial_{s_1}^{1}\partial_{s_2}^{0}}^{\texttt{D}}\underbrace{\mbf{x}_1}_{\texttt{x}}(t,\underbrace{0,s_2}_{\texttt{loc}}),	    &
    0&=\overbrace{\partial_{s_1}^{0}\partial_{s_2}^{1}}^{\texttt{D}}\underbrace{\mbf{x}_1}_{\texttt{x}}(t,\underbrace{s_1,-1}_{\texttt{loc}})
\end{align*}
For these, in addition to specifying e.g.~the desired state component and order of the derivative, we also have to specify at which boundary to evaluate the state component. For this, we use the field \texttt{loc}, specifying the first (and only) term in each of the four boundary conditions as
\begin{matlab}
\begin{verbatim}
 >> PDE.BC{1}.term{1}.x = 1;            PDE.BC{2}.term{1}.x = 1;
 >> PDE.BC{1}.term{1}.loc = [0,s2];     PDE.BC{2}.term{1}.loc = [s1,-1];
 
 >> PDE.BC{3}.term{1}.x = 1;            PDE.BC{4}.term{1}.x = 1;
 >> PDE.BC{3}.term{1}.D = [1,0];        PDE.BC{4}.term{1}.D = [0,1];
 >> PDE.BC{3}.term{1}.loc = [0,s2];     PDE.BC{4}.term{1}.loc = [s1,-1];
\end{verbatim}    
\end{matlab}
Note here that, by not specifying a field \texttt{C}, PIETOOLS assumes the term is not pre-multiplied by any matrix, and the value of \texttt{C} defaults to an identity matrix of the appropriate size. 

Having declared the full PDE, we use the function \texttt{initialize} to check for errors and fill in any gaps, which produces the output
\begin{matlab}
\begin{verbatim}
>> PDE = initialize(PDE);

Encountered 2 state components: 
 x1(t,s1,s2), of size 2, differentiable up to order (2,2) in variables (s1,s2);
 x2(t,s1,s2), of size 1, differentiable up to order (0,0) in variables (s1,s2);

Encountered 4 boundary conditions: 
 F1(t,s2) = 0, of size 2;
 F2(t,s1) = 0, of size 2;
 F3(t,s2) = 0, of size 2;
 F4(t,s1) = 0, of size 2;
\end{verbatim}
\end{matlab}
matching the desired specifications of our state components and boundary conditions.

\subsection{Declaring an ODE-PDE System}\label{subsec:terms:ODE_PDE}

Having illustrated how to implement general linear 2D PDEs, we now illustrate how systems of coupled ODEs and PDEs can be implemented in PIETOOLS. For example, suppose we have a system of the form
\begin{align*}
\dot{x}_1(t)&=-x_1(t) + \int_{0}^{3}2\mbf{x}_2(t,s_1)ds_1 - \int_{-1}^{1}3\mbf{x}_{3}(t,3,s_2)ds_2,	&		(s_1,s_2)&\in[0,3]\times[-1,1]	\\
\dot{\mbf{x}}_2(t,s_1)&=s_1 x_1(t) + \partial_{s_1}\mbf{x}_2(t,s_1) - \int_{-1}^{1}\mbf{x}_3(t,s_1,s_2) ds_2,	&	t&\geq 0	\\
\dot{\mbf{x}}_3(t,s_1,s_2)&=s_1s_2 x_1(t) + \mbf{x}_2(t,s_1) + \partial_{s_2}\mbf{x}_3(t,s_1,s_2),		\\
0&=\mbf{x}_2(t,3)-x_1(t)	\\
0&=\mbf{x}_3(t,3,-1) - x_1(t-3)	\\
0&=\partial_{s_1}\mbf{x}_3(t,s_1,-1) - \mbf{x}_2(t,s_1)
\end{align*}
where $x_1\in\R$ is a finite-dimensional state, $\mbf{x}_2\in L_2[0,3]$ varies only along the first spatial dimension, and $\mbf{x}_3\in L_2\bl[[0,3]\times[-1,1]\br]$ varies along both spatial directions. As before, we first initialize an empty PDE structure as
\begin{matlab}
\begin{verbatim}
 >> PDE = pde_struct; 
\end{verbatim}
\end{matlab}
Next, we implement each state component and their associated differential equation using a separate field \texttt{PDE.x\{i\}}. In this case, different states vary in (different numbers of) variables, which we indicate through the fields \texttt{vars} as
\begin{matlab}
\begin{verbatim}
 >> pvar s1 s2
 >> PDE.x{1}.vars = [];     PDE.x{2}.vars = s1;     PDE.x{3}.vars = [s1; s2];
 >> PDE.x{1}.dom = [];      PDE.x{2}.dom = [0,3];   PDE.x{3}.dom = [0,3; -1,1];
\end{verbatim}
\end{matlab}
Note here that, for the finite-dimensional state component $x_1$, we can indicate that this state does not vary in space by specifying an empty set of variables \texttt{vars} and/or specifying an empty spatial domain \texttt{dom}.

Having initialized the different state components, we now specify the PDE for each. Here, the first term in the first PDE,
\begin{align*}
 \underbrace{-}_{\texttt{C}}\underbrace{x_1}_{\texttt{x}}(t),
\end{align*}
can be easily implemented as
\begin{matlab}
\begin{verbatim}
 >> PDE.x{1}.term{1}.x = 1;
 >> PDE.x{1}.term{1}.C = -1;
\end{verbatim}
\end{matlab}
However, for the second term, we have to perform integration, to remove the dependence of the state $\mbf{x}_2(s_1)$ on the spatial variable $s_1$. To declare this, we use the field $\texttt{I}$, which is a cell array of which the number of elements is equal to the number of variables that the state component depends on. Each element of this cell array should specify the spatial domain over which to integrate the state component along the associated spatial direction. Since $\mbf{x}_2(s_1)$ depends only on $s_1$, the field \texttt{I} will have just one element, which we set equal to the domain of integration $[0,3]$. That is, we implement the second term in the first PDE,
\begin{align*}
\underbrace{2}_{\texttt{C}}\underbrace{\int_{0}^{3}}_{\texttt{I\{1\}}}\underbrace{\mbf{x}_2}_{\texttt{x}}(t,s_1)ds_1,
\end{align*}
as
\begin{matlab}
\begin{verbatim}
 >> PDE.x{1}.term{2}.x = 2;
 >> PDE.x{1}.term{2}.C = 2;
 >> PDE.x{1}.term{2}.I{1} = [0,3];
\end{verbatim}
\end{matlab}
Finally, for the third term, we also integrate the state, but this time along the second spatial variable $s_2$ on which the state component $\mbf{x}_3(s_1,s_2)$ depends -- meaning we need to specify this integral using the second element \texttt{I\{2\}} of the field \texttt{I}. To also remove the dependence of $\mbf{x}_3(s_1,s_2)$ on $s_1$, the state is evaluated at the boundary $s_1=3$, which we can specify using the field \texttt{loc}. Doing so, we implement the third term in the first PDE,
\begin{align*}
\underbrace{-3}_{\texttt{C}} \underbrace{\int_{-1}^{1}}_{\texttt{I\{2\}}}\underbrace{\mbf{x}_{3}}_{\texttt{x}}(t,\underbrace{3,s_2}_{\text{loc}})ds_2
\end{align*}
as
\begin{matlab}
\begin{verbatim}
 >> PDE.x{1}.term{3}.x = 3;             PDE.x{1}.term{3}.C = -3;
 >> PDE.x{1}.term{3}.I{2} = [-1,1];     PDE.x{1}.term{3}.loc = [3,s2];
\end{verbatim}
\end{matlab}
Note here that we do not specify a value for \texttt{I\{1\}}. Leaving this field empty, we indicate that no integral should be taken along the direction of the first spatial variable $s_1$ on which the state component depends.

Now, to specify the second PDE,
\begin{align*}
 \dot{\mbf{x}}_2(t,s_1)&=\underbrace{s_1}_{\texttt{C}}\underbrace{x_1}_{\texttt{x}}(t) + \overbrace{\partial_{s_1}^{1}}^{\texttt{D}}\underbrace{\mbf{x}_2}_{\texttt{x}}(t,s_1) \underbrace{-}_{\texttt{C}} \underbrace{\int_{-1}^{1}}_{\texttt{I\{2\}}}\underbrace{\mbf{x}_3}_{\texttt{x}}(t,s_1,s_2) ds_2
\end{align*}
 we set
\begin{matlab}
\begin{verbatim}
 >> PDE.x{2}.term{1}.x = 1;         PDE.x{2}.term{1}.C = s1;  
 >> PDE.x{2}.term{2}.x = 2;         PDE.x{2}.term{2}.D = 1
 >> PDE.x{2}.term{3}.x = 3;         PDE.x{2}.term{3}.C = -1;                                               
 >> PDE.x{2}.term{3}.I{2} = [-1,1];
\end{verbatim}
\end{matlab}
where we note that, since the state component $\mbf{x}_2(s_1)$ depends only on a single spatial variable $s_1$, we need to specify only one order of the derivative \texttt{D=1}. Similarly, no order of a derivative should be specified for state component $x_1$ under any circumstance, as this state does not vary in space at all. 
Finally, to implement the third PDE,
\begin{align*}
 \dot{\mbf{x}}_3(t,s_1,s_2)&=\underbrace{s_1s_2}_{\texttt{C}} \underbrace{x_1}_{\texttt{x}}(t) + \underbrace{\mbf{x}_2}_{\texttt{x}}(t,s_1) + \overbrace{\partial_{s_1}^{0}\partial_{s_2}^{1}}^{\texttt{D}}\underbrace{\mbf{x}_3}_{\texttt{x}}(t,s_1,s_2),
\end{align*}
we set
\begin{matlab}
\begin{verbatim}
 >> PDE.x{3}.term{1}.x = 1;      PDE.x{3}.term{1}.C = s1*s2;     
 >> PDE.x{3}.term{2}.x = 2;
 >> PDE.x{3}.term{3}.x = 3;      PDE.x{3}.term{3}.D = [0,1];
\end{verbatim}
\end{matlab}
With that, we have specified the PDE, and it remains only to set the boundary conditions. We implement the first condition
\begin{align*}
0&=\underbrace{\mbf{x}_2}_{\texttt{x}}(t,\overbrace{3}^{\texttt{loc}})\underbrace{-}_{\texttt{C}}\underbrace{x_1}_{\texttt{x}}(t),
\end{align*}
as
\begin{matlab}
\begin{verbatim}
 >> PDE.BC{1}.term{1}.x = 2;       PDE.BC{1}.term{2}.x = 1;
 >> PDE.BC{1}.term{1}.loc = 3;     PDE.BC{1}.term{2}.C = -1;
\end{verbatim}
\end{matlab}
where we note that only a single position needs to be indicated in \texttt{loc} for state component $\mbf{x}_2(s_1)$, as this component depends on only one spatial variable. On the other hand, for the second boundary condition,
\begin{align*}
0&=\underbrace{\mbf{x}_3}_{\texttt{x}}(t,\overbrace{3,-1}^{\texttt{loc}}) \underbrace{-}_{\texttt{C}}\underbrace{x_1}_{\texttt{x}}(t-\underbrace{3}_{\texttt{delay}}),
\end{align*}
we do need to specify a spatial position for both $s_1$ and $s_2$ in state component $\mbf{x}_3$,
\begin{matlab}
\begin{verbatim}
 >> PDE.BC{2}.term{1}.x = 3;            PDE.BC{2}.term{2}.x = 1;
 >> PDE.BC{2}.term{1}.loc = [3,-1];     PDE.BC{2}.term{2}.C = -1;
 >>                                     PDE.BC{2}.term{2}.delay = 3;
\end{verbatim}
\end{matlab}
where we use the field \texttt{delay} to indicate that we wish to evaluate $x_{1}$ at time $t-1$.
Finally, for the third boundary condition
\begin{align*}
0&=\overbrace{\partial_{s_1}^{1}\partial_{s_2}^{0}}^{\texttt{D}}\underbrace{\mbf{x}_3}_{\texttt{x}}(t,\overbrace{s_1,-1}^{\texttt{loc}}) \underbrace{-}_{\texttt{C}}\underbrace{\mbf{x}_2}_{\texttt{x}}(t,\overbrace{s_1}^{\texttt{loc}}),
\end{align*}
we specify a location at which to evaluate both $\mbf{x}_1(s_1)$ and $\mbf{x}_2(s_1,s_2)$,
\begin{matlab}
\begin{verbatim}
 >> PDE.BC{3}.term{1}.x = 3;             PDE.BC{3}.term{2}.x = 2;
 >> PDE.BC{3}.term{1}.loc = [s1,-1];     PDE.BC{3}.term{2}.loc = s1;
 >> PDE.BC{3}.term{1}.D = [1,0];         PDE.BC{3}.term{2}.C = -1;
\end{verbatim}
\end{matlab}
Note that, although we are not evaluating $\mbf{x}_2(t,s_1)$ here at any boundary, we do specify its position as \texttt{loc=s1}, as it is good practice to always specify the position at which to evaluate (non-ODE) state components when declaring the boundary conditions.

Finally, having declared the full PDE, we run \texttt{initialize},
\begin{matlab}
\begin{verbatim}
>> PDE = initialize(PDE);

Encountered 3 state components: 
 x1(t),       of size 1, finite-dimensional;
 x2(t,s1),    of size 1, differentiable up to order (1) in variables (s1);
 x3(t,s1,s2), of size 1, differentiable up to order (1,1) in variables (s1,s2);

Encountered 3 boundary conditions: 
 F1(t) = 0,    of size 1;
 F2(t) = 0,    of size 1;
 F3(t,s1) = 0, of size 1;
\end{verbatim}
\end{matlab}
indicating that the state components and boundary conditions have been properly implemented.

%\newpage
%\vspace*{1.0cm}

\subsection{Declaring a System with (Delayed) Inputs and Outputs}\label{subsec:terms:input_output}

Suppose now we want to implement the following system
\begin{align*}
 \dot{\mbf{x}}(t,s_1,s_2)&=\partial_{s_1}\partial_{s_2}\mbf{x}(t,s_1,s_2)+(3-s_1)(1-s_2)(s_2+1)w(t)	&	(s_1,s_2)&\in[0,3]\times[-1,1]	\\
 z(t)&=10\int_{0}^{3}\int_{-1}^{1}\mbf{x}(t,s_1,s_2)ds_2 ds_1	\\
 \mbf{y}_1(t,s_1)&=\int_{-1}^{1}\mbf{x}(t,s_1,s_2)ds_2 + s_1 w(t)	\\
 \mbf{y}_2(t,s_2)&=\mbf{x}(t,0,s_2)	\\
 0&=\mbf{x}(t,0,-1)	\\
 0&=\partial_{s_1}\mbf{x}(t,s_1,-1) - u_1(t-0.5)	\\
 0&=\partial_{s_2}\mbf{x}(t,0,s_2) - \mbf{u}_2(t,s_2)	
\end{align*}
In this system, we have a finite-dimensional disturbance $w$, finite-dimensional regulated output $z$, infinite dimensional actuator inputs $\mbf{u}(t,s_1,s_2)=\bmat{u_1(t)\\\mbf{u}_2(t,s_2)}$, and infinite-dimensional observed outputs $\mbf{y}(t,s_1,s_2)=\bmat{\mbf{y}_1(t,s_1)\\\mbf{y}_2(t,s_2)}$, in addition to our infinite dimensional state $\mbf{x}(t,s_1,s_2)$. Here, we initialize the PDE structure as before as
\begin{matlab}
\begin{verbatim}
 >> PDE = pde_struct();
\end{verbatim}
\end{matlab}
and then implement the state as
\begin{matlab}
\begin{verbatim}
 >> pvar s1 s2
 >> PDE.x{1}.vars = [s1,s2];
 >> PDE.x{1}.dom = [0,3; -1,1];
\end{verbatim}
\end{matlab}
Similarly, we indicate the spatial variation of the different inputs and outputs as
\begin{matlab}
\begin{verbatim}
 >> PDE.z{1}.vars = [];      PDE.w{1}.vars = [];
 
 >> PDE.y{1}.vars = s1;      PDE.u{1}.vars = [];
 >> PDE.y{1}.dom = [0,3];
 >> PDE.y{2}.vars = s2;      PDE.u{2}.vars = s2;
 >> PDE.y{2}.dom = [-1,1];   PDE.u{2}.dom = [-1,1];
\end{verbatim}
\end{matlab}
where we set \texttt{PDE.z\{1\}.vars = [];} and \texttt{PDE.w\{1\}.vars = [];} to indicate that the regulated output and exogenous input are finite-dimensional.

Next, we implement the PDE. The first term here,
\begin{align*}
\overbrace{\partial_{s_1}^{1}\partial_{s_2}^{1}}^{\texttt{D}}\underbrace{\mbf{x}}_{\texttt{x}}(t,s_1,s_2)
\end{align*}
can be implemented simply as
\begin{matlab}
\begin{verbatim}
 >> PDE.x{1}.term{1}.x = 1;
 >> PDE.x{1}.term{1}.D = [1,1];
\end{verbatim}
\end{matlab}
where we use the field \texttt{x} to indicate that the term involves the first (and only) state component. Similarly, to add the term
\begin{align*}
 \underbrace{(3-s_1)(1-s_2)(s_2+1)}_{\texttt{C}}\underbrace{w}_{\texttt{w}}(t),
\end{align*}
we indicate that we are adding an input by using the field \texttt{w} as
\begin{matlab}
\begin{verbatim}
 >> PDE.x{1}.term{2}.w = 1;
 >> PDE.x{1}.term{2}.C = (3-s1)*(1-s2)*(s2+1);
\end{verbatim}
\end{matlab}
Note that, in any term, we can only specify one of the fields \texttt{x}, \texttt{w} and \texttt{u}, as any one term cannot involve both a state and an input, or both an exogenous and an actuator input. PIETOOLS will check whether a field \texttt{x}, \texttt{w}, or \texttt{u} is specified, and act accordingly. %If non of these fields is specified, PIETOOLS assumes the term to involve a state component, and will set \texttt{PDE.x\{i\}.term\{j\}.x=i}.
Note also that, since the inputs may describe arbitrary external forcings, PIETOOLS cannot take the derivative of an input or evaluate it at any particular position. As such, in any term involving an input \texttt{w} or \texttt{u}, we cannot specify an order of a derivative (\texttt{D}) or spatial position (\texttt{loc}).

Now, to specify the regulated output
\begin{align*}
 z&=\underbrace{10}_{\texttt{C}}\underbrace{\int_{0}^{3}}_{\texttt{I\{1\}}}\underbrace{\int_{-1}^{1}}_{\texttt{I\{2\}}}\underbrace{\mbf{x}}_{\texttt{x}}(t,s_1,s_2)ds_2 ds_1
\end{align*}
we simply add one term to \texttt{PDE.z\{1\}} as
\begin{matlab}
\begin{verbatim}
 >> PDE.z{1}.term{1}.x = 1;            PDE.z{1}.term{1}.C = 10;
 >> PDE.z{1}.term{1}.I{1} = [0,3];     PDE.z{1}.term{1}.I{2} = [-1,1];
\end{verbatim}
\end{matlab}
where we use \texttt{I\{1\}} and \texttt{I\{2\}} to define the domain of integration for the first and second variable on which the state component depends.

Similarly, to specify the equation for $\mbf{y}_1$,
\begin{align*}
\mbf{y}_1(s_1)&=\underbrace{\int_{-1}^{1}}_{\texttt{I\{2\}}}\underbrace{\mbf{x}}_{\texttt{x}}(t,s_1,s_2)ds_2 + \underbrace{s_1}_{\texttt{C}}\underbrace{w}_{\texttt{w}}(t),
\end{align*}
we set
\begin{matlab}
\begin{verbatim}
 >> PDE.y{1}.term{1}.x = 1;             PDE.y{1}.term{2}.w = 1;
 >> PDE.y{1}.term{1}.I{2} = [-1,1];     PDE.y{1}.term{2}.C = s1;
\end{verbatim}
\end{matlab}
this time specifying only an integral along the second spatial direction through \texttt{I\{2\}}.

Finally, we implement the second regulated output equation,
\begin{align*}
 \mbf{y}_2(s_2)&=\underbrace{\mbf{x}}_{\texttt{x}}(t,\overbrace{0,s_2}^{\texttt{loc}}),
\end{align*}
as
\begin{matlab}
\begin{verbatim}
 >> PDE.y{2}.term{1}.x = 1;
 >> PDE.y{2}.term{1}.loc = [0,s2];
\end{verbatim}
\end{matlab}
This leaves only the boundary conditions, the first of which
\begin{align*}
 0&=\underbrace{\mbf{x}}_{\texttt{x}}(t,\overbrace{0,-1}^{\texttt{loc}}),
\end{align*}
can be easily implemented as
\begin{matlab}
\begin{verbatim}
 >> PDE.BC{1}.term{1}.x = 1;
 >> PDE.BC{1}.term{1}.loc = [0,-1];
\end{verbatim}
\end{matlab}
Next, for the second boundary condition
\begin{align*}
 0&=\overbrace{\partial_{s_1}^1\partial_{s_2}^{0}}^{\texttt{D}}\underbrace{\mbf{x}}_{\texttt{x}}(t,\overbrace{s_1,-1}^{\texttt{loc}}) \underbrace{-}_{\texttt{C}}\underbrace{u_1}_{\texttt{u}}(t-\underbrace{0.5}_{\texttt{delay}})
\end{align*}
we specify the two terms as
\begin{matlab}
\begin{verbatim}
 >> PDE.BC{2}.term{1}.x = 1;           PDE.BC{2}.term{2}.u = 1;
 >> PDE.BC{2}.term{1}.D = [1,0];       PDE.BC{2}.term{2}.C = -1;
 >> PDE.BC{2}.term{1}.loc = [s1,-1];   PDE.BC{2}.term{2}.delay = 0.5;
\end{verbatim}
\end{matlab}
Here, although we have to specify the spatial position at which to evaluate the state $\mbf{x}$, we do not specify the spatial position of the input $\mbf{u}_1$, as we cannot evaluate an input at any specific position. Similarly, the third boundary condition
\begin{align*}
 0&=\overbrace{\partial_{s_1}^{0}\partial_{s_2}^{1}}^{\texttt{D}}\underbrace{\mbf{x}}_{\texttt{x}}(t,\overbrace{0,s_2}^{\texttt{loc}}) \underbrace{-}_{\texttt{C}}\underbrace{\mbf{u}_2}_{\texttt{u}}(t,s_2),
\end{align*}
can be implemented as
\begin{matlab}
\begin{verbatim}
 >> PDE.BC{3}.term{1}.x = 1;           PDE.BC{3}.term{2}.u = 2;
 >> PDE.BC{3}.term{1}.D = [0,1];       PDE.BC{3}.term{2}.C = -1;
 >> PDE.BC{3}.term{1}.loc = [0,s2];
\end{verbatim}
\end{matlab}
Having declared the full system, we can then call \texttt{initialize},
\begin{matlab}
\begin{verbatim}
>> PDE = initialize(PDE);

Encountered 1 state component: 
 x(t,s1,s2),    of size 1, differentiable up to order (1,1) in variables (s1,s2);

Encountered 2 actuator inputs: 
 u1(t),          of size 1;
 u2(t,s2),       of size 1;

Encountered 1 exogenous input: 
 w(t),    of size 1;

Encountered 2 observed outputs: 
 y1(t,s1),       of size 1;
 y2(t,s2),       of size 1;

Encountered 1 regulated output: 
 z(t),    of size 1;

Encountered 3 boundary conditions: 
 F1(t) = 0,      of size 1;
 F2(t,s1) = 0,   of size 1;
 F3(t,s2) = 0,   of size 1;
\end{verbatim}
\end{matlab}
indicating that the state, inputs, and outputs have been correctly declared.

\begin{boxEnv}{\textbf{Note:}}
Although the terms-based input format allows infinite-dimensional inputs and outputs to be specified in the PDE structure (as we did here for $\mbf{u}$ and $\mbf{y}$), PIETOOLS 2022 may not always be able to convert such PDEs to PIEs. This functionality will be included in a later release.
\end{boxEnv}

%\newpage
%\vspace*{1.0cm}

\subsection{Additional Options}\label{subsec:terms:additional_opts}

Using the methods from the previous sections, a wide variety of ODE-PDE input-output systems can be implemented in PIETOOLS. However, for those users interested in implementing even more complicated (linear) systems, there are a few additional options for PDEs that PIETOOLS allows. In particular, in defining linear systems, PIETOOLS also allows partial integrals of state variables and inputs to be added to the dynamics, using the field \texttt{I}. In addition, PDEs with higher-order temporal derivatives can be declared using the field \texttt{tdiff}. %In addition, as we show in Subsection~\ref{sec:subsec_loc}, boundary states can also be included in PDE dynamics and output equations. 
Finally, the inherent continuity constraints imposed for PDEs in PIETOOLS can also be adjusted, using the field \texttt{diff}. 

\bigskip

\subsubsection{Partial Integrals and Kernel Functions}%\label{subsec:terms:integrals}

In the previous subsections, we showed that an integral $\int_{a_k}^{b_k}\mbf{x}(s_1,\hdots,s_n)ds_k$ of a state $\mbf{x}$ over $s_k\in[a_k,b_k]$ could be specified in the PDE structure by setting \texttt{I\{k\}=[ak,bk]}. Integrating over the entire domain of the variable $s_k\in[a_k,b_k]$, this integral will remove the dependence of the state $\mbf{x}$ on the variable $s_k$. However, rather than performing a full integral, we can also specify a partial integral, integrating over either $[a_k,s_k]$ or $[s_k,b_k]$. For example, to add a term
\begin{align*}
 \underbrace{\int_{s_1}^{3}}_{\texttt{I\{1\}}}\underbrace{\int_{-1}^{s_2}}_{\texttt{I\{2\}}}\underbrace{\mbf{x}_{j}}_{\texttt{x}}(t,\theta_1,\theta_2)d\theta_2 d\theta_1
\end{align*}
to the PDE of the $i$th state variable, we can set
\begin{matlab}
\begin{verbatim}
 >> PDE.x{i}.term{l}.x = j;
 >> PDE.x{i}.term{l}.I{1} = [s1,3];
 >> PDE.x{i}.term{l}.I{2} = [-1,s2];
\end{verbatim}
\end{matlab}
Similarly, a term 
\begin{align*}
\underbrace{10s_2}_{\texttt{C}}\underbrace{\int_{s_2}^{1}}_{\texttt{I\{2\}}}\overbrace{\partial_{s_1}^{1}\partial_{s_2}^{0}}^{\texttt{D}}\underbrace{\mbf{x}_{j}}_{\texttt{x}}(t,\overbrace{3,\theta_2}^{\texttt{loc}})d\theta_2
\end{align*}
can be added to the $i$th boundary condition as
\begin{matlab}
\begin{verbatim}
 >> PDE.BC{i}.term{l}.x = j;
 >> PDE.BC{i}.term{l}.D = [1,0];
 >> PDE.BC{i}.term{l}.loc = [3,s2];
 >> PDE.BC{i}.term{l}.I{2} = [s2,1];
 >> PDE.BC{i}.term{l}.C = 10*s2;
\end{verbatim}
\end{matlab}
Note here that, although a dummy variable $\theta_2$ is introduced in the mathematical representation of the integral $\int_{s_2}^{1}\mbf{x}(t,3,\theta_2) d\theta$, the location \texttt{loc=[3,s2]} at which we evaluate the state can still be specified using the primary variable $s_2$. However, for the factor $C(s_2)=10s_2$ with which to multiply the state, the use of the primary variable $s_2$ ensures that this factor will not be integrated itself. Suppose now that we wish for $C$ to act as a kernel instead, adding e.g. a term
\begin{align*}
\int_{s_2}^{1}10\theta_2 \partial_{s_1}^{1}\mbf{x}_{j}(t,3,\theta_2)d\theta_2
\end{align*}
To implement this term, we first have to specify dummy variables for the state component. This can be done using the second column of the field \texttt{x\{j\}.vars}, specifying dummy variables $(\theta_1,\theta_2)$ for the $j$th state component as
\begin{matlab}
\begin{verbatim}
 >> pvar s1 s2 theta1 theta2
 >> PDE.x{j}.vars = [s1,theta1; s2,theta2];
 >> PDE.x{j}.dom = [0,3; -1,1]
\end{verbatim}
\end{matlab}
Here, each dummy variable $\theta_i$ is linked to a particular spatial variable $s_i$ by specifying the two variables in the same \textbf{row} of the field \texttt{vars}. The domain of the dummy variable will then be the same as that of its associated primary variable, as specified in the corresponding row of \texttt{dom}. 

Having specified the dummy variables, we can now add the term
\begin{align*}
\underbrace{\int_{s_2}^{1}}_{\texttt{I\{2\}}}\underbrace{10\theta_2}_{\texttt{C}}\overbrace{\partial_{s_1}^{1}\partial_{s_2}^{0}}^{\texttt{D}}\underbrace{\mbf{x}_{j}}_{\texttt{x}}(t,\overbrace{3,\theta_2}^{\texttt{loc}})d\theta_2
\end{align*}
to the $i$th boundary condition by specifying
\begin{matlab}
 \texttt{>> PDE.BC\{i\}.term\{l\}.x = j;}	\\
 \texttt{>> PDE.BC\{i\}.term\{l\}.D = [1,0];}	\\
 \texttt{>> PDE.BC\{i\}.term\{l\}.loc = [3,theta2];}	\hspace*{0.45cm} \text{or} \hspace*{0.45cm}
 \texttt{PDE.BC\{i\}.term\{l\}.loc = [3,s2];}	\\
 \texttt{>> PDE.BC\{i\}.term\{l\}.I\{2\} = [s2,1];}	\\
 \texttt{>> PDE.BC\{i\}.term\{l\}.C = 10*theta2;}	
\end{matlab}
Here, we can choose to set either \texttt{loc=[3,theta2]} or \texttt{loc=[3,s2]}, as the field \texttt{I\{2\}=[s2,1]} ensures that both options will be recognized as integrating the state over the domain $[s_2,1]$. However, dummy variables can only be used (in the fields \texttt{loc} and \texttt{C}) if integration is performed along the associated spatial direction. For example, a term
\begin{align*}
\underbrace{\int_{0}^{s_1}}_{\texttt{I}\{1\}}\underbrace{s_2(s_1-\theta_1)}_{\texttt{C}}\overbrace{\partial_{s_1}^{1}\partial_{s_2}^{2}}^{\texttt{D}}\underbrace{\mbf{x}_{j}}_{\texttt{x}}(t,\overbrace{\theta_1,-1}^{\texttt{loc}})d\theta_1
\end{align*}
could also be added to e.g. the PDE of the $i$th state component as
\begin{matlab}
 \texttt{>> PDE.BC\{i\}.term\{l\}.x = j;}	\\
 \texttt{>> PDE.BC\{i\}.term\{l\}.D = [1,2];}	\\
 \texttt{>> PDE.BC\{i\}.term\{l\}.loc = [theta1,-1];}	\hspace*{0.45cm} \text{or}	\hspace*{0.45cm}
 \texttt{PDE.BC\{i\}.term\{l\}.loc = [s1,-1];}	\\
 \texttt{>> PDE.BC\{i\}.term\{l\}.I\{1\} = [0,s1];}	\\
 \texttt{>> PDE.BC\{i\}.term\{l\}.C = s2*(s1-theta1);}	
\end{matlab}
in which case $\theta_2$ cannot be used, since no integration is performed along the second spatial direction. However, dummy variables can also be used to implement kernels for full integrals, adding e.g. a term
\begin{align*}
\underbrace{\int_{0}^{s_1}}_{\texttt{I\{1\}}}\underbrace{\int_{-1}^{1}}_{\texttt{I\{2\}}}\underbrace{\theta_2(s_1-\theta_1)}_{\texttt{C}}\overbrace{\partial_{s_1}^{1}\partial_{s_2}^{2}}^{\texttt{D}}\underbrace{\mbf{x}_{j}}_{\texttt{x}}(t,\theta_1,\theta_2)d\theta_2 d\theta_1
\end{align*}
as
\begin{matlab}
\begin{verbatim}
 >> PDE.BC{i}.term{l}.x = j;
 >> PDE.BC{i}.term{l}.D = [1,2];
 >> PDE.BC{i}.term{l}.I{1} = [0,s1];
 >> PDE.BC{i}.term{l}.I{2} = [-1,1];
 >> PDE.BC{i}.term{l}.C = theta2*(s1-theta1);
\end{verbatim}
\end{matlab}
where in this case, no location needs to be specified, as the state is not evaluated at any boundary.

\begin{boxEnv}{\textbf{Note:}}
In specifying the domain of integration for a variable $s_k\in[a_k,b_k]$, only the full domain \texttt{I\{k\}=[ak,bk]} or the partial domains \texttt{I\{k\}=[ak,sk]} or \texttt{I\{k\}=[sk,bk]} are supported. Specifying any other domain for the integral will produce an error.
\end{boxEnv}

\subsubsection{Higher Order Temporal Derivatives}

In the previous sections, only PDEs involving first-order temporal derivatives have been implemented, using the element of \texttt{x\{i\}.term} to declare the terms in the equation for $\dot{\mbf{x}}_i$. However, PDEs involving higher order temporal derivatives can also be specified, by adjusting the value of \texttt{x\{i\}.tdiff}. For example, suppose we want to declare the wave equation
\begin{align*}
    \ddot{\mbf{x}}(t,s)&=\partial_{s}^2\mbf{x}(t,s), &   s&\in[0,1],\quad t\geq 0,   \\
    \mbf{x}(t,0)&=0,\qquad \mbf{x}(t,1)&=0,
\end{align*}
where $\mbf{x}\in L_2[0,1]$. We initialize the PDE structure and the state $\mbf{x}$ as before,
\begin{matlab}
\begin{verbatim}
 >> PDE = pde_struct();
 >> pvar s
 >> PDE.x{1}.vars = s;
 >> PDE.x{1}.dom = [0,1];
\end{verbatim}
\end{matlab}
Then, before specifying the terms in the PDE for this state component $\mbf{x}_1=\mbf{x}$, we indicate that this PDE concerns a second order temporal derivative of $\mbf{x}$, using the field \texttt{tdiff}:
\begin{matlab}
\begin{verbatim}
 >> PDE.x{1}.tdiff = 2;
\end{verbatim}
\end{matlab}
Note that if no field \texttt{tdiff} is specified (as in the previous sections), its value will default to 1. Having specified the desired order of the temporal derivative, we can declare the PDE and BCs as
\begin{matlab}
\begin{verbatim}
 >> PDE.x{1}.term{1}.D = 2;
 >> PDE.BC{1}.term{1}.loc = 0;
 >> PDE.BC{2}.term{1}.loc = 1;
\end{verbatim}
\end{matlab}
and we can initialize
\begin{matlab}
\begin{verbatim}
 >> PDE = initialize(PDE);
 
Encountered 1 state component: 
 x(t,s),    of size 1, differentiable up to order (2) in variables (s);

Encountered 2 boundary conditions: 
 F1(t) = 0, of size 1;
 F2(t) = 0, of size 1;
\end{verbatim}
\end{matlab}
We note that, although higher order temporal derivatives can be specified in the PDE format, the PIE representation cannot represent such higher order temporal derivatives. Therefore, when calling \texttt{convert(PDE,'pie')}, the higher order temporal derivatives will first be expanded, expressing the PDE as a system involving only first order temporal derivatives. This is done using the function \texttt{expand\_tderivatives}, which can also be called to manually expand the delays
\begin{matlab}
\begin{verbatim}
 >> PDE = expand_tderivatives(PDE)
 
 Added 1 state component: 
    x2(t,s) := (d/dt) x1(t,s)
\end{verbatim}
\end{matlab}
Introducing the new state component $x_{2}(t,s)=\dot{x}_1(t,s)$, PIETOOLS then expresses the system as a PDE involving only first-order temporal derivatives as
\begin{align*}
    \dot{\mbf{x}}_1(t,s)&=\mbf{x}_2(t,s),       &   s&\in[0,1],\quad t\geq 0,   \\
    \dot{\mbf{x}}_2(t,s)&=\partial_{s}^2\mbf{x}_1(t,s),     \\
    \mbf{x}_1(t,0)&=0,\qquad \mbf{x}_1(t,1)=0.
\end{align*}
\begin{boxEnv}{\textbf{Note:}}
 Expanding higher order temporal derivatives using \texttt{expand\_tderivatives}, no boundary conditions will be imposed upon the newly introduced state components (such as $x_{2}(t,s)=\dot{x}_1(t,s)$). Boundary conditions can be imposed upon the new states by manually declaring them in terms-based format. Results from e.g. stability tests may vary depending on whether or not boundary conditions are imposed upon the newly introduced state components.
\end{boxEnv}

%In doing so, no boundary conditions are imposed upon the newly introduced state component $\mbf{x}_2$, assuming these to be inherently satisfied through the constraint $\mbf{x}_2(t,s)=\dot{\mbf{x}}_1(t,s)$. If you do wish to explicitly specify boundary conditions for $\mbf{x}_2$ as well, you can add these to the system after this initialization, or you will have to manually decompose the second-order temporal derivative system into one involving only first-order temporal derivatives, with the desired boundary conditions.

%\bigskip

\subsubsection{Continuity Constraints}\label{sec:subsec_diff}

In any PDE, there are inherent continuity constraints imposed upon the PDE state, requiring the state to be differentiable in each spatial variable up to some order. For example, for a PDE
\begin{align*}
 \dot{\mbf{x}}(t,s_1,s_2)&=\partial_{s_1}^2\mbf{x}(t,s_1,s_2) +\partial_{s_2}\mbf{x}(t,s_1,s_2)
\end{align*}
the state $\mbf{x}$ must be at least second order differentiable with respect to $s_1$, and first order differentiable with respect to $s_2$. Specifying this system in PIETOOLS, a continuity constraint will automatically be imposed upon the PDE state as
\begin{align*}
 \partial_{s_1}^2\partial_{s_2}\mbf{x}(t,s_1,s_2)\in L_2.
\end{align*}
Accordingly, a well-posed solution to the PDE will require (at least) two boundary conditions to be specified along boundaries in the first spatial dimension (e.g. $s_1=0$ and/or $s_1=3$), and one boundary condition to be specified along boundaries in the second spatial dimension (e.g. $s_2=-1$ or $s_2=1$). Moreover, at the boundaries along the first spatial direction, we can only evaluate first order derivatives of the state with respect to $s_1$, and at the boundaries along the second spatial direction, the we cannot differentiate the state with respect to $s_2$ at all. That is, where the boundary conditions
\begin{align*}
0&=\partial_{s_1} \mbf{x}(t,0,s_2)	&
0&=\mbf{x}(t,s_1,-1)
\end{align*}
are permitted, the boundary conditions
\begin{align*}
 0&=\partial_{s_1}^2 \mbf{x}(t,0,s_2)	&
 0&=\partial_{s_2}\mbf{x}(t,s_1,-1)
\end{align*}
are prohibited.

In some cases, however, it may be desirable for a state component to be differentiable up to a greater order than the maximal order of the derivative appearing in the PDE. In PIETOOLS, this can be indicated in the PDE structure through the optional field \texttt{PDE.x\{i\}.diff}, allowing the order of spatial differentiability of the $i$th PDE state component to be explicitly specified as
\begin{matlab}
\begin{verbatim}
 >> PDE.x{i}.diff = [dmax1, dmax2];
\end{verbatim}
\end{matlab}
where \texttt{dmax1} denotes the maximal order of differentiability in $s_1$, and \texttt{dmax2} the maximal order of differentiability in $s_2$. Note that this order of differentiability cannot be smaller than the maximal order of the spatial derivatives of the $i$th PDE state in any term of the system. That is, specifying a term such as
\begin{matlab}
\begin{verbatim}
 >> PDE.x{k}.term{j}.x = i;   PDE.x{k}.term{j}.D = [dmax1+1, dmax2];
\end{verbatim}
\end{matlab}
PIETOOLS will automatically update the order of differentiability of the $i$th state component to \texttt{PDE.x\{i\}.diff=[dmax1+1, dmax2]}. Furthermore, the number of columns of the field \texttt{diff} should always match the number of rows of the field \texttt{vars}, specifying an order of differentiability for each spatial variable on which the state component depends. For example, if a state component $\mbf{x}_j(s_2)$ varies only in a single variable $s_2$, only a single order of differentiability can be specified as
\begin{matlab}
\begin{verbatim}
 >> PDE.x{j}.vars = s2;       PDE.x{j}.diff = dmax2;
\end{verbatim}
\end{matlab}

\section{The Command Line Parser format Format}\label{subsec:additional-command-line}
In this subsection, we explain how defining and manipulating ODE-PDE systems revolves around two main classes: \texttt{state} and \texttt{sys}. Furthermore, we will also specify valid modes of manipulating these objects in MATLAB and potential caveats while using these objects.
\subsection{\texttt{state} class objects}\label{ss-sec:state}
All symbols used to define a systems are either \texttt{polynomial} type (part of SOSTOOLS) or \texttt{state} type (part of PIETOOLS). Here, we will focus on \texttt{state} class objects and methods defined for such objects. First, any \texttt{state} class object has the following properties that can be freely accessed (but \textbf{should not} be modified directly).
\begin{Statebox}{\texttt{state}}
This class has the following properties:
\begin{enumerate}
\item \texttt{type}: Type of variable; It is a cell array of strings that can take values in $\{'ode'$ $,'pde'$ $,'out'$ $,'in'\}$
\item \texttt{veclength}: Positive integer 
\item \texttt{var}: Cell array of polynomial row vectors (Multipoly class object)
\item \texttt{diff\_order}: Cell array of non-negative integers (same size as var)
\end{enumerate}
\end{Statebox}

The first independent variable stored in each row of the \texttt{state.var} cell structure is always the time variable \texttt{t}. Spatial variables are stored in location 2 and on-wards. For example,
\begin{matlab}
>> X = state('pde'); x = state('ode');\\
>> X.var
\begin{verbatim}
ans =
  [ t, s]
\end{verbatim}
>> x.var
\begin{verbatim}
ans =
  [ t ]
\end{verbatim}
\end{matlab}

Differentiation information is stored as a cell array where the cell structure has the same size as \texttt{state.var} with non-negative integers specifying order of differentiation w.r.t. the independent variable based on the location.
For the above example, we have
\begin{matlab}
>> X.diff\_order
\begin{verbatim}
ans =
  [ 0, 0]
\end{verbatim}
>> y = diff(X,s,2);\\
>> y.diff\_order
\begin{verbatim}
ans =
  [ 0, 2]
\end{verbatim}
\end{matlab}

Note, user can indeed edit these properties directly by assignment. For example, the code
\begin{matlab}
>> x = state('pde');\\
>> x.diff\_order = [0,2];
\end{matlab}
defines the symbol \texttt{x} as a function $x(t,s)$, and converts it to the second derivative $\partial_s^2 x(t,s)$. This is same as the code
\begin{matlab}
>> x = state('pde'); \\
>> x = diff(x,s,2);
\end{matlab}
Since, this permanently changes \texttt{x} to its second spatial derivative in the workspace, such direct manipulation of the properties should be avoided at all costs. 

\paragraph{Declaring/initializing state variables} 
The initialization function \texttt{state()} takes two input arguments (both are optional):
\begin{itemize}
    \item type: The argument is reserved to specification of the type of the state object (defaults to 'ode', if not specified)
    \item veclength: The size of the vector-valued state (defaults to one, if not specified)
\end{itemize}
\begin{matlab}
d = state('pde',3);
\end{matlab}
Alternatively, multiple states can be defined collectively using the command shown below, however, all such states will default to the type 'ode' and length 1.
\begin{matlab}
state ~a ~b ~c;
\end{matlab}

\paragraph{Operations on state class objects} All of the following operations should give us a \texttt{terms} (an internal class that cannot be accessed or modified by users) class object which is defined by some PI operator times a vector of states. Operators/functions that are used to manipulate \texttt{state} objects are:
\begin{enumerate}
\item addition: \texttt{x+y} or  \texttt{x-y}  
\item multiplication: \texttt{K*x}
\item vertical concatentation: \texttt{[x;y]}
\item differentiation: \texttt{diff(x,s,3)}
\item integration: \texttt{int(x,s,[0,s])}
\item substitution: \texttt{subs(x,s,0)}
\end{enumerate}

\vfill
\paragraph{Caveats in operations on state class objects} While manipulation of state class objects, the users must adhere the following rules stated in the table \ref{tab:invalid_state_operations}. All the operations listed in the table are invalid.

Addition of time derivatives is not allowed, since that usually leads to a descriptor dynamical PDE system which is not supported by PIETOOLS.
For example, consider the following PDE.
\begin{align*}
    \dot{\mbf x}(t) + \dot{\mbf y}(t) &=\partial_s^2\mbf x(t,s)\\
    2\dot{\mbf y}(t) &=5\partial_s^2\mbf y(t,s).
\end{align*}
\textbf{This PDE cannot be implemented directly using the command line parser.}
Since, the left hand side of the equation has a coefficient different from identity, the user needs to first separate it as
\begin{align*}
    \dot{\mbf x}(t) &=\partial_s^2\mbf x(t,s)-2.5\partial_s^2\mbf y(t,s)\\
    \dot{\mbf y}(t) &=2.5\partial_s^2\mbf y(t,s).
\end{align*}
Now, we can define this PDE using the following code:
%\newpage
\begin{verbatim}
\begin{matlab}
>> pvar ~t ~s ~theta; \\
>> x = state('pde'); ~y = state('pde');\\ 
>> odepde= sys(); \\
>> odepde = addequation(odepde, diff(x,t)-diff(x,s,2)-2.5*diff(y,s,2));\\
>> odepde = addequation(odepde, diff(y,t)-2.5*diff(y,s,2));
\end{matlab}
\end{verbatim}

Likewise, we do not permit adding outputs with outputs, outputs with time derivatives, or right multiplication which also lead to descriptor type systems. Coupling on left hand side of these equations must be manually resolved before defining the PDE in PIETOOLS.

Other limitations to note are, PIETOOLS does not support temporal-spatial mixed derivatives, integration in time, and evaluation of functions at specific time or inside the spatial domain. For example, for a state $x(t,s)$ with $s\in[0,1]$ we cannot find $x(t=2,s)$ or $x(t,s=0.5)$. $x$ can only be evaluated at the boundary $s=0$ or $s=1$. 

\begin{boxEnv}{Note:}
While PIETOOLS terms format (introduced in Section~\ref{sec:alt_PDE_input:terms_input_PDE}) supports higher order temporal derivatives, the command line parser does not support it currently.
\end{boxEnv}

\begin{table}
\renewcommand{\arraystretch}{0.3}
\begin{tabular}{ | m{8em} | m{12cm} |} 
\hline 
\vspace*{0.15cm} Operation type & 
\vspace*{0.15cm} Incorrect or `not-permitted' operations\\[0.6em]
  \hline
 Addition & \begin{enumerate}[label=\textcolor{red}{\ding{54}}]
\item Adding two time derivatives: \texttt{ diff(x,t)+diff(x,t)}
\item Adding two outputs: \texttt{z1+z2}
\item Adding time derivative and outptu: \texttt{ diff(x,t)+z}
\end{enumerate}\\
  \hline
 Multiplication&\begin{enumerate}[label=\textcolor{red}{\ding{54}}]
\item Multiplying two states: \texttt{x*x}
\item Multiplying non-identity with time derivative/output: \texttt{ 2*diff(x,t)} or \texttt{ -1*z}
\item Right multiplication: \texttt{x*3} 
\end{enumerate}\\
  \hline
  Differentiation&\begin{enumerate}[label=\textcolor{red}{\ding{54}}]
\item Higher order time derivatives:\texttt{ diff(x,t,2)}
\item Mixed derivatives of space and time: \texttt{ diff(diff(x,t),s,2)}
\end{enumerate}\\
  \hline
  Substitution&\begin{enumerate}[label=\textcolor{red}{\ding{54}}]
\item Substituting a double for time variable: \texttt{ subs(x,t,2)}
\item Substituting positive time delay: \texttt{ subs(x,t,t+5)}
\item Substitution values other than \texttt{pvar} variable or boundary values
\end{enumerate}\\
  \hline
  Integration&\begin{enumerate}[label=\textcolor{red}{\ding{54}}]
\item Integration of time variable: \texttt{ int(x,t,[0,5])}
\item both limits being non-numeric: \texttt{ int(x,s,[theta,eta])}
\item limit same as variable of integration: \texttt{ int(x,s,[s,1])}
\end{enumerate}\\
  \hline
  Concatenation&
  \begin{enumerate}[label=\textcolor{red}{\ding{54}}]
    \item Horizontal concatenation: \texttt{ [x,x]}
    \item Blank spaces in vertical concatentation: \texttt{[x + y; z]}
\end{enumerate}\\
\hline
\end{tabular}
\caption{This table lists all the invalid forms of operations on \texttt{state} class objects. The left column specifies the type of operation whereas the right column lists the operations that are \textbf{INVALID} for that `type' of operation.}
\label{tab:invalid_state_operations}
\end{table}

\subsection{\texttt{sys} class objects}\label{subsec:sys}

\begin{Statebox}{\texttt{sys}}
This class has following accessible properties:
\begin{itemize}
    \item \texttt{equation}: stores all the equations added to the system object in a column vector where every row is an equation with zero on the right hand side (i.e., \texttt{row(i)=0} for every \texttt{i})
    \item \texttt{type}: type of the system (currently supports `pde' and `pie')
    \item \texttt{params}: either a \texttt{pde\_struct} or \texttt{pie\_struct} object
    \item \texttt{dom}: a $1\times 2$ vector double (value of first element must be strictly smaller than that of second element)
    \item Other hidden properties:
    \begin{enumerate}
        \item \texttt{states}: a vector of all states, inputs, outputs appearing in the equation property    
        \item \texttt{ControlledInputs}: A vector with length same as the states property with 0 or 1 value. This vector specifies whether a state is a controlled input or not.
        \item \texttt{ObservedOutputs}: A vector with length same as the states property with 0 or 1 value. This specifies whether a state is an observed output or not.
    \end{enumerate}
\end{itemize}
\end{Statebox}

\paragraph{\texttt{sys} class methods} Methods used to modify a \texttt{sys()} object are listed below.
\begin{itemize}
\item \texttt{addequation}: adds an equation to the \texttt{obj.equation} property; syntax \texttt{addequation(obj, eqn)}
\item \texttt{removeequation}: removes equation in row \texttt{i} from the \texttt{obj.equation} property; syntax \texttt{removeequation(obj,i)}
\item \texttt{setControl}: sets a chosen state \texttt{x} as a control input; syntax \texttt{setControl(obj,x)}
\item \texttt{setObserve}: sets a chosen state \texttt{x} as an observed output; syntax \texttt{setObserve(obj,x)}
\item \texttt{removeControl}: removes a chosen state \texttt{x} from the set of control inputs; syntax

\texttt{removeControl(obj,x)}
\item removeObserve: removes a chosen state \texttt{x} from the set of observed outputs; syntax \texttt{removeObserve(obj,x)}
\item getParams: parses symbolic equations from \texttt{obj.equation} property to get \texttt{pde\_struct} object which is stored in \texttt{obj.params}; syntax \texttt{getParams(obj)}
\item convert: converts \texttt{obj.params} from \texttt{pde\_struct} to \texttt{pie\_struct} object; syntax 

\texttt{convert(obj,'pie')}
\end{itemize}

\begin{boxEnv}{\textbf{WARNING:}}
\texttt{sys} class object properties should not be modified directly (unless you know what you are doing); Use the methods provided above.
\end{boxEnv}

\chapter{Batch Input Formats for Time-Delay Systems}\label{ch:alt_DDE_input}

In Chapter~\ref{ch:PDE_DDE_representation}, we showed how time-delay systems (TDSs) can be implemented as delay differential equations (DDEs) in PIETOOLS. In that chapter, we further hinted at the fact that PIETOOLS also allows TDSs to be declared in two alternative representations: as neutral type systems (NDSs) and as differential difference equations (DDFs). In this chapter, we will provide more details on how to work with such NDS and DDF systems in PIETOOLS. In particular, in Section~\ref{sec:alt_DDE_input:TDS_formats}, we recall the DDE representation, and show what NDS and DDF systems look like, and how systems of each type can be declared in PIETOOLS. In Section~\ref{ch:alt_DDE_inputs:convert}, we then show how NDS and DDE systems can be converted to the DDF representation in PIETOOLS, and how each type of TDS can be converted to a PIE.

\section{Representing Systems with Delay}\label{sec:alt_DDE_input:TDS_formats}
% In contrast to PDE systems, systems with delay have a widely recognized input format. For this reason, the input format for systems with delay is only available in batch mode. However, there are three distinct types of delay system which can be included in PIETOOLS. We address each of these separately. In each case, the problem data should be included in a DDE, NDS, or DDF structure and terms which are not included are assumed to be zero.
In this section, we show how time-delay systems in DDE, NDS and DDF representation can be declared in PIETOOLS, focusing on DDE systems in Subsection~\ref{subsec:DDE_format}, NDS systems in Subsection~\ref{subsec:NDS_format}, and DDF systems in Subsection~\ref{subsec:DDF_format}. For more information on how to declare systems in DDE representation in PIETOOLS, we refer to Section~\ref{sec:PDE_DDE_representation:DDEs}
%%%%%%%%%%%%%%%%%%%%%%%%%%%%%%%%%%%%%%%%%%%%

\subsection{Input of Delay Differential Equations}\label{subsec:DDE_format}

The DDE data structure allows the user to declare any of the matrices in the following general form of Delay-Differential equation.
\begin{align}
	&\bmat{\dot{x}(t)\\z(t) \\ y(t)}=\bmat{A_0 & B_{1} & B_{2}\\ C_{1} & D_{11} &D_{12}\\ C_{2} & D_{21} &D_{22}}\bmat{x(t)\\w(t)\\u(t)}+\sum_{i=1}^K \bmat{A_i & B_{1i} & B_{2i}\\C_{1i} & D_{11i} & D_{12i}\\C_{2i} & D_{21i} & D_{22i}} \bmat{x(t-\tau_i)\\w(t-\tau_i)\\u(t-\tau_i)}\notag \\
& \hspace{2cm}+\sum_{i=1}^K \int_{-\tau_i}^0\bmat{A_{di}(s) & B_{1di}(s) &B_{2di}(s)\\C_{1di}(s) & D_{11di}(s) & D_{12di}(s)\\C_{2di}(s) & D_{21di}(s) & D_{22di}(s)} \bmat{x(t+s)\\w(t+s)\\u(t+s)}ds \label{eqn:DDE}
\end{align}
In this representation, it is understood that
\begin{itemize}
\item The present state is $x(t)$.\vspace{-2mm}
\item The disturbance or exogenous input is $w(t)$. These signals are not typically known or alterable. They can account for things like unmodelled dynamics, changes in reference, forcing functions, noise, or perturbations.\vspace{-2mm}
\item The controlled input is $u(t)$. This is typically the signal which is influenced by an actuator and hence can be accessed for feedback control. \vspace{-2mm}
\item The regulated output is $z(t)$. This signal typically includes the parts of the system to be minimized, including actuator effort and states. These signals need not be measured using senors.\vspace{-2mm}
\item The observed or sensed output is $y(t)$. These are the signals which can be measured using sensors and fed back to an estimator or controller.\vspace{-2mm}
\end{itemize}
To add any term to the DDE structure, simply declare is value. For example, to represent 
\[
\dot x(t)=-x(t-1),\qquad z(t)=x(t-2)
\]
we use
	\begin{matlab}
		>> DDE.tau = [1 2];\\
		>> DDE.Ai\{1\} = -1;\\
		>> DDE.C1i\{2\} = 1;
	\end{matlab}
All terms not declared are assumed to be zero. The exception is that we require the user to specify the values of the delay in \texttt{DDE.tau}. When you are done adding terms to the DDE structure, use the function \texttt{DDE=PIETOOLS\_initialize\_DDE(DDE)}, which will check for undeclared terms and set them all to zero. It also checks to make sure there are no incompatible dimensions in the matrices you declared and will return a warning if it detects such malfeasance. The complete list of terms and DDE structural elements is listed in Table~\ref{tab:DDE_parameters}.

\begin{table}[ht!]\vspace{-2mm}
\begin{center}{
\begin{tabular}{c|c||c|c||c|c}
%\multicolumn{2}{c||}{Dimension Size} & \multicolumn{2}{c}{CPU seconds} &\multicolumn{2}{c}{CPU seconds} \\
  \multicolumn{6}{c}{\textbf{ODE Terms:}}\\
Eqn.~\eqref{eqn:DDE}  & \texttt{DDE.}  &   Eqn.~\eqref{eqn:DDE}  & \texttt{DDE.} &   Eqn.~\eqref{eqn:DDE}  & \texttt{DDE.}\\
\hline
$A_0$    & \texttt{A0} & $B_{1}$ & \texttt{B1} &$B_{2}$&\texttt{B2}\\ 
$C_{1}$ & \texttt{C1} & $D_{11}$ &\texttt{D11}&$D_{12}$&\texttt{D12}\\ 
$C_{2}$ & \texttt{C2} & $D_{21}$ &\texttt{D21}&$D_{22}$&\texttt{D22} \\ \hline \\
 \multicolumn{6}{c}{ \textbf{Discrete Delay Terms:}}\\
Eqn.~\eqref{eqn:DDE}  & \texttt{DDE.}  &   Eqn.~\eqref{eqn:DDE}  & \texttt{DDE.} &   Eqn.~\eqref{eqn:DDE}  & \texttt{DDE.}\\
\hline
$A_i$    & \texttt{Ai\{i\}} & $B_{1i}$ & \texttt{B1i\{i\}} &$B_{2i}$&\texttt{B2i\{i\}}\\ 
$C_{1i}$ & \texttt{C1i\{i\}} & $D_{11i}$ &\texttt{D11i\{i\}}&$D_{12i}$&\texttt{D12i\{i\}}\\ 
$C_{2i}$ & \texttt{C2i\{i\}} & $D_{21i}$ &\texttt{D21i\{i\}}&$D_{22i}$&\texttt{D22i\{i\}}\\
\hline \\  \multicolumn{6}{c}{\textbf{Distributed Delay Terms: May be functions of \texttt{pvar s}}}\\
Eqn.~\eqref{eqn:DDE}  & \texttt{DDE.}  &   Eqn.~\eqref{eqn:DDE}  & \texttt{DDE.} &   Eqn.~\eqref{eqn:DDE}  & \texttt{DDE.}\\
\hline
$A_{di} $   & \texttt{Adi\{i\}} & $B_{1di}$ & \texttt{B1di\{i\}} &$B_{2di}$&\texttt{B2di\{i\}}\\ 
$C_{1di} $& \texttt{C1di\{i\}} &$ D_{11di} $&\texttt{D11di\{i\}}&$D_{12di}$&\texttt{D12di\{i\}}\\ 
$C_{2di}$ & \texttt{C2di\{i\}} & $D_{21di} $&\texttt{D21di\{i\}}&$D_{22di}$&\texttt{D22di\{i\}}\\
\end{tabular}
}
\end{center}\vspace{-2mm}
\caption{ Equivalent names of Matlab elements of the \texttt{DDE} structure terms for terms in Eqn.~\eqref{eqn:DDE}. For example, to set term \texttt{XX} to \texttt{YY}, we use \texttt{DDE.XX=YY}.  In addition, the delay $\tau_i$ is specified using the vector element \texttt{DDE.tau(i)} so that if $\tau_1=1, \tau_2=2, \tau_3=3$, then \texttt{DDE.tau=[1 2 3]}. }\label{tab:DDE_parameters}\end{table}

\subsubsection{Initializing a DDE Data structure} The user need only add non-zero terms to the DDE structure. All terms which are not added to the data structure are assumed to be zero. Before conversion to another representation or data structure, the data structure will be initialized using the command
	\begin{flalign*}
		&\texttt{DDE = initialize\_PIETOOLS\_DDE(DDE)}&
	\end{flalign*}
This will check for dimension errors in the formulation and set all non-zero parts of the \texttt{DDE} data structure to zero. Not that, to make the code robust, all PIETOOLS conversion utilities perform this step internally.

\subsection{Input of Neutral Type Systems}\label{subsec:NDS_format}
The input format for a Neutral Type System (NDS) is identical to that of a DDE except for 6 additional terms: 
\begin{align}
	\bmat{\dot{x}(t)\\z(t) \\ y(t)}&=\bmat{A_0 & B_{1} & B_{2}\\ C_{1} & D_{11} &D_{12}\\ C_{2} & D_{21} &D_{22}}\bmat{x(t)\\w(t)\\u(t)}+\sum_{i=1}^K \bmat{A_i  & B_{1i} & B_{2i}& E_i\\C_{1i}& D_{11i} & D_{12i} & E_{1i}\\C_{2i} & D_{21i} & D_{22i}&E_{2i}} \bmat{x(t-\tau_i)\\w(t-\tau_i)\\u(t-\tau_i)\\ \dot x(t-\tau_i)}\notag\\[-3mm]
& +\hspace{-1mm}\sum_{i=1}^K \hspace{0mm}\int_{-\tau_i}^0\hspace{-1mm}\bmat{A_{di}(s) & \hspace{-1mm}B_{1di}(s) &\hspace{-1mm}B_{2di}(s)& \hspace{-1mm}E_{di}(s)\\C_{1di}(s) & \hspace{-1mm}D_{11di}(s) & \hspace{-1mm}D_{12di}(s)& \hspace{-1mm}E_{1di}(s)\\C_{2di}(s) &\hspace{-1mm}D_{21di}(s) & \hspace{-1mm}D_{22di}(s)& \hspace{-1mm}E_{2di}(s)} \hspace{-2mm}\bmat{x(t+s)\\w(t+s)\\u(t+s)\\ \dot x(t+s)}\hspace{-1mm}ds\label{eqn:NDS}
\end{align}
These new terms are parameterized by $E_i,E_{1i}$, and $E_{2i}$ for the discrete delays and by $E_{di},E_{1di}$, and $E_{2di}$ for the distributed delays. As for the DDE case, these terms should be included in a NDS object as, e.g. \texttt{NDS.E\{1\}=1}. 

\begin{table}[hb]\vspace{-2mm}
\begin{center}{\small
\begin{tabular}{c|c||c|c||c|c||c|c}
%\multicolumn{2}{c||}{Dimension Size} & \multicolumn{2}{c}{CPU seconds} &\multicolumn{2}{c}{CPU seconds} \\
  \multicolumn{8}{c}{\textbf{ODE Terms:}}\\
Eqn.~\eqref{eqn:NDS}  & \texttt{NDS.}  &   Eqn.~\eqref{eqn:NDS}  & \texttt{NDS.} &   Eqn.~\eqref{eqn:NDS}  & \texttt{NDS.}\\
\hline
$A_0$    & \texttt{A0} & $B_{1}$ & \texttt{B1} &$B_{2}$&\texttt{B2}&&\\ 
$C_{1}$ & \texttt{C1} & $D_{11}$ &\texttt{D11}&$D_{12}$&\texttt{D12}&&\\ 
$C_{2}$ & \texttt{C2} & $D_{21}$ &\texttt{D21}&$D_{22}$&\texttt{D22}&& \\ \hline \\
 \multicolumn{8}{c}{ \textbf{Discrete Delay Terms:}}\\
Eqn.~\eqref{eqn:NDS}  & \texttt{NDS.}  &   Eqn.~\eqref{eqn:NDS}  & \texttt{NDS.} &   Eqn.~\eqref{eqn:NDS}  & \texttt{NDS.}&   Eqn.~\eqref{eqn:NDS}  & \texttt{NDS.}\\
\hline
$A_i$    & \texttt{Ai\{i\}} & $B_{1i}$ & \texttt{B1i\{i\}} &$B_{2i}$&\texttt{B2i\{i\}}&$E_i$& \texttt{Ei\{i\}}\\ 
$C_{1i}$ & \texttt{C1i\{i\}} & $D_{11i}$ &\texttt{D11i\{i\}}&$D_{12i}$&\texttt{D12i\{i\}}&$E_{1i}$& \texttt{E1i\{i\}}\\ 
$C_{2i}$ & \texttt{C2i\{i\}} & $D_{21i}$ &\texttt{D21i\{i\}}&$D_{22i}$&\texttt{D22i\{i\}}&$E_{2i}$& \texttt{E2i\{i\}}\\
\hline \\  \multicolumn{8}{c}{\textbf{Distributed Delay Terms: May be functions of \texttt{pvar s}}}\\
Eqn.~\eqref{eqn:NDS}  & \texttt{NDS.}  &   Eqn.~\eqref{eqn:NDS}  & \texttt{NDS.} &   Eqn.~\eqref{eqn:NDS}  & \texttt{NDS.}&   Eqn.~\eqref{eqn:NDS}  & \texttt{NDS.}\\
\hline
$A_{di} $   & \texttt{Adi\{i\}} & $B_{1di}$ & \texttt{B1di\{i\}} &$B_{2di}$&\texttt{B2di\{i\}}&$E_{di}$& \texttt{Edi\{i\}}\\ 
$C_{1di} $& \texttt{C1di\{i\}} &$ D_{11di} $&\texttt{D11di\{i\}}&$D_{12di}$&\texttt{D12di\{i\}}&$E_{1di}$& \texttt{E1di\{i\}}\\ 
$C_{2di}$ & \texttt{C2di\{i\}} & $D_{21di} $&\texttt{D21di\{i\}}&$D_{22di}$&\texttt{D22di\{i\}}&$E_{2di}$& \texttt{E2di\{i\}}\\
\end{tabular}
}
\end{center}\vspace{-2mm}
\caption{ Equivalent names of Matlab elements of the \texttt{NDS} structure terms for terms in Eqn.~\eqref{eqn:NDS}. For example, to set term \texttt{XX} to \texttt{YY}, we use \texttt{NDS.XX=YY}. In addition, the delay $\tau_i$ is specified using the vector element \texttt{NDS.tau(i)} so that if $\tau_1=1, \tau_2=2, \tau_3=3$, then \texttt{NDS.tau=[1 2 3]}. }\label{tab:NDS_parameters}\end{table}

\subsubsection{Initializing a NDS Data structure} The user need only add non-zero terms to the NDS structure. All terms which are not added to the data structure are assumed to be zero. Before conversion to another representation or data structure, the data structure will be initialized using the command
\begin{matlab}
\begin{verbatim}
 >> NDS = initialize_PIETOOLS_NDS(NDS);
\end{verbatim}
\end{matlab}
This will check for dimension errors in the formulation and set all non-zero parts of the \texttt{NDS} data structure to zero. Not that, to make the code robust, all PIETOOLS conversion utilities perform this step internally.

%%%%%%%%%%%%%%%%%%%%%%%%%%%%%%%%%%%%%%%%%%%%

\subsection{The Differential Difference Equation (DDF) Format}\label{subsec:DDF_format}
A Differential Difference Equation (DDF) is a more general representation than either the DDE or NDS. Most importantly, unlike the DDE or NDS, it allows one to represent the structure of the delayed channels. As such, it is the only representation for which the minimal realization features of PIETOOLS are defined. Nevertheless, the general form of DDF is more compact that of the DDE or NDS. The distinguishing feature of the DDF is decomposition of the output signal from the ODE part into sub-components, $r_i$, each of which is delayed by amount $\tau_i$. Identification of these $r_i$ is often challenging and hence most users will input the system as an ODE or NDS and then convert to a minimal DDF representation. The form of a DDF is given as follows.
	
\begin{align}
	\bmat{\dot{x}(t)\\ z(t)\\y(t)\\r_i(t)}&=\bmat{A_0 & B_1& B_2\\C_1 &D_{11}&D_{12}\\C_2&D_{21}&D_{22}\\C_{ri}&B_{r1i}&B_{r2i}}\bmat{x(t)\\w(t)\\u(t)}+\bmat{B_v\\D_{1v}\\D_{2v}\\D_{rvi}} v(t) \notag\\
v(t)&=\sum_{i=1}^K C_{vi} r_i(t-\tau_i)+\sum_{i=1}^K \int_{-\tau_i}^0C_{vdi}(s) r_i(t+s)ds.\label{eqn:DDF}
\end{align}

As for a DDE or NDS, each of the non-zero parameters in Eqn.~\eqref{eqn:DDF} should be added to the \texttt{DDF} structure, along with the vector of values of the delays \texttt{DDF.tau}. The elements of the \texttt{DDF} structure which can be defined by the user are included in Table~\ref{tab:DDF_parameters}.

\begin{table}[hb]\vspace{-2mm}
\begin{center}{
\begin{tabular}{c|c||c|c||c|c||c|c}
%\multicolumn{2}{c||}{Dimension Size} & \multicolumn{2}{c}{CPU seconds} &\multicolumn{2}{c}{CPU seconds} \\
  \multicolumn{6}{c}{\textbf{ODE Terms:}}\\
Eqn.~\eqref{eqn:DDF}  & \texttt{DDF.}  &   Eqn.~\eqref{eqn:DDF}  & \texttt{DDF.} &   Eqn.~\eqref{eqn:DDF}  & \texttt{DDF.}&   Eqn.~\eqref{eqn:DDF}  & \texttt{DDF.}\\
\hline
$A_0$    & \texttt{A0} & $B_{1}$ & \texttt{B1} &$B_{2}$&\texttt{B2}&$B_v$&\texttt{Bv}\\ 
$C_{1}$ & \texttt{C1} & $D_{11}$ &\texttt{D11}&$D_{12}$&\texttt{D12}&$D_{1v}$&\texttt{D1v}\\ 
$C_{2}$ & \texttt{C2} & $D_{21}$ &\texttt{D21}&$D_{22}$&\texttt{D22}&$D_{2v}$&\texttt{D2v} \\
$C_{ri}$ & \texttt{Cri\{i\}} & $B_{r1i}$ &\texttt{Br1i\{i\}}&$B_{r2i}$&\texttt{Br2i\{i\}}&$D_{rvi}$&\texttt{Drvi\{i\}} \\
\hline \\
 \multicolumn{6}{c}{ \textbf{Discrete Delay Terms:}}\\
Eqn.~\eqref{eqn:DDF}  & \texttt{DDF.}  &   &  &    & \\
\hline
$C_{vi}$    & \texttt{Cvi\{i\}} & & &&\\ 
\hline \\  \multicolumn{6}{c}{\textbf{Distributed Delay Terms: May be functions of \texttt{pvar s}}}\\
Eqn.~\eqref{eqn:DDF}  & \texttt{DDF.}  &    &  &    & \\
\hline
$C_{vdi}(s) $   & \texttt{Cvdi\{i\}} & &  &&\\ 
\end{tabular}
}
\end{center}\vspace{-2mm}
\caption{ Equivalent names of Matlab elements of the \texttt{DDF} structure terms for terms in Eqn.~\eqref{eqn:DDF}. For example, to set term \texttt{XX} to \texttt{YY}, we use \texttt{DDF.XX=YY}.  In addition, the delay $\tau_i$ is specified using the vector element \texttt{DDF.tau(i)} so that if $\tau_1=1, \tau_2=2, \tau_3=3$, then \texttt{DDF.tau=[1 2 3]}. }\label{tab:DDF_parameters}\end{table}

\subsubsection{Initializing a DDF Data structure} The user need only add non-zero terms to the DDF structure. All terms which are not added to the data structure are assumed to be zero. Before conversion to another representation or data structure, the data structure will be initialized using the command
\begin{matlab}
\begin{verbatim}
 >> DDF = initialize_PIETOOLS_DDF(DDF);
\end{verbatim}
\end{matlab}
This will check for dimension errors in the formulation and set all non-zero parts of the \texttt{DDF} data structure to zero. Not that, to make the code robust, all PIETOOLS conversion utilities perform this step internally.

%%%%%%%%%%%%%%%%%%%%%%%%%%%%%%%%%%%%%%%%%%%%

\section{Converting between DDEs, NDSs, DDFs, and PIEs}\label{ch:alt_DDE_inputs:convert}
For a given delay system, there are several alternative representations of that system. For example, a DDE can be represented in the DDE, DDF, or PIE format. However, only the DDF and PIE formats allow one to specify structure in the delayed channels, which are infinite-dimensional. For that reason, it is almost always preferable to efficiently convert the DDE or NDS to either a DDF or PIE - as this will dramatically reduce computational complexity of the analysis, control, and simulation problems (assuming you have tools for analysis, control and simulation of DDFs and PIEs - which we do!). However, identifying an efficient DDF or PIE representation of a given DDF/NDS is laborious for large systems and requires detailed understanding of the DDF format. For this reason, we introduce a set of functions for automating this conversion process.

\subsection{DDF to PIE}\label{sec:DDF2pie2} To convert a DDF data structure to an equivalent PIE representation, we have two utilities which are typically called sequentially. The first uses the SVD to identify and eliminate unused delay channels. The second na\"ively converts a DDF to an equivalent PIE. 

\subsubsection{Minimal DDF Realization of a DDF} The typical first step in analysis, simulation and control of a DDF is elimination of unused delay channels. This is accomplished using the SVD to identify such channels in a DDF structure and output a smaller, equivalent DDF structure. To use this utility, simply declare your \texttt{DDF} and enter the command
\begin{matlab}
\begin{verbatim}
 >> DDF = minimize_PIETOOLS_DDF(DDF);
\end{verbatim}
\end{matlab}

\subsubsection{Converting a DDF to a PIE} Having constructed a minimal (or not) DDF representation of a DDE, NDS or DDF, the next step is conversion to an equivalent PIE. To use this utility, simply declare your \texttt{DDF} structure and enter the command
\begin{matlab}
\begin{verbatim}
 >> PIE = convert_PIETOOLS_DDF(DDF,'pie');
\end{verbatim}
\end{matlab}

\subsection{DDE to DDF or PIE} We next address the problem of converting a DDE data structure to a DDF or PIE data structure. Because the DDE representation does not allow one to represent structure, this conversion is almost always performed by first computing a minimized DDF representation using \texttt{minimize\_PIETOOLS\_DDE2DDF}, followed possibly by converting this DDF representation to a PIE. Both steps are included in the function \texttt{convert\_PIETOOLS\_DDE}, allowing the minimal DDF representation and PIE representation of a DDE structure \texttt{DDE} to be computed as
\begin{matlab}
\begin{verbatim}
 >> [DDF, PIE] = convert_PIETOOLS_DDE(DDE);
\end{verbatim}
\end{matlab}
Here, the function \texttt{convert\_PIETOOLS\_DDE} computes the minimal DDF representation by calling \texttt{minimal\_PIETOOLS\_DDE2DDF}, which uses the SVD to eliminate unused delay channels in the DDF - resulting in a much more compact representation of the same system. As such, the minimal DDF representation can be computed by calling \texttt{minimal\_PIETOOLS\_DDE2DDF} directly as
\begin{matlab}
\begin{verbatim}
 >> DDF = minimal_PIETOOLS_DDE2DDF(DDF);
\end{verbatim}
\end{matlab}
or by calling \texttt{convert\_PIETOOLS\_DDE} with a second argument \texttt{'ddf'}
\begin{matlab}
\begin{verbatim}
 >> DDF = convert_PIETOOLS_DDE(DDE,'dde');
\end{verbatim}
\end{matlab}
Similarly, if only the PIE representation is desired, the user can also call
\begin{matlab}
\begin{verbatim}
 >> PIE = convert_PIETOOLS_DDE(DDE,'pie');
\end{verbatim}
\end{matlab}
though the procedure for computing the PIE will still involve computing the DDF representation first.

\subsubsection{DDE direct to PIE [NOT RECOMMENDED!]} 
Although it should never be used in practice, we also include a utility to construct the equivalent na\"ive PIE representation of a DDE. This is occasionally useful for purposes of comparison. To use this utility, simply declare your \texttt{DDE} and enter the command
\begin{matlab}
\begin{verbatim}
 >> DDF = convert_PIETOOLS_DDE2PIE_legacy(DDE);
\end{verbatim}
\end{matlab}
Because of the limited utility of the unstructured representation, we have not included a na\"ive DDE to DDF utility.

\subsection{NDS to DDF or PIE} 
Finally, we next address the problem of converting a NDS data structure to a DDF or PIE data structure. Like the DDE, the NDS representation does not allow one to represent structure and so the typical process is involves 3 steps: direct conversion of the NDS to a DDF; constructing a minimal representation of the resulting DDF using \texttt{minimize\_PIETOOLS\_DDF}; and conversion of the reduced DDF to a PIE. These three steps have been combined into a single function \texttt{convert\_PIETOOLS\_NDS}, computing the DDF represenation, minimizing this representation, and converting this representation to a PIE. All three resulting structures can be returned by calling
\begin{matlab}
\begin{verbatim}
 >> [DDF\_max, DDF, PIE] = convert\_PIETOOLS\_NDS(NDS);
\end{verbatim}
\end{matlab}
where now \texttt{DDF\_max} corresponds to the non-minimized DDF representation of \texttt{NDS}, and \texttt{DDF} corresponds to the minimized representation. If the user only want to compute this DDF representation, it is computationally cheaper to call the function with only two outputs,
\begin{matlab}
\begin{verbatim}
 >> [DDF\_max, DDF] = convert\_PIETOOLS\_NDS(NDS);
\end{verbatim}
\end{matlab}
or to call the function with a second argument \texttt{'ddf'},
\begin{matlab}
\begin{verbatim}
 >> [DDF] = convert\_PIETOOLS\_NDS(NDS,'ddf');
\end{verbatim}
\end{matlab}
Similarly, if only the non-minimized DDF representation is desired, the function should called with a single output,
\begin{matlab}
\begin{verbatim}
 >> DDF\_max = convert\_PIETOOLS\_NDS(NDS,'ddf_max');
\end{verbatim}
\end{matlab}
or with a second argument \texttt{'ddf\_max'},
\begin{matlab}
\begin{verbatim}
 >> DDF\_max = convert\_PIETOOLS\_NDS(NDS,'ddf_max');
\end{verbatim}
\end{matlab}
It is also possible to pass an argument \texttt{'pie'}, calling
\begin{matlab}
\begin{verbatim}
 >> PIE = convert\_PIETOOLS\_NDS(NDS,'pie');
\end{verbatim}
\end{matlab}
returning only the PIE representation, even though the DDF representations will also be computed.

%%%%%%%%%%%%%%%%%%%%%%%%%%%%%%%%%%%%%%%%%%%%

\chapter{Operations on PI Operators in PIETOOLS: opvar and dopvar}\label{ch:opvar}

In Chapter~\ref{ch:PIE}, we showed how PI operators could be declared as \texttt{opvar} and \texttt{opvar2d} objects in PIETOOLS. In Chapter~\ref{ch:LPIs}, we showed how the similar class of \texttt{dopvar} (and \texttt{dopvar2d}) objects can be used to represent PI operator decision variables in convex optimization programs. In this Chapter, we detail some features of these \texttt{opvar} and \texttt{dopvar} classes, showing how standard operations on PI operators can be easily performed using the \texttt{opvar} classes in PIETOOLS.
In particular, we first recall the structure of \texttt{opvar} and \texttt{opvar2d} objects in Setion~\ref{sec:opvar_declare}, also showing how such objects can be declared in PIETOOLS.. In Section~\ref{sec:opvar_binary_ops}, we then show algebraic operations such as addition of \texttt{opvar} objects can be performed in PIETOOLS, after which we show how matrix operations such as concatenation can be performed on \texttt{opvar} objects in Section~\ref{sec:opvar_matrix_ops}. Finally, in Section~\ref{sec:additional_methods}, we outline a few additional operations for \texttt{opvar} objects. 
For more information on the theory behind these operations, we refer to Appendix~\ref{appx:PI_theory}, as well as papers such as~\cite{}.

\begin{boxEnv}{\textbf{Note}}
Unless stated otherwise, the operations on \texttt{opvar} objects presented in the following sections can also be performed on \texttt{opvar2d}, \texttt{dopvar} and \texttt{dopvar2d} class objects. To reduce notation, these operations will be illustrated only for \texttt{opvar} class objects.
\end{boxEnv}

\section{Declaring \texttt{opvar} and \texttt{dopvar} Objects}\label{sec:opvar_declare}

In this section, we briefly recall how \texttt{opvar} objects are structured, and how they represent PI operators in 1D. We also briefly introduce the \texttt{dopvar} class, showing how such objects can be declared in a similar manner to \texttt{opvar} objects. For more information on the \texttt{opvar2d} structure for PI operators in 2D, we refer to Chapter~\ref{ch:PIs}.

\subsection{The \texttt{opvar} Class}

In PIETOOLS, 4-PI operators are represented by \texttt{opvar} objects, which are structures with 8 accessible fields. In particular, letting $\mcl{T}:\bmat{\R^{n_0}\\L_2^{n_1}[a,b]}\rightarrow \bmat{\R^{m_0}\\L_2^{m_1}[a,b]}$ be a 4-PI operator of the form
\begin{align}\label{eq:4PI_standard_form_2}
    \bl(\mcl{T}\mbf{x}\br)(s)=
    \left[\begin{array}{ll}
        Px_0        \hspace*{-0.1cm}~& +\ \int_{a}^{b}Q_1(s)\mbf{x}_1(s)ds  \\
        Q_2(s)x_0   \hspace*{-0.1cm}& +\ R_{0}(s)\mbf{x}_1(s) + \int_{a}^{s}R_{1}(s,\theta)\mbf{x}_1(\theta)d\theta + \int_{s}^{b}R_{2}(s,\theta)\mbf{x}_1(\theta)d\theta
    \end{array}\right]
\end{align}
for $\mbf{x}=\bmat{x_0\\\mbf{x}_1}\in \bmat{\R^{n_0}\\L_2^{n_1}[a,b]}$, we can declare $\mcl{T}$ as an \texttt{opvar} object \texttt{T} with the following fields

\begin{Statebox}{\texttt{opvar} fields}
%The opvar class has 8 accessible properties, including the 6 parameters mentioned above. A 4-PI operator of the form presented in Equation~\eqref{eq:4PI_standard_form_2} can be declared as an \texttt{opvar} object \texttt{B} with the the following fields. \\
%    The properties of an opvar object,\texttt{ T}$:\smbmat{\R^{p}\\L_2{q}[a,b]}\to \sbmat{\R^m\\L_2^{n}[a,b]}$, can be accessed using ``.'' as shown below:

{\small
\centering
\hspace*{-0.425cm}
\begin{tabular}{p{0.75cm}p{1.85cm}p{12.75cm}}
    \texttt{dim}    & \texttt{= [m0,n0; \hspace*{0.4cm} m1,n1]} 
    &  $2\times 2$ array of type \texttt{double} specifying the dimensions of the function spaces $\sbmat{\R^{m_0}\\L_2^{m_1}[a,b]}$ and $\sbmat{\R^{n_0}\\L_2^{n_1}[a,b]}$ the operator maps to and from;\\
    \texttt{var1} & \texttt{= s}    &  $1\times 1$ \texttt{pvar} (\texttt{polynomial} class) object specifying the spatial variable $s$; \\
    \texttt{var2} & \texttt{= theta}    &  $1\times 1$ \texttt{pvar} (\texttt{polynomial} class) object specifying the dummy variable $\theta$;   \\
    \texttt{I} & \texttt{= [a,b]}       &  $1\times 2$ array of type \texttt{double}, specifying the interval $[a,b]$ on which the spatial variables $s$ and $\theta$ exist; \\
    \texttt{P} & \texttt{= P} & $m_0\times n_0$ array of type \texttt{double} or \texttt{polynomial} defining the matrix $P$; \\
    \texttt{Q1} & \texttt{= Q1} & $m_0\times n_1$ array of type \texttt{double} or \texttt{polynomial} defining the function $Q_1(s)$; \\
    \texttt{Q2} & \texttt{= Q2} & $m_1\times n_0$ array of type \texttt{double} or \texttt{polynomial} defining the function $Q_2(s)$; \\
    \texttt{R.R0} & \texttt{= R0} & $m_1\times n_1$ array of type \texttt{double} or \texttt{polynomial} defining the function $R_0(s)$; \\
    \texttt{R.R1} & \texttt{= R1} & $m_1\times n_1$ array of type \texttt{double} or \texttt{polynomial} defining the function $R_1(s,\theta)$; \\
    \texttt{R.R2} & \texttt{= R2} & $m_1\times n_1$ array of type \texttt{double} or \texttt{polynomial} defining the function $R_2(s,\theta)$; \\
 \end{tabular}
 }

\end{Statebox}

As an example, suppose we want to declare the 4-PI operator $\mcl{T}:\sbmat{\R^2\\ L_2^{3}[2,3]}\rightarrow \sbmat{\R^{3}\\L_2[2,3]}$ defined as\\[-2.5em]
\begin{align*}
\bl(\mcl{T}\mbf{x}\br)(r)&=
\bbbbl[\!\begin{array}{ll}
  \overbrace{\sbmat{1&0\\0&2\\3&4}}^{P}x_0 & \!+\ \int_{2}^{3}\overbrace{\sbmat{r^2& 0& 0\\ 3& r^3& 0\\ 0& r+2*r^2 & 0}}^{Q_1(r)}\mbf{x}_1(r)dr \\
  \underbrace{\sbmat{-5r & 6}}_{Q_2(s)}x_0       &\!+\ \int_{2}^{r}\underbrace{\sbmat{r&2\nu & 3(r-\nu)}}_{R_1(r,\nu)}\mbf{x}_1(\nu)d\nu
\end{array}\!\bbbbr],   &   r&\in[2,3], \\[-2.0em]
\end{align*}
for any $\mbf{x}=\sbmat{x_0\\\mbf{x}_1}\in\sbmat{\R^2\\ L_2^{3}[2,3]}$. To declare this operator, we first initialize an opvar object \texttt{T}, using the syntax
\begin{matlab}
\begin{verbatim}
 >> opvar T
 ans =
       [] | [] 
       -----------
       [] | ans.R 

 ans.R =
     [] | [] | [] 
\end{verbatim}     
\end{matlab}
This command creates an empty \texttt{opvar} object \texttt{T} with all dimensions 0. Consequently, the parameters \texttt{P,Qi,Ri} are initialized to $0\times 0$ matrices. Since we know the operator $\mcl{T}$ to map $\sbmat{\R^2\\ L_2^{3}[2,3]}\rightarrow \sbmat{\R^{3}\\L_2[2,3]}$, we can specify the desired dimension of the \texttt{opvar} object \texttt{T} using the command
\begin{matlab}
\begin{verbatim}
 >> T.dim = [3 2; 1 3]
 T =
       [0,0] | [0,0,0] 
       [0,0] | [0,0,0] 
       [0,0] | [0,0,0] 
       ----------------
       [0,0] | T.R 

 T.R =
     [0,0,0] | [0,0,0] | [0,0,0] 
\end{verbatim}    
\end{matlab}
Here, by assigning a value to \texttt{dim}, the parameters are adjusted to zero matrices of appropriate dimensions. We note that, this command is not strictly necessary, as the dimensions of \texttt{T} will also be automatically adjusted once we specify the values of the parameters.

Next, we assign the interval $[2,3]$ on which the functions $\mbf{x}_1\in L_2^3[0,3]$ are defined, by setting the field \texttt{I} as
\begin{matlab}
 >> T.I = [2,3];
\end{matlab}
Since the parameters defining $\mcl{T}$ also depend on $r,\nu\in[2,3]$, we have to assign these variables as well. For this, we represent them by \texttt{pvar} objects \texttt{r} and \texttt{nu}, and set the values of \texttt{T.var1} and \texttt{T.var2} as
\begin{matlab}
\begin{verbatim}
 >> pvar r nu
 >> T.var1 = r;     T.var2 = nu;
\end{verbatim}
\end{matlab}
Note that, if the parameters \texttt{Qi} and \texttt{R.Ri} are constant, there is no need to declare the variables \texttt{var1} or \texttt{var2}, in which case these fields will default to \texttt{var1=s} and \texttt{var2=theta}. Having declared the variables, we finally set the values of the parameters, by assigning them to the appropriate fields of \texttt{T}
\begin{matlab}
\begin{verbatim}
 >> T.P = [1,0; 0,2; 3,4];
 >> T.Q1 = [r^2, 0, 0; 3, r^3, 0; 0, r+2*r^2, 0];
 >> T.Q2 = [-5*r, 6];
 >> T.R.R1 = [r, 2*nu, 3*r-nu]
 T =
          [1,0] | [r^2,0,0] 
          [0,2] | [3,r^3,0] 
          [3,4] | [0,2*r^2+r,0] 
       ----------------------
       [-5*r,6] | T.R 

 T.R =
     [0,0,0] | [r,2*nu,-nu+3*r] | [0,0,0] 
\end{verbatim}
\end{matlab}

% \begin{boxEnv}{\textbf{Note}}
% If any of the parameters \texttt{Q1}, \texttt{Q2}, \texttt{R.R0}, \texttt{R.R1} or \texttt{R.R2} are polynomial functions, it is crucial that the variables appearing in these functions are also assigned to the fields \texttt{var1} and \texttt{var2}. For example, if a parameter $R_1(r,\nu)=2*r+\nu$ is declared as
% \begin{matlab}
% \begin{verbatim}
%  >> pvar r nu
%  >> opvar T;
%  >> T.R.R1 = 2*r+nu;
% \end{verbatim}
% \end{matlab}
% the user must also declare
% \begin{matlab}
% \begin{verbatim}
%  >> T.var1 = r;     T.var2 = nu;
% \end{verbatim}
% \end{matlab}
% Otherwise, the values will default to \texttt{var1=s} and \texttt{var2=theta} for \texttt{s=polynomial('s')} and \texttt{theta=polynomial('theta')}, which may result in errors when performing any operations on \texttt{T}. 
% \end{boxEnv}

\begin{boxEnv}{\textbf{Note}}
	\texttt{dim} is dependent on size of the 6 parameters \texttt{P, Qi} and \texttt{R.Ri}. Modifying those parameters automatically changes the value stored in \texttt{dim} property. If the dimensions of the parameters are incompatible, \texttt{dim} will store \texttt{Nan} as its value to alert the user about the discrepancy.
\end{boxEnv}

\subsection{\texttt{dpvar} objects and the \texttt{dopvar} class}

In addition to polynomial functions, polynomial decision variables can also be declared in PIETOOLS. Such decision variables are used in polynomial optimization programs, optimizing over the values of the coefficients defining these polynomials. To declare such polynomial decision variables, we use the \texttt{dpvar} class, extending the \texttt{polynomial} class to represent polynomials with decision variables. For example, to represent a variable quadratic polynomial
\begin{align*}
    p(c_0,c_1,c_2;x)=c_0+c_1x+c_2x^2,
\end{align*}
with unknown coefficient $\{c_0,c_1,c_2\}$, we use
\begin{matlab}
\begin{verbatim}
 >> pvar x
 >> dpvar c0 c1 c2
 >> p = c0 + c1*x + c2*x^2
 p = 
   c0 + c1*x + c2*x^2
\end{verbatim}
\end{matlab}
Here, the second line decision variables $c_0$, $c_1$ and $c_2$, and the third line uses these decision variables to declare the decision variable polynomial $p(c_0,c_1,c_2;x)$ as a \texttt{dpvar} object \texttt{p}. Crucially, this variable is affine in the coefficients $c_0$, $c_1$ and $c_2$, as \texttt{dpvar} objects can only be used to represent decision variable polynomials that are affine with respect to their coefficients. This is because PIETOOLS cannot tackle polynomial optimization programs that are nonlinear in the decision variables, and accordingly, the \texttt{dpvar} structure has been built to exploit the linearity of the decision variables to minimize computational effort.

Using polynomial decision variables, we can also define PI operator decision variables, defining the parameters $Q_1$ through $R_2$ by polynomial decision variables rather than polynomials. For example, suppose we have an operator $\mcl{D}:L_2^{2}[0,1]\rightarrow L_2^{2}[0,1]$ defined as
\begin{align*}
 \bl(\mcl{D}\mbf{x}\br)(s)&=\underbrace{\bmat{c_1&c_2s\\c_2s&c_3s^2}}_{R_0(s)}\mbf{x}(s)+\int_{s}^{1}\underbrace{\bmat{c_4s^2&c_5s\theta\\ -c_6s\theta&c_7\theta^2}}_{R_2(s,\theta)}\mbf{x}(\theta)d\theta, &
 s&\in[0,1]
\end{align*}
for $\mbf{x}\in L_2^2[0,1]$, where $c_1$ through $c_7$ are unknown coefficients. Then, we can declare the parameters $R_1$ and $R_2$ as \texttt{dpvar} class objects by calling
\begin{matlab}
\begin{verbatim}
 >> pvar s th
 >> dpvar c1 c2 c3 c4 c5 c6 c7
 >> R0 = [c1, c2*s; c2*s, c3*s^2];
 >> R2 = [c4, c5*s*th; c6*s*th, c7*th^2];
\end{verbatim}
\end{matlab}
Then, we can declare $\mcl{D}$ as a \texttt{dopvar} object \texttt{D}. Such \texttt{dopvar} objects have a structure identical to that of \texttt{opvar} objects, with the only difference being that the parameters \texttt{P} through \texttt{R.R2} can be declared as \texttt{dpvar} objects. Accordingly, we declare the \texttt{dopvar} object \texttt{D} in a similar manner as we would and \texttt{opvar} objects:
\begin{matlab}
\begin{verbatim}
 >> dopvar D;
 >> D.I = [0,1];
 >> D.var1 = s;     D.var2 = th;
 >> D.R.R0 = R0;    D.R.R1 = R2
 D =
       [] | [] 
       ---------
       [] | D.R 

 D.R =
         [c1,c2*s] | [0,0] |      [c4,c5*s*th] 
     [c2*s,c3*s^2] | [0,0] | [c6*s*th,c7*th^2] 
\end{verbatim}
\end{matlab}

\section{Algebraic Operations on \texttt{opvar} Objects}\label{sec:opvar_binary_ops}
	
%\section{\texttt{opvar} and \texttt{opvar2d} class methods}
In this section, we go over various methods that help in manipulating and handling of \texttt{opvar} objects in PIETOOLS. In particular, in Subsection~\ref{sec:add_pi}, we show how the sum of two PI operators can be computed in PIETOOLS, followed by the composition of PI operators in Subsection~\ref{sec:comp_pi}. In Subsection~\ref{sec:adj_pi}, we then show how to take the adjoint of a PI operator, and finally, in Subsection~\ref{sec:inv_pi}, we show how a numerical inverse of a PI operator can be computed. 
To illustrate each of these operations, we use the PI operators $\mcl{A},\mcl{B}:\sbmat{\R^2\\L_2[-1,1]}\rightarrow\sbmat{\R^2\\L_2[-1,1]}$ defined as
\begin{align}\label{eq:opvar_ops_ex}
    \bl(\mcl{A}\mbf{x}\br)(s)&=
    \bbbbl[\begin{array}{ll}
      \sbmat{1&0\\2&-1}x_0   &\!+\ \int_{-1}^{1}\sbmat{1-s\\s+1}\mbf{x}_1(s)ds \\
      \sbmat{10s&-1}x_0   &\!+\ 2\mbf{x}_1(s) + \int_{-1}^{s}(s-\theta)\mbf{x}_1(\theta)d\theta + \int_{s}^{1}(s-\theta)\mbf{x}_1(\theta) d\theta
    \end{array}\bbbbr],  \\
    \bl(\mcl{B}\mbf{x}\br)(s)&=
    \bbbbl[\begin{array}{ll}
      \sbmat{1&0\\0&3}x_0   &  \nonumber\\
      \sbmat{5s&-s}x_0   &\!+\ s^2\mbf{x}_1(s) + \int_{s}^{1}\theta\mbf{x}_1(\theta) d\theta
    \end{array}\bbbbr], &   s&\in[-1,1],
\end{align}
for $\mbf{x}=\sbmat{x_0\\\mbf{x}_1}\in \sbmat{\R^2\\L_2[-1,1]}$. We declare these operators as
\begin{matlab}
\begin{verbatim}
 >> pvar s th
 >> opvar A B;
 >> A.I = [-1,1];         B.I = [-1,1];
 >> A.var1 = s;           B.var1 = s;
 >> A.var2 = th;          B.var2 = th;
 >> A.P = [1,0;2,-1];     B.P = [1,0;0,3];
 >> A.Q1 = [1-s;s+1];
 >> A.Q2 = [10*s,-1];     B.Q2 = [5*s,-s];
 >> A.R.R0 = 2;           B.R.R0 = s^2;
 >> A.R.R1 = (s-th);
 >> A.R.R2 = (s-th);      B.R.R2 = th;
\end{verbatim}
\end{matlab}

\subsection{Addition (\texttt{A+B})}\label{sec:add_pi}
\texttt{opvar} objects, \texttt{A} and \texttt{B}, can be added simply by using the command
\begin{matlab}
 >> A+B
\end{matlab}
For two \texttt{opvar} objects to be added, they \textbf{must} have same dimensions (\texttt{A.dim=B.dim}), domains (\texttt{A.I=B.I}), and variables (\texttt{A.var1=B.var1}). Furthermore, if \texttt{A} (or \texttt{B}) is a scalar then PIETOOLS considers that as adding \texttt{A*I} (or \texttt{B*I}) where \texttt{I} is an identity matrix. Again, this operation is appropriate if and only if dimensions match. Similarly, if \texttt{A} (or \texttt{B}) is a matrix with matching dimension, it can be added to \texttt{opvar} \texttt{B} (or  \texttt{A}) using the same command.

\paragraph*{Example}
Adding the \texttt{opvar} objects \texttt{A} and \texttt{B} corresponding to operators $\mcl{A},\mcl{B}$ defined as in Equation~\eqref{eq:opvar_ops_ex}, we find
\begin{matlab}
\begin{verbatim}
 >> C = A+B
 C =
            [2,0] | [-s+1] 
            [2,2] | [s+1] 
        ------------------
      [15*s,-s-1] | C.R 

 C.R =
     [s^2+2] | [s-th] | [s]
\end{verbatim}
\end{matlab}
suggesting that, for $\mbf{x}=\sbmat{x_0\\\mbf{x}_1}\in \sbmat{\R^2\\L_2[-1,1]}$,
\begin{align*}
    \bl(\mcl{A}\mbf{x}\br)(s)+\bl(\mcl{B}\mbf{x}\br)(s)&=
    \bbbbl[\begin{array}{ll}
      \sbmat{2&0\\2&2}x_0   &\!+\ \int_{-1}^{1}\sbmat{1-s\\s+1}\mbf{x}_1(s)ds \\
      \sbmat{15s&-s-1}x_0   &\!+\ (s^2+2)\mbf{x}_1(s) + \int_{-1}^{s}(s-\theta)\mbf{x}_1(\theta)d\theta + \int_{s}^{1}s\mbf{x}_1(\theta) d\theta
    \end{array}\bbbbr].
    %&    s&\in[-1,1]
\end{align*}

\subsection{Multiplication (\texttt{A*B})}\label{sec:comp_pi}
\texttt{opvar} objects, \texttt{A} and \texttt{B}, can be composed simply by using the command
\begin{matlab}
>> A*B
\end{matlab}
For two opvar objects to be composed, they \textbf{must} have the same domains (\texttt{A.I=A.B}), the same variables (\texttt{A.var1=B.var1} and \texttt{A.var2=B.var2}), and the output dimension of \texttt{B} must match the input dimension of \texttt{A} (\texttt{A.dim(:,2)=B.dim(:,1)}). Furthermore, if \texttt{A} (or \texttt{B}) is a scalar then PIETOOLS considers that as a scalar multiplication operation, thus multiplying all parameters of \texttt{B} (or \texttt{A}) by that value. 

\paragraph*{Example}
Composing the \texttt{opvar} objects \texttt{A} and \texttt{B} corresponding to operators $\mcl{A},\mcl{B}$ defined as in Equation~\eqref{eq:opvar_ops_ex}, we find
\begin{matlab}
\begin{verbatim}
 >> C = A*B
 C = 
              [-2.3333,0.66667] | [-1.5*s^3+2*s^2+1.5*s] 
               [5.3333,-3.6667] | [1.5*s^3+2*s^2+0.5*s] 
      -------------------------------------------
      [20*s-3.3333,-2*s-2.3333] | C.R 

 C.R =
     [2*s^2] | [2*s*th^2-1.5*th^3+s*th+0.5*th] | [2*s*th^2-1.5*th^3+s*th+2.5*th] 
\end{verbatim}
\end{matlab}
suggesting that, for $\mbf{x}=\sbmat{x_0\\\mbf{x}_1}\in \sbmat{\R^2\\L_2[-1,1]}$,
\begin{align*}
    \bbl(\mcl{A}\bl(\mcl{B}\mbf{x}\br)\bbr)(s)&=
    \left[\begin{array}{ll}
      \sbmat{-2\frac{1}{3}&\frac{2}{3}\\5\frac{1}{3}&-3\frac{2}{3}}x_0   &\!+\ \int_{-1}^{1}\sbmat{-1\frac{1}{2}s^3+2s^2+1\frac{1}{2}s\\1\frac{1}{2}s^3+2s^2+\frac{1}{2}s}\mbf{x}_1(s)ds \\
      \sbmat{20s-3\frac{1}{3}&-2s-2\frac{1}{3}}x_0   &\!+\ 2s^2\mbf{x}_1(s) + \int_{-1}^{s}(2s\theta^2-1\frac{1}{2}\theta^3+s\theta+\frac{1}{2}\theta)\mbf{x}_1(\theta)d\theta \\
      &\quad + \int_{s}^{1}(2s\theta^2-1\frac{1}{2}\theta^3+s\theta+2\frac{1}{2}\theta)\mbf{x}_1(\theta) d\theta
    \end{array}\right].
    %&    s&\in[-1,1]
\end{align*}

\begin{boxEnv}{\textbf{Note}}
 Although \texttt{dopvar} objects can be multiplied with \texttt{opvar} objects and vice versa, producing a \texttt{dopvar} object in both cases, it is not possible to compute the composition of two \texttt{dopvar} objects. This is because \texttt{dopvar} objects depend linearly (affinely) on the decision variables, and the composition of two \texttt{dopvar} objects would require taking the product of decision variables. Similarly for \texttt{dopvar2d} objects.
\end{boxEnv}

\subsection{Adjoint (\texttt{A'})}\label{sec:adj_pi}
The adjoint of an \texttt{opvar} object \texttt{A} can be calculated using the command
\begin{matlab}
 >> A'
\end{matlab}
For an operator $\mcl{A}:RL^{n_0,n_1}[a,b]\rightarrow RL^{m_0,m_1}[a,b]$, the adjoint $\mcl{A}^*:RL^{m_0,m_1}[a,b]\rightarrow RL^{n_0,n_1}[a,b]$ will be such that, for any $\mbf{x}\in RL^{n_0,n_1}[a,b]$ and $\mbf{y}\in RL^{m_0,m_1}[a,b]$,
\begin{align*}
 \ip{\mcl{A}\mbf{x}}{\mbf{y}}_{RL} = \ip{\mbf{x}}{\mcl{A}^*\mbf{y}}_{RL},
\end{align*}
where for $\mbf{x}=\sbmat{x_0\\\mbf{x}_1}\in\sbmat{\R^{n_0}\\L_2^{n_1}[a,b]}=RL^{n_0,n_1}[a,b]$ and $\mbf{y}=\sbmat{y_0\\\mbf{y}_1}\in\sbmat{\R^{n_0}\\L_2^{n_1}[a,b]}=RL^{n_0,n_1}[a,b]$,
\begin{align*}
    \ip{\mbf{x}}{\mbf{y}}_{RL}:=\ip{x_0}{y_0} + \ip{\mbf{x}_1}{\mbf{y}_1}_{L_2} = x_0^T y_0 + \int_{a}^{b}[\mbf{x}_1(s)]^T\mbf{y}_1(s)ds
\end{align*}

\paragraph*{Example}
Computing the adjoint of the \texttt{opvar} object \texttt{A} corresponding to operator $\mcl{A}$ defined as in Equation~\eqref{eq:opvar_ops_ex}, we find
\begin{matlab}
\begin{verbatim}
 >> AT = A'
 AT =
            [1,2] | [10*s] 
           [0,-1] | [-1] 
       ------------------
       [-s+1,s+1] | AT.R 

 AT.R =
     [2] | [-s+th] | [-s+th] 
\end{verbatim}
\end{matlab}
suggesting that, for $\mbf{x}=\sbmat{x_0\\\mbf{x}_1}\in \sbmat{\R^2\\L_2[-1,1]}$,
\begin{align*}
    \bl(\mcl{A}^*\mbf{x}\br)(s)&=
    \left[\begin{array}{ll}
      \sbmat{1&2\\0&-1}x_0   &\!+\ \int_{-1}^{1}\sbmat{10s\\-1}\mbf{x}_1(s)ds \\
      \sbmat{1-s&s+1}x_0   &\!+\ 2\mbf{x}_1(s) + \int_{-1}^{1}(\theta-s)\mbf{x}_1(\theta)d\theta
      + \int_{s}^{1}(\theta-s)\mbf{x}_1(\theta) d\theta
    \end{array}\right].
\end{align*}

\subsection{Inverse (\texttt{inv\_opvar(A)})}\label{sec:inv_pi}
The inverse of an opvar object \texttt{A} can be numerically calculated, using the function 
\begin{matlab}
 >> inv\_opvar(A)
\end{matlab}
See Lemma \ref{lem:inverse} for details on Inversion formulae.

\paragraph*{Example}
Computing the inverse of the \texttt{opvar} object \texttt{A} corresponding to operator $\mcl{A}$ defined as in Equation~\eqref{eq:opvar_ops_ex}, we find
\begin{matlab}
\begin{verbatim}
 >> Ainv = inv_opvar(A)
 Ainv =
                    [-0.2,-0.4] | [  0.3*s + 0.2]
                      [1.2,0.4] | [ -0.3*s - 0.7]
       ------------------------------------------
       [ 0.3*s+0.7,1.35*s+0.65] | AT.R 

 Ainv.R =
     [0.5] | [-1.2*s*th-0.675*s-0.3*th-0.575] | [-1.2*s*th-0.675*s-0.3*th-0.575]
\end{verbatim}
\end{matlab}
suggesting that, for $\mbf{x}=\sbmat{x_0\\\mbf{x}_1}\in \sbmat{\R^2\\L_2[-1,1]}$
\begin{align*}
    \bl(\mcl{A}^{-1}\mbf{x}\br)(s)&=
    \left[\begin{array}{ll}
      \sbmat{-0.2&-0.4\\1.2&0.4}x_0   &\!+\ \int_{-1}^{1}\sbmat{0.3s+0.2\\-0.3s-0.7}\mbf{x}_1(s)ds \\
      \sbmat{0.3s+0.7&1.35s+0.65}x_0   &\!+\ 0.5\mbf{x}_1(s) + \int_{-1}^{1}(-1.2s\theta-0.675s-0.3\theta-0.575)\mbf{x}_1(\theta)d\theta \\
      & \quad + \int_{s}^{1}(-1.2s\theta-0.675s-0.3\theta-0.575)\mbf{x}_1(\theta) d\theta
    \end{array}\right].
\end{align*}
% We verify that the composition of \texttt{Ainv} with \texttt{A} returns an identity operator
% \begin{matlab}
% \begin{verbatim}
%  >> Iop = Ainv*A
%  Iop =
%        [1,0] | [0] 
%        [0,1] | [0] 
%        ------------
%        [0,0] | AT.R 

%  Iop.R =
%      [1] | [0] | [0] 
% \end{verbatim}
% \end{matlab}

\begin{boxEnv}{\textbf{Note}}
	An inverse function has not been defined for \texttt{opvar2d}, \texttt{dopvar}, or \texttt{dopvar2d} objects.
\end{boxEnv}

% \subsection{Conditional statements involving PI operators}\label{sec:conditional}
% 4-PI operators can also be used in conditions statements to test for equality or inequality.
% For example, if \texttt{A} and \texttt{B} are two opvar objects, user can write a conditional test to verify if they are equal using the syntax
% \begin{matlab}
% >> A==B
% \end{matlab}
% \begin{flalign*}
% &\texttt{A==B}&
% \end{flalign*}
% This code returns a boolean value. Additionally, there is a function \texttt{isempty\_opvar} which is essentially verifies if \texttt{A==0} (a zero operator) or if \texttt{A==$\phi$} (a null operator).

\section{Matrix Operations on \texttt{opvar} Objects}\label{sec:opvar_matrix_ops}

In this section, we show how matrix operations on \texttt{opvar} objects can be performed. In particular, in Subsection~\ref{sec:concat_pi} we show how \texttt{opvar]} objects can be concatenated, and in Subsection~\ref{sec:opvar_subsref}, we show how desired rows and columns of \texttt{opvar} objects can be extracted. Although we explain these operations only for \texttt{opvar} objects, they can also be applied to \texttt{dopvar}, \texttt{opvar2d}, and \texttt{dopvar2d} objects. For the purpose of illustration, we once more use the 4-PI operators $\mcl{A},\mcl{B}:\sbmat{\R^2\\L_2[-1,1]}\rightarrow \sbmat{\R^2\\L_2[-1,1]}$ defined in Equation~\eqref{eq:opvar_ops_ex}, represented in PIETOOLS by the \texttt{opvar} objects \texttt{A} and \texttt{B},
\begin{matlab}
\begin{verbatim}
 >> A
 A =
          [1,0] | [-s+1] 
         [2,-1] | [s+1] 
      ----------------
      [10*s,-1] | A.R 

 A.R =
    [2] | [s-th] | [s-th] 
 >> B
 B =
         [1,0] | [0] 
         [0,3] | [0] 
      ---------------
      [5*s,-s] | B.R 

 B.R =
    [s^2] | [0] | [th] 
\end{verbatim}
\end{matlab}

\subsection{Concatenation (\texttt{[A,B]})}\label{sec:concat_pi}
Just like matrices, multiple \texttt{opvar} objects can be concatenated, provided the dimensions match.
In particular, two \texttt{opvar} objects \texttt{A} and \texttt{B} can be horizontally or vertically concatenated by respectively using the command
\begin{matlab}
>> [A B]  \% for horizontal concatenation\\ 
>> [A; B] \% for vertical concatenation
\end{matlab}
Note that concatenation of \texttt{opvar} objects is allowed only if their spatial domain is the same (\texttt{A.I=B.I}), and the variables involved in each are identical (\texttt{A.var1=B.var1} and \texttt{A.var2=B.var2}). Moreover, \texttt{A} and \texttt{B} can be horizontally concatenated only if they have the same row dimensions (\texttt{A.dim(:,1)=B.dim(:,2)}, and they can be concatenated vertically only if they have the same column dimensions (\texttt{A.dim(:,2)=B.dim(:,2)}). 

\paragraph*{Example}
Horizontally concatenating the \texttt{opvar} objects \texttt{A} and \texttt{B} corresponding to operators $\mcl{A},\mcl{B}$ defined as in Equation~\eqref{eq:opvar_ops_ex}, we find
\begin{matlab}
\begin{verbatim}
 >> C = [A, B]
 C =
             [1,0,1,0] | [-s+1,0] 
            [2,-1,0,3] | [s+1,0] 
      -----------------------
      [10*s,-1,5*s,-s] | C.R 

 C.R = 
     [2,s^2] | [s-th,0] | [s-th,th] 
\end{verbatim}
\end{matlab}
Note that, since $\mcl{A},\mcl{B}:\sbmat{\R^2\\L_2[-1,1]}\rightarrow \sbmat{\R^2\\L_2[-1,1]}$, we have $\mcl{C}:\sbmat{\R^4\\L_2^2[-1,1]}\rightarrow \sbmat{\R^2\\L_2[-1,1]}$, \textbf{not} $\mcl{C}:\sbmat{\R\\L_2[-1,1]\\\R\\L_2[-1,1]}\rightarrow \sbmat{\R^2\\L_2[-1,1]}$. Similarly, taking the vertical concatenation,
\begin{matlab}
\begin{verbatim}
 >> C = [A; B]
 C =
          [1,0] | [-s+1] 
         [2,-1] | [s+1] 
          [1,0] | [0] 
          [0,3] | [0] 
      ----------------
      [10*s,-1] | C.R 
       [5*s,-s] |   

 C.R =
       [2] | [s-th] | [s-th] 
     [s^2] |    [0] |   [th] 
\end{verbatim}
\end{matlab}
the resulting operator $\mcl{C}$ will map $\sbmat{\R^2\\L_2[-1,1]}\rightarrow \sbmat{\R^4\\L_2^2[-1,1]}$, as PIETOOLS cannot represent operators mapping e.g. $\sbmat{\R^2\\L_2[-1,1]}\rightarrow \sbmat{\R\\L_2[-1,1]\\\R\\L_2[-1,1]}$.

\subsection{Subs-indexing (\texttt{A(i,j)})}\label{sec:opvar_subsref}

4-PI operators can also be sliced the way matrices are sliced in matrices. The index slicing is performed in the same manner as matrices. 
\begin{matlab}
>> T(row\_ind, col\_ind)
\end{matlab}
Indexing 4-PI operators is slightly different from matrix indexing due to presence of multiple components. These components can be visualized as being stacked as in a matrix:
\begin{align*}
\texttt{B}=\left[\begin{array}{c|c}
\texttt{P}&\texttt{Q1}\\\hline
\texttt{Q2}&\texttt{Ri}
\end{array}\right]
\end{align*}
Then, row indices specified in \texttt{row\_ind} correspond to the rows in this big matrix. Column indices, \texttt{col\_ind}, are associated with the columns of this big matrix in similar manner. The retrieved slices are put in appropriate components and a 4-PI operator is returned.
Note the bottom-right block of the big matrix  \texttt{B} has 3 components in \texttt{Ri}. If indices in the slice correspond to rows and columns in this block, then the slice is extracted from all three components and stored in a \texttt{Ri} part of the new sliced PI operator.

\paragraph*{Example}
Extracting row 3 and columns 1, and 3 of the \texttt{opvar} object \texttt{A} corresponding to the operator $\mcl{A},\mcl{B}$ defined as in Equation~\eqref{eq:opvar_ops_ex}, we find
\begin{matlab}
\begin{verbatim}
 >> C = A(3,[1,3])
 C =
          [] | [] 
      -------------
      [10*s] | C.R 
 C.R =
     [2] | [s-th] | [s-th] 
\end{verbatim}
\end{matlab}

 \section{Additional Methods for \texttt{opvar} Objects}\label{sec:additional_methods}
There are some additional functions included in PIETOOLS that can be used in debugging or as the user sees fit. In this section, we compile the list of those functions, without going into details or explanation. However, users can find additional information by using \texttt{help} command in MATLAB.
	
\vspace{5mm}
\begin{center}
\begin{tabular}{|p{4.5cm}|p{10cm}|}
		\hline
		\textbf{Function Name} & \textbf{Description}\\\hline
        \texttt{A==B} & The function checks whether the variables, domain, dimensions and parameters of the operators \texttt{A} and \texttt{B} are equal, and returns a binary value 1 if this is the case, or 0 if it is not. The function can also be used to check if an operator \texttt{A} has all parameters equal to zero operator by calling \texttt{A==0}.
        \\\hline
    	\texttt{isvalid(P)}& The function returns a logical value. 0 is everything is in order, 1 if the object has incompatible dimensions, 2 if property \texttt{P} is not a matrix, 3 if properties \texttt{Q1, Q2} or \texttt{R0} are not polynomials in $s$, 4 if properties \texttt{R1} or \texttt{R2} are not polynomials in $s$ and $\theta$. For \texttt{opvar2d} objects, returns boolean value \texttt{true} or \texttt{false} to indicate if \texttt{P} is appropriate or not.
		\\\hline
		\texttt{degbalance(T)}& Estimates polynomial degrees needed to create an \texttt{opvar} object \texttt{Q} associated to a positive PI operator $\mcl{Q}\succeq 0$ in \texttt{poslpivar}, such that \texttt{T=Q} has at least one solution.
        \\\hline
		\texttt{getdeg(T)}& Returns highest and lowest degree of $s$ and $\theta$ in the components of the opvar object \texttt{T}.
        \\\hline
		\texttt{rand\_opvar(dim, deg)}& Creates a random opvar object of specified dimensions \texttt{dim} and polynomial degrees \texttt{deg}.
        \\\hline
		\texttt{show(T,opts)}& Alternative display format for opvar objects with optional argument to omit selected properties from display output. \textbf{Not defined for \texttt{opvar2d} objects.}
        \\\hline
		\texttt{opvar\_postest(T)}& Numerically test for sign definiteness of \texttt{T}. Returns -1 if negative definite, 0 if indefinite and 1 if positive definite. Use \texttt{opvar\_postest\_2d} for \texttt{opvar2d} objects.
        \\\hline
		\texttt{diff\_opvar(T)}& Returns composition of derivative operator with opvar \texttt{T} as described in Lem. \ref{lem:diff_PI}. Use \texttt{diff(T)} for \texttt{opvar2d} objects.
        \\\hline
		%\texttt{dirac\_opvar(T,a)}& Composition of dirac operator with opvar \texttt{T} at location \texttt{a} as described in Lem. \ref{lem:dirac_PI}\\\hline
			%<-- This file does not exist...
\end{tabular}
\end{center}

\part{Examples and Applications}

\chapter{PIETOOLS Demonstrations}\label{ch:demos}

In this Chapter, we illustrate several applications of PIE simulation and LPI programming, and how each of these problems can be implemented in PIETOOLS. Each of these problems has also been implemented as a \texttt{DEMO} file in PIETOOLS, which can be found in the \texttt{PIETOOLS\_demos} directory. 

\section{DEMO 1: Simple Stability, Simulation and Control Problem}\label{sec:demos:1}
See Chapter~\ref{ch:scope} for a description.

\section{DEMO 2: Estimating the Volterra Operator Norm}\label{sec:demos:volterra}

The Volterra integral operator $\mcl{T}:L_2[0,1]\rightarrow L_2[0,1]$ is perhaps the simplest example of a 3-PI operator, defined as
\begin{align*}
    \bl(\mcl{T}\mbf{x}\br)(s)&=\int_{0}^{s}\mbf{x}(\theta)d\theta,   &   s&\in[0,1],
\end{align*}
for any $\mbf{x}\in L_2[0,1]$. In PIETOOLS, this operator can be easily declared as
\begin{matlab}
\begin{verbatim}
 a=0;    b=1;
 opvar Top;
 Top.R.R1 = 1;   Top.I = [a,b];
\end{verbatim}
\end{matlab}
Then, an upper bound on the norm of this operator can be computed by solving the LPI
\begin{align*}
    &\min_{\gamma\geq 0}\ \gamma,    \\
    \text{s.t.}\qquad &\mcl{T}^*\mcl{T}\leq\gamma,
\end{align*}
so that case $C=\sqrt{\gamma}$ for any feasible value $\gamma$ is an upper bound on the norm of $\mcl{T}$. This optimization problem is an LPI, that can be declared and solved in PIETOOLS as
\begin{matlab}
\begin{verbatim}
 % First, define dpvar gam and set up an optimization problem
 vars = [Top.var1;Top.var2];     % Free vars in optimization problem
 dpvar gam;                      % Decision var in optimization problem
 prob = sosprogram(vars,gam);

 % Next, set gam as objective function min{gam}
 prob = sossetobj(prob, gam);

 % Then, enforce the constraint Top'*Top-gam<=0
 opts.psatz = 1;                             % Allow Top'*Top-gam>0 outside of [a,b]
 prob = lpi_ineq(prob,-(Top'*Top-gam),opts); % lpi_ineq(prob,Q) enforces Q>=0

 % Finally, solve and retrieve the solution
 prob = sossolve(prob);
 operator_norm = sqrt(double(sosgetsol(prob,gam)));
\end{verbatim}
\end{matlab}
This code can also be run by calling ``volterra\_operator\_norm\_DEMO''. We obtain an upper bound $C=0.68698$ on the induced norm $\|\mcl{T}\|$ of the Volterra operator. The exact value of the induced norm of this operator is known to be equal to $\|\mcl{T}\|=\frac{2}{\pi}= 0.6366...$.

\section{DEMO 3: Solving the Poincar\'e Inequality}\label{sec:demos:poincare}

In a one dimensional domain $\Omega=[a,b]$, the Poincar\'e inequality imposes a bound on the norm of a function $x(s)$ in terms of the spatial derivative $\partial_s x(s)$ of this function,
\begin{align*}
    \|x\|_{L_2}&\leq C\|\partial_{s}x\|_{L_2},  \hspace*{2.0cm}   \forall x\in W_1[a,b],
    \intertext{where}
    W_1[a,b]&:=\{x\in L_2[a,b]\mid \partial_s x\in L_2[a,b], x(a)=x(b)=0\}.
\end{align*}
In PIETOOLS, we can find a value $C$ that satisfies this inequality, by solving the optimization problem
\begin{align*}
    &\min_{\gamma\geq 0}\ \gamma, \\
    \text{s.t.}\qquad &\ip{x}{x}_{L_2}-\gamma\ip{\partial_{s}x}{\partial_{s}x}_{L_2}\leq 0, & \forall x&\in W_1[a,b]
\end{align*}
in which case $C=\sqrt{\gamma}$ satisfies the Poincar\'e inequality.
To declare this problem, we first note that we can represent $x$ and $\partial_{s}x$ in terms of a fundamental state $\partial_{s}^2 x\in L_2[a,b]$, which is free of the boundary conditions and continuity constraints imposed upon $x$ and $\partial_{s}x$. In particular, we first declare a PDE
\begin{align*}
    \dot{x}(t,s)&=\partial_{s}x(t,s), &   s&\in[a,b], \\
    x(t,a)&=x(t,b)=0,
\end{align*}
as a \texttt{pde\_struct} object by calling
\begin{matlab}
\begin{verbatim}
 % % Initialize the PDE structure and spatial variable s in [a,b]
 pvar s theta;
 pde_struct PDE;
 a = 0;      b = 1;

 % % Declare the state variables x(t,s)
 PDE.x{1}.vars = s;
 PDE.x{1}.dom = [a,b];
 PDE.x{1}.diff = 2;          % Let x be second order differentiable wrt s.

 % % Declare the PDE \dot{x}(t,s) = \partial_{s} x(t,s)
 PDE.x{1}.term{1}.D = 1;     % Order of the derivative wrt s

 % % Declare the boundary conditions x(t,a) = x(t,b) = 0
 PDE.BC{1}.term{1}.loc = a;      % Evaluate x at s=a
 PDE.BC{2}.term{1}.loc = b;      % Evaluate x at s=b

 % % Initialize the system
 PDE = initialize(PDE);
\end{verbatim}
\end{matlab}
Here, we explicitly indicate that the function $x$ must be second order differentiable, as we wish to derive a PIE representation in terms of the fundamental state $x_{\text{f}}:=\partial_{s}^2 x$. To obtain this PIE representation, we call \texttt{convert},
\begin{matlab}
\begin{verbatim}
 PIE = convert(PDE,'pie');
 H2 = PIE.T;     % H2 x_{ss} = x
 H1 = PIE.A;     % H1 x_{ss} = x_{s}
\end{verbatim}
\end{matlab}
arriving at an equivalent representation of the PDE as
\begin{align*}
    \mcl{H}_2\dot{x}_{\text{f}}(t,s)&=\mcl{H}_{1}x_{\text{f}}(t,s), &   s&\in[a,b].
\end{align*}
In this representation, the fundamental state $x_{\text{f}}:=\partial_{s}^2 x$ is free of any boundary conditions and continuity constraints. Moreover, we note that
\begin{align*}
    \mcl{H}_{2}x_{\text{f}}(t,s)&=x(t,s),\qquad \text{and},\qquad
    \mcl{H}_1 x_{\text{f}}(t,s)&=\partial_{s}x(t,s).
\end{align*}
As such, the Poincar\'e inequality optimization problem can be equivalently represented as
\begin{align*}
    &\min_{\gamma\geq 0}\ \gamma, \qquad
    \text{s.t.} \quad \ip{\mcl{H}_{2}x_{\text{f}}}{\mcl{H}_{2}x_{\text{f}}}-\gamma\ip{\mcl{H}_1 x_{\text{f}}}{\mcl{H}_1 x_{\text{f}}}\leq 0,   \qquad \forall x_{\text{f}}\in L_2[a,b]
\end{align*}
giving rise to an LPI
\begin{align*}
    &\min_{\gamma\geq 0}\ \gamma, \qquad
    \text{s.t.}\quad \mcl{H}_{2}^*\mcl{H}_{2}-\gamma\mcl{H}_1^*\mcl{H}_1\leq 0.
\end{align*}
We declare and solve this LPI in PIETOOLS as
\begin{matlab}
\begin{verbatim}
 % % First, define dpvar gam and set up an optimization problem
 dpvar gam;
 vars = [H2.var1; H2.var2];
 prob = sosprogram(vars,gam);

 % % Set gam as objective function to minimize
 prob = sossetobj(prob, gam);

 % % Set up the constraint H2'*H2-gam H1'*H1<=0
 opts.psatz = 1;     % allow H2'*H2 > gam H1'*H1 outside of [a,b]
 prob = lpi_ineq(prob,-(H2'*H2-gam*H1'*H1),opts);

 % Solve and retrieve the solution
 prob = sossolve(prob);
 poincare_constant = sqrt(double(sosgetsol(prob,gam)));
\end{verbatim}
\end{matlab}
This code can also be run calling the function ``poincare\_inequality\_DEMO'', arriving at a constant $C=0.42664$ that satisfies the Poincar\'e inequality on the domain $[a,b]=[0,1]$. On this domain, a minimal value of $C$ is known to be $C=\frac{1}{\pi}=0.31831...$.

\section{DEMO 4: Finding an Optimal Stability Parameter}\label{sec:demos:stability}

We consider a reaction-diffusion equation on an interval $[a,b]$,
\begin{align*}
    \dot{\mbf{x}}(t,s)&=\lambda \mbf{x}(t,s) + \partial_{s}^2\mbf{x}(t,s),  &   s&\in[a,b],
    \mbf{x}(t,a)&=\mbf{x}(t,b)=0,
\end{align*}
for some $\lambda\in\R$. On the interval $[a,b]=[0,1]$, this system is know to be stable whenever $\lambda\leq\pi^2=9.8696...$. In PIETOOLS, we can numerically estimate this limit using the function \texttt{stability\_PIETOOLS}. In particular, we may equivalent represent the PDE as a PIE of the form
\begin{align*}
    \bl(\mcl{T}\dot{\mbf{x}}_{\text{f}}\br)(t,s&=\bl(\mcl{A}(\lambda)\mbf{x}_{\text{f}}\br)(t,s),   &   s&\in[a,b],
\end{align*}
where $\mbf{x}_{\text{f}}(t,s):=\partial_s^2\mbf{x}(t,s)$. Then, a maximal value of $\lambda$ for which the PIE is stable can be estimated by solving the optimization problem
\begin{align*}
    &\max_{\lambda\in\R,\mcl{P}\in\Pi}\ \lambda,  \\
    \text{s.t.}\qquad &\mcl{P}\succ 0,   \\
    &\mcl{T}^*\mcl{P}\mcl{A}(\lambda) + \mcl{A}^*(\lambda)\mcl{P}\mcl{T}\preccurlyeq 0.
\end{align*}
Unfortunately, both $\lambda$ and $\mcl{P}$ are decision variables in this optimization program, and so the product $\mcl{P}\mcl{A}(\lambda)$ is not linear in the decision variables. As such, this problem cannot be directly implemented as a convex optimization program. However, for any fixed value of $\lambda$, stability of the PIE can be verified by testing feasibility of the LPI
\begin{align*}
    &\mcl{P}\succ 0,
    &\mcl{T}^*\mcl{P}\mcl{A}(\lambda) + \mcl{A}^*(\lambda)\mcl{P}\mcl{T}\preccurlyeq 0,
\end{align*}
an optimization program that has already been implemented in \texttt{stability\_PIETOOLS}. Therefore, to estimate an upper bound on the value of $\lambda$ for which the PDE is stable, we can test stability for given values of $\lambda$, and perform bisection over some domain $\lambda\in[\lambda_{\min},\lambda_{\max}]$ to find an optimal value. For our demonstration, we use $\lambda_{\min}=0$ and $\lambda_{\max}=20$, testing stability for $8$ values of $\lambda$ between these upper and lower bounds:
\begin{matlab}
\begin{verbatim}
 %%% Set bisection limits for lam.
 lam_min = 0;        lam_max = 20;
 lam = 0.5*(lam_min + lam_max);
 n_iters = 8;
\end{verbatim}
\end{matlab}
We initialize a PDE structure in Terms-format, so that we may easily update the value of $\lambda$:
\begin{matlab}
\begin{verbatim}
 %%% Initialize a PDE structure.
 a = 0;  b = 1;
 pvar s
 pde_struct PDE;
 PDE.x{1}.vars = s;
 PDE.x{1}.dom = [a,b];

 % Set the PDE \dot{x}(t,s) = lam*x(t,s) + x_{ss}(t,s);
 PDE.x{1}.term{1}.C = lam;
 PDE.x{1}.term{2}.D = 2;

 % Set the BCs x(t,a) = x(t,b) = 0;
 PDE.BC{1}.term{1}.loc = a;
 PDE.BC{2}.term{1}.loc = b;
\end{verbatim}
\end{matlab}
Finally, we fix settings for the LPI program, using \texttt{veryheavy} settings to achieve relatively high accuracy
\begin{matlab}
\begin{verbatim}
 %%% Initialize settings for solving the LPI
 settings = lpisettings('veryheavy');
 if strcmp(settings.sos_opts.solver,'sedumi')
     settings.sos_opts.params.fid = 0;   % Suppress output in command window
 end
\end{verbatim}
\end{matlab}
Having initialized everything, we perform bisection on the value of $\lambda$. In particular, for a given value of $\lambda\in[\lambda_{\min},\lambda_{\max}]$, we update the PDE structure \texttt{PDE}, compute the associated PIE structure \texttt{PIE}, and test stability of this PIE using \texttt{stability\_PIETOOLS}. Then, we check the value of \texttt{feasratio} in the output optimization program structure, which should be close to 1 if the LPI was successfully solved, and thus the system was found to be stable. 
\begin{itemize}
    \item If the system is stable, then it is stable for any value of $\lambda$ smaller than the value \texttt{lam} used in the test. As such, we update the value of $\lambda_{\min}\leftarrow\lambda$, and repeat the test with a greater value $\lambda=\frac{1}{2}(\lambda_{\min}+\lambda_{\max})$.
    \item If stability could not be verified, then stability can also not be verified for any value of $\lambda$ greater than the value \texttt{lam} used in the test. As such, we update the value of $\lambda_{\max}\leftarrow\lambda$, and repeat the test with a greater value $\lambda=\frac{1}{2}(\lambda_{\min}+\lambda_{\max})$.
\end{itemize}
This algorithm can be implemeted as
\begin{matlab}
\begin{verbatim}
 %%% Perform bisection on the value of lam
 for iter = 1:n_iters
     % Update the value of lam in the PDE.
     PDE.x{1}.term{1}.C = lam;
    
     % Update the PIE.
     PIE = convert(PDE,'pie');
    
     % Re-run the stability test.
     prog = PIETOOLS_stability(PIE,settings);

     % Check if the system is stable
     if prog.solinfo.info.dinf || prog.solinfo.info.pinf ...
            || abs(prog.solinfo.info.feasratio - 1)>0.3
         % Stability cannot be verified, decreasing the value of lam...
         lam_max = lam;
         lam = 0.5*(lam_min + lam_max);
     else
         % The system is stable, trying a larger value of lam...
         lam_success = lam;
         lam_min = lam;
         lam = 0.5*(lam_min + lam_max);
     end    
 end
\end{verbatim}
\end{matlab}
This code can also be called using ``stability\_parameter\_bisection\_DEMO''. Running this demo, we find that stability can be verified whenever $\lambda\leq 9.8438$.

\section{DEMO 5: Constructing and Simulating an Optimal Estimator}\label{sec:demos:estimator}

We consider a reaction-diffusion PDE, with an observed output $y$,
\begin{align}\label{eq:demos:estimator:PDE}
    && \dot{\mbf{x}}(t,s)&=\partial_{s}^2\mbf{x}(t,s) + 4\mbf{x}(t,s) + w(t), & &&    s&\in[0,1], &&\nonumber\\
    \text{with BCs}& & 0&=\mbf{x}(t,0)=\partial_{s}\mbf{x}(t,1),  \nonumber\\
    \text{and outputs}& & z(t)&=\int_{0}^{1}\mbf{x}(t,s)ds + w(t),   \nonumber\\
    &&y(t)&=\mbf{x}(t,1). 
\end{align}
This PDE can be easily declared in PIETOOLS as
\begin{matlab}
\begin{verbatim}
 pvar s t
 PDE = sys();
 x = state('pde');    w = state('in');
 y = state('out');    z = state('out');
 eqs = [diff(x,t) == diff(x,s,2) + 4*x + w;
        z == int(x,s,[0,1]) + w;
        y == subs(x,s,1);
        subs(x,s,0) == 0;
        subs(diff(x,s),s,1) == 0];
 PDE = addequation(PDE,eqs);
 PDE = setObserve(PDE,y);
\end{verbatim}  
\end{matlab}
at which point an equivalent PIE representation can be derived by calling \texttt{convert}:
\begin{matlab}
\begin{verbatim}
 PIE = convert(PDE,'pie');        PIE = PIE.params;
 T = PIE.T;    
 A = PIE.A;      C1 = PIE.C1;      C2 = PIE.C2;
 B1 = PIE.B1;    D11 = PIE.D11;    D21 = PIE.D21;
\end{verbatim}
\end{matlab}
Then, the PDE~\eqref{eq:demos:estimator:PDE} can be equivalently represented by a PIE
\begin{align}\label{eq:demos:estimator:PIE}
    \bl(\mcl{T}\dot{\mbf{v}}\br)(t,s)&=\bl(\mcl{A}\mbf{v}\br)(t,s)+\bl(\mcl{B}_1 w\br)(t,s), &   s&\in[0,1]  && \nonumber\\
    z(t)&=\bl(\mcl{C}_1\mbf{v}\br)(t)+\bl(\mcl{D}_{11} w)(t), \nonumber\\
    y(t)&=\bl(\mcl{C}_2\mbf{v}\br)(t)+\bl(\mcl{D}_{12} w)(t),
\end{align}
where we define $\mbf{v}:=\partial_{s}^2\mbf{x}$, and where $\mbf{x}=\mcl{T}\mbf{v}$.

We consider the problem of designing an optimal estimator for the PIE~\eqref{eq:demos:estimator:PIE}. In particular, we construct an estimator of the form
\begin{align}\label{eq:demos:estimator:PIE_estimator}
	\mcl{T} \dot{\hat{\mbf{v}}}(t) &=\mcl{A}{\mbf{\hat{v}}}(t)+\mathcal{L}\bl(y(t)-\hat{y}(t)\br), \nonumber\\
	\hat{z}(t) &= \mcl{C}_1\mbf{\hat{v}}(t),   \nonumber\\
	\hat{y}(t) &= \mcl{C}_2\mbf{\hat{v}}(t),   
\end{align}
so that the error $\mbf{e}(t,s):=\hat{\mbf{v}}(t,s)-\mbf{v}(t,s)$ in the state and $\tilde{z}(t):=\hat{z}(t)-z(t)$ in the output satisfy
\begin{align}\label{eq:demos:estimator:PIE_error}
	\mcl T \dot{\mbf{e}}(t) &=(\mcl{A}-\mcl{L}\mcl{C}_2)\mbf{e}(t)-(\mcl{B}_1+\mcl{L}\mcl{D}_{21})w(t) \nonumber\\
	\tilde{z}(t) &= \mcl{C}_1\mbf{e}(t) - \mcl{D}_{11} w(t).
\end{align}
The goal of $H_{\infty}$-optimal estimation, then, is to determine a value for the observer operator $\mcl{L}$ that minimizes the gain $\gamma:=\frac{\|\tilde{z}\|_{L_2}}{\|w\|_{L_2}}$ from disturbances $w$ to errors $\tilde{z}$ in the output. See also Section~\ref{sec:LPI_examples:estimation}. To construct such an operator, we solve the LPI,
\begin{align}\label{eq:demos:estimator:LPI}
	&\min\limits_{\gamma,\mcl{P},\mcl{Z}} ~~\gamma&\notag\\
	&\mcl{P}\succ0, &%\notag\\
	&\hspace{-1ex}Q:=\bmat{-\gamma I& -\mcl D_{11}^{\top}&-(\mcl P\mcl B_1+\mcl Z\mcl D_{21})^*\mcl T\\(\cdot)^*&-\gamma I&\mcl C_1\\(\cdot)^*&(\cdot)^*&(\mcl P\mcl A+\mcl Z\mcl C_2)^*\mcl T+(\cdot)^*}\preccurlyeq 0&
\end{align}
so that, for any solution $(\gamma,\mcl{P},\mcl{Z})$ to this problem, letting $\mcl{L}:=\mcl{P}^{-1} \mcl{Z}$, the estimation error will satisfy $\norm{\hat{z}-z} \leq \gamma \norm{w}$. 

In Chapter~\ref{ch:LPIs}, we showed in detail how the LPI~\eqref{eq:demos:estimator:LPI} can be declared for a PIE of the form~\eqref{eq:demos:estimator:PIE}. The code declaring and solving this LPI can also be found in the demo file ``Hinf\_Optimal\_Estimator\_DEMO''. We will therefore not discuss this implementation here. Instead, we compute an optimal value of the observer operator using one of the pre-defined \texttt{executive} files, as
\begin{matlab}
\begin{verbatim}
 settings = lpisettings('heavy');
 [prog, Lval, gam_val] = PIETOOLS_Hinf_estimator(PIE, settings); 
\end{verbatim}
\end{matlab}
returning a value \texttt{Lval} of the operator $\mcl{L}$ such that the $L_2$-gain $\frac{\|\tilde{z}\|_{L_2}}{\|w\|_{L_2}}$ is bounded from above by \texttt{gam\_val}.

Given the observer operator $\mcl{L}$, we can construct the Estimator~\eqref{eq:demos:estimator:PIE_estimator}, obtaining a PIE
\begin{align*}
    \left(\bmat{\mcl{T}&0\\0&\mcl{T}}\bmat{\dot{\mbf{v}}\\\dot{\hat{\mbf{v}}}}\right)(t,s)&=\left(\bmat{\mcl{A}&0\\-\mcl{L}\mcl{C}_{2}&\mcl{A}+\mcl{L}\mcl{C}_{2}}\bmat{\mbf{v}\\\hat{\mbf{v}}}\right)(t,s)+\left(\bmat{\mcl{B}_{1}\\\mcl{L}\mcl{D}_{21}}w\right)(t) \\
    \bmat{z\\\hat{z}}(t)&=\left(\bmat{\mcl{C}_1&0\\0&\mcl{C}_1}\bmat{\mbf{v}\\\hat{\mbf{v}}}\right)(t)+\left(\bmat{\mcl{D}_{11}\\0}w\right)(t)
\end{align*}
We can declare this PIE in PIETOOLS as
\begin{matlab}
\begin{verbatim}
 % Construct the operators defining the PIE.
 T_CL = [T, 0*T; 0*T, T];
 A_CL = [A, 0*A; -Lval*C2, A+Lval*C2];   B_CL = [B1; Lval*D21];
 C_CL = [C1, 0*C1; 0*C1, C1];            D_CL = [D11; 0*D11];

 % Declare the PIE.
 PIE_CL = pie_struct();
 PIE_CL.vars = PIE.vars;
 PIE_CL.dom = PIE.dom;
 PIE_CL.T = T_CL;
 PIE_CL.A = A_CL;              PIE_CL.B1 = B_CL;
 PIE_CL.C1 = C_CL;             PIE_CL.D11 = D_CL;
 PIE_CL = initialize(PIE_CL);
\end{verbatim}    
\end{matlab}
Then, we can use PIESIM to simulate the evolution of the PDE state $\mbf{x}(t)=\bl(\mcl{T}\mbf{v}\br)(t)$ and its estimate $\hat{\mbf{x}}(t)=\bl(\mcl{T}\hat{\mbf{v}}\br)(t)$ associated to the system defined by \texttt{PIE\_CL}. For this, we first declare initial conditions $\sbmat{\mbf{v}(0,s)\\\hat{\mbf{v}}(0,s)}$ and values of the disturbance $w(t)$ as
\begin{matlab}
\begin{verbatim}
 % Declare initial conditions for the state components of the PIE
 syms st sx real
 uinput.ic.PDE = [-10*sx;    % Actual initial PIE state value
                  0];        % Estimated initial PIE state value
 
 % Declare the value of the disturbance w(t)
 uinput.w = 2*sin(pi*st);
\end{verbatim}
\end{matlab}
Here we use symbolic objects \texttt{st} and \texttt{sx} to represent a temporal variable $t$ and spatial variable $s$ respectively. Note that, since we will be simulating the PIE directly, the initial conditions \texttt{uinput.ic.PDE} will also correspond to the initial values of the PIE state $\sbmat{\mbf{v}(0,s)\\\hat{\mbf{v}}(0,s)}$, not to those of the PDE state. Here, we let the initial PIE state $\mbf{v}(0)$ be parabolic, and start with an estimate of this state that is just $\hat{\mbf{v}}(0)=0$. Given these initial conditions, we then simulate the evolution of the PDE state $\sbmat{\mbf{x}\\\mbf{\hat{x}}}=\sbmat{\mcl{T}&0\\0&\mcl{T}}\sbmat{\dot{\mbf{v}}\\\dot{\hat{\mbf{v}}}}$ using PIESIM as
\begin{matlab}
\begin{verbatim}
 % Set options for the discretization and simulation:
 opts.plot = 'no';   % Do not plot the final solution
 opts.N = 8;         % Expand using 8 Chebyshev polynomials
 opts.tf = 1;        % Simulate up to t = 1;
 opts.dt = 1e-3;     % Use time step of 10^-3
 opts.intScheme = 1; % Time-step using Backward Differentiation Formula (BDF) 
 ndiff = [0,0,2];    % The PDE state involves 2 twice differentiable states

 % Simulate the solution to the PIE with estimator.
 [solution,grid] = PIESIM(PIE_CL,opts,uinput,ndiff);

 % Extract actual and estimated solution at each time step.
 x_act = reshape(solution.timedep.pde(:,1,:),opts.N+1,[]);
 x_est = reshape(solution.timedep.pde(:,2,:),opts.N+1,[]);
 tval = solution.timedep.dtime;
\end{verbatim}
\end{matlab}
Here, we use \texttt{opts.N=8} Chebyshev polynomials in the expansion of the solution, and simulate up to \texttt{opts.tf=1}, taking time steps of \texttt{opts.dt=1e-3}. Having obtained the solution, we then collect the actual values $\mbf{x}(t,s)$ of the PDE state at each time step and each grid point in \texttt{x\_act}, and the estimated values $\mbf{\hat{x}}(t,s)$ of the PDE state at each time step and each grid point in \texttt{x\_est}. Plotting the obtained values at several grid points, as well as the error in estimated state, we obtain a graph as in Figure~\ref{fig:demos:estimator}.

\begin{figure}[H]
	\centering
	\includegraphics[scale=0.75]{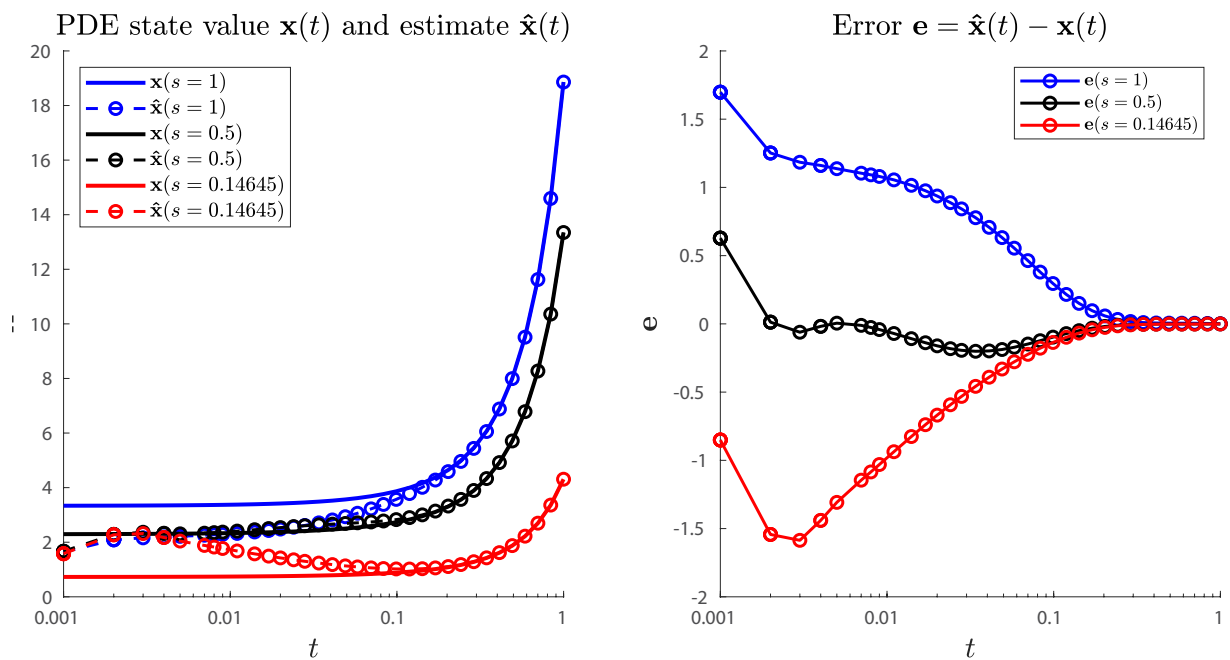}
	\caption{Simulated value of PDE state $\mbf{x}(t,s)$ and estimated state $\mbf{\hat{x}}(t,s)$, along with the error $\mbf{e}=\hat{\mbf{x}}(t,s)-mbf{x}(t,s)$ associated to the PDE~\eqref{eq:demos:estimator:PDE} at several grid points $s\in[0,1]$, using the Estimator~\eqref{eq:demos:estimator:PIE_estimator} with operator $\mcl{L}$ computed by solving the $H_{\infty}$-optimal estimator LPI. See the demo file ``Hinf\_Optimal\_Estimator\_DEMO''.}\label{fig:demos:estimator}
\end{figure}

As Figure~\ref{fig:demos:estimator} shows, the value of the estimate state $\mbf{\hat{x}}$ at each of the grid points quickly converge to those of the actual state $\mbf{x}$, despite starting off with a rather poor estimate of $\mbf{\hat{x}}=0$. Within a time of 1, the values of the actual and estimated state become indistinguishable (at the displayed scale), with the error converging to zero. This is despite the fact that the value of the state itself continues to increase, as the PDE~\eqref{eq:demos:estimator:PDE} is unstable.

The full code constructing the optimal estimator, simulating the PDE state and its estimate, and plotting the results has been included in PIETOOLS as the demo file\\
``Hinf\_Optimal\_Estimator\_DEMO''.

\section{DEMO 6: \texorpdfstring{$\hinf$}{Hinf}-optimal Controller synthesis for PDEs}\label{sec:demo:control}
We consider an unstable reaction-diffusion PDE, with output $z$, disturbance $w$ and control input $u$
\begin{align}\label{eq:demos:control:PDE}
    && \dot{\mbf{x}}(t,s)&=\partial_{s}^2\mbf{x}(t,s) + 4\mbf{x}(t,s) + sw(t)+su(t), & &&    s&\in[0,1], &&\nonumber\\
    \text{with BCs}& & 0&=\mbf{x}(t,0)=\partial_{s}\mbf{x}(t,1),  \nonumber\\
    \text{and outputs}& & z(t)&=\bmat{\int_{0}^{1}\mbf{x}(t,s)ds + w(t)\\u(t)}. 
\end{align}
This PDE can be easily declared in PIETOOLS as
\begin{matlab}
\begin{verbatim}
pvar s t
lam = 4;
PDE = sys();
x = state('pde');   w = state('in');
z = state('out', 2);   u = state('in');
eqs = [diff(x,t) == diff(x,s,2) + lam*x + s*w + s*u;
    z == [int(x,s,[0,1]) + w; u];
    subs(x,s,0)==0;
    subs(diff(x,s),s,1)==0];
PDE = addequation(PDE,eqs);
PDE = setControl(PDE,u);
display_PDE(PDE);
\end{verbatim}  
\end{matlab}
Next, we convert the PDE system to a PIE by using the following code and extract relevant PI operators for easier access:
\begin{matlab}
\begin{verbatim}
PIE = convert(PDE,'pie');       PIE = PIE.params;
T = PIE.T;
A = PIE.A;      C1 = PIE.C1;    B2 = PIE.B2;
B1 = PIE.B1;    D11 = PIE.D11;  D12 = PIE.D12;
\end{verbatim}
\end{matlab}
Then, the PDE~\eqref{eq:demos:control:PDE} can be equivalently represented by a PIE
\begin{align}\label{eq:demos:control:PIE}
    \bl(\mcl{T}\dot{\mbf{v}}\br)(t,s)&=\bl(\mcl{A}\mbf{v}\br)(t,s)+\bl(\mcl{B}_1 w\br)(t,s)+\bl(\mcl{B}_2 u\br)(t,s), &   s&\in[0,1]  && \nonumber\\
    z(t)&=\bl(\mcl{C}_1\mbf{v}\br)(t)+\bl(\mcl{D}_{11} w)(t)+\bl(\mcl{D}_{12} u)(t),
\end{align}
where we define $\mbf{v}:=\partial_{s}^2\mbf{x}$ and  $\mbf{x}=\mcl{T}\mbf{v}$.

We now attempt to find a state-feedback $\hinf$-optimal controller for the PIE~\eqref{eq:demos:control:PIE}. In particular, we use $u(t) = \mcl K \mbf v(t)$, where $\mcl K:L_2\to \R$ is a PI operator of the form
\[\mcl K \mbf v = \int_a^b K(s)\mbf v(s) ds.\] 
Then we get the closed loop system
\begin{align}\label{eq:demos:control:PIE_CL}
	\mcl{T} \dot{\mbf{v}}(t) &=(\mcl{A}+\mcl B_2\mcl K){\mbf{v}}(t)+\mcl B_1 w(t), \nonumber\\
	z(t) &= (\mcl{C}_1+\mcl D_{12}\mcl K)\mbf{z}(t)+\mcl D_{11} w(t).
\end{align}

Next, we solve the LPI for $\hinf$-optimal control to find a $\mcl K$ that minimizes the gain $\gamma:=\frac{\|z\|_{L_2}}{\|w\|_{L_2}}$ from disturbances $w$ to errors $\tilde{z}$ in the output. See also Section~\ref{sec:LPI_examples:control}. To find such an operator, we solve the LPI,
\begin{flalign}\label{eq:demos:control:LPI}
	\min\limits_{\gamma,\mcl{P},\mcl{Z}} ~~\gamma~s.t.\quad&\mcl{P}\succ0&\notag\\
	&\bmat{-\gamma I & \mcl{D}_{11}&\mcl{T}(\mcl{P}\mcl{C}_1+\mcl{Z}\mcl{D}_{12})\\(\cdot)^*&-\gamma I&\mcl{B}_1^*\\(\cdot)^*&(\cdot)^*&\mcl{T}(\mcl{A}\mcl{P}+\mcl{B}_2\mcl{Z})^*+(\mcl{A}\mcl{P}+\mcl{B}_2\mcl{Z})\mcl{T}^*}\preccurlyeq 0&
\end{flalign}
so that, for any solution $(\gamma,\mcl{P},\mcl{Z})$ to this problem, letting $\mcl{K}:=\mcl Z\mcl{P}^{-1}$, we have $\norm{z} \leq \gamma \norm{w}$. 

In Chapter~\ref{ch:LPIs}, we showed in detail how the LPI~\eqref{eq:demos:control:LPI} can be declared for a PIE of the form~\eqref{eq:demos:control:PIE}. The code declaring and solving this LPI can also be found in the demo file ``Hinf\_optimal\_control\_DEMO''. Here we use pre-defined \texttt{executive} files to solve the above LPI, which is a simple function call as shown below
\begin{matlab}
\begin{verbatim}
 settings = lpisettings('heavy');
 [prog, Kval, gam_val] = PIETOOLS_Hinf_control(PIE, settings); 
\end{verbatim}
\end{matlab}
The executive, in this particular case, returns a value \texttt{Kval} of the operator $\mcl{K}$ such that the $L_2$-gain $\frac{\|z\|_{L_2}}{\|w\|_{L_2}}$ is bounded from above by \texttt{gam\_val}. Next, we construct the closed loop PIE using the function \texttt{closedLoop\_PIE},
\begin{matlab}
\begin{verbatim}
    PIE_CL = closedLoopPIE(PIE,Kval);
\end{verbatim}
\end{matlab}
Note, closed loop system can be constructed manually, similar to the method used in \ref{sec:demos:estimator}.

Then, we can use PIESIM to simulate the evolution of the PDE state $\mbf{x}(t)=\bl(\mcl{T}\mbf{v}\br)(t)$ associated to the system defined by \texttt{PIE\_CL}. As described before we can set initial condition and disturbance using the commands below
\begin{matlab}
\begin{verbatim}
 % Declare initial conditions for the state components of the PIE
 syms st sx real
 uinput.ic.PDE = -10*sx;    % IC PIE
 uinput.w = exp(-st);   % disturbance
\end{verbatim}
\end{matlab}
Note that, since we will be simulating the PIE directly, the initial conditions \texttt{uinput.ic.PDE} will also correspond to the initial values of the PIE state $\mbf{v}(0,s)$, not to those of the PDE state. 
In this example, we have used multiple initial conditions and disturbance inputs. For brevity, the code corresponding to all the different initial conditions and disturbances is not presented. 

Next, we set the parameters related to the numerical scheme and simulation as shown below
\begin{matlab}
\begin{verbatim}
 % Set options for the discretization and simulation:
 opts.plot = 'no';   % Do not plot the final solution
 opts.N = 8;         % Expand using 8 Chebyshev polynomials
 opts.tf = 1;        % Simulate up to t = 1;
 opts.dt = 1e-3;     % Use time step of 10^-3
 opts.intScheme = 1; % Time-step using Backward Differentiation Formula (BDF) 
 ndiff = [0,0,1];    % The PDE state involves 2 twice differentiable states
\end{verbatim}
\end{matlab}
These parameters are same as the ones described in \ref{sec:demos:estimator}, and hence the description is omitted here.

We run multiple simulations for both open (without controller) and closed loop PIEs (with controller) the results of which are plotted in Figure~\ref{fig:demos:control_ol} and Figure \ref{fig:demos:control_cl}, respectively.

\begin{figure}[H]
	\centering
	\includegraphics[width=\textwidth]{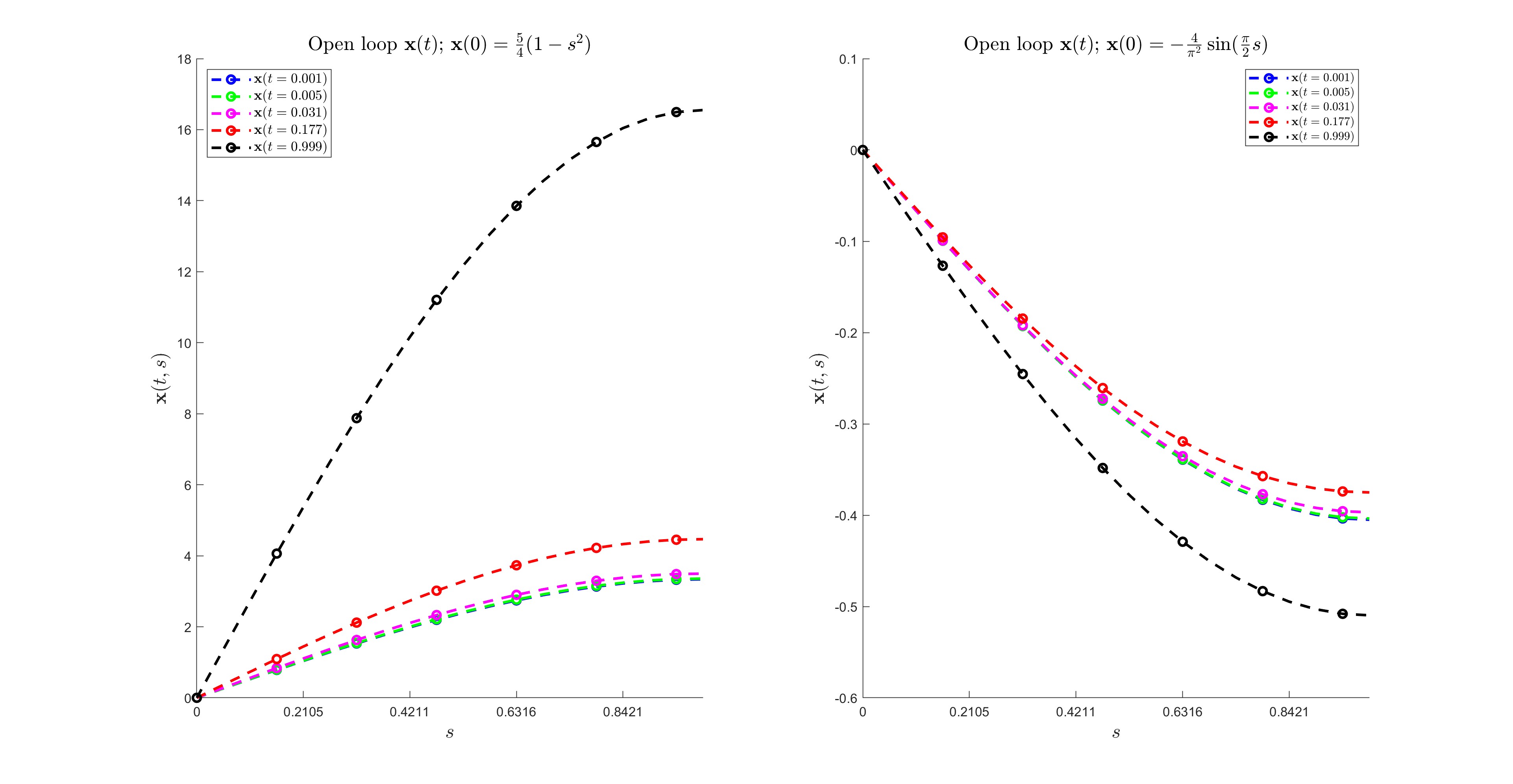}
	\caption{Simulated value of PDE state $\mbf{x}(t,s)$ at several grid points $s\in[0,1]$, without controller input, for different initial conditions and a bounded $L_2$ disturbance $w$.}\label{fig:demos:control_ol}
\end{figure}

\begin{figure}[H]
	\centering
	\includegraphics[width=\textwidth]{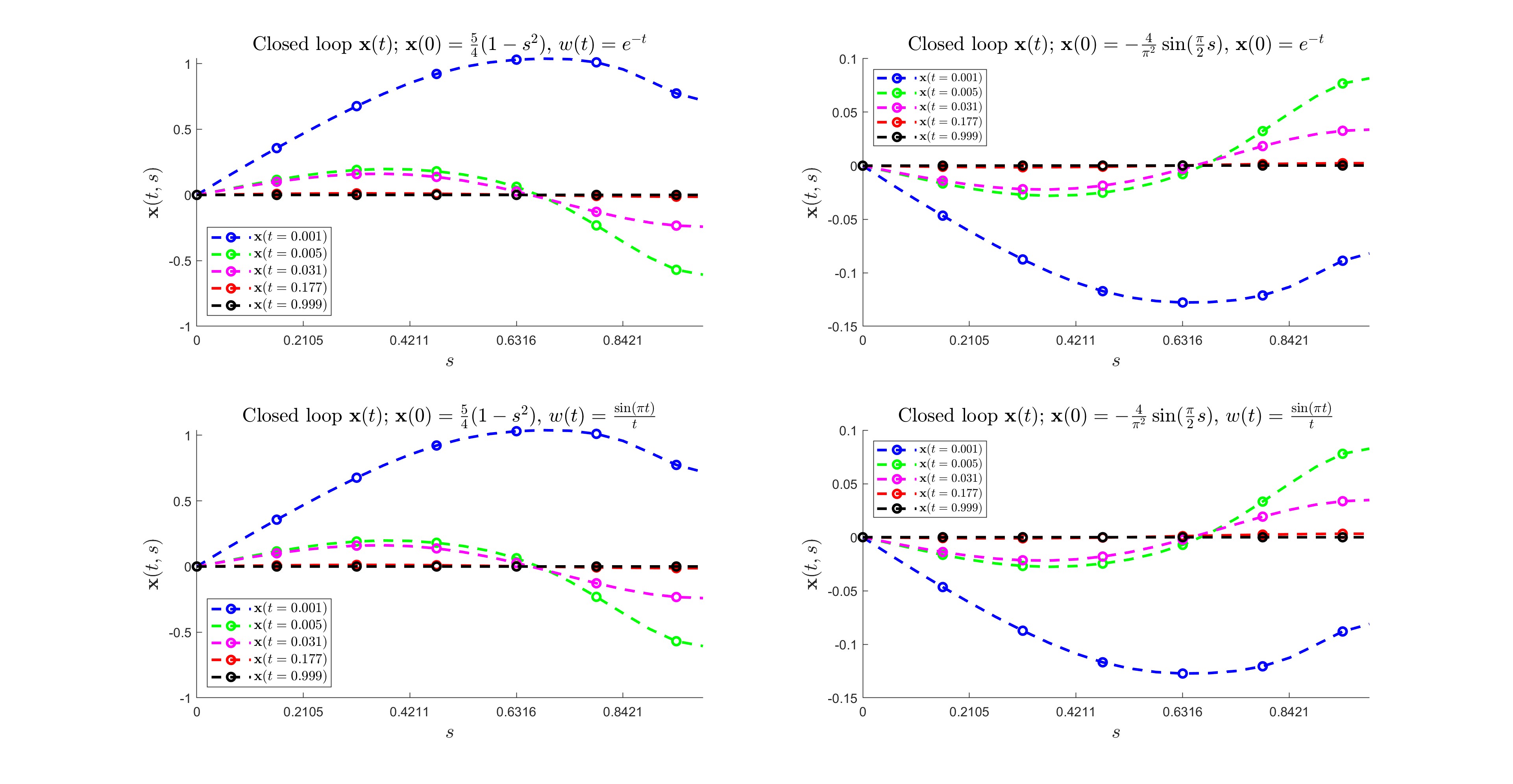}
	\caption{Simulated value of PDE state $\mbf{x}(t,s)$ at several grid points $s\in[0,1]$, with $\hinf$-optimal state feedback controller, for different initial conditions and a bounded $L_2$ disturbance $w$. $L_2$-gain $= 1.6205$.}\label{fig:demos:control_cl}
\end{figure}

Lastly, for the initial condition $\mbf v(0,s) = -10s$, we plot the regulated output $z(t)$ when subjected to two different disturbance inputs, namely, $w(t) = \exp(-t)$ and $w(t) = \frac{\sin(\pi t)}{t}$. This can be obtained by numerical integration of the solution as shown in the code below, which can they be plotted to obtain Figure \ref{fig:demos:control_output}.
\begin{matlab}
\begin{verbatim}
tval = linspace(0,1,2000); % grid points in time
XX = linspace(0,1,20); % grid points in space
w1_tval = subs(sin(pi*st)./(st+eps),tval); 
w2_tval = subs(exp(-st),tval);
z_quadrature = double(subs(C1.Q1(1,1),s,XX)); 
k_quadrature = double(subs(Kval.Q1,s,XX));
ZZ1 = trapz(z_quadrature,x_CL_a)+double(w1_tval); %z1 for w1 disturbance
ZZ2 = trapz(z_quadrature,x_CL_aa)+double(w2_tval);%z1 for w2 disturbance
ZZ3 = trapz(k_quadrature,x_CL_a); %u(t) for w1 disturbance
ZZ4 = trapz(k_quadrature,x_CL_aa); %u(t) for w1 disturbance
\end{verbatim}
\end{matlab}
\begin{figure}[H]
	\centering
	\includegraphics[width=\textwidth]{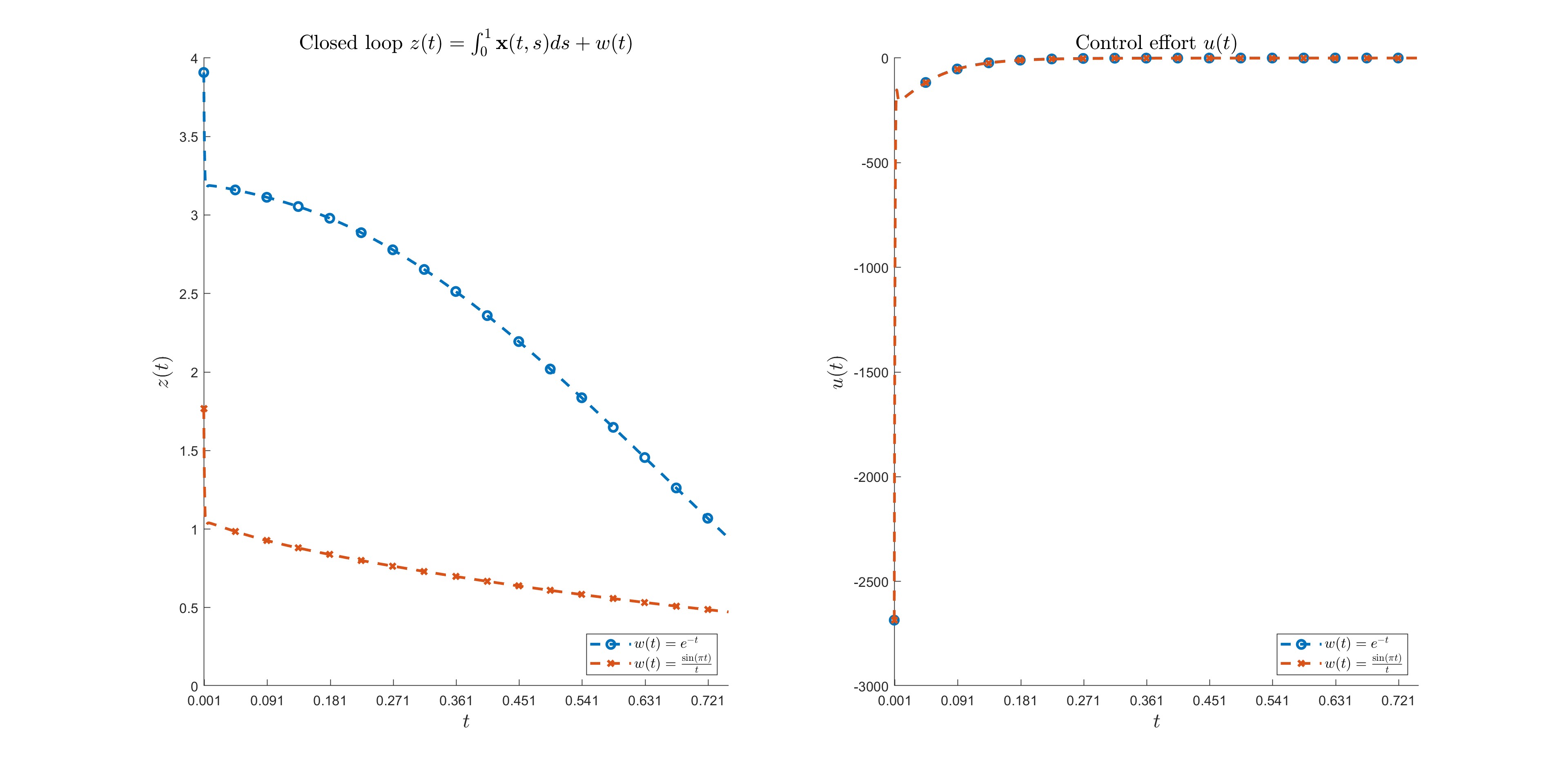}
	\caption{Simulated value of $Z(t)$, with $\hinf$-optimal state feedback controller, for different bounded $L_2$ disturbances, $w$. $L_2$-gain $= 1.6205$.}\label{fig:demos:control_output}
\end{figure}

The full code for designing the optimal controller, simulating the PDE state, and plotting the results has been included in PIETOOLS as the demo file\\
``Hinf\_optimal\_control\_DEMO''

\section{DEMO 7: Observer-based Controller design and simulation for PDEs}\label{sec:demo:observer-control}
In Sections \ref{sec:demos:estimator} and \ref{sec:demo:control}, we showed the procedure to find optimal estimator and controller for a PDE. The controller was designed based on the information of the complete PDE state, which, in practice, is unrealistic. So, in this section, we use the observer and controller obtained in the previous sections and couple them by changing the control law to be dependent on the estimated state instead of PDE state, i.e., we set $u = \mcl K \hat{\mbf v}$, where $\mcl K$ is the control gains obtained in \ref{sec:demo:control} and $\hat{\mbf v}$ is the state estimate from the observer found in \ref{sec:demos:estimator}.

For demonstration, we reuse the unstable reaction-diffusion PDE, with output $z$, disturbance $w$ and control input $u$
\begin{align}\label{eq:demos:obs-control:PDE}
    && \dot{\mbf{x}}(t,s)&=\partial_{s}^2\mbf{x}(t,s) + 4\mbf{x}(t,s) + sw(t)+su(t), & &&    s&\in[0,1], &&\nonumber\\
    \text{with BCs}& & 0&=\mbf{x}(t,0)=\partial_{s}\mbf{x}(t,1),  \nonumber\\
    \text{and outputs}& & z(t)&=\bmat{\int_{0}^{1}\mbf{x}(t,s)ds + w(t)\\u(t)}\notag\\
    && y(t) &= \mbf x(t,1). 
\end{align}
Recall, this PDE can be defined using the code
\begin{matlab}
\begin{verbatim}
pvar s t;
lam = 4;
PDE = sys();
x = state('pde');   w = state('in');
z = state('out', 2);   u = state('in'); y = state('out);
eqs = [diff(x,t) == diff(x,s,2) + lam*x + s*w + s*u;
    z == [int(x,s,[0,1]) + w; u];
    y == subs(x,s,1);
    subs(x,s,0)==0;
    subs(diff(x,s),s,1)==0];
PDE = addequation(PDE,eqs);
PDE = setControl(PDE,u);
PDE = setObserve(PDE,u);
display_PDE(PDE);
\end{verbatim}  
\end{matlab}
Next, we convert the PDE system to a PIE by using the following code and extract relevant PI operators for easier access:
\begin{matlab}
\begin{verbatim}
PIE = convert(PDE,'pie');       PIE = PIE.params;
T = PIE.T;
A = PIE.A;      C1 = PIE.C1;    B2 = PIE.B2;
B1 = PIE.B1;    D11 = PIE.D11;  D12 = PIE.D12;
C2 = PIE.C2;    D21 = PIE.D21;  D22 = PIE.D22;
\end{verbatim}
\end{matlab}

Details on observer design and controller design can be found in Sections \ref{sec:demos:estimator} and \ref{sec:demo:control} respectively, and therefore, will not be repeated here. For now, we proceed assuming the observer gains $\mcl L$ and the controller gains $\mcl K$ are known.

Recall, from Equations \eqref{eq:demos:estimator:PIE_estimator} and \eqref{eq:demos:control:PIE}, the combined system is given by
\begin{align}\label{eq:demos:obscon:PIE}
\bl(\mcl{T}\dot{\mbf{v}}\br)(t)&=\bl(\mcl{A}\mbf{v}\br)(t)+\bl(\mcl{B}_1 w\br)(t)+\bl(\mcl{B}_2 u\br)(t), \nonumber\\
    z(t)&=\bl(\mcl{C}_1\mbf{v}\br)(t)+\bl(\mcl{D}_{11} w)(t)+\bl(\mcl{D}_{12} u)(t),\notag\\
    \mcl{T} \dot{\hat{\mbf{v}}}(t) &=\mcl{A}{\mbf{\hat{v}}}(t)+\mathcal{L}\bl(\mcl{C}_2\mbf{v}(t)-\mcl{C}_2\mbf{\hat{v}}(t)\br), \nonumber\\
	\hat{z}(t) &= \mcl{C}_1\mbf{\hat{v}}(t).
\end{align}
Using the observer-controller coupling, $u = \mcl K \hat{\mbf v}$, we get the closed loop PIE
\begin{align}\label{eq:demos:obcon:PIE_CL}
    \left(\bmat{\mcl{T}&0\\0&\mcl{T}}\bmat{\dot{\mbf{v}}\\\dot{\hat{\mbf{v}}}}\right)(t,s)&=\left(\bmat{\mcl{A}&\mcl B_2\mcl K\\-\mcl{L}\mcl{C}_{2}&\mcl{A}+\mcl{L}\mcl{C}_{2}}\bmat{\mbf{v}\\\hat{\mbf{v}}}\right)(t,s)+\left(\bmat{\mcl{B}_{1}\\\mcl{L}\mcl{D}_{21}}w\right)(t) \notag\\
    \bmat{z\\\hat{z}}(t)&=\left(\bmat{\mcl{C}_1&\mcl D_{12}\mcl K\\0&\mcl{C}_1}\bmat{\mbf{v}\\\hat{\mbf{v}}}\right)(t)+\left(\bmat{\mcl{D}_{11}\\0}w\right)(t).
\end{align}

We can construct the above closed loop system using the code
\begin{matlab}
\begin{verbatim}
T_CL = [T, 0*T; 0*T, T];
A_CL = [A, B2*Kval; -Lval*C2, A+Lval*C2];   B_CL = [B1; Lval*D21];
C_CL = [C1, D12*Kval; 0*C1, C1];            D_CL = [D11; 0*D11];
PIE_CL = pie_struct();
PIE_CL.vars = PIE.vars;
PIE_CL.dom = PIE.dom;
PIE_CL.T = T_CL;
PIE_CL.A = A_CL;        PIE_CL.B1 = B_CL;
PIE_CL.C1 = C_CL;       PIE_CL.D11 = D_CL;
PIE_CL = initialize(PIE_CL);
\end{verbatim}
\end{matlab}

Once, we have the closed loop PIE object, we can use \texttt{PIESIM} to simulate the system with different initial conditions and disturbances. Following the process presented in Section \ref{sec:demo:control} and \ref{sec:demos:estimator}, we use the initial condition $\mbf v(0,s) = -10s$ and $\hat{\mbf v}(0,s)=0$. Using the same options and disturbance as before, we simulate the open loop system (without control or observer) using the code
\begin{matlab}
\begin{verbatim}
syms st sx real
opts.plot = 'no';   % Do not plot the final solution
opts.N = 8;         % Expand using 8 Chebyshev polynomials
opts.tf = 1;        % Simulate up to t = 1;
opts.dt = 1e-3;     % Use time step of 10^-3
opts.intScheme=1;   % Time-step using Backward Differentiation Formula (BDF)
ndiff = [0,0,1];    % The PDE state involves 1 second order differentiable state variables

% Simulate the solution to the PIE without controller for different IC.
uinput.ic.PDE = [-10*sx];   % IC PIE 
uinput.w = exp(-st); % disturbance
[solution_OL,grid] = PIESIM(PIE,opts,uinput,ndiff);
\end{verbatim}
\end{matlab}
Then, we can access the time-varying PDE solution from \texttt{solution\_OL} for post-processing. In Figure \ref{fig:demos:obcon_ol} we post the solution at different time values to show that the open loop system is unstable.

\begin{figure}[H]
	\centering
	\includegraphics[width=\textwidth]{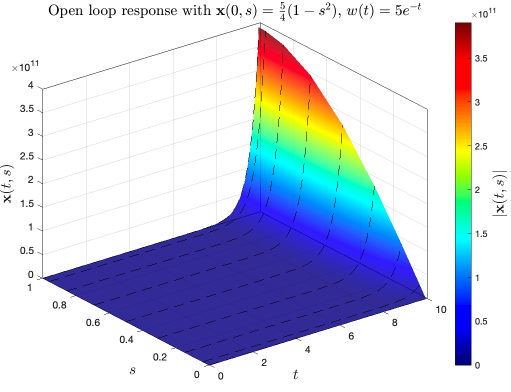}
	\caption{Simulated value of PDE state $\mbf{x}(t,s)$ at several grid points $s\in[0,1]$, without controller input, with some initial condition and a bounded $L_2$ disturbance $w$.}\label{fig:demos:obcon_ol}
\end{figure}

Likewise, we can simulate and extract the closed loop system solution using the code
\begin{matlab}
\begin{verbatim}
% Simulate the solution to the PIE with controller for different IC and disturbance.
ndiff = [0,0,2]; 
uinput.ic.PDE = [-10*sx; 0];    % IC PIE and observed state
uinput.w = exp(-st);   % disturbance
[solution_CL_a,grid] = PIESIM(PIE_CL,opts,uinput,ndiff);
uinput.ic.PDE = [sin(sx*pi/2); 0];    % IC PIE
uinput.w = 5*sin(pi*st)./(st+eps);   % disturbance
[solution_CL_b,grid] = PIESIM(PIE_CL,opts,uinput,ndiff);
% Extract actual solution at each time step.
tval = solution_CL_a.timedep.dtime;
x_CL_a = reshape(solution_CL_a.timedep.pde(:,1,:),opts.N+1,[]);
hatx_CL_a = reshape(solution_CL_a.timedep.pde(:,2,:),opts.N+1,[]);
x_CL_b = reshape(solution_CL_b.timedep.pde(:,1,:),opts.N+1,[]);
hatx_CL_b = reshape(solution_CL_b.timedep.pde(:,2,:),opts.N+1,[]);
\end{verbatim}
\end{matlab}
where, \textbf{note} that \texttt{ndiff} is changed to $[0,0,2]$ because the closed loop PIE system has two distributed states corresponding to two PDE states (PDE state and observer state) that are twice differentiable in space. Using the solutions stored in \texttt{x\_CL\_a(b)}, we can plot the response of the closed loop system for different initial conditions and disturbances as shown below in Figure \ref{fig:demos:obcon_cl}.
\begin{figure}[H]
	\centering
	\includegraphics[width=\textwidth]{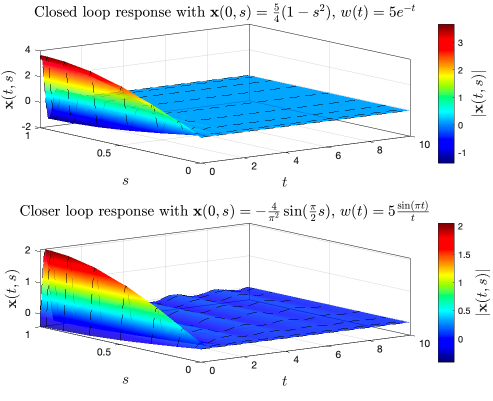}
	\caption{Simulated value of PDE state $\mbf{x}(t,s)$ at several grid points $s\in[0,1]$, with $\hinf$-optimal state feedback controller, for different initial conditions and bounded $L_2$ disturbances $w$. $L_2$-gain $= 1.6205$ for controller, $L_2$-gain $= 1.005$ for observer.}\label{fig:demos:obcon_cl}
\end{figure}

Lastly, for the initial condition $\mbf v(0,s) = -10s$ and $\hat{\mbf v}(0)=0$, we plot (see Figure \ref{fig:demos:obcon_out} the regulated output $z(t)$ when subjected to two different disturbance inputs, namely, $w(t) = \exp(-t)$ and $w(t) = 5\frac{\sin(\pi t)}{t}$, where $z$ is obtained by numerical integration of the solution as described in Section \ref{sec:demo:control}. 

\begin{figure}[H]
	\centering
	\includegraphics[width=\textwidth]{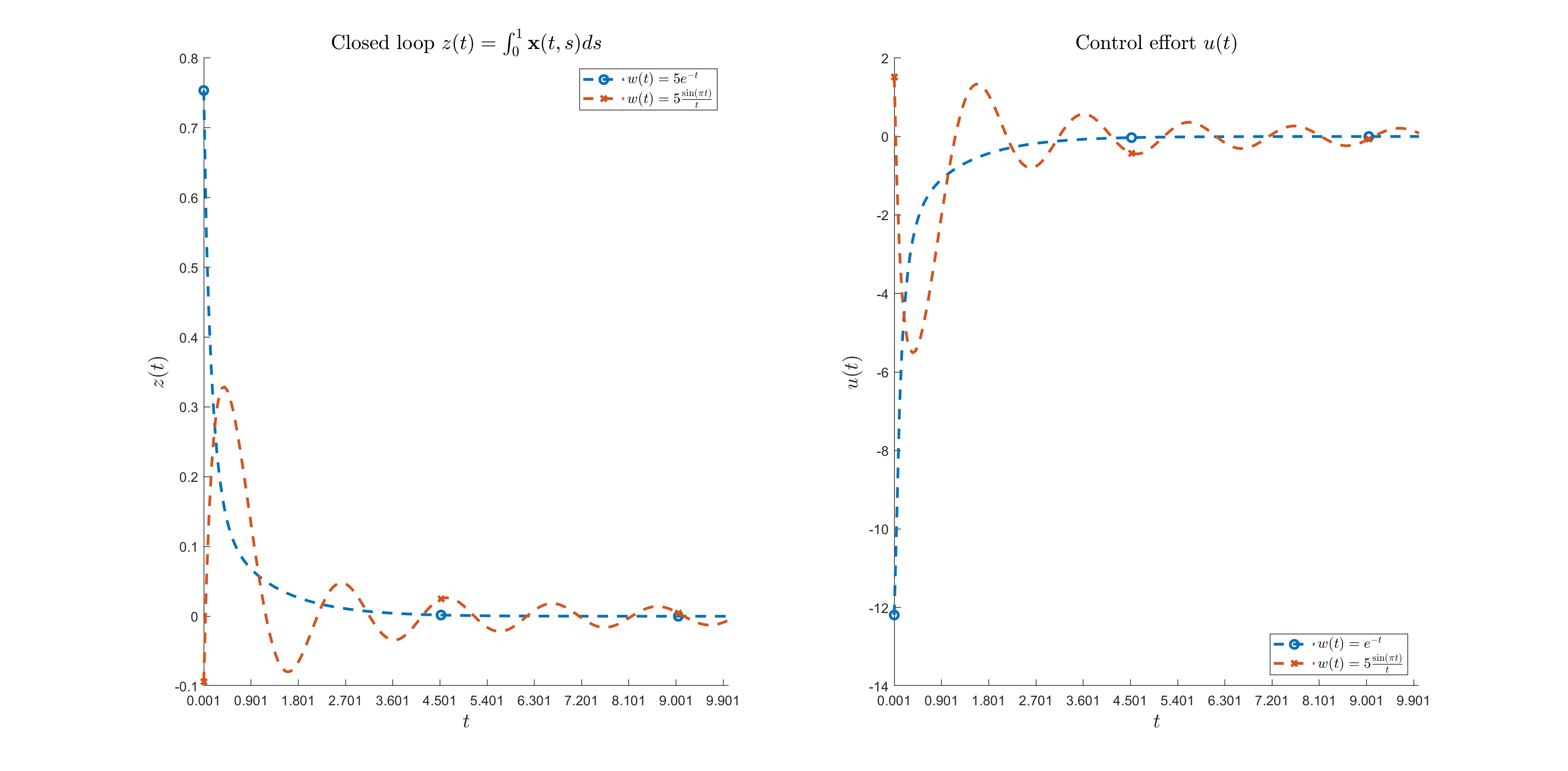}
	\caption{Simulated value of $Z(t)$, with $\hinf$-optimal state feedback controller, for different bounded $L_2$ disturbances, $w$.}\label{fig:demos:obcon_out}
\end{figure}

\chapter{Libraries of PDE and TDS Examples in PIETOOLS}\label{ch:examples}

In Chapters~\ref{ch:PDE_DDE_representation},~\ref{ch:alt_PDE_input} and~\ref{ch:alt_DDE_input}, we have shown how partial differential equations and time-delay systems can be declared in PIETOOLS through different input formats. To help get started with each of these input formats, PIETOOLS includes a variety of example pre-defined PDE and TDS systems, including common examples and particular models from the literature. These examples are collected in the \texttt{PIETOOLS\_examples} folder, and can be accessed calling the function \texttt{examples\_PDE\_library\_PIETOOLS} and the scripts \texttt{examples\_DDE\_library\_PIETOOLS} and \texttt{examples\_NDSDDF\_library\_PIETOOLS}. In this Chapter, we illustrate how this works, focusing on PDE examples in Section~\ref{sec:examples:PDE}, and TDS examples in Section~\ref{sec:examples:TDS}.

\section{A Library of PDE Example Problems}\label{sec:examples:PDE}

To help get started with analysing and simulating PDEs in PIETOOLS, a variety of PDE models have been included in the directory \texttt{PIETOOLS\_examples/Examples\_Library}. These systems include common PDE models, as well as examples from the literature, and are defined through separate MATLAB functions. Each of these functions takes two arguments
\begin{enumerate}
    \item \texttt{GUI}: A binary index (0 or 1) indicating whether the example should be opened in the graphical user interface;
    \item \texttt{params}: A string for specifying allowed parameters in the system.
\end{enumerate}
For example, calling \texttt{help PIETOOLS\_PDE\_Ex\_Reaction\_Diffusion\_Eq}, PIETOOLS indicates that this function declares a reaction-diffusion PDE
\begin{align*}
    \dot{\mbf{x}}(t,s)&=\lambda \mbf{x}(t,s) + \partial_{s}^2\mbf{x}(t,s),  &   s&\in[0,1]
    \mbf{x}(t,0)&=\mbf{x}(t,1)=0,
\end{align*}
where the value of the parameter $\lambda$ can be set. Then, calling
\begin{matlab}
\begin{verbatim}
 >> PDE = PIETOOLS_PDE_Ex_Reaction_Diffusion_Eq(0,{'lam=10;'})    
\end{verbatim}
\end{matlab}
we obtain a \texttt{pde\_struct} object \texttt{PDE} representing the reaction-diffusion equation with $\lambda=10$. Calling
\begin{matlab}
\begin{verbatim}
 >> PDE = PIETOOLS_PDE_Ex_Reaction_Diffusion_Eq(1,{'lam=10;'})    
\end{verbatim}
\end{matlab}
The PDE will also be loaded in the GUI, though a default value of $\lambda=9.86$ will be used, as the GUI will always load a pre-defined file, which cannot be adjusted from the command line.

To simplify the process of extracting PDE examples, PIETOOLS includes a function \texttt{examples} \texttt{\_PDE\_library\_PIETOOLS()}. In this function, each of the pre-defined PDEs is assigned an index, allowing desired PDEs to be extracted by calling \texttt{examples\_PDE\_library\_PIETOOLS} with the associated index. For example, scrolling through this function we find that the reaction-diffusiong equation is the fifth system in the list, and therefore, we can obtain a \texttt{pde\_struct} object defining this system by calling the library function with argument ``5'', returning
\begin{matlab}
\begin{verbatim}
 >> PDE = examples_PDE_library_PIETOOLS(5);  
  --- Extracting ODE-PDE example 5 ---
  
  (d/dt) x(t,s) = 9.86 * x(t,s) + (d^2/ds^2) x(t,s);
  0 = x(t,0);
  0 = x(t,1);
 
 Would you like to run the executive associated to this problem? (y/n) 
 -->
\end{verbatim}
\end{matlab}
We note that the function asks whether an executive should be run for the considered PDE. This is because, for each of the PDE examples, an associated LPI problem has also been declared, matching one of the \texttt{executive} files (see also Chapter~\ref{ch:LPI_examples}). For the reaction-diffusion equation, the proposed executive is the \texttt{PIETOOLS\_stability} function, testing stability of the PDE. Entering \texttt{yes} in the command line window, this executive will be automatically run, whilst entering \texttt{no} will stop the function, and just return the \texttt{pde\_struct} object \texttt{PDE}.

Using the \texttt{examples\_PDE\_library\_PIETOOLS} function, parameters in the PDE can also be adjusted, calling e.g.
\begin{matlab}
\begin{verbatim}
 >> PDE = examples_PDE_library_PIETOOLS(5,'lam=10;');  
  --- Extracting ODE-PDE example 5 ---
  
  (d/dt) x(t,s) = 10 * x(t,s) + (d^2/ds^2) x(t,s);
  0 = x(t,0);
  0 = x(t,1);
\end{verbatim}
\end{matlab}
Similarly, if multiple parameters can be specified, we specify each of these parameters separately. For example, we note that example 7 corresponds to a PDE
\begin{align*}
    \dot{\mbf{x}}(t,s)&= c(s) \mbf{x}(t,s) + b(s)\partial_{s}\mbf{x}(t,s) + a(s)\partial_{s}^2\mbf{x}(t,s),  &   s&\in[0,1],
    \mbf{x}(t,0)&=\partial_{s}\mbf{x}(t,1)=0,
\end{align*}
where the values of the functions $a(s)$, $b(s)$ and $c(s)$ can be specified. As such, we can declare this PDE for $a=1$, $b=2$ and $c=3$ by calling
\begin{matlab}
\begin{verbatim}
 >> PDE = examples_PDE_library_PIETOOLS(7,'a=1;','b=2;','c=3;');
  --- Extracting ODE-PDE example 7 ---
  
  (d/dt) x(t,s) = 3 * x(t,s) + 2 * (d/ds) x(t,s) + (d^2/ds^2) x(t,s);
  0 = x(t,0);
  0 = (d/ds) x(t,1);
\end{verbatim}
\end{matlab}
Finally, we can also open the PDE in the GUI by calling
\begin{matlab}
\begin{verbatim}
 >> examples_PDE_library_PIETOOLS(7,'GUI');
  --- Extracting ODE-PDE example 7 ---
\end{verbatim}
\end{matlab}
or extract the PDE as a \texttt{pde\_struct} (terms-format) and open it in the GUI by calling
\begin{matlab}
\begin{verbatim}
 >> PDE = examples_PDE_library_PIETOOLS(7,'TERM','GUI');
  --- Extracting ODE-PDE example 7 ---
\end{verbatim}
\end{matlab}
In each case, the function will still ask whether the executive associated with the PDE should be run as well. Of course, you can also convert the PDE to a PIE yourself using \texttt{convert}, and then run any desired executive manually, assuming this executive makes sense (e.g. there's no sense in computing an $H_{\infty}$-gain if your system has no outputs).

%As a warning, please realize that the way in which the function executes the parameter specifications allows you to specify all kinds of nonsense, that may not produce the desired result, or even adversely affect your program. As such, we strongly advise you to only specify values of parameters presented in the right-most column of the table of contents. Each row in this column describes the parameter names, along with their default values. If no parameters are mentioned, you shouldn't specify any values. If you do specify parameters, make sure that the name you specify does indeed match a parameter name for your considered example \textbf{exactly} (case-sensitive). Otherwise, the default value will be retained, and you may be adjusting some internal value necessary for proper execution. Remember: with great power, comes great responsibility.

\section{Libraries of DDE, NDS, and DDF Examples}\label{sec:examples:TDS}

Aside from the PDE examples, a list of TDS examples is also included in PIETOOLS, in DDE, NDS, and DDF format. Unlike the PDE problems, however, these TDS examples are all not separated into distinct functions, but are divided over two scripts: \texttt{examples\_DDE\_library\_PIETOOLS} and \texttt{examples\_NDSDDF\_library\_PIETOOLS}. In each of these scripts, most examples are commented, and only one example should be uncommented at any time. This example can then be extracted by calling the script, adding a structure \texttt{DDE}, \texttt{NDS} or \texttt{DDF} to the MATLAB workspace. To extract a different example, the desired example must be uncommented, and all other examples must be commented, at which point the script can be called again to obtain a structure representing the desired system.

We expect to update the DDE and NDS/DDF example files in a future release to match the format used for the PDE example library.

\subsection{DDE Examples} 
We have compiled a list of 23 DDE numerical examples, grouped into: stability analysis problems; input-output systems; estimator design problems; and feedback control problems. These examples are drawn from the literature and citations are used to indicate the source of each example. For each group, the relevant flags have been included to indicate which executive mode should be called after the example has been loaded.

\subsection{NDS and DDF Examples} 
There are relatively few DDFs which do not arise from a DDE or NDS. Hence, we have combined the DDF and NDS example libraries into the script \texttt{examples\_NDSDDF\_library\_PIETOOLS}. The Neutral Type systems are listed first, and currently consist only of stability analysis problems - of which we include 13. As for the DDE case, the library is a script, so the user must uncomment the desired example and call the script from the root file or command window. For the NDS problems, after calling the example library, in order to convert the NDS to a DDF or PIE, the user can use the following commands:
\begin{matlab}
\begin{verbatim}
 >> NDS = initialize\_PIETOOLS\_NDS(NDS);
 >> DDF = convert\_PIETOOLS\_NDS(NDS,'ddf');
 >> DDF = minimize\_PIETOOLS\_DDF(DDF);
 >> PIE = convert\_PIETOOLS\_DDF(DDF,'pie');
\end{verbatim}
\end{matlab}
In contrast to the NDS case, we only include 3 DDF examples. The first two are difference equations which cannot be represented in either the NDS or DDE format. The third is a network control problem, which is also included in the DDE library in DDE format.

\chapter{Standard Applications of LPI Programming}\label{ch:LPI_examples}
In Chapter~\ref{ch:LPIs}, we showed how general LPI optimization programs can be declared and solved in PIETOOLS. In this chapter, we provide several applications of LPI programming for analysis, estimation, and control of PIEs. Recall that such PIEs take the form
\begin{align}\label{eq:standardizedPIE_LPIforAnalysisofPIEs}
	\mcl T \dot{\mbf v}(t)+\mcl{T}_{w} \dot{w}(t)+\mcl{T}_{u}\dot{u}(t)&=\mcl A\mbf v(t)+\mcl{B}_1w(t)+\mcl B_2u(t), \quad \mbf v(0)=\mbf v_{\text{I}}\notag\\
	z(t) &= \mcl{C}_1\mbf v(t) + \mcl{D}_{11}w(t) + \mcl{D}_{12}u(t),\notag\\
	y(t) &= \mcl{C}_2\mbf{v}(t) + \mcl{D}_{21}w(t) + \mcl{D}_{22}u(t),
\end{align}
where $\mbf{v}=\sbmat{v_0\\\mbf{v}_1\\\mbf{v}_2\\\mbf{v}_3}\in\sbmat{\R^{n_0}\\L_2^{n_1}[a,b]\\L_2^{n_2}[c,d]\\L_2^{n_3}[[a,b]\times[c,d]]}$, and where $\mcl{T}$ through $\mcl{D}_{22}$ are all PI operators. In Section~\ref{sec:LPI_examples:analysis}, we provide several LPIs for stability analysis and $H_{\infty}$-gain estimation of such PIEs, setting $u=0$. Then, in Section~\ref{sec:LPI_examples:estimation}, we present an LPI for $H_{\infty}$-optimal estimation of PIEs of the form~\eqref{eq:standardizedPIE_LPIforAnalysisofPIEs}, followed by an LPI for $H_{\infty}$-optimal full-state feedback control in Section~\ref{sec:LPI_examples:control}. We note that, almost all of these LPIs have already been implemented as \texttt{executive} functions in PIETOOLS, and we will refer to these executives when applicable.

\section{LPIs for Analysis of PIEs}\label{sec:LPI_examples:analysis}
Using LPIs, several properties of a PIE as in Equation~\eqref{eq:standardizedPIE_LPIforAnalysisofPIEs} may be tested, as listed in this section. In particular, the LPIs listed below are extensions of classical results used in analysis of ODEs using LMIs. For most of these LPIs, PIETOOLS includes an executive function that may be run to solve it for a given PIE.

\subsection{Operator Norm}
For a PI operator $\mcl{T}$, an upper bound $\sqrt{\gamma}$ on the operator norm $\norm{\mcl{T}}$ can be computed by solving the LPI 
\begin{align}\label{eq:PI_op_norm}
    & \min\limits_{\gamma,\mcl{P}} ~~\gamma&\notag\\
    &\mcl{T}^*\mcl{T}-\gamma\preccurlyeq 0&
\end{align}
This LPI has not been implemented as an executive in PIETOOLS, but has been implemented in the demo file \texttt{volterra\_operator\_norm\_DEMO} (see also Section~\ref{sec:demos:volterra}).

\subsection{Stability}
For given PI operators $\mcl{T}$ and $\mcl{A}$, stability of the PIE
\begin{align*}
    \mcl{T}\dot{\mbf{v}}(t)=\mcl{A}\mbf{v}(t)
\end{align*}
can be tested by solving the LPI
\begin{align}\label{eq:stab_lpi}
	&\mcl{P}\succ0&\notag\\
	&\mcl{T}^*\mcl{P}\mcl{A}+\mcl{A}^*\mcl{P}\mcl{T}\preccurlyeq 0&.
\end{align}
If there exists a PI operator $\mcl{P}$ such that this LPI is feasible, then the PIE is stable. 
Given a structure \texttt{PIE}, this LPI may be solved for the associated PIE by calling
\begin{matlab}
\begin{verbatim}
 >> [prog, Pop] = PIETOOLS_stability(PIE, settings);
\end{verbatim}
\end{matlab}
Here \texttt{prog} will be an SOS program structure describing the solved problem and \texttt{Pop} will be a \texttt{dopvar} object describing the (unsolved) decision operator $\mcl{P}$ from which the solved operator can be derived using
\begin{matlab}
\begin{verbatim}
 >> Pop = getsol_lpivar(prog,Pop);
\end{verbatim}
\end{matlab}
See Chapter~\ref{ch:LPIs} for more information on the operation of this function and the \texttt{settings} input.

\subsection{Dual Stability}
For given PI operators $\mcl{T}$ and $\mcl{A}$, stability of the PIE
\begin{align*}
    \mcl{T}\dot{\mbf{v}}(t)=\mcl{A}\mbf{v}(t)
\end{align*}
can also be tested by solving the LPI
\begin{align}\label{eq:dual-stab_lpi}
	&\mcl{P}\succ0&\notag\\
	&\mcl{T}\mcl{P}\mcl{A}^*+\mcl{A}\mcl{P}\mcl{T}^*\preccurlyeq 0&
\end{align}
If there exists a PI operator $\mcl{P}$ such that this LPI is feasible, then the PIE is stable. 
Given a structure \texttt{PIE}, this LPI may be solved for the associated PIE by calling
\begin{matlab}
\begin{verbatim}
 >> [prog, Pop] = PIETOOLS_stability_dual(PIE, settings);
\end{verbatim}
\end{matlab}
Here \texttt{prog} will be an SOS program structure describing the solved problem and \texttt{Pop} will be a \texttt{dopvar} object describing the (unsolved) decision operator $\mcl{P}$.

\subsection{Input-Output Gain}
Consider a system of the form
\begin{align}\label{eq:standardizedPIE_noControl}
	\mcl T \dot{\mbf v}(t)&=\mcl A\mbf v(t)+\mcl{B}_1w(t), \quad \mbf v(0)=\mbf{0}\notag\\
	z(t) &= \mcl{C}_1\mbf v(t) + \mcl{D}_{11}w(t), 
\end{align}
where $w\in L_2^{n_w}[0,\infty)$. Then, $z\in L_2^{n_z}[0,\infty)$, and $\norm{z}_{L_2}\leq {\gamma}\norm{w}_{L_2}$, if the following LPI is feasible
\begin{align}\label{eq:hinf_lpi}
	&\min\limits_{\gamma,\mcl{P}} ~~\gamma&\notag\\
	&\mcl{P}\succ0&\notag\\
	&\bmat{-\gamma I & \mcl{D}_{11}^*&\mcl{B}_1^*\mcl{PT}\\(\cdot)^*&-\gamma I&\mcl{C}_1\\(\cdot)^*&(\cdot)^*&\mcl{T}^*\mcl{P}\mcl{A}+\mcl{A}^*\mcl{P}\mcl{T}}\preccurlyeq 0&
\end{align}
Given a structure \texttt{PIE}, this LPI may be solved for the associated PIE by calling
\begin{matlab}
\begin{verbatim}
 >> [prog, Pop, gam] = PIETOOLS_Hinf_gain(PIE, settings);
\end{verbatim}
\end{matlab}
Here \texttt{prog} will be an SOS program structure describing the solved problem, and \texttt{gam} will be the smallest value of $\gamma$ for which the LPI was found to be feasible, offering a bound on the $L_2$-gain from $w$ to $z$ of the system. The output \texttt{Pop} will be a \texttt{dopvar} object describing the (unsolved) decision operator $\mcl{P}$. 
				
\subsection{Dual Input-Output Gain}
For a System~\eqref{eq:standardizedPIE_noControl} with $w\in L_2^{n_w}[0,\infty)$, an upper bound $\gamma$ on the $L_2$-gain from $w$ to $z$ can also be obtained by solving the LPI	
\begin{align}\label{eq:dual_hinf_lpi}
	&\min\limits_{\gamma,\mcl{P}} ~~\gamma&\notag\\
	&\mcl{P}\succ0&\notag\\
	&\bmat{-\gamma I & \mcl{D}_{11}&\mcl{T}\mcl{P}\mcl{C}_1\\(\cdot)^*&-\gamma I&\mcl{B}_1^*\\(\cdot)^*&(\cdot)^*&\mcl{T}\mcl{P}\mcl{A}^*+\mcl{A}\mcl{P}\mcl{T}^*}\preccurlyeq 0&
\end{align}
Given a structure \texttt{PIE}, this LPI may be solved for the associated PIE by calling
\begin{matlab}
\begin{verbatim}
 >> [prog, Pop, gam] = PIETOOLS_Hinf_gain_dual(PIE, settings);
\end{verbatim}
\end{matlab}
Here \texttt{prog} will be an SOS program structure describing the solved problem, and \texttt{gam} will be the found optimal value for $\gamma$. The output \texttt{Pop} will be a \texttt{dopvar} object describing the (unsolved) decision operator $\mcl{P}$.

\subsection{Positive Real Lemma}
For a PIE of the form
\begin{align*}%\label{eq:standardizedPIE_noControl}
	\mcl T \dot{\mbf v}(t)&=\mcl A\mbf v(t)+\mcl{B}_1w(t), \quad \mbf v(0)=\mbf{0}\notag\\
	z(t) &= \mcl{C}_1\mbf v(t) + \mcl{D}_{11}w(t), 
\end{align*}
we can test whether the system is passive by solving the LPI
{
\begin{align}\label{eq:positive_real_lpi}
	&\mcl{P}\succ0&    \notag\\
	&\bmat{-\mcl{D}_{11}^*-\mcl{D}_{11}&\mcl{B}_1^*\mcl{PT}-\mcl{C}_1\\(\cdot)^*&\mcl{T}^*\mcl{P}\mcl{A}+\mcl{A}^*\mcl{P}\mcl{T}}\preccurlyeq 0&
\end{align}
}
If there exists a PI operator $\mcl{P}$ such that this LPI is feasible, then the system is passive. Note that this LPI has not been implemented as an executive in PIETOOLS.

\section{LPIs for Optimal Estimation of PIEs}\label{sec:LPI_examples:estimation}
For the following PIE
\begin{align}
	\mcl T \dot{\mbf v}(t)+\mcl{T}_{w} \dot{w}(t)&=\mcl A\mbf v(t)+\mcl{B}_1w(t),\notag\\
	z(t) &= \mcl{C}_1\mbf v(t) + \mcl{D}_{11}w(t),\notag\\
	y(t) &= \mcl{C}_2\mbf{v}(t) + \mcl{D}_{21}w(t),
\end{align}
a state estimator has the following structure:
\begin{align} 
	\mcl T \dot{\mbf{\hat{v}}}(t) &=\mcl A\mbf{\hat{v}}(t)+\mathcal{L}(\hat{y}(t)-y(t)),\notag\\
	\hat{z}(t) &= \mcl{C}_1\mbf{\hat{v}}(t) \notag\\
	\hat{y}(t) &= \mcl{C}_2\mbf{\hat{v}}(t).
\end{align}
so that the errors $\mbf{e}:=\hat{\mbf{v}}-\mbf{v}$ and $\tilde{z}:=\hat{z}-z$ in respectively the state and regulated output estimates satisfy
\begin{align*}
    \mcl{T}\dot{\mbf{e}}-\mcl{T}_w w(t)&=(\mcl{A}+\mcl{L}\mcl{C}_2)\mbf{e}(t) - (\mcl{B}_1+\mcl{L}\mcl{D}_{21})w(t), \\
    \tilde{z}(t)&=\mcl{C}_1\mbf{e}(t) - D_{11}w(t)
\end{align*}
The $H_{\infty}$-optimal estimation problem amounts to synthesizing $\mathcal{L}$ such that the estimation error $\tilde{z}:={\hat{z}-z}$ admits $\norm{\tilde{z}} \leq \gamma \norm{w}$ for a particular $\gamma >0$. To establish such an estimator, we can solve the LPI
\begin{align}\label{eq:esti_lpi}
	&\min\limits_{\gamma,\mcl{P},\mcl{Z}} ~~\gamma&\notag\\
	&\mcl{P}\succ0&\notag\\
	&\bmat{\mcl T_{w}^*(\mcl P\mcl B_1+\mcl Z\mcl D_{21})+(\cdot)^*& 0 &(\cdot)^*\\ 0  &0 & 0 \\ -(\mcl P\mcl A+\mcl Z\mcl C_2)^*\mcl T_{w}& 0 &0}\hspace{-1ex}+\hspace{-1ex}\bmat{-\gamma I& -\mcl D_{11}^{\top}&-(\mcl P\mcl B_1+\mcl Z\mcl D_{21})^*\mcl T\\(\cdot)^*&-\gamma I&\mcl C_1\\(\cdot)^*&(\cdot)^*&(\mcl P\mcl A+\mcl Z\mcl C_2)^*\mcl T+(\cdot)^*}\preccurlyeq 0&
\end{align}
Then, if this LPI is feasible for some $\gamma>0$ and PI operators $\mcl{P}$ and $\mcl{Z}$, 
then, letting $\mcl{L}:=\mcl{P}^{-1} \mcl{Z}$, the estimation error will satisfy $\norm{z-\hat{z}} \leq \gamma \norm{w}$. 
Given a structure \texttt{PIE}, this LPI may be solved for the associated PIE by calling
\begin{matlab}
\begin{verbatim}
 >> [prog, Lop, gam, Pop, Zop] = PIETOOLS_Hinf_estimator(PIE, settings);
\end{verbatim}
\end{matlab}
Here \texttt{prog} will be an SOS program structure describing the solved problem, \texttt{gam} will be the found optimal value for $\gamma$, and \texttt{Lop} will be an \texttt{opvar} object describing the optimal estimator $\mcl{L}$. Outputs \texttt{Pop} and \texttt{Zop} will be \texttt{dopvar} objects describing the (unsolved) decision operators $\mcl{P}$ and $\mcl{Z}$, from which the solved operators can be derived using
\begin{matlab}
\begin{verbatim}
 >> Pop = getsol_lpivar(prog,Pop);        Zop = getsol_lpivar(prog,Zop);
\end{verbatim}
\end{matlab}
See Chapter~\ref{ch:LPIs} for more information on the operation of this function and the \texttt{settings} input.

%%%%%%%%%%%%%%%%%%%%%%%%%%%%%%%%%%%%%%%%%%%%%%%%%%%%%%%%%%%%%%%%%%%%%

\section{LPIs for Optimal Control of PIEs}\label{sec:LPI_examples:control}

In this section, we discuss the synthesis of $H_{\infty}$ optimal control of a PIE of the form
\begin{align}\label{eq:LPI_examples:control_PIE}
	\mcl T \dot{\mbf v}(t)&=\mcl A\mbf v(t)+\mcl{B}_1w(t)+\mcl B_2u(t), \qquad \mbf v(0)=\mbf v_0\notag\\
	z(t) &= \mcl{C}_1\mbf v(t) + \mcl{D}_{11}w(t) + \mcl{D}_{12}u(t),%\notag\\
%	y(t) &= \mcl{C}_2\mbf{v}(t) + \mcl{D}_{21}w(t) + \mcl{D}_{22}u(t),
\end{align}
where $w\in L_2[0,\infty)$. The problem of synthesizing an $H_{\infty}$-optimal controller amounts to determining a PIE operator $\mcl{K}$ such that, using the full-state feedback law $u(t) = \mathcal{K}\mbf{v}(t)$, the regulated output ${z}$ admits $\norm{z}_{L_2} \leq \gamma \norm{w}_{L_2}$ for a particular $\gamma >0$. To establish such a controller, we can solve the LPI
\begin{align}\label{eq:cont_lpi}
	&\min\limits_{\gamma,\mcl{P},\mcl{Z}} ~~\gamma&\notag\\
	&\mcl{P}\succ0&\notag\\dj
	&\bmat{-\gamma I& \mcl D_{11}& (\mcl{C}_1\mcl P+D_{12}\mcl{Z})\mcl T^*\\
		\mcl D_{11}^* & -\gamma I & \mcl B_1^*\\
		()^* & \mcl B_1& ()^*+\left(\mcl{AP}+\mcl{B}_2\mcl{Z}\right)\mcl{T}^*}\preccurlyeq 0&
\end{align}
If this LPI is feasible for some $\gamma>0$ and PI operators $\mcl{P}$ and $\mcl{Z}$, then, letting $\mcl{K}:=\mcl{Z}\mcl{P}^{-1}$, the $L_2$-gain for the controlled system with $u=\mcl{K}\mbf{v}$ will be such that $\norm{z} \leq \gamma \norm{w}$.  
Given a structure \texttt{PIE}, this LPI may be solved for the associated PIE by calling
\begin{matlab}
\begin{verbatim}
 >> [prog, Kop, gam, Pop, Zop] = PIETOOLS_Hinf_control(PIE, settings);
\end{verbatim}
\end{matlab}
Here \texttt{prog} will be an SOS program structure describing the solved problem, \texttt{gam} will be the found optimal value for $\gamma$, and \texttt{Kop} will be an \texttt{opvar} object describing the optimal feedback $\mcl{K}$. Outputs \texttt{Pop} and \texttt{Zop} will be \texttt{dopvar} objects describing the (unsolved) decision operators $\mcl{P}$ and $\mcl{Z}$, from which the solved operators can be derived using
\begin{matlab}
\begin{verbatim}
 >> Pop = getsol_lpivar(prog,Pop);        Zop = getsol_lpivar(prog,Zop);
\end{verbatim}
\end{matlab}
See Chapter~\ref{ch:LPIs} for more information on the operation of this function and the \texttt{settings} input.

\part{Appendices}

\appendix
%   Part 4
%
%   Appendices:
%   Algebraic operations

\chapter{PI Operators and their Properties}\label{appx:PI_theory}

In this appendix, we discuss in a bit more detail the crucial properties of PI operators that PIETOOLS relies on for implementation and analysis of PIEs. For this, in Section~\ref{sec:PI_theory:PI_defs}, we first recap the definitions of PI operators as presented in Chapter~\ref{ch:PIE}, also introducing some notation that we will continue to use throughout the appendix. In Section~\ref{sec:PI_theory:Addition},~\ref{sec:PI_theory:Composition}, and~\ref{sec:PI_theory:Adjoint}, we show that respectively the sum, composition, and adjoint of PI operators can be expressed as PI operators. In Section~\ref{sec:PI_theory:Inverse}, and~\ref{sec:PI_theory:Derivative}, we then show how the respectively the inverse of a PI operator, and the composition of a PI operator with a differential operator can be computed. Finally, in Section~\ref{sec:PI_theory:Positive_PI}, we show how a cone of positive PI operators can be parameterized by positive matrices, allowing an LPI constraint $\mcl{P}\succcurlyeq 0$ to be posed as an LMI $P\succcurlyeq 0$.

For more information on PI operators, and full proofs of each of the results, we refer to e.g.~\cite{peet_2020Aut}~\cite{shivakumar_2019CDC} (1D) and~\cite{jagt_2021PIEACC} (2D).

\section{PI Operators on Different Function Spaces}\label{sec:PI_theory:PI_defs}

Recall that we denote the space of square integrable functions on a domain $\Omega$ as $L_2[\Omega]$, with inner product
\begin{align*}
    \ip{\mbf{x}}{\mbf{y}}_{L_2}=\int_{\Omega}\bl[\mbf{x}(s)\br]^T\mbf{y}(s)ds.
\end{align*}
In defining the different PI operators, we will restrict ourselves to domains of 1D or 2D hypercubes. In 1D, such a hypercube is simply an interval $[a,b]$, for which we define the following PI operator:
    
\begin{defn}[3-PI Operator]\label{appx_def:3-PI}
 For given parameters 
 \begin{align*}
    R:=\{R_0,R_1,R_2\}\in\left\{L_2^{m\times n}[a,b],L_2^{m\times n}\bl[[a,b]^2\br],L_2^{m\times n}\bl[[a,b]^2\br]\right\}=:\mcl{N}_{1D}^{m\times n}[a,b],
 \end{align*}
 we define the associated 3-PI operator $\mcl{P}[R]:=\mcl{P}_{\{R_0,R_1,R_2\}}:L_2^{n}[a,b]\rightarrow L_2^{m}[a,b]$ as
 \begin{align}
  \bl(\mcl{P}[R] \mbf{x}\br)(s) := R_0(s) \mbf{x}(s) +\int_{a}^{s} R_1(s,\theta)\mbf{x}(\theta)d \theta +\int_s^b R_2(s,\theta)\mbf{x}(\theta)d \theta,
 \end{align} 
 for any $\mbf{x}\in L_2^{n}[a,b]$.
\end{defn}

We note that 3-PI operators can be seen as (one possible) generalization of matrices to infinite dimensional vector spaces. In particular, suppose we have a matrix $P\in\R^{m\times n}$, which we decompose as $P=D+L+U$, where $D$ is diagonal, $L$ is strictly lower triangular, and $U$ is strictly upper-triangular. Then, for any $x\in\R^n$, the $i$th element of the product $Px$ is given by:
\begin{align*}
 \bl(Px\br)_{i} &= D_{ii} x_i + \sum_{j=1}^{i-1} L_{ij}x_j + \sum_{j=i+1}^{n} U_{ij} x_j.
 \intertext{
 Compare this to the value of $\mcl{P}[R]\mbf{x}$ at a position $s\in[a,b]$ for some $\mbf{x}\in L_2[a,b]$ and 3-PI parameters $R$:}
 \bl(\mcl{P}[R]\mbf{x}\br)(s) &= R_0(s)\mbf{x}(s) + \int_{a}^{s} R_1(s,\theta)\mbf{x}(\theta)d\theta + \int_{s}^{b} R_2(s,\theta)\mbf{x}(\theta)d\theta.
 \end{align*}
 Replacing row and column indices $(i,j)$ by primary and dummy variables $(s,\theta)$, and performing integration instead of summation, 3-PI operators have a structure very similar to that of matrices, wherein we can recognize a diagonal, lower-triangular, and upper-triangular part. Accordingly, we will occasionally refer to a PI operator of the form $\mcl{P}_{\{R_0,0,0\}}$ as a diagonal 3-PI operator, and to PI operators of the forms $\mcl{P}_{\{0,R_1,0\}}$ and $\mcl{P}_{\{0,0,R_2\}}$ as lower- and upper-triangular PI operators respectively. The similar structure between matrices and PI operator also ensures that matrix operations such as addition and multiplication are valid for PI operators as well, as we will discuss in more detail in the next sections.

 To map functions on a domain $[a,b]\times[c,d]\subset\R^2$, we also define the $9$-PI operator:

\begin{defn}[9-PI Operator]\label{appx_def:9-PI}
 For given parameters
{\small
\begin{align*}
 R&:={\small\left[\!\!\begin{array}{lll}
 R_{00} & R_{01} & R_{02}\\ R_{10} & R_{11} & R_{12}\\ R_{20} & R_{21} & R_{22}
 \end{array}\!\!\right]}    \\
 &\hspace*{0.5cm}\in
 {\small\left[\begin{array}{lll}
    L_{2}^{m\times n}\bl[[a,b]\times[c,d]\br] & L_{2}^{m\times n}\bl[[a,b]\times[c,d]^2\br] & L_{2}^{m\times n}\bl[[a,b]\times[c,d]^2\br] \\ 
    L_{2}^{m\times n}\bl[[a,b]^2\times[c,d]\br] & L_{2}^{m\times n}\bl[[a,b]^2\times[c,d]^2\br] & L_{2}^{m\times n}\bl[[a,b]^2\times[c,d]^2\br]\\
    L_{2}^{m\times n}\bl[[a,b]^2\times[c,d]\br] & L_{2}^{m\times n}\bl[[a,b]^2\times[c,d]^2\br] & L_{2}^{m\times n}\bl[[a,b]^2\times[c,d]^2\br]
 \end{array}\right]}
 =:\mcl{N}_{2D}^{m\times n}\bl[[a,b]\!\times\![c,d]\br]
 \end{align*}
 }
 we define the associated 9-PI operator $\mcl{P}[R]:=\mcl{P}\sbmat{R_{00} & R_{01} & R_{02}\\ R_{10} & R_{11} & R_{12}\\ R_{20} & R_{21} & R_{22}}:L_2^{n}\bl[[a_1,b_1]\times[a_2,b_2]\br]\rightarrow L_2^{m}\bl[[a_1,b_1]\times[a_2,b_2]\br]$ as
{\small
\begin{align}
    %&\left(\mcl{P}\sbmat{R_{00} & R_{01} & R_{02}\\ R_{10} & R_{11} & R_{12}\\ R_{20} & R_{21} & R_{22}}\mbf{x}\right)(s,r) \nonumber\\
    %\hspace*{0.25cm}
    \left(\mcl{P}[R]\mbf{x}\right)(s,r)= R_{00}(s,r)\mbf{x}(s,r) &+\hspace*{0.0cm} \int_{c}^{r}\! R_{01}(s,r,\nu)\mbf{x}(s,\nu)d\nu + \int_{r}^{d}\! R_{02}(s,r,\nu)\mbf{x}(s,\nu)d\nu \nonumber\\
    +\int_{a}^{s}\! R_{10}(s,r,\theta)\mbf{x}(\theta,r)d\theta &+ \int_{a}^{s}\!\int_{c}^{r}\! R_{11}(s,r,\theta,\nu)\mbf{x}(\theta,\nu)d\nu d\theta + \int_{a}^{s}\!\int_{r}^{d}\! R_{12}(s,r,\theta,\nu)\mbf{x}(\theta,\nu)d\nu d\theta  \nonumber\\
    +\int_{s}^{b} R_{20}(s,r,\theta)\mbf{x}(\theta,r)d\theta &+ \int_{s}^{b}\!\int_{c}^{r}\! R_{21}(s,r,\theta,\nu)\mbf{x}(\theta,\nu)d\nu d\theta + \int_{s}^{b}\!\int_{r}^{d}\! R_{22}(s,r,\theta,\nu)\mbf{x}(\theta,\nu)d\nu d\theta
 \end{align}
}
for any $\mbf{x}\in L_2^{n}\bl[[a_1,b_2]\times[a_2,b_2]\br]$.
\end{defn}

Note that, similar to how 3-PI operators can be seen as a generalization of matrices, operating on infinite-dimensional states $\mbf{x}(s)$ instead of a finite-dimensional vectors $x_i$, 9-PI operators are a generalization of (a particular class of) tensors, operating on infinite-dimensional states $\mbf{x}(s,r)$ instead of matrix-valued states $x_{ij}$. However, this comparison is not quite as easy to visualize as that between 3-PI operators and matrices, so we will mostly use 3-PI operators to illustrate the different properties of PI operators in the remaining sections.

Finally we define a general class of PI operators, encapsulating 3-PI operators and 9-PI operators, as well as matrices and ``cross-operators''. In particular, we consider operators defined on the set $Z^{\textnormal{n}}\br[[a,b],[c,d]\bl]:={\scriptsize\left[\begin{array}{l}
\R^{n_0}\\ L_2^{n_s}[a,b]\\ L_2^{n_r}[c,d]\\ L_2^{n_2}\bl[[a,b]\times[c,d]\br]
\end{array}\right]}$, where $\text{n}:=\{n_0,n_s,n_r,n_2\}$, with each element being a coupled state of finite-dimensional variables $x_0\in\R^{n_0}$, 1D functions $\mbf{x}_s\in L_2^{n_s}[a,b]$ and $\mbf{x}_r\in L_2^{n_r}[a,b]$, and 2D functions $\mbf{x}_2\in L_2^{n_2}\bl[[a,b]\times[c,d]\br]$.

\begin{defn}[PI Operator]

For any operator $\mcl{R}:Z^{\textnormal{n}}\br[[a,b],[c,d]\bl]\rightarrow Z^{\textnormal{m}}\br[[a,b],[c,d]\bl]$ with $\textnormal{m}:=\{m_0,m_s,m_r,m_2\}$ and $\textnormal{n}:=\{n_0,n_s,n_r,n_2\}$, we say that $\mcl{R}$ is a PI operator, denoted by $\mcl{R}\in\Pi^{\textnormal{m}\times\textnormal{m}}$ if there exist parameters
{\scriptsize
\begin{align*}
 &R:=\left[\!\!\begin{array}{llll}
    R_{00} & R_{0s} & R_{0r} & R_{02}\\
    R_{s0} & R_{ss} & R_{sr} & R_{s2}\\
    R_{r0} & R_{rs} & R_{rr} & R_{s2}\\
    R_{20} & R_{2s} & R_{2r} & R_{22}
 \end{array}\!\!\right] \\
 &\qquad \in
 \left[\!\!\begin{array}{llll}
    \R^{m_0\times n_0} & L_2^{m_0\times n_s}[a,b] & L_2^{m_0\times n_r}[c,d] & L_2^{m_0\times n_2}\bl[[a,b]\times[c,d]\br]\\
    L_2^{m_s\times n_0}[a,b] & \mcl{N}_{1D}^{m_s\times n_s}[a,b] & L_2^{m_s\times n_r}\bl[[a,b]\times[c,d]\br] & \mcl{N}_{1D\leftarrow 2D}^{m_s\times n_2}\bl[[a,b],[c,d]\br]\\
    L_2^{m_r\times n_0}[c,d] & L_2^{m_r\times n_s}\bl[[a,b]\times[c,d]\br] & \mcl{N}_{1D}^{m_r\times n_r}[c,d] & \mcl{N}_{1D\leftarrow 2D}^{m_r\times n_2}\bl[[c,d],[a,b]\br]\\
    L_2^{m_2\times n_0}\bl[[a,b]\times[c,d]\br] & \mcl{N}_{2D\leftarrow 1D}^{m_2\times n_s}\bl[[a,b],[c,d]\br] & \mcl{N}_{2D\leftarrow 1D}^{m_2\times n_r}\bl[[c,d],[a,b]\br] & \mcl{N}_{2D}^{m_2\times n_r}\bl[[a,b]\times[c,d]\br]
 \end{array}\!\!\right]
 =:\mcl{N}^{\textnormal{m}\times \textnormal{n}}\bl[[a,b]\times[c,d]\br]
\end{align*}
}
such that
{\small
\begin{align}
 \mcl{R}&=\left(\mcl{P}[R]\mbf{x}\right)(s,r)    \nonumber\\
 &=\mcl{P}\sbmat{R_{00} & R_{0s} & R_{0r} & R_{02}\\
    R_{s0} & R_{ss} & R_{sr} & R_{s2}\\
    R_{r0} & R_{rs} & R_{rr} & R_{s2}\\
    R_{20} & R_{2s} & R_{2r} & R_{22}}\sbmat{x_0\\ \mbf{x}_{s}\\ \mbf{x}_{r}\\ \mbf{x}_2}   \nonumber\\
    &:=
    \left[\!\begin{array}{llll}
     R_{00}x_0 &\!\! +\ \int_{a}^{b}\! R_{0s}(s)\mbf{x}_s(s)ds &\!\! +\ \int_{c}^{d}\! R_{0r}(r)\mbf{x}_r(r)dr &\!\! +\ \int_{a}^{b}\!\int_{c}^{d}R_{02}\mbf{x}_{2}(s,r)drds \\
     R_{s0}(s)x_0 &\!\! +\ \bl(\mcl{P}[R_{ss}]\mbf{x}_{s}\br)(s) &\!\! +\ \int_{c}^{d}\! R_{sr}(s,r)\mbf{x}_{r}(r)dr &\!\! +\ \bl(\mcl{P}[R_{s2}]\mbf{x}_{2}\br)(s) \\
     R_{r0}(r)x_0 &\!\! +\ \int_{a}^{b}\! R_{rs}(s,r)\mbf{x}_{s}(s)ds &\!\! +\ \bl(\mcl{P}[R_{rr}]\mbf{x}_{r}\br)(r)  &\!\! +\ \bl(\mcl{P}[R_{r2}]\mbf{x}_{2}\br)(r) \\
     R_{20}(s,r)x_0 &\!\! +\ \bl(\mcl{P}[R_{2s}]\mbf{x}_{s}\br)(s,r) &\!\! +\ \bl(\mcl{P}[R_{2r}]\mbf{x}_{r}\br)(s,r)  &\!\! +\ \bl(\mcl{P}[R_{22}]\mbf{x}_{2}\br)(s,r) 
    \end{array}\!\right],
\end{align}
}
for any $\mbf{x}=\sbmat{x_0\\ \mbf{x}_{s}\\ \mbf{x}_{r}\\ \mbf{x}_2}\in {\scriptsize\left[\begin{array}{l}
    \R^{n_0}\\ L_2^{n_s}[a,b]\\ L_2^{n_r}[c,d]\\ L_2^{n_2}\bl[[a,b]\times[c,d]\br]
    \end{array}\right]}=:Z^{\textnormal{n}}$, where for given parameters
{\small
\begin{align*}
P&:=\{P_0,P_1,P_2\}\\
&\qquad \in \left\{L_2^{m_s\times n_2}\bl[[a,b]\!\times\![c,d]\br],L_2^{m_s\times n_2}\bl[[a,b]^2\!\times\![c,d]\br],L_2^{m_s\times n_2}\bl[[a,b]^2\!\times\![c,d]\br]\right\}=:\mcl{N}^{m_s\times n_2}_{1D\leftarrow 2D}\bl[[a,b],[c,d]\br], \\
Q&:=\{Q_0,Q_1,Q_2\}\\
&\qquad \in \left\{L_2^{m_s\times n_2}\bl[[a,b]\!\times\![c,d]\br],L_2^{m_s\times n_2}\bl[[a,b]^2\!\times\![c,d]\br],L_2^{m_s\times n_2}\bl[[a,b]^2\!\times\![c,d]\br]\right\}=:\mcl{N}^{m_s\times n_2}_{2D\leftarrow 1D}\bl[[a,b],[c,d]\br],
\end{align*}
}
we define
{\small
\begin{align*}
 \bl(\mcl{P}[P] \mbf{x}_{2}\br)(s) &:= \int_{c}^{d}\bbbl[P_0(s,r) \mbf{x}_{2}(s,r) +\int_{a}^{s} P_1(s,r,\theta)\mbf{x}_{2}(\theta,r)d \theta +\int_s^b P_2(s,r,\theta)\mbf{x}(\theta,r)d \theta \bbbr]dr,  \nonumber\\
 \bl(\mcl{P}[Q] \mbf{x}_{s}\br)(s,r) &:= Q_0(s,r) \mbf{x}_{s}(s) +\int_{a}^{s} Q_1(s,r,\theta)\mbf{x}_{s}(\theta)d \theta +\int_s^b Q_2(s,r,\theta)\mbf{x}_{s}(\theta)d \theta,
\end{align*}
}
for any $\mbf{x}_2\in L_2^{n_2}\bl[[a,b]\times[c,d]\br]$ and $\mbf{x}_s\in L_2^{n_s}[a,b]$.

\end{defn}

\section{Addition of PI Operators}\label{sec:PI_theory:Addition}

An obvious but crucial property of PI operators is that the sum of two PI operators (of appropriate dimensions) is again a PI operator.

\begin{lem}
 For any PI parameters $Q,R\in\mcl{N}^{\text{m}\times\text{n}}\bl[[a,b],[c,d]\br]$, there exist unique parameters $P\in\mcl{N}^{\text{m}\times\text{n}}\bl[[a,b],[c,d]\br]$ such that
 \begin{align*}
    \mcl{P}[R]+\mcl{P}[Q]=\mcl{P}[P].
 \end{align*}
 That is, for any $\mbf{x}\in Z^{\textnormal{n}}\bl[[a,b],[c,d]\br]$,
 \begin{align*}
    \bl((\mcl{P}[Q]+\mcl{P}[R])\mbf{x}\br)(s)=\bl(\mcl{P}[P]\mbf{x})(s).
 \end{align*}
\end{lem}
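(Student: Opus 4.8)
The plan is to verify the claim componentwise, exploiting the block structure of a general PI operator $\mcl{P}[R]$ acting on $Z^{\textnormal{n}}\bl[[a,b],[c,d]\br]$. Since the action of $\mcl{P}[R]+\mcl{P}[Q]$ on any $\mbf{x}=\sbmat{x_0\\\mbf{x}_s\\\mbf{x}_r\\\mbf{x}_2}$ is computed by adding the corresponding $4\times 4$ arrays of sub-expressions, and addition in each of the underlying function spaces (finite matrices, $L_2[a,b]$, $L_2[c,d]$, $L_2[[a,b]\times[c,d]]$) is well-defined, it suffices to show that in each block the sum of two sub-operators of the prescribed type is again a sub-operator of that type, and to read off the formula for the parameter $P$. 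So first I would fix $\mbf{x}\in Z^{\textnormal{n}}$ and write out $\bl((\mcl{P}[Q]+\mcl{P}[R])\mbf{x}\br)(s,r)$ using the defining formula for $\mcl{P}[\cdot]$, grouping terms block by block.

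The genuinely non-trivial blocks are the ones involving 3-PI sub-operators ($R_{ss},R_{rr}$), the 9-PI sub-operator ($R_{22}$), and the ``cross'' sub-operators ($R_{s2},R_{r2},R_{2s},R_{2r}$), since for the purely matrix-valued and purely multiplicative-integral entries ($R_{00}$, $R_{0s}$, $R_{s0}$, etc.) additivity is immediate. For the 3-PI blocks, I would use the elementary identity: given $R=\{R_0,R_1,R_2\}$ and $Q=\{Q_0,Q_1,Q_2\}$ in $\mcl{N}_{1D}^{m\times n}[a,b]$, for every $\mbf{x}\in L_2^n[a,b]$ and $s\in[a,b]$,
\begin{align*}
 \bl(\mcl{P}[R]\mbf{x}\br)(s)+\bl(\mcl{P}[Q]\mbf{x}\br)(s)
 &=\bl(R_0(s)+Q_0(s)\br)\mbf{x}(s)\\
 &\quad+\int_a^s\bl(R_1(s,\theta)+Q_1(s,\theta)\br)\mbf{x}(\theta)d\theta
 +\int_s^b\bl(R_2(s,\theta)+Q_2(s,\theta)\br)\mbf{x}(\theta)d\theta,
\end{align*}
so the summed operator is $\mcl{P}[\{R_0+Q_0,\,R_1+Q_1,\,R_2+Q_2\}]$. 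The analogous pointwise computation for 9-PI operators (nine parameters $R_{ij}+Q_{ij}$) and for the cross-operators (three parameters each, summed entrywise) follows the same pattern, simply by linearity of the integral over whatever domain is in play; I would state these without grinding through all nine lines for $R_{22}$. Assembling the blocks, I define $P\in\mcl{N}^{\textnormal{m}\times\textnormal{n}}\bl[[a,b],[c,d]\br]$ to be the entrywise sum $P_{\alpha\beta}:=R_{\alpha\beta}+Q_{\alpha\beta}$ (with entrywise sums of the internal parameters $\{R_0,R_1,R_2\}$ etc. wherever an entry is itself a tuple), and conclude $\mcl{P}[R]+\mcl{P}[Q]=\mcl{P}[P]$.

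For uniqueness of $P$, I would argue that the parameters of a PI operator are determined by the operator itself. The cleanest route is to test against suitable inputs: applying $\mcl{P}[P]$ to indicator-type or bump functions supported near a point, and to finite-dimensional unit vectors in the $x_0$ slot, one recovers each parameter $R_{\alpha\beta}$ (up to the usual $L_2$-a.e.\ identification) as a limit of rescaled outputs — e.g.\ the diagonal part $R_{ss,0}(s)$ from $\mbf{x}_s$ concentrating at $s$, and $R_{ss,1}(s,\theta)$, $R_{ss,2}(s,\theta)$ from the two one-sided integral contributions, which are separated by evaluating the output to the left versus the right of the support. Since both $\mcl{P}[P]$ and $\mcl{P}[P']$ equal the same operator $\mcl{P}[R]+\mcl{P}[Q]$, this forces $P=P'$ (a.e.), giving uniqueness. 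The main obstacle here is not conceptual but bookkeeping: making the separation-of-parameters argument rigorous requires care about the $L_2$-equivalence classes and about distinguishing the diagonal ($R_0$-type) contribution from the two integral kernels, so I would either cite the earlier characterization of PI operators (e.g.\ from~\cite{peet_2020Aut,shivakumar_2019CDC}) for this identifiability fact, or prove it once for 3-PI operators and note the general case is identical block by block.
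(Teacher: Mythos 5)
Your proposal is correct and follows essentially the same route as the paper: the paper's proof likewise verifies the claim for 3-PI operators by linearity of the integral, observing that the parameters simply add entrywise, and then notes that the general (block) case follows the same pattern, deferring full details to a reference. Your additional sketch of the uniqueness argument (recovering parameters by testing against concentrated inputs) addresses a point the paper's proof passes over entirely, and is a reasonable way to justify that part of the statement.
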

\begin{proof}
    We outline the proof for 3-PI operators, for which it is easy to see that, by linearity of the integral,
    \begin{align*}
        &\bl(\mcl{P}_{\{R_0,R_1,R_2\}}\mbf{x}\br)(s)+\bl(\mcl{P}_{\{Q_0,Q_1,Q_2\}}\mbf{x}\br)(s)    \\
        &\qquad=[R_0(s)+Q_0(s)]\mbf{x}(s)+\int_{a}^{s}[R_1(s,\theta)+Q_1(s,\theta)]\mbf{x}(\theta)d\theta +\int_{s}^{b}[R_2(s,\theta)+Q_2(s,\theta)]\mbf{x}(\theta)d\theta  \\
        &\qquad\qquad= \bl(\mcl{P}_{\{R_0+Q_0,R_1+Q_1,R_2+Q_2\}}\mbf{x}\br)(s).
    \end{align*}
    Similar results can be easily derived for more general PI operators. For a full proof, we refer to~\cite{peet_2020Aut}.
\end{proof}
Comparing the addition operation for $3$-PI operators to that for matrices $A,B\in\R^{m\times n}$, we can draw direct parallels. In particular, where the sum $C=A+B$ of two matrices is simply computed by adding the elements $[C]_{ij}=[A]_{ij}+[B]_{ij}$ for each row $i$ and column $j$, the sum of two 3-PI operators is computed by simply adding the values of the parameters $P(s,\theta)=Q(s,\theta)+R(s,\theta)$ at each position $s$ and $\theta$ within the domain.

\section{Composition of PI Operators}\label{sec:PI_theory:Composition}

In addition to the sum of two PI operators being a PI operator, the composition of two PI operators can also be shown to be a PI operator, as stated in the following lemma:

\begin{lem}
 For any PI parameters $R_1\in\mcl{N}^{\text{m}\times\text{p}}\bl[[a,b],[c,d]\br]$ and $R_2\in\mcl{N}^{\text{p}\times\text{n}}\bl[[a,b],[c,d]\br]$, there exist unique parameters $R_3\in\mcl{N}^{\text{m}\times\text{n}}\bl[[a,b],[c,d]\br]$ such that
 \begin{align*}
    \mcl{P}[R_1]\circ\mcl{P}[R_2]=\mcl{P}[R_3].
 \end{align*}
 That is, for any $\mbf{x}\in Z^{\textnormal{n}}\bl[[a,b],[c,d]\br]$,
 \begin{align*}
    \bbl(\mcl{P}[R_1]\bl(\mcl{P}[R_2]\mbf{x}\br)\bbr)(s)=\bl(\mcl{P}[R_3]\mbf{x})(s).
 \end{align*}
\end{lem}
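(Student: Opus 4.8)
The plan is to prove the lemma by direct substitution, exploiting the block structure of PI operators to reduce the problem to a finite collection of elementary compositions. First I would recall that a general PI operator $\mcl{P}[R]$ acts block-wise on $\mbf{x}=\sbmat{x_0\\\mbf{x}_s\\\mbf{x}_r\\\mbf{x}_2}$, so that the $(i,j)$ block of the composition $\mcl{P}[R_1]\circ\mcl{P}[R_2]$ is a sum over an intermediate index $k$ of compositions $\mcl{P}[(R_1)_{ik}]\circ\mcl{P}[(R_2)_{kj}]$. Since we have already shown (Section~\ref{sec:PI_theory:Addition}) that the sum of PI operators is a PI operator, it suffices to check that each such elementary composition is again of PI type and to identify its parameters. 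This reduces the general statement to finitely many cases: matrix$\times$matrix, matrix$\times$($L_2$-to-finite-dim), 3-PI$\times$3-PI, 3-PI$\times$(cross operator), 9-PI$\times$9-PI, and the various mixed 1D/2D cross-terms.

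The heart of the computation is the 3-PI$\times$3-PI case, which I would carry out explicitly to fix the pattern. Given $\mbf{x}\in L_2^n[a,b]$, write $\mbf{y}=\mcl{P}_{\{Q_0,Q_1,Q_2\}}\mbf{x}$ and then expand $\bl(\mcl{P}_{\{R_0,R_1,R_2\}}\mbf{y}\br)(s)$. The diagonal--diagonal term gives $R_0(s)Q_0(s)\mbf{x}(s)$, contributing to the new $R_0$. Terms of the form diagonal$\times$triangular, triangular$\times$diagonal, and triangular$\times$triangular produce single integrals $\int_a^s(\cdots)\mbf{x}(\theta)d\theta$ or $\int_s^b(\cdots)\mbf{x}(\theta)d\theta$ directly. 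The only genuinely delicate terms are the products of two triangular operators with "mismatched" limits, e.g.\ $\int_a^s R_1(s,\theta)\int_\theta^b Q_2(\theta,\eta)\mbf{x}(\eta)d\eta\,d\theta$; here one must split the inner integral and exchange the order of integration (Fubini, valid since the kernels are $L_2$ and $\mbf{x}\in L_2$) to sort the resulting double integral into the $\int_a^s$ and $\int_s^b$ parts. This produces kernels such as $\int_a^{\min(s,\eta)} R_1(s,\theta)Q_2(\theta,\eta)d\theta$, which one shows lie in $L_2\bl[[a,b]^2\br]$, so that the composition is indeed $\mcl{P}_{\{P_0,P_1,P_2\}}$ for explicit $P_0,P_1,P_2$. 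Uniqueness follows because a 3-PI operator is zero iff all three parameters vanish a.e. (by testing against suitable $\mbf{x}$), so the parameters of the composition are determined.

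For the remaining blocks the bookkeeping is heavier but conceptually identical: the 9-PI$\times$9-PI case requires iterated Fubini in both the $s$- and $r$-directions and a case split over all nine pairings of triangular sub-operators, while the cross-terms (1D-to-2D composed with 2D-to-1D, etc.) collapse some integrals entirely. I expect the main obstacle to be purely organizational — managing the combinatorial explosion of limit-matching cases in the 9-PI setting and verifying in each case that the resulting kernels remain in the appropriate $L_2$ spaces (using Cauchy--Schwarz / the fact that a product of an $L_2$ kernel with an $L_2$ kernel integrated over a bounded domain is again $L_2$). Rather than grinding through all of these, in the write-up I would present the 3-PI$\times$3-PI computation in full as the representative case, state the closed-form parameter formulas for the general case (citing~\cite{peet_2020Aut} and~\cite{jagt_2021PIEACC} for the 1D and 2D cases respectively), and note that uniqueness is immediate from the injectivity of the parameter-to-operator map on each block.
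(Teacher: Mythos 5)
Your proposal follows essentially the same route as the paper: an explicit computation of the 3-PI $\times$ 3-PI case by splitting and reordering the double integrals (the paper phrases the same bookkeeping via indicator functions $\mbf{I}(s-\theta)$, yielding exactly your $\int_a^{\min(s,\theta)}$-type kernels), with the 2D and cross-operator blocks deferred to the cited references. The only addition beyond the paper's argument is your explicit uniqueness remark via injectivity of the parameter-to-operator map, which the paper leaves implicit; otherwise the two proofs coincide.
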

\begin{proof}
    We once again outline the proof only for 3-PI operators. For this, we define the indicator function
    \begin{align*}
    \mbf{I}(s-\theta)=\begin{cases}1,   &\text{if }s\geq \theta\\ 0.    &\text{else} \end{cases}
    \end{align*}
    allowing us to write, e.g.
    \begin{align*}
     \bl(\mcl{P}_{\{R_0,R_1,R_2\}}\mbf{x}\bl)(s)=R_0(s)\mbf{x}+\int_{a}^{b}\bbl[\mbf{I}(s-\theta)R_1(s,\theta)+\mbf{I}(\theta-s)R_2(s,\theta)\bbr]\mbf{x}(\theta)d\theta.
    \end{align*}
    Furthermore, we have the following relations for the indicator function
    \begin{align*}
        \mbf{I}(s-\eta)\mbf{I}(\eta-\theta)&=\begin{cases}
            \mbf{I}(s-\theta),    &\text{if } \eta\in[\theta,s],  \\
            0,                  &\text{else,}
        \end{cases} \\
        \mbf{I}(s-\eta)\mbf{I}(\theta-\eta)&=\mbf{I}(s-\theta)\mbf{I}(\theta-\eta) + \mbf{I}(\theta-s)\mbf{I}(s-\eta)
    \end{align*}
    Using the first relation, it follows that for any $R_1,Q_1\in L_2^{m_s\times n_s}\bl[[a,b]^2\br]$,
    \begin{align*}
     \bbl(\mcl{P}_{\{0,R_1,0\}}\bl(\mcl{P}_{\{0,Q_1,0\}}\mbf{x}\br)\bbr)(s)&=\int_{a}^{s}R_1(s,\eta)\int_{a}^{\eta}Q_1(\eta,\theta)\mbf{x}(\theta)d\theta d\eta \\
     &= \int_{a}^{b}\int_{a}^{b}\mbf{I}(s-\eta)\mbf{I}(\eta-\theta)R_1(s,\eta)Q_1(\eta,\theta)\mbf{x}(\theta)d\theta d\eta \\
     &= \int_{a}^{s}\left[\int_{\theta}^{s}R_1(s,\eta)Q_1(\eta,\theta) d\eta\right] \mbf{x}(\theta)d\theta  %\\
     %&= \int_{a}^{s}\left[\int_{\theta}^{s}R_1(s,\eta)Q_1(\eta,\theta)d\eta\right] \mbf{x}(\theta) d\theta 
     = \bl(\mcl{P}_{\{0,P_{11},0\}}\mbf{x}\br)(s),
    \end{align*}
    where $P_{11}(s,\theta):=\int_{\theta}^{s}R_1(s,\eta)Q_1(\theta,\eta)d\eta$. Similarly, we can show that
    \begin{align*}
     \bbl(\mcl{P}_{\{0,R_1,0\}}\bl(\mcl{P}_{\{0,0,Q_2\}}\mbf{x}\br)\bbr)(s)&=
     \int_{a}^{s}\left[\int_{a}^{\theta}R_1(s,\eta)Q_2(\eta,\theta)d\eta\right]\mbf{x}(\theta)d\theta + \int_{s}^{b}\left[\int_{a}^{s}R_1(s,\eta)Q_2(\eta,\theta)d\eta\right]\mbf{x}(\theta)d\theta \\
     \bbl(\mcl{P}_{\{0,0,R_2\}}\bl(\mcl{P}_{\{0,Q_1,0\}}\mbf{x}\br)\bbr)(s)&=
     \int_{a}^{s}\left[\int_{s}^{b}R_2(s,\eta)Q_1(\eta,\theta)d\eta\right]\mbf{x}(\theta)d\theta + \int_{s}^{b}\left[\int_{\theta}^{b}R_2(s,\eta)Q_1(\eta,\theta)d\eta\right]\mbf{x}(\theta)d\theta \\
     \bbl(\mcl{P}_{\{0,0,R_2\}}\bl(\mcl{P}_{\{0,0,Q_2\}}\mbf{x}\br)\bbr)(s)&=
     \int_{s}^{b}\left[\int_{s}^{\theta}R_2(s,\eta)Q_2(\theta,\eta)d\eta\right] \mbf{x}(\theta) d\theta = \bl(\mcl{P}_{\{0,0,P_{22}\}}\mbf{x}\br)(s),
    \end{align*}
    proving that the composition of lower-triangular and upper-triangular partial integrals can always be expressed as partial integrals as well. It is also easy easy to see that
    \begin{align*}
        \bbl(\mcl{P}_{\{R_0,0,0\}}\bl(\mcl{P}_{\{0,Q_1,Q_2\}}\mbf{x}\br)\bbr)(s)&=\int_{a}^{s}R_0(s)Q_1(s,\theta)\mbf{x}(\theta)d\theta + \int_{s}^{b}R_0(s)Q_2(s,\theta)\mbf{x}(\theta)d\theta = \bl(\mcl{P}_{\{0,P_{01},P_{02}\}}\mbf{x}\br)(s), \\
        \bbl(\mcl{P}_{\{0,R_1,R_2\}}\bl(\mcl{P}_{\{Q_0,0,0\}}\mbf{x}\br)\bbr)(s)&=\int_{a}^{s}R_1(s,\theta)Q_0(\theta)\mbf{x}(\theta)d\theta + \int_{s}^{b}R_2(s,\theta)Q_2(\theta)\mbf{x}(\theta)d\theta = \bl(\mcl{P}_{\{0,P_{10},P_{20}\}}\mbf{x}\br)(s),
    \end{align*}
    from which it follows that the composition of any 3-PI operators can be expressed as a 3-PI operator as well. Moreover, since we can repeat these steps along any spatial directions, this result extends to more general (2D) PI operators as well. For a full proof, we refer to~\cite{}.
    
\end{proof}

We note again the similarity to matrices: just like the product of two lower triangular matrices $L_1,L_2$ is a lower triangular matrix $L_3$, the composition of two lower-triangular 3-PI operators $\mcl{P}_{\{0,R_1,0\}},\mcl{P}_{\{0,Q_1,0\}}$ is also a lower-triangular 3-PI operator $\mcl{P}_{\{0,P_{11},0\}}$. Similarly, the product of two upper-triangular 3-PI operators $\mcl{P}_{\{0,0,R_2\}},\mcl{P}_{\{0,0,Q_2\}}$ is also an upper-triangular 3-PI operator $\mcl{P}_{\{0,0,P_{22}\}}$, but the composition of lower- and upper-triangular PI operators need not be lower- or upper-triangular -- just as with matrices. Finally, the composition of a diagonal operator $\mcl{P}_{\{R_0,0,0\}}$ with a lower- or upper-diagonal PI operator is also respectively lower- or upper-diagonal.

\section{Adjoint of PI Operators}\label{sec:PI_theory:Adjoint}

To define the adjoint of a PI operator $\mcl{R}\in\Pi^{\textnormal{m}\times\text{normal{n}}}$, we first recall the definition of the function space that these operators map: $Z^{\textnormal{n}}\br[[a,b],[c,d]\bl]:={\scriptsize\left[\begin{array}{l}
\R^{n_0}\\ L_2^{n_s}[a,b]\\ L_2^{n_r}[c,d]\\ L_2^{n_2}\bl[[a,b]\times[c,d]\br]
\end{array}\right]}$, where $\text{n}:=\{n_0,n_s,n_r,n_2\}$, with each element being a coupled state of finite-dimensional variables $x_0\in\R^{n_0}$, 1D functions $\mbf{x}_s\in L_2^{n_s}[a,b]$ and $\mbf{x}_r\in L_2^{n_r}[a,b]$, and 2D functions $\mbf{x}_2\in L_2^{n_2}\bl[[a,b]\times[c,d]\br]$. We endow this space with the inner product
\begin{align*}
    \ip{\mbf{x}}{\mbf{y}}_{Z}&=\ip{x_0}{y_0}+\ip{\mbf{x}_s}{\mbf{y}_s}_{L_2}+\ip{\mbf{x}_r}{\mbf{y}_r}_{L_2}+\ip{\mbf{x}_2}{\mbf{y}_2}_{L_2}    \\
    &=x_0^T y_0 + \int_{a}^{b}[\mbf{x}_s(s)]^T\mbf{y}(s)ds + \int_{c}^{d}[\mbf{x}_r(r)]^T\mbf{y}(r)dr + \int_{a}^{b}\int_{c}^{d}[\mbf{x}_2(s,r)]^T\mbf{y}(s,r)dr ds
\end{align*}
Defining this inner product, we can also define the adjoint of PI operators.

\begin{lem}
 For any PI parameters $R\in\mcl{N}^{\text{m}\times\text{n}}\bl[[a,b],[c,d]\br]$, there exist unique parameters $Q\in\mcl{N}^{\text{n}\times\text{m}}\bl[[a,b],[c,d]\br]$ such that
 \begin{align*}
    \bl(\mcl{P}[R]\br)^*=\mcl{P}[Q],
 \end{align*}
 where $\mcl{P}^*$ denotes the adjoint of a PI operator $\mcl{P}$. 
 That is, for any $\mbf{x}\in Z^{\textnormal{n}}\bl[[a,b],[c,d]\br]$ and $\mbf{y}\in Z^{\textnormal{m}}\bl[[a,b],[c,d]\br]$,
 \begin{align*}
    \ip{\mcl{P}[R]\mbf{x}}{\mbf{y}}_{Z}=\ip{\mbf{x}}{\mcl{P}[Q]\mbf{y}}_{Z}.
 \end{align*}
\end{lem}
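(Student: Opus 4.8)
The plan is to prove this by reducing, via the block structure of PI operators, to the case of $3$-PI operators and their cross-component analogues, and then performing a direct computation that moves the operator across the inner product using Fubini's theorem. First I would write $\mcl{P}[R]$ out in its full $4\times 4$ block form acting on $\mbf{x}=\sbmat{x_0\\\mbf{x}_s\\\mbf{x}_r\\\mbf{x}_2}$, and likewise expand $\ip{\mcl{P}[R]\mbf{x}}{\mbf{y}}_{Z}$ as a finite sum of $16$ pairings $\ip{(\mcl{P}[R]\mbf{x})_i}{\mbf{y}_i}$ over the four output components. Since the ambient inner product $\ip{\cdot}{\cdot}_{Z}$ is a direct sum of the four component inner products, and the adjoint of a block operator is the conjugate-transpose block operator, it suffices to show that each of the $16$ ``atomic'' operators — a constant matrix $R_{00}$, a kernel operator like $\mbf{x}_s\mapsto\int_a^b R_{0s}(s)\mbf{x}_s(s)ds$, a $3$-PI operator $\mcl{P}[R_{ss}]$, a cross operator $\mcl{P}[R_{s2}]$, etc. — has an adjoint that is again of PI type with the output/input spaces swapped, and to record the transformation on the parameters.

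The core computation is the $3$-PI case. For $\mcl{P}_{\{R_0,R_1,R_2\}}:L_2^{n_s}[a,b]\to L_2^{m_s}[a,b]$ I would compute, for $\mbf{x}\in L_2^{n_s}[a,b]$ and $\mbf{y}\in L_2^{m_s}[a,b]$,
\begin{align*}
 \ip{\mcl{P}_{\{R_0,R_1,R_2\}}\mbf{x}}{\mbf{y}}_{L_2}
 &=\int_a^b [R_0(s)\mbf{x}(s)]^T\mbf{y}(s)\,ds
  +\int_a^b\!\!\int_a^s [R_1(s,\theta)\mbf{x}(\theta)]^T\mbf{y}(s)\,d\theta\,ds \\
 &\qquad +\int_a^b\!\!\int_s^b [R_2(s,\theta)\mbf{x}(\theta)]^T\mbf{y}(s)\,d\theta\,ds.
\end{align*}
Then I apply Fubini to swap the order of integration in the double integrals, using the identities on the indicator function $\mbf{I}(s-\theta)$ already introduced in the excerpt (so that $\int_a^b\int_a^s(\cdot)\,d\theta\,ds=\int_a^b\int_\theta^b(\cdot)\,ds\,d\theta$, and similarly for the upper-triangular term), and relabel dummy variables. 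Collecting terms and using $[R(s,\theta)\mbf{x}(\theta)]^T\mbf{y}(s)=\mbf{x}(\theta)^T R(s,\theta)^T\mbf{y}(s)$, one reads off that $\bl(\mcl{P}_{\{R_0,R_1,R_2\}}\br)^*=\mcl{P}_{\{Q_0,Q_1,Q_2\}}$ with $Q_0(s)=R_0(s)^T$, $Q_1(s,\theta)=R_2(\theta,s)^T$, $Q_2(s,\theta)=R_1(\theta,s)^T$. The purely finite-dimensional and ``rectangular'' atoms (matrix, $L_2\to\R$, $\R\to L_2$) are immediate by transposition, and the cross operators $\mcl{P}[P],\mcl{P}[Q]$ between $1$D and $2$D spaces are handled by the same Fubini swap applied in the appropriate single spatial variable. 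Finally I would assemble the $16$ atomic adjoints into the $4\times 4$ parameter array $Q\in\mcl{N}^{\textnormal{n}\times\textnormal{m}}$, observing that the block transpose swaps $R_{s2}\leftrightarrow R_{2s}$-type off-diagonal blocks, and invoke the closure of the PI class under addition (previous lemma in the excerpt) to conclude $\mcl{P}[R]^*\in\Pi^{\textnormal{n}\times\textnormal{m}}$. Uniqueness of $Q$ follows from the fact that the parameters of a PI operator are uniquely determined by the operator (stated earlier for addition/composition; alternatively argue by testing against appropriate families of $\mbf{x},\mbf{y}$).

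The main obstacle I anticipate is bookkeeping rather than mathematical depth: keeping the swaps of primary and dummy variables consistent across all components, and correctly tracking which blocks of $R$ map into which blocks of $Q$ under the ``conjugate transpose'' so that the triangular parameters $R_1$ and $R_2$ get interchanged (with variable swap and transpose) in every $3$-PI and every cross block. A secondary point requiring a word of care is the justification of Fubini — one must note that the kernels lie in $L_2$ over the relevant product domains and the functions $\mbf{x},\mbf{y}$ lie in $L_2$, so the integrands are integrable over the product region, making the interchange legitimate. For a fully detailed treatment of these routine but lengthy verifications, I would refer to~\cite{peet_2020Aut} and~\cite{jagt_2021PIEACC}.
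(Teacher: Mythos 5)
Your proposal is correct and follows essentially the same route as the paper: a direct computation of $\ip{\mcl{P}[R]\mbf{x}}{\mbf{y}}_{Z}$, an interchange of the order of integration (via the indicator-function/Fubini argument), and a relabelling of variables to read off the adjoint parameters, yielding exactly the paper's formulas $\hat{R}_0(s)=R_0^T(s)$, $\hat{R}_1(s,\theta)=R_2^T(\theta,s)$, $\hat{R}_2(s,\theta)=R_1^T(\theta,s)$ together with the block transpose $\hat{Q}_1=Q_2^T$, $\hat{Q}_2=Q_1^T$. The only organizational difference is that the paper carries out the computation for the full 4-PI operator at once while you reduce to the sixteen atomic blocks first; your added remarks on uniqueness and the Fubini justification are fine but not needed beyond what the paper sketches.
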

\begin{proof}
 We outline the proof only for 4-PI operators. In particular, let $\textnormal{n}=\{n_0,n_1,0,0\}$ and $\textnormal{m}=\{m_0,m_1,0,0\}$, and let $B=\sbmat{P&Q_1\\Q_2&\{R_0,R_1,R_2\}}$ define a 4-PI operator $\mcl{P}[B]\in\Pi^{\textnormal{n}\times\textnormal{m}}$. Define $\hat{B}=\sbmat{\hat{P}&\hat{Q}_1\\\hat{Q}_2&\{\hat{R}_0,\hat{R}_1,\hat{R}_2\}}$, where
 \begin{align*}
     \hat{P}&=P^T,           &   \hat{Q}_1(s)&=Q_2^T(s),    &   \hat{R}_1(s,\theta)&=R_2^T(\theta,s),   \\
     \hat{Q}_2(s)&=Q_1^T(s), &      \hat{R}_0(s)&=R_0^T(s),  &   \hat{R}_2(s,\theta)&=R_1^T(\theta,s), 
 \end{align*}
 Then, for arbitrary $\mbf{x}=\sbmat{x_0\\\mbf{x}_1}\in Z^{\textnormal{n}}$ and $\mbf{y}=\sbmat{y_0\\\mbf{y}_1}\in Z^{\textnormal{m}}$, we note that
 \begin{align*}
     \ip{\mcl{P}[B]\mbf{x}}{\mbf{y}}_{Z}&=[Px_0]^T y_0 + \left[\int_{a}^{b}Q_1(s)\mbf{x}_1(s)ds\right]^Ty_0 + \int_{a}^{s}[Q_2(s)x_0]^T\mbf{y}_1(s)ds +\int_{a}^{b}[R_0(s)\mbf{x}_1(s)]^T\mbf{y}_1(s)ds    \\
    &\qquad + \int_{a}^{b}\left[\left(\int_{a}^{b}\mbf{I}(s-\theta)R_1(s,\theta)+\mbf{I}(\theta-s)R_2(s,\theta)\right)\mbf{x}_1(\theta)d\theta\right]^T\mbf{y}_1(s)ds    \\
    &=x_0 [P^T y_0] + x_0^T\left[\int_{a}^{b}Q_2^T(s)\mbf{y}_1(s)ds\right] + \int_{a}^{b}\mbf{x}_1^T(s)\left[Q_1^T(s)y_0\right]ds +\int_{a}^{b}\mbf{x}_1^T(s)\left[R_0^T(s)\mbf{y}_1(s)\right]ds \\
    &\qquad +\int_{a}^{b}\mbf{x}_1^T(s)\left[\left(\int_{a}^{b}\mbf{I}(-\theta-s)R_1(\theta,s)+\mbf{I}(s-\theta)R_2(\theta,s)\right)\mbf{y}_1(\theta)d\theta\right]ds   \\
    &=\ip{\mbf{x}}{\mcl{P}[\hat{B}]\mbf{y}}_{Z}.
 \end{align*}
 
\end{proof}
We note again the similarities with matrices: Just like the adjoint of a matrix can be determined by switching the rows and columns, the adjoint of a 3-PI operator is determined by switching the primary and dual variables $(s,\theta)$, as well as switching the lower- and upper-triangular parts.

\section{Inversion of PI operators}\label{sec:PI_theory:Inverse}
In this section, we focus on constructing the controller from the feasible solutions $\mcl{P}$ and $\mcl{Z}$ to the LPIs described in previous sections. The controller gains $\mcl{K}$ is given by the relation $\mcl{K}=\mcl{Z}\mcl{P}^{-1}$. However, $\mcl{P}^{-1}$, although is a 4-PI operator, may not have polynomial parameters. Hence the inverse is calculated numerically. First, we propose a numerical method to find the inverse of $\mcl{P}$. Then, we use the numerical inverse to construct the controller gains. To find the inverse of a 4-PI operator, $\fourpi{P}{Q}{Q^T}{R_i}$, we first find the inverse of 3-PI operators of the form $\threepi{I,H_1,R_2}$ and then express the inverse of $\fourpi{P}{Q}{Q^T}{R_i}$ in terms of $\threepi{I,H_1,H_2}^{-1}$ where $H_i$ are dependent on the parameters $\{P,Q,R_i\}$.
\subsection{Inversion of 3-PI operators}
	%The analytical expression for $\fourpi{P}{Q}{Q^T}{R_i}^{-1}$ is given by the following result from \cite{peet_sicon2020}. Note that $\fourpi{P}{Q}{Q^T}{R_i}$ must be a separable 4-PI operator, i.e., $R_1 = R_2$. Prior to the result, we define $Z(s)$ as a vector of monomials for vector-valued polynomials in $s$. 
	First, we note that any matrix-valued polynomial $H(s,\theta)$ can be factored as $F(s)G(\theta)$. Then, for any given 3-PI operator $\threepi{I,H_1,H_2}$ with matrix-valued polynomial parameters $H_1$ and $H_2$, we have
	\begin{align*}
	\threepi{I,H_1,H_2} &= \threepi{I,-F_1G_1,-F_2G_2},~\text{where}\\
	H_i(s,\theta) &= -F_i(s)G_i(\theta),
	\end{align*}
	for some matrix-valued polynomials $F_i$ and $G_i$. We can now find an inverse for $\threepi{I,H_1,H_2}$ using the following result. 
	\begin{lem}\label{lem:3piINV}
	Suppose $F_1: [a,b]\to \R^{n\times p}$, $G_1:[a,b]\to \R^{p\times n}$, $F_2:[a,b]\to\R^{n\times q}$, $G_2:[a,b]\to \R^{q\times n}$ and $U$ is the unique function that satisfies the equation 
	\begin{align*}
	U(s) &= I_{(p+q)} +\int_a^s U(t)\bmat{G_1(t)F_1(t) & G_1(t)F_2(t)\\-G_2(t)F_1(t)&-G_2(t)F_2(t)} dt, 
	\end{align*} 
	where $U$ is partitioned as
	\[
	U = \bmat{U_{11}&U_{12}\\U_{21}&U_{22}}, \quad U_{11}(s)\in \R^{p\times p}, U_{22}(s)\in \R^{q\times q}.
	\]
	Then, the 3-PI operator $\threepi{I,-F_1G_1,-F_2G_2}$ is invertible if and only if $U_{22}(b)$ is invertible and 
	\begin{align*}
	(\threepi{I,-F_1G_1,-F_2G_2})^{-1} &= \threepi{I,L_1,L_2},
	\end{align*}
	where
	\begin{subequations}\label{eq:Li_inv}
	\begin{align}
	L_1(s,t) &= \bmat{F_1(s)&F_2(s)}U(s)V(t)\bmat{G_1(t)\\-G_2(t)}-L_2(s,t),\\
	L_2(s,t) &= -\bmat{F_1(s)&F_2(s)}U(s)PV(t)\bmat{G_1(t)\\-G_2(t)},
	\end{align}
	\end{subequations}
	\begin{align*}
	P &= \bmat{0_{p\times p}&0_{p\times q}\\U_{22}(b)^{-1}U_{21}(b)&I_{q}},	
	\end{align*}
	and $V$ is the unique function satisfying the equation 
	\[
	V(t) = I_{(p+q)} - \int_a^t \bmat{G_1(s)F_1(s) & G_1(s)F_2(s)\\-G_2(s)F_1(s)&-G_2(s)F_2(s)}V(s) ds.
	\]	
	\end{lem}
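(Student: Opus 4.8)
\textbf{Proof proposal for Lemma~\ref{lem:3piINV}.}

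The plan is to verify directly that $\threepi{I,L_1,L_2}$ is a two-sided inverse of $\threepi{I,-F_1G_1,-F_2G_2}$ by composing the two operators and using the composition formula for 3-PI operators (Section~\ref{sec:PI_theory:Composition}). The first step is to recognize $U$ and $V$ as fundamental-solution-type matrices: writing $M(s):=\bl[\begin{smallmatrix}G_1(s)F_1(s)&G_1(s)F_2(s)\\-G_2(s)F_1(s)&-G_2(s)F_2(s)\end{smallmatrix}\br]$, the defining Volterra integral equations say $U'(s)=U(s)M(s)$ with $U(a)=I$, and $V'(s)=-M(s)V(s)$ with $V(a)=I$. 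From these two ODEs one gets immediately that $\frac{d}{ds}\bl(U(s)V(s)\br)=U'V+UV'=UMV-UMV=0$, so $U(s)V(s)\equiv U(a)V(a)=I$; hence $V=U^{-1}$ pointwise. This identity is the algebraic engine of the whole proof and should be established first.

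Next I would set up the composition. Abbreviate $F(s):=\bmat{F_1(s)&F_2(s)}$ and $G(\theta):=\bl[\begin{smallmatrix}G_1(\theta)\\-G_2(\theta)\end{smallmatrix}\br]$, so the operator to invert is $\threepi{I,H_1,H_2}$ with $H_1(s,\theta)=-F(s)\,\Pi_1 G(\theta)$, $H_2(s,\theta)=-F(s)\,\Pi_2 G(\theta)$ for the obvious block projections $\Pi_1,\Pi_2$ onto the first $p$ / last $q$ coordinates — actually, cleanly, $H_1(s,\theta)=-F_1(s)G_1(\theta)$ and $H_2(s,\theta)=-F_2(s)G_2(\theta)$, and the candidate inverse has kernels $L_1,L_2$ given in \eqref{eq:Li_inv}, both of which are of the form $F(s)(\text{matrix function of }s)\,W(\theta)G(\theta)$-type after using $U,V,P$. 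The key structural observation is that all kernels appearing are \emph{semiseparable}: products $F(s)\,\Phi(s,\theta)\,G(\theta)$ where $\Phi$ is built from $U$ and $V$. Composing two such 3-PI operators, by the formulas in the proof of the composition lemma, again yields semiseparable kernels plus a diagonal term, and the inner integrals $\int_\theta^s$, $\int_a^\theta$, $\int_a^s$ of $G(\eta)F(\eta)=$ (blocks of $M(\eta)$) can be evaluated using $U'=UM$, $V'=-MV$ — i.e. $\int_\theta^s M(\eta)\,d\eta$ telescopes against $U$ or $V$. So the second step is: compute $\threepi{I,L_1,L_2}\circ\threepi{I,H_1,H_2}$, collect the diagonal part (which will be $I$ automatically), and reduce each of the lower- and upper-triangular kernels to an expression in $F,G,U,V,P,U_{22}(b)^{-1}U_{21}(b)$.

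The third step is the algebraic simplification: show the reduced lower-triangular kernel and upper-triangular kernel both vanish. Here is where $UV=I$ gets used repeatedly, together with the specific choice of $P=\bl[\begin{smallmatrix}0&0\\U_{22}(b)^{-1}U_{21}(b)&I_q\end{smallmatrix}\br]$, which is a projection ($P^2=P$) chosen precisely so that the boundary term picked up at $s=b$ in the inner integral cancels. The invertibility of $U_{22}(b)$ is exactly the condition needed for $P$ to be well-defined, and one should also argue the converse — that if $\threepi{I,H_1,H_2}$ is invertible then $U_{22}(b)$ must be invertible — which can be done by noting the $(2,2)$ block of $U(b)$ is (up to sign) the "transition matrix" of the Volterra flow restricted to the $q$-dimensional channel, and a singular $U_{22}(b)$ would produce a nonzero element of the kernel of the operator. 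Then one checks the composition in the other order ($\threepi{I,H_1,H_2}\circ\threepi{I,L_1,L_2}=I$) either by the symmetric computation or by a short uniqueness argument: a 3-PI operator of the form $I+\text{(triangular)}$ has a two-sided inverse as soon as it has a one-sided one, since $I+\mcl{N}$ with $\mcl{N}$ Volterra is always invertible (Neumann series in the nilpotent-like operator), and left and right inverses of an invertible operator coincide.

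The main obstacle I expect is purely bookkeeping: the composition of two 3-PI operators generates six kernel terms (lower$\times$lower, lower$\times$upper, upper$\times$lower, upper$\times$upper, diagonal$\times$triangular, triangular$\times$diagonal), each producing inner integrals with different limits, and keeping the block structure of $U,V,P$ straight through all of them — especially tracking which integrals run to $s$, to $\theta$, or to $b$, and matching the $s=b$ boundary contribution against the $P$ term — is delicate and error-prone. The conceptual content is small (it is essentially the statement that inverting a semiseparable integral operator reduces to solving a linear ODE, the continuous analogue of inverting a quasiseparable matrix), but writing it without sign errors is the real work. A clean way to organize it is to first prove the purely lower-triangular special case ($F_2,G_2$ absent), where only $U_{11}$ appears and $P=0$, then handle the general case by the same manipulation with the extra channel, so that the role of $P$ and of $U_{22}(b)^{-1}$ is isolated in one place.
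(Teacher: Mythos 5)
First, note that the paper does not actually prove this lemma; it only cites Gohberg--Goldberg--Kaashoek, so your proposal is a reconstruction of the standard argument for inverting semiseparable integral operators rather than an alternative to a proof given here. Your overall strategy --- reduce to the fundamental matrices $U,V$ of an associated linear ODE, establish $UV\equiv I$, and verify the inverse by composing 3-PI operators --- is the right one, and your observations that $V=U^{-1}$ and that $P$ is a projection (so that the boundary contribution at $s=b$ is absorbed) are correct and are indeed the engine of the argument.

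There is, however, one concrete snag in the verification step as you describe it. Writing $M=GF$ with $F=\bmat{F_1&F_2}$ and $G=\sbmat{G_1\\-G_2}$, the kernels of the candidate inverse have the form $F(s)U(s)\,C\,V(t)G(t)$ for constant matrices $C$, so in the composition $\threepi{I,L_1,L_2}\circ\threepi{I,H_1,H_2}$ the middle variable $\eta$ produces the product $V(\eta)G(\eta)\cdot F(\eta)=V(\eta)M(\eta)$, and in the opposite order it produces $M(\eta)U(\eta)$. With the ODEs exactly as stated --- $U'=UM$ and $V'=-MV$ --- neither $VM$ nor $MU$ is an exact derivative, so the telescoping you invoke does not go through; it would require $U'=MU$ and $V'=-VM$ (which are equally self-consistent and still give $VU\equiv I$). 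Before the bookkeeping you must therefore resolve this left/right discrepancy: either the integrands in the defining equations for $U$ and $V$ should read $M(t)U(t)$ and $V(s)M(s)$, or $U$ and $V$ should swap positions in \eqref{eq:Li_inv}; one of the two must hold for the cancellation to close, and this should be checked against the cited source. Alternatively, you can sidestep the composition entirely by deriving the inverse from the two-point boundary value problem: setting $z_1(s)=\int_a^s G_1(t)\mbf{x}(t)dt$ and $z_2(s)=\int_s^b G_2(t)\mbf{x}(t)dt$ turns $\threepi{I,-F_1G_1,-F_2G_2}\mbf{x}=\mbf{y}$ into $z'=Mz+G\mbf{y}$ with $z_1(a)=0$, $z_2(b)=0$, and solving by variation of constants yields both the kernel formula and the necessity of invertibility of $U_{22}(b)$ in one stroke. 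Finally, your shortcut for upgrading a one-sided inverse to a two-sided one rests on the wrong reason: the operator is not of Volterra type (it has both triangular parts), so the Neumann-series argument does not apply; the correct justification is that it is identity plus a compact integral operator, hence Fredholm of index zero, so a one-sided inverse forces invertibility.
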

	\begin{proof}
	Proof can be found in \cite[Chapter IX.2]{gohberg2013classes}.
	\end{proof}
	Using Lemma 2.2. of \cite[Chapter IX.2]{gohberg2013classes}, we can use an iterative process and numerical integration to approximate $U$ and $V$ functions in the above result to find an inverse for the 3-PI operators of the form $\threepi{I,R_1,R_2}$ where $R_i$ are matrix-valued polynomials. By extension, given an $R_0$ invertible, we can obtain the inverse of a general 3-PI operator as shown below.
	\begin{cor}\label{cor:3piINV}
	Suppose $R_0:[a,b]\to \R^{n\times n}$, $R_1,R_2: [a,b]^2\to \R^{n\times n}$, with $R_0$ invertible on $[a,b]$. Then, the inverse of the 3-PI operator, $\threepi{R_i}$, is given by $\threepi{\hat R_0,\hat R_1,\hat R_2}$ where
	\begin{align*}
	&\hat R_0(s) = R_0(s)^{-1}, \quad \hat{R}_i(s,\theta) = L_i(s,\theta)\hat R_0(\theta), \quad i\in \{1,2\},
	\end{align*}
	where $L_1$ and $L_2$ are as defined in \eqref{eq:Li_inv} for functions $F_i$ and $G_i$ such that $F_i(s)G_i(\theta)= R_0(s)^{-1}R_i(s,\theta)$.
	\end{cor}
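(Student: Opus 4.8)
\textbf{Proof proposal for Corollary~\ref{cor:3piINV}.}

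The plan is to reduce the inversion of a general $3$-PI operator $\threepi{R_i}$ with invertible diagonal part $R_0$ to the inversion of a $3$-PI operator of the special form $\threepi{I,H_1,H_2}$ covered by Lemma~\ref{lem:3piINV}. First I would factor out the diagonal part: since $R_0(s)$ is invertible on $[a,b]$, the diagonal $3$-PI operator $\threepi{R_0,0,0}$ is invertible with inverse $\threepi{R_0^{-1},0,0}$ (this is the $3$-PI analogue of inverting a diagonal matrix, and follows from the composition formula for $3$-PI operators in Section~\ref{sec:PI_theory:Composition}, where the composition of two diagonal $3$-PI operators is the diagonal $3$-PI operator obtained by pointwise multiplication). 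Next I would write
\begin{align*}
 \threepi{R_0,R_1,R_2} &= \threepi{R_0,0,0}\circ\threepi{I,\tilde R_1,\tilde R_2},
\end{align*}
where $\tilde R_i(s,\theta) = R_0(s)^{-1}R_i(s,\theta)$; this identity is verified directly from Definition~\ref{appx_def:3-PI}, since $\bl(\threepi{R_0,0,0}(\threepi{I,\tilde R_1,\tilde R_2}\mbf{x})\br)(s) = R_0(s)\mbf{x}(s) + \int_a^s R_0(s)\tilde R_1(s,\theta)\mbf{x}(\theta)d\theta + \int_s^b R_0(s)\tilde R_2(s,\theta)\mbf{x}(\theta)d\theta$, and $R_0(s)\tilde R_i(s,\theta) = R_i(s,\theta)$.

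Then I would invert each factor and compose in the reverse order: $\threepi{R_i}^{-1} = \threepi{I,\tilde R_1,\tilde R_2}^{-1}\circ\threepi{R_0,0,0}^{-1} = \threepi{I,\tilde R_1,\tilde R_2}^{-1}\circ\threepi{R_0^{-1},0,0}$. For the first factor, I would apply Lemma~\ref{lem:3piINV}: choosing polynomial factorizations $\tilde R_i(s,\theta) = -F_i(s)G_i(\theta)$ (equivalently $F_i(s)G_i(\theta) = R_0(s)^{-1}R_i(s,\theta)$ up to sign, which one can always arrange by absorbing signs into $F_i$ or $G_i$), the lemma gives $\threepi{I,\tilde R_1,\tilde R_2}^{-1} = \threepi{I,L_1,L_2}$ with $L_1,L_2$ as in~\eqref{eq:Li_inv}. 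Finally I would compose $\threepi{I,L_1,L_2}$ with the diagonal operator $\threepi{R_0^{-1},0,0}$ on the right using the composition formula: composing $\threepi{0,L_1,L_2}$ with a diagonal operator on the right multiplies the kernels on the right by $R_0(\theta)^{-1}$, and composing the identity part with $\threepi{R_0^{-1},0,0}$ gives the diagonal part $R_0^{-1}$. This yields exactly $\hat R_0(s) = R_0(s)^{-1}$ and $\hat R_i(s,\theta) = L_i(s,\theta)R_0(\theta)^{-1} = L_i(s,\theta)\hat R_0(\theta)$, as claimed.

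The main obstacle I anticipate is purely bookkeeping rather than conceptual: carefully checking that the sign conventions in the factorization $\tilde R_i = -F_iG_i$ versus the corollary's stated condition $F_i(s)G_i(\theta) = R_0(s)^{-1}R_i(s,\theta)$ are consistent, and that the right-composition of a general (non-diagonal) $3$-PI operator with a diagonal $3$-PI operator indeed only touches the $\theta$-argument of the kernels and leaves the $s$-dependence and the triangular (lower/upper) structure intact. Both of these follow from the explicit composition identities proved in Section~\ref{sec:PI_theory:Composition} — in particular $\mcl{P}_{\{0,R_1,R_2\}}\circ\mcl{P}_{\{Q_0,0,0\}} = \mcl{P}_{\{0,R_1 Q_0,R_2 Q_0\}}$ — so the verification is mechanical. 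One should also note that Lemma~\ref{lem:3piINV} requires the auxiliary matrix $U_{22}(b)$ to be invertible for $\threepi{I,\tilde R_1,\tilde R_2}$ to be invertible; under the hypothesis that $\threepi{R_i}$ itself is invertible (which, given $R_0$ invertible, is equivalent to invertibility of the first factor), this condition is automatically met, so no extra assumption is needed.
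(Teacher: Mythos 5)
Your proposal is correct and follows essentially the same route as the paper's own proof: factor out the diagonal part as $\threepi{R_0,R_1,R_2}=\threepi{R_0,0,0}\circ\threepi{I,R_0^{-1}R_1,R_0^{-1}R_2}$, invert the second factor via Lemma~\ref{lem:3piINV}, and right-compose with $\threepi{R_0^{-1},0,0}$ to pick up the factor $\hat R_0(\theta)$ in the kernels. Your remark on the sign mismatch between the lemma's convention $H_i=-F_iG_i$ and the corollary's stated condition $F_iG_i=R_0^{-1}R_i$ is a legitimate bookkeeping point that the paper's proof silently glosses over.
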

	\begin{proof} 
	Let $R_i$ be as stated above.
	\begin{align*}
	&(\threepi{R_0(s),R_1(s,\theta),R_2(s,\theta)})^{-1} \\
	&= (\threepi{R_0(s),0,0} \threepi{I,R_0(s)^{-1}R_1(s,\theta),R_0(s)^{-1}R_2(s,\theta)})^{-1} \\
	&= (\threepi{I,R_0(s)^{-1}R_1(s,\theta),R_0(s)^{-1}R_2(s,\theta)})^{-1}(\threepi{R_0(s),0,0})^{-1} \\
	&= \threepi{I,L_1(s,\theta),L_2(s,\theta)}\threepi{R_0(s)^{-1},0,0} \\
	&= \threepi{R_0(s)^{-1},L_1(s,\theta)R_0(\theta)^{-1},L_2(s,\theta)R_0(\theta)^{-1}}.
	\end{align*}
	where $L_i$ are obtained from the Lemma \ref{lem:3piINV} and the composition of PI operators is performed using the formulae in \cite{shivakumar_2019CDC}.

	\end{proof}
	Note the above expressions for the inverse are exact, however, in practice, $R_0^{-1}$ may not have an analytical expression (or very hard to determine). Thus, finding $F_i$ and $G_i$ such that $F_i(s)G_i(\theta)= R_0(s)^{-1}R_i(s,\theta)$ may not be possible. To overcome this problem, we approximate $R_0^{-1}$ by a polynomial which guarantees that $R_0^{-1}R_i$ are polynomials and can be factorized into $F_i$ and $G_i$. Using this approach, we can find an approximate inverse for $\threepi{R_i}$ using Lemma \ref{lem:3piINV}. 
	% \begin{lem}\label{lem:inverse}
	% 	Suppose that $Q(s) = HZ(s)$ and $R_1(s,\theta)=Z(s)^T \Gamma Z(\theta)$ and $\mcl P:=\fourpiFull{P}{Q}{Q^T}{R_0}{R_1}{R_1}$ is a coercive and self-adjoint operator. Then ${\mcl P}^{-1}:=\fourpiFull{\hat P}{\hat Q}{\hat Q^T}{\hat R_0}{\hat R_{1}}{\hat R_{1}}$
	% 	where the parameters are defined as
	% 	\begin{align*}
	% 		\hat P & = \left(I- \hat H K H^T\right)P^{-1}, ~~    \hat Q(s)  =\frac{1}{b-a}\hat H Z(s)R_0(s)^{-1} \\
	% 		\hat R_0(s) & = \frac{1}{b-a}R_0(s)^{-1},~~  \hat R_{1}(s,\theta)=\hat R_0^T(s) Z(s)^T\hat \Gamma Z(\theta)\hat R_0(\theta),
	% 	\end{align*}
	% 	and
	% 	\begin{align*}
	% 		K&=\int_{a}^b  Z(s)R_0(s)^{-1} Z(s)^Tds\\
	% 		\hat H &=P^{-1} H \left( K H^T P^{-1} H -I- K\Gamma\right)^{-1}\\
	% 		\hat \Gamma&=-(\hat H^T H+\Gamma)(I + K \Gamma)^{-1}.
	% 	\end{align*}
	% 	Furthermore, ${\mcl P}^{-1}$ is self-adjoint, coercive, and ${\mcl P}^{-1}\mcl P \mbf x=\mcl P\mcl P^{-1} \mbf x=\mbf x$ for any $\mbf x  \in \R L_2^{m,n}$.
	% \end{lem}

	\subsection{Inversion of 4-PI operators}
	Given, $R_0, R_1, R_2$ with $R_0$ invertible, we proposed a way to find the inverse of the operator $\threepi{R_i}$. Now, we use this method to find the inverse of a 4-PI operator $\fourpi{P}{Q}{Q^T}{R_i}$. Given $P$, $Q$ and $R_i$ with invertible $P$ and $R_0$, define the 3-PI operator $\threepi{H_i}$ with parameters $H_i$ 
	\begin{align*}
	H_0(s) = R_0(s), \quad H_i(s,\theta) &=R_i(s,\theta)-Q(s)^TP^{-1}Q(\theta),
	&\qquad\qquad \quad i\in\{1,2\}.
	\end{align*}

	Next we suppose that $\threepi{H_i}$ is invertible. We will use the inverse of this 3-PI operator to find the inverse of the 4-PI operator as follows.
	\begin{cor}\label{lem:inverse}
	Suppose $P\in \R^{m\times m}$, $Q:[a,b]\to \R^{m\times n}$, $R_0:[a,b]\to\R^{n\times n}$ and $R_1,R_2:[a,b]^2\to \R^{n\times n}$ are matrices and matrix-valued polynomials with $P$ and $\threepi{H_i}$ invertible such that $(\threepi{H_i})^{-1}=\threepi{L_i}$, where
	\begin{align*}
	H_0(s) &= R_0(s), \quad H_i(s,\theta) =R_i(s,\theta)-Q(s)^TP^{-1}Q(\theta),
	\end{align*}
	for $~i\in \{1,2\}$.
	Then, the inverse of $\fourpi{P}{Q}{Q^T}{R_i}$ is given by
	\begin{align*}
	\left(\fourpi{P}{Q}{Q^T}{R_i}\right)^{-1} &= \fourpi{\hat P}{\hat Q}{\hat Q^T}{\hat R_i}, 
	\end{align*}
	where $\hat P, \hat Q, \hat R_i$ are defined as
	\begin{align}\label{eq:4piINV}
	\hat R_0(s) &= R_0(s)^{-1},\quad \hat R_i(s,\theta) = L_i(s,\theta), \quad\text{for} ~i\in \{1,2\},\notag\\
	\hat P &= P^{-1}+P^{-1}\left(\int_a^b (Q(s)\hat R_0(s)Q(s)^T\right.\notag\\
	&\qquad\left. +\int_s^bQ(\theta)L_1(\theta,s)Q(\theta)^Td\theta \right.\notag\\&\quad\qquad\left.+\int_a^sQ(\theta)L_2(\theta,s)Q(\theta)^Td\theta \right)ds P^{-1}\notag\\
	\hat Q(s) &=-P^{-1}\left(Q(s)\hat R_0(s)+\int_s^bQ(\theta)L_1(\theta,s)d\theta \right.\notag\\
	&\qquad\left.+\int_a^sQ(\theta)L_2(\theta,s)d\theta \right). 
	\end{align}
	\end{cor}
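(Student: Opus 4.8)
The plan is to verify the claimed formula by direct computation: I would show that the composition of $\fourpi{P}{Q}{Q^T}{R_i}$ with the operator $\fourpi{\hat P}{\hat Q}{\hat Q^T}{\hat R_i}$ defined in \eqref{eq:4piINV} equals the identity operator $\fourpi{I}{0}{0}{\{I,0,0\}}$ on $\sbmat{\R^m\\L_2^n[a,b]}$. The composition of 4-PI operators is a well-defined 4-PI operator (Section~\ref{sec:PI_theory:Composition}, applied to the special case of 4-PI operators), with explicit formulae for the resulting parameters in terms of the parameters of the two factors, so the task reduces to an algebraic identity among matrix-valued polynomials and integral expressions. Because of the block structure $\sbmat{P&Q\\Q^T&\{R_i\}}$, it is natural to compute the composition blockwise: the $(1,1)$ finite-dimensional block, the $(1,2)$ and $(2,1)$ coupling blocks, and the $(2,2)$ 3-PI block.

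The key structural idea — which I would introduce at the start — is the Schur-complement-style decomposition of the 4-PI operator. Writing $\mcl{B}=\fourpi{P}{Q}{Q^T}{R_i}$, one factors $\mcl{B}$ as a product of a lower-triangular 4-PI operator, a block-diagonal 4-PI operator $\mathrm{diag}(\mcl{P}[P], \threepi{H_i})$ with $H_i$ as defined in the statement, and an upper-triangular 4-PI operator, exactly mirroring the matrix identity $\sbmat{P&Q\\Q^T&R}=\sbmat{I&0\\Q^TP^{-1}&I}\sbmat{P&0\\0&R-Q^TP^{-1}Q}\sbmat{I&P^{-1}Q\\0&I}$. Here the role of $R-Q^TP^{-1}Q$ is played by the 3-PI operator $\threepi{H_i}$ (note $H_0(s)=R_0(s)$, since the $P^{-1}$-correction term $Q(s)^TP^{-1}Q(\theta)$ is a kernel and contributes only to $H_1,H_2$, not to the multiplier part). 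Inverting each of the three factors is elementary: the triangular 4-PI factors have simple closed-form inverses, and the block-diagonal factor inverts to $\mathrm{diag}(\mcl{P}[P^{-1}], (\threepi{H_i})^{-1})$, where $(\threepi{H_i})^{-1}=\threepi{L_i}$ is furnished by Corollary~\ref{cor:3piINV} (applicable since $H_0=R_0$ is invertible and $\threepi{H_i}$ is assumed invertible). Composing these three inverses in reverse order and collecting terms using the 4-PI composition formulae should reproduce exactly the expressions for $\hat P$, $\hat Q(s)$, $\hat R_0(s)$, $\hat R_i(s,\theta)$ in \eqref{eq:4piINV}.

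First I would establish the factorization lemma and confirm it via the composition formulae (this is routine but needs care with the distinction between multiplier terms and kernel terms in the 3-PI parts). Next I would invert the three factors. Then I would carry out the composition of the three inverse factors; the upper-triangular-times-diagonal-times-lower-triangular product generates the integral terms $\int_s^b Q(\theta)L_1(\theta,s)Q(\theta)^T d\theta$ and $\int_a^s Q(\theta)L_2(\theta,s)Q(\theta)^T d\theta$ appearing in $\hat P$ and $\hat Q$ — these arise precisely from composing $\mcl{P}[P^{-1}Q]$ (viewed as a map $L_2\to\R^m$ via $\int_a^b$) with $\threepi{L_i}$ and then with $\mcl{P}[Q^TP^{-1}]$. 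Finally I would read off the four parameters and check they match \eqref{eq:4piINV}, and separately note $\hat R_0 = R_0^{-1}$, $\hat R_i = L_i$ come directly from the $(2,2)$ block of the diagonal factor's inverse since the triangular factors contribute nothing to the multiplier part of that block.

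The main obstacle I anticipate is bookkeeping in the composition step: the 4-PI composition formulae produce the new $R_1$ and $R_2$ kernels as sums of products of the factor kernels together with boundary-limit contributions, and one must track the $\mbf{I}(s-\theta)$ indicator splitting carefully (as in the proof sketch of Section~\ref{sec:PI_theory:Composition}) to see that the cross-terms telescope and that the contributions of $Q^TP^{-1}Q$ inside $H_i$ cancel against the coupling-block contributions, leaving the clean formula $\hat R_i = L_i$. A secondary subtlety is the approximation caveat already flagged after Corollary~\ref{cor:3piINV}: if $R_0^{-1}$ (equivalently $H_0^{-1}$) is not polynomial, the factorization $F_i(s)G_i(\theta)=R_0(s)^{-1}R_i(s,\theta)$ used to invoke Lemma~\ref{lem:3piINV} is only approximate, so the resulting 4-PI inverse is likewise only approximate; I would state this explicitly rather than attempt to prove exactness in that case.
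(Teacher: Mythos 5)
Your proposal is correct, and its endpoint coincides with the paper's: the paper's (very brief) proof simply composes $\fourpi{P}{Q}{Q^T}{R_i}$ with the operator defined by \eqref{eq:4piINV} using the 4-PI composition formulae and checks that the result is the identity. Where you differ is that you \emph{derive} the formulae rather than merely verify them, via a block-LDU (Schur-complement) factorization $\sbmat{I&0\\Q^TP^{-1}&I}\,\mathrm{diag}\bl(P,\threepi{H_i}\br)\sbmat{I&P^{-1}Q\\0&I}$, inverting each factor and recomposing. This is a genuinely different organization of the same computation: it explains \emph{why} the Schur-complement parameters $H_i=R_i-Q^TP^{-1}Q$ (with $H_0=R_0$, since the correction is a pure kernel and contributes nothing to the multiplier part) appear in the hypothesis, and it makes transparent that $\hat R_0=R_0^{-1}$, $\hat R_i=L_i$ come from the $(2,2)$ block $\threepi{L_i}$ of the inverted diagonal factor while the integral terms in $\hat P$ and $\hat Q$ arise from composing $P^{-1}Q$ with $\threepi{L_i}$ and $Q^TP^{-1}$. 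The paper's route is shorter to state but opaque; yours costs an extra factorization lemma (itself verified by the same composition formulae) but buys a constructive derivation and localizes the bookkeeping. Your closing caveats are also apt: the symmetric form $\fourpi{P}{Q}{Q^T}{R_i}$ implicitly presumes the self-adjoint case (so that the $(1,2)$ and $(2,1)$ blocks of the inverse are adjoints of one another), and the exactness of the result is indeed limited by the polynomial-approximation step in Corollary~\ref{cor:3piINV} when $R_0^{-1}$ is not polynomial, a point the paper flags only after that corollary.
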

	\begin{proof}
	This result can be proved by performing the composition of $\mcl P$ and its inverse $\mcl P^{-1}$ where the inverse is provided by the formulae stated above. Using composition formulae for 4-PI operators (See \cite{shivakumar_2019CDC}), we show that the resulting operator is an identity. 
	\end{proof}

%\newpage

% \section{Additional Properties of Partial Integral Operators}
% We list some properties in this section, such as derivative of PI operator and Dirac delta operator on PI operator, that become useful later on in Chapter \ref{ch:PIE} for reformulation of systems in PIE form. 

\section{Composition of Differential and PI operator}\label{sec:PI_theory:Derivative}

Given the well-known relationship between integrals and derivatives (think e.g. the fundamental theorem of calculus), it is natural to assume that the composition of a differential operator and a PI operator may be expressed as a PI operator as well. Unfortunately, this is not true in general, as e.g. the operator $\mcl{P}$ defined as $\bl(\mcl{P}\mbf{v}\br)(s)=P(s)\mbf{v}(s)$ is a PI operator, but there clearly does not exist a PI operator $\mcl{Q}$ such that $\partial_{s}\bl(\mcl{P}\mbf{v}\br)(s)=\bl(\mcl{Q}\mbf{v}\br)(s)$. Nevertheless, if the function $\mbf{v}$ is differentiable, i.e. $\mbf{v}\in H_1$, then we can always express the derivative $\bl(\mcl{P}\mbf{v}\br)(s)$ for a PI operator $\mcl{P}$ in terms of $\mbf{v}(s)$ and $\partial_{s}\mbf{v}(s)$ as $\partial_{s}\bl(\mcl{P}\mbf{v}\br)(s)=\bl(\mcl{Q}\sbmat{\mbf{v}\\\partial_{s}\mbf{v}}\br)(s)$, as we show in the next lemma.

	%\begin{lem}\label{lem:diff_PI}
%		Suppose $\fourpi{P}{Q_1}{Q_2}{R_i}:\R^m\times H_1^n\to\R^p\times L_2^q$. Then spatial derivative, denoted by $\partial_s \fourpi{P}{Q_1}{Q_2}{R_i}: \R^m\times H_1^n\times L_2^n\to L_2^q$ is given by
%		\begin{align}\label{eq:diff_PI}
%			\partial_s\fourpi{P}{Q_1}{Q_2}{R_i}= \fourpiFull{0}{0}{\bar{Q}_2}{\bar{R}_0}{\bar{R}_1}{\bar{R}_2}
%		\end{align}
%		where $\bar{Q}_2(s) = \partial_s Q_2(s)$, $\bar{R}_0(s) = \bmat{\partial_s R_0(s)+R_1(s,s)-R_2(s,s) & R_0(s)}$ and $\bar{R}_i(s,\theta)=\bmat{0&R_i(s,\theta)}$ for $i\in\{1,2\}$.  <--- I don't think this is correct
%	\end{lem}  

\begin{lem}\label{lem:diff_PI}
	Suppose $\fourpi{P}{Q_1}{Q_2}{R_i}:\R^m\times H_1^n\to\R^p\times L_2^q$, and define $\partial_s \fourpi{P}{Q_1}{Q_2}{R_i}: \R^m\times H_1^n\times L_2^n\to \R^p \times L_2^q$ as
	\begin{align}\label{eq:diff_PI}
		\partial_s\fourpi{P}{Q_1}{Q_2}{R_i}= \fourpi{0}{0}{\bar{Q}_2}{\bar{R}_i}
	\end{align}
	where $\bar{Q}_2(s) = \partial_s Q_2(s)$, $\bar{R}_0(s) = \bmat{\partial_s R_0(s)+R_1(s,s)-R_2(s,s) & R_0(s)}$ and $\bar{R}_i(s,\theta)=\bmat{\partial_s R_i(s,\theta)&0}$ for $i\in\{1,2\}$. Then, for any $x\in\R^m$, $\mbf{x}\in H_1^n$,
	\begin{align*}
		\partial_s\left(\fourpi{P}{Q_1}{Q_2}{R_i}\bmat{x\\\mbf{x}}\right)
		=\left(\partial_s\fourpi{P}{Q_1}{Q_2}{R_i}\right)
		\bmat{x\\\mbf{x}\\\partial_s \mbf{x}}
	\end{align*}
\end{lem}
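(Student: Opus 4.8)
The statement is a direct computation: we apply $\partial_s$ to the explicit formula for $\bl(\mcl{P}[B]\sbmat{x\\\mbf{x}}\br)(s)$ and verify that what comes out matches the right-hand side defined by~\eqref{eq:diff_PI}. Since the top block of the output is a constant vector $Px + \int_a^b Q_1(s')\mbf{x}(s')ds'$ (no dependence on $s$), its $s$-derivative is zero, which explains why the first row and the $Q_1$-column of $\partial_s\mcl{P}[B]$ are zero. So the entire content of the lemma lives in the second (distributed) block
\begin{align*}
 f(s):=Q_2(s)x + R_0(s)\mbf{x}(s) + \int_{a}^{s}R_1(s,\theta)\mbf{x}(\theta)d\theta + \int_{s}^{b}R_2(s,\theta)\mbf{x}(\theta)d\theta,
\end{align*}
and the plan is simply to compute $\partial_s f(s)$ term by term.

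First I would differentiate $Q_2(s)x$, giving $(\partial_s Q_2(s))x = \bar Q_2(s)x$, which accounts for the $\bar Q_2$ entry. Next, $\partial_s\bl(R_0(s)\mbf{x}(s)\br) = (\partial_s R_0(s))\mbf{x}(s) + R_0(s)\partial_s\mbf{x}(s)$; note this is where the requirement $\mbf{x}\in H_1^n$ is used, so that $\partial_s\mbf{x}$ is a well-defined $L_2$ function. The two pieces here are exactly the two blocks of $\bar R_0(s)=\bmat{\partial_s R_0(s)+R_1(s,s)-R_2(s,s) & R_0(s)}$ acting on $\sbmat{\mbf{x}\\\partial_s\mbf{x}}$, except for the extra $R_1(s,s)-R_2(s,s)$ term, which must come from the integral terms. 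The key step is the Leibniz integral rule applied to the variable-limit integrals:
\begin{align*}
 \partial_s\int_{a}^{s}R_1(s,\theta)\mbf{x}(\theta)d\theta &= R_1(s,s)\mbf{x}(s) + \int_{a}^{s}(\partial_s R_1(s,\theta))\mbf{x}(\theta)d\theta, \\
 \partial_s\int_{s}^{b}R_2(s,\theta)\mbf{x}(\theta)d\theta &= -R_2(s,s)\mbf{x}(s) + \int_{s}^{b}(\partial_s R_2(s,\theta))\mbf{x}(\theta)d\theta.
\end{align*}
The boundary contributions $R_1(s,s)\mbf{x}(s)$ and $-R_2(s,s)\mbf{x}(s)$ combine with the $\partial_s R_0$ term to give precisely the first block of $\bar R_0$ acting on $\mbf{x}$, while the two remaining integrals are $\int_a^s \bar R_1(s,\theta)\sbmat{\mbf{x}(\theta)\\\partial_s\mbf{x}(\theta)}d\theta$ and $\int_s^b \bar R_2(s,\theta)\sbmat{\mbf{x}(\theta)\\\partial_s\mbf{x}(\theta)}d\theta$ since $\bar R_i(s,\theta)=\bmat{\partial_s R_i(s,\theta)&0}$ annihilates the $\partial_s\mbf{x}$ component. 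Collecting all terms reproduces exactly $\bl(\mcl{P}\sbmat{0&0\\\bar Q_2&\{\bar R_0,\bar R_1,\bar R_2\}}\sbmat{x\\\mbf{x}\\\partial_s\mbf{x}}\br)(s)$, which is the claim.

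The only real subtlety — and the main obstacle — is justifying the differentiation under the integral sign and the Leibniz rule in this function-space setting: one needs $\mbf{x}\in H_1^n$ (so $\mbf{x}$ is absolutely continuous, hence the boundary evaluations $\mbf{x}(s)$ make sense and the identities hold pointwise a.e.), and one needs the parameters $R_i$ to be smooth enough in $s$ (which holds here since they are polynomial or at least $C^1$). Granting these regularity facts, the rest is bookkeeping. I would therefore state the regularity assumptions explicitly at the start, invoke the Leibniz rule as a standard fact, and then present the term-by-term computation above as the body of the proof.
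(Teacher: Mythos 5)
Your proposal is correct and follows essentially the same route as the paper, whose proof simply invokes the Leibniz integral rule and leaves the term-by-term bookkeeping to the reader. Your computation fills in exactly those details (the boundary terms $R_1(s,s)\mbf{x}(s)$ and $-R_2(s,s)\mbf{x}(s)$ combining with $\partial_s R_0$, the product rule producing the $R_0(s)\partial_s\mbf{x}(s)$ block, and the vanishing of the finite-dimensional row), and they all check out.
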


\begin{proof}
    The result can be easily derived using the Leibniz integral rule, stating that for any $F\in H_1[a,b]^2$,
    \begin{align*}
        \frac{d}{ds}\left(\int_{L(s)}^{U(s)}F(s,\theta) d\theta\right) = F(s,U(s))\frac{d}{ds}U(s) - F(s,L(s))\frac{d}{ds} + \int_{L(s)}^{U(s)}\frac{d}{ds}F(s,\theta)d\theta.
    \end{align*}
    For a similar result for PI operators in 2D, we refer to~\cite{jagt_2021PIEACC}.
\end{proof}

% To see PIETOOLS implementation, check Section \ref{sec:additional_methods}.	

\section{Matrix Parametrization of Positive Definite PI Operators}\label{sec:PI_theory:Positive_PI}

In order to be able to solve optimization programs involving PI operator $\mcl{P}$, we need to be able to enforce positivity constraints $\mcl{P}\succcurlyeq 0$. For this, we parameterize PI operators by positive matrices, expanding them as $\mcl{P}=\mcl{Z}^*P\mcl{Z}$ for a fixed operator $\mcl{Z}$, and positive semidefinite matrix $P\succcurlyeq 0$. The following theorem provides a sufficient condition for positivity of a 4-PI operator defined as
\begin{align}\label{eq:4pi}
    \bl(\mcl{P}\sbmat{P,&Q_1\\Q_2,&\{R_i\}}\mbf{x}\br)(s)=
    \left[\begin{array}{ll}
        Px_0        \hspace*{-0.1cm}~& +\ \int_{a}^{b}Q_1(s)\mbf{x}_1(s)ds  \\
        Q_2(s)x_0   \hspace*{-0.1cm}& +\ R_{0}(s)\mbf{x}_1(s) + \int_{a}^{s}R_{1}(s,\theta)\mbf{x}_1(\theta)d\theta + \int_{s}^{b}R_{2}(s,\theta)\mbf{x}_1(\theta)d\theta
    \end{array}\right]
\end{align}
for $\mbf{x}=\bmat{x_0\\\mbf{x}_1}\in \bmat{\R^{n_0}\\L_2^{n_1}[a,b]}$. This result allows us to parameterize a cone of positive PI operators as positive matrices, implement LPI constraints as LMI constraints, allowing us to solve optimization problems with PI operators using semi-definite programming solvers. 
\begin{thm}\label{th:positivity}
	For any functions $Z_1:[a, b]\to\R^{d_1\times n}$, $Z_2:[a, b]\times [a, b]\rightarrow\mathbb{R}^{d_2\times n}$, if $g(s)\geq0$ for all $s\in[a,b]$ and 
	{\small
		\begin{align}
		P&= T_{11}\myint g(s) ds,\nonumber\\
		Q(\eta) &= g(\eta)T_{12}Z_1(\eta)+\myintb{\eta} g(s)T_{13}Z_2(s,\eta)\text{d}s+ \myinta{\eta}g(s) T_{14}Z_2(s,\eta)\text{d}s, \nonumber\\
		R_1(s,\eta) &=g(s)Z_1(s)^{\top}T_{23}Z_2(s,\eta)+g(\eta)Z_2(\eta,s)^{\top}T_{42}Z_1(\eta)+\myintb{s}g(\th)Z_2(\theta,s)^{\top}T_{33}Z_2(\theta,\eta)\text{d}\theta\nonumber\\
		&\hspace{1.3cm}~~+\int_{\eta}^{s}g(\th)Z_2(\theta,s)^{\top}T_{43}Z_2(\theta,\eta)\text{d}\theta+\myinta{\eta}g(\th)Z_2(\theta,s)^{\top}T_{44}Z_2(\theta,\eta)\text{d}\theta,\nonumber\\
		R_2(s,\eta) &=g(s)Z_1(s)^{\top}T_{32}Z_2(s,\eta)+g(\eta)Z_2(\eta,s)^{\top}T_{24}Z_1(\eta)+\myintb{\eta}g(\th)Z_2(\theta,s)^{\top}T_{33}Z_2(\theta,\eta)\text{d}\theta\notag\\
		&\hspace{1.3cm}~~+\int_{s}^{\eta}g(\th)Z_2(\theta,s)^{\top}T_{34}Z_2(\theta,\eta)\text{d}\theta+\myinta{s}g(\th)Z_2(\theta,s)^{\top}T_{44}Z_2(\theta,\eta)\text{d}\theta,\nonumber\\
		R_0(s) &= g(s)Z_1(s)^{\top} T_{22} Z_1(s).
		\label{eq:TH}
		\end{align}}
	where
	\begin{align*}
	T= \begin{bmatrix}
	T_{11} & T_{12} & T_{13} & T_{14}\\
	T_{21} & T_{22} & T_{23} & T_{24}\\
	T_{31} & T_{32} & T_{33} & T_{34}\\
	T_{41} & T_{42} & T_{43} & T_{44}
	\end{bmatrix}\succcurlyeq 0,
	\end{align*}\\
	%   is partitioned such that $T_{11}\in\mathbb{S}^m,~ T_{22},~T_{33}\in\mathbb{S}^n$.
	then the operator $\fourpi{P}{Q_1}{Q_2}{R_i}$ ~as defined in \eqref{eq:4pi} is positive semidefinite, i.e. $\ip{\mbf{x}}{\fourpi{P}{Q_1}{Q_2}{R_i}\mbf{x}}\ge 0$ for all $\mbf{x}\in \R^m\times L_2^n[a,b]$.
\end{thm}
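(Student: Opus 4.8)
The plan is to show that the quadratic form $\ip{\mbf{x}}{\fourpi{P}{Q_1}{Q_2}{R_i}\mbf{x}}$ can be written as $\myint g(s)\,\bl[w(s)^\top T\, w(s)\br]\,ds$ for a suitable vector-valued function $w(s)$ built from $x_0$, $\mbf{x}_1$ and its partial integrals, and then invoke $T\succcurlyeq 0$ and $g(s)\ge 0$ to conclude nonnegativity. This is the standard ``square of a PI-type map'' construction, mirroring how a positive semidefinite matrix $P$ can be written as $Z^\top T Z$: here the role of $Z$ is played by an operator $\mcl{Z}$ whose components are $Z_1(s)$ acting on $\mbf{x}_1(s)$, the two partial integrals $\int_a^s Z_2(s,\theta)\mbf{x}_1(\theta)d\theta$ and $\int_s^b Z_2(s,\theta)\mbf{x}_1(\theta)d\theta$, and a constant block for $x_0$.

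First I would define, for $\mbf{x}=\sbmat{x_0\\\mbf{x}_1}\in\R^{n_0}\times L_2^{n_1}[a,b]$, the four ``coordinate'' functions
\begin{align*}
 z_0(s)&:=x_0, & z_1(s)&:=Z_1(s)\mbf{x}_1(s), \\
 z_2(s)&:=\int_{s}^{b}Z_2(\theta,s)^{\top}\!\!\cdots, & z_3(s)&:=\int_{a}^{s}Z_2(\theta,s)^{\top}\!\!\cdots,
\end{align*}
taking care with which argument of $Z_2$ is integrated (the indices in~\eqref{eq:TH} indicate that $Z_2(\theta,s)^\top$ appears, so $\theta$ is the integration variable and $s$ the outer variable). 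Set $w(s):=\bmat{z_0(s)\\ z_1(s)\\ z_2(s)\\ z_3(s)}$, partitioned conformally with the $4\times4$ block structure of $T$. Then I would expand $\myint g(s)\, w(s)^\top T\, w(s)\, ds$ block by block: the $T_{11}$ term yields $x_0^\top\bl(T_{11}\myint g(s)ds\br)x_0$, matching $\ip{x_0}{Px_0}$; the $T_{12}$, $T_{13}$, $T_{14}$ terms (and their transposes) produce the cross terms between $x_0$ and $\mbf{x}_1$, after swapping the order of integration (Fubini) to collect everything against $\mbf{x}_1(\eta)$, reproducing exactly the stated $Q(\eta)$; the $T_{22}$ term gives the diagonal multiplier $R_0(s)=g(s)Z_1(s)^\top T_{22}Z_1(s)$; and the $T_{23},T_{24},T_{32},T_{33},T_{34},T_{42},T_{43},T_{44}$ terms, after expanding products of the two partial integrals and again applying Fubini, give the kernels $R_1(s,\eta)$ and $R_2(s,\eta)$. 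Once the algebra confirms that $\ip{\mbf{x}}{\fourpi{P}{Q_1}{Q_2}{R_i}\mbf{x}}=\myint g(s)\,w(s)^\top T\, w(s)\,ds$, nonnegativity is immediate: $g(s)\ge0$ and $T\succcurlyeq0$ force the integrand to be pointwise nonnegative.

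The main obstacle is the bookkeeping in the cross-products of the two partial-integral components $z_2$ and $z_3$. Expanding $z_2(s)^\top T_{33} z_2(s)$ gives a double integral over $\{(\theta,\theta')\in[s,b]^2\}$, which must be split along $\theta'\lessgtr\theta$ (and similarly for the other sign combinations) to recast it as a single partial integral of a kernel in $(s,\eta)$; this is precisely where the nested integration limits $\int_{\eta}^{s}$, $\int_{s}^{\eta}$, $\int_{a}^{\eta}$, $\int_{\eta}^{s}$ in the formulas for $R_1$ and $R_2$ come from, and getting every limit and every transpose of $T_{ij}$ versus $T_{ji}$ right is the delicate part. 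A useful check is symmetry: since $T=T^\top$, the resulting operator must satisfy $\fourpi{P}{Q_1}{Q_2}{R_i}^*=\fourpi{P}{Q_1}{Q_2}{R_i}$, which forces $R_1(s,\eta)^\top=R_2(\eta,s)$, $Q_1=Q_2^\top$ in the appropriate sense, and $R_0=R_0^\top$; verifying these relations against~\eqref{eq:TH} both catches sign/index errors and confirms the construction. Everything else — Fubini on a compact rectangle, linearity — is routine.
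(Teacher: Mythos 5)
Your proposal is correct and is precisely the intended argument: the paper states this theorem without proof (deferring to the cited references), but the text immediately preceding it announces the parameterization $\mcl{P}=\mcl{Z}^*T\mcl{Z}$, and writing $\ip{\mbf{x}}{\mcl{P}\mbf{x}}=\int_a^b g(s)\,w(s)^\top T\,w(s)\,ds$ with $w(s)=\bl(x_0,\;Z_1(s)\mbf{x}_1(s),\;\int_a^s Z_2(s,\theta)\mbf{x}_1(\theta)d\theta,\;\int_s^b Z_2(s,\theta)\mbf{x}_1(\theta)d\theta\br)$ and then applying Fubini to match the parameters in \eqref{eq:TH} is exactly that construction. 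One small correction to your sketch: the third and fourth components of $w(s)$ are the \emph{untransposed} partial integrals just written; the transposed kernels $Z_2(\theta,s)^\top$ in the formulas for $R_1,R_2$ arise only after the Fubini swap relabels the outer variable, not in the definition of $w$ itself.
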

To see the PIETOOLS implementation, check Section \ref{sec:poslpivar}. For an extension of this result to 2D PI operators, we refer to~\cite{jagt_2021PIEACC}.

\chapter{Troubleshooting}
This appendix is dedicated to tackling issues related to installation and setting up of PIETOOLS 2022. Additionally, we discuss some common issues users may run into while solving LPIs.
	
\section{Troubleshooting: Installation}
When installing the PIETOOLS toolbox using the install script, in rare circumstances, the user may run into one of following errors. In case of an error, a message is displayed explaining the issue, which can be one of the following.
\begin{enumerate}
	\item \textit{An error appeared when trying to create the folder ...}\\
	Check if the folder already has a folder named \textbf{PIETOOLS\_2022}. If that does not resolve the issue try running MATLAB from an administrator account which has the authority to create or modify folders. As a last resort, try installing in a different folder. If that does not fix this, contact us.
	
	\item \textit{The installation directory ``   '' already exists...}\\
	Obvious error. However, if there is no folder with the name PIETOOLS\_2022, check for a hidden folder.
	
	%\item \textit{An error appeared when trying to create the folder...}\\
	%See the steps to first error.
	
	\item \textit{`tbxmanager' or `SeDuMi' were not downloaded or installed...}\\
	Check your internet connection. Verify that MATLAB is allowed to download files using the internet connection. Check if the websites for tbxmanager and SeDuMi are operational. 
	
	\item \textit{`PIETOOLS' was not downloaded or installed...}\\
	Check the suggestions for the previous error. If that does not fix the issue, contact us.
	
	\item \textit{Could not modify the initialization file ``startup.m''...}\\
	Try running MATLAB from an administrator account which has the authority to create or modify folders. If that does not fix the issue, manually add SeDuMi or the relevant SDP solver (like MOSEK, sdpt3, sdpnal) to your MATLAB path, and extract the files in \textbf{PIETOOLS\_2022.zip}. Also add PIETOOLS to your MATLAB path.
	
	\item \textit{Could not save the path to a default location...}\\
	Try running MATLAB from an administrator account which has the authority to create or modify folders. If that does not fix it, just add PIETOOLS and SeDuMi to your MATLAB path and skip this step.
	
\end{enumerate}

\section{Troubleshooting: Solving LPIs}
PIETOOLS can be used for solving LPI optimization problems and users may run into errors while setting up and solving them. For any errors in setting up an LPI, refer to the function and script headers to ensure that input-output formats are correct. Ensure that PI objects, defined in the LPI problem, are well-defined and valid PI operators. You can use the \texttt{isvalid} function to check if a PI operator is well defined. If that does not fix the problem, feel free to contact us.\\
PIETOOLS 2022 relies on recently released SOSTOOLS 4.00 with solver SeDuMi to solve optimization problems. If you are unfamiliar with SOSTOOLS, and are unsure how to intepret the results of a solved optimization problem, please check the SOSTOOLS manual available at \url{http://www.cds.caltech.edu/sostools/}, or the SeDuMi manual avaialable at \url{https://sedumi.ie.lehigh.edu/?page_id=58} for more information. A brief overview of how to interpret results, and what to do in case of error, is provided below:
\begin{enumerate}
	\item \textbf{How do I interpret the results of a solved optimization problem?}\\
	A general rule of thumb is to look at: pinf, dinf, feasratio and Residual norm. pinf and dinf should be 0, while feasratio is in between -1 and 1 (preferably closer to 1). The lower the residual norm the better. Refer to SeDuMi manual to interpret other output parameters and more details~\cite{sedumi}.
	
	\item \textbf{What if pinf is 1?}\\
	Verify if the LPI constraints are in fact feasible. Verify if the sign-definiteness of the PI operator is on a compact interval (use psatz term if local sign-definite is needed) or the entire real line. Use 'getdeg' function to check if your LPI constraint has high degree polynomials. If yes, make sure that all opvar variables used in lpi\_eq function have high enough degrees to match it. If this does not resolve the issue contact us and attach the files that you are trying to run along with a snapshot of the error/output.
	
	\item \textbf{What if dinf is 1 and feasratio is -1?}\\
	This issue typically occurs when the objective function is unbounded from below and becomes $-\infty$. Check if the objective function is bounded below. If this does not resolve the issue contact us by email and attach the files that you are trying to run along with a snapshot of the error/output.
\end{enumerate}

\section{Contact Details}
To resolve issues, report bugs or to collaborate on any development work regarding PIETOOLS, please contact us through email and we will get back to you as quickly as possible. In case of issues with installation, solving problems or bugs identified, please include the script file that generates the error along with images of the error generated in MATLAB. You can reach us through email at: \url{sshivak8@asu.edu}, \url{ad2079@cam.ac.uk}, \url{djagt@asu.edu} and \url{mpeet@asu.edu}

\bibliographystyle{plain}
\bibliography{user_manual}

\appendix

\end{document}